\title{On the Mordell-Gruber spectrum}
\author{Uri Shapira 
  } 
\address{ETH Z\"urich {\tt ushapira@gmail.com}}
\author{Barak Weiss}
\address{Ben Gurion University, Be'er Sheva, Israel 84105
{\tt barakw@math.bgu.ac.il}}
\font\sb = cmbx8 scaled \magstep0
\font\sn = cmssi8 scaled \magstep0
\long\def\combarak#1{\ifdraft{\sb #1 }\else\ignorespaces\fi}
\newif\ifdraft\drafttrue
\newcommand\name[1]{\label{#1}{\ifdraft{\sn [#1]}\else\ignorespaces\fi}}
\newcommand\eq[2]{{\ifdraft{\ \tt [#1]}\else\ignorespaces\fi}\begin{equation}\label{#1}{#2}\end{equation}}
\newcommand {\equ}[1]{\eqref{#1}}
    \newcommand{\Xn}{{\mathcal{L}_n}}
\newcommand{\goth}[1]{{\mathfrak{#1}}}
\newcommand{\Q}{{\mathbb {Q}}}
\newcommand{\R}{{\mathbb{R}}}
\newcommand{\Z}{{\mathbb{Z}}}
\newcommand{\C}{{\mathbb{C}}}
\newcommand{\BB}{{\mathcal{B}}}
\newcommand{\PPP}{{\mathcal{P}}}
\newcommand{\E}{{\mathbf{e}}}
\newcommand{\wt}[1]{{\widetilde{#1}}}
\newcommand{\pa}[1]{{\left(#1\right)}}
\newcommand{\set}[1]{{\{#1\}}}
\newcommand{\on}[1]{{\operatorname{#1}}}
\newcommand{\Lam}{\Lambda}
\newcommand\mat[1]{\pa{\begin{matrix}#1\end{matrix}}}
\newcommand{\Ad}{{\operatorname{Ad}}}
\newcommand{\GL}{\operatorname{GL}}
\newcommand{\SL}{\operatorname{SL}}
\newcommand{\diag}{{\rm diag}}
\newcommand{\Gal}{{\rm Gal}}
\newcommand {\ignore}[1]  {}
\newcommand{\spa}{{\rm span}}
\newcommand{\conv}{{\rm conv}}
\newcommand{\df}{{\, \stackrel{\mathrm{def}}{=}\, }}
\newcommand{\til}{\widetilde}
\newcommand{\supp}{{\rm supp}}
\newcommand{\sm}{\smallsetminus}
\newcommand{\vre}{\varepsilon}
\newcommand\Vol{\mathrm{Vol}}
\newcommand{\A}{{\mathcal{A}}}
\newtheorem{thm}{Theorem}[section]
\newtheorem{lem}[thm]{Lemma}
\newtheorem{prop}[thm]{Proposition}
\newtheorem{cor}[thm]{Corollary}
\theoremstyle{definition}\newtheorem{remark}[thm]{Remark}
\newtheorem{example}[thm]{Example}
\theoremstyle{definition}\newtheorem{dfn}[thm]{Definition}
\begin{document}
\maketitle

\begin{abstract}
We investigate the Mordell
constant of certain families of lattices, in particular, of lattices
arising from totally real fields. We define the almost sure value
$\kappa_\mu$ of
the Mordell constant with respect to certain homogeneous measures on the
space of lattices, and establish a strict inequality
$\kappa_{\mu_1} < \kappa_{\mu_2}$ when the $\mu_i$ are finite and
$\supp(\mu_1) \varsubsetneq \supp (\mu_2)$. In combination with known results regarding the
dynamics of the diagonal group  we obtain isolation results as well as
information regarding accumulation points of the Mordell-Gruber
spectrum, extending previous work of Gruber and Ramharter. One of the main tools we develop is the associated algebra, an algebraic invariant
attached to the orbit of a lattice under a block group, which can be
used to characterize closed and finite volume orbits. 


\end{abstract}
\section{Introduction}
\subsection{The Mordel constant of a lattice}
Let $\Lambda \subset \R^n$ be a lattice. By a {\em symmetric box} in $\R^n$ we mean a set of the form $ [-a_1, a_1] \times \cdots \times [-a_n, a_n]$, and we say that a symmetric box is
 {\em admissible} if it contains no nonzero points of $\Lambda$ in its interior.  The {\em Mordell constant} of $\Lambda$ is defined to be 
\eq{eq: defn const}{
\kappa(\Lambda) \df \sup_{\BB} \frac{ \Vol(\BB)}{2^n \Vol(\Lambda)},
}
where the supremum is taken over admissible symmetric boxes $\BB$, and where $\Vol(\BB)$ denotes the volume of $\BB$ and $\Vol(\Lambda)$ denotes the volume of a fundamental domain for $\Lambda$. 
The purpose of this paper is to study the quantity $\kappa(\cdot)$, as a function on the space of lattices; in particular, to study its image, which we call the {\em Mordell-Gruber spectrum}, its generic values, and isolation properties. 
Research on these questions stems from the so-called `Mordell inverse problem' \cite{Mordell} and their in-depth study was carried out in a number of papers, notably those of Gruber and Ramharter. We refer to \cite[Chap. 3]{GL} for a detailed history, and give more precise references to the literature below. 

Since the function $\kappa(\Lambda)$ is invariant under homotheties, there is no loss of generality in restricting our attention to unimodular lattices (i.e.\ lattices with $\Vol(\Lambda)=1$). We will denote the space of unimodular lattices of dimension $n$ by $\Xn$. It is equipped with the transitive action of the group $G\df \SL_n(\R)$, and the function $\kappa$ is invariant under the action of the subgroup $A$ of diagonal matrices in $G$ with positive diagonal entries\footnote{For notational convenience we will not work with the full group of linear maps preserving $\kappa$, which besides $A$, also contains non-positive diagonal matrices and permutations of the coordinates.}, since this action permutes symmetric boxes. We will prove new results as well as apply known ones about this $A$-action on $\Xn$ to derive consequences for $\kappa$ --- see \cite{EL} for a survey of the recent progress in the study of this action. 
\subsection{Homogeneous measures and intermediate lattices}
From the dynamical point of view it is natural to study homogeneous
$A$-invariant measures on $\Xn$ which we now define. 
Let $H\Lambda\subset \Xn$ be closed orbit of a real algebraic 
subgroup $H \subset G$. We will see in Proposition~\ref{prop: new closed} that 
the orbit $H\Lam$ supports a locally finite $H$-invariant  measure which is unique up to scaling. 
\begin{dfn}
Given an $A$-invariant closed orbit $H\Lam\subset \Xn$ of a closed
connected real algebraic subgroup $H \subset G$  we refer to the $H$-invariant locally
finite measure supported on $H\Lam$ as the \textit{homogeneous
  measure} associated with the orbit $H\Lam$ and denote it by
$\mu_{H\Lam}$. The closed orbit $H\Lam$ will be referred to as the
\textit{homogeneous space} corresponding to the measure. 
\end{dfn}
We emphasize that we allow our homogeneous spaces to be of infinite
measure; when the measure $\mu_{H\Lam}$ is finite  we say that the
orbit $H\Lam$ is \textit{of finite volume}. It is well known that the
orbit $G\Lam=\Xn$ is of finite volume and  we denote the  
corresponding (unique) $G$-invariant probability measure by $\mu_{\Xn}$.
The starting point of our discussion is the following
\begin{thm}\name{u thm1}
Let $\mu$ be a homogeneous $A$-invariant measure on $\Xn$. Then for $\mu$-almost any $\Lambda$ 
\begin{equation}\label{u thm1 eq}
\kappa(\Lambda)=\max\{\kappa(\Lambda'):\Lambda'\textrm{ is in the support of }\mu\}.
\end{equation}
\end{thm}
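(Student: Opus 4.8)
The plan is to recast $\kappa$ as an $A$-orbital supremum of a continuous function, read off semicontinuity, and then finish using ergodicity of $\mu$ under $A$.

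\emph{Reformulating $\kappa$.} For $\Lambda\in\Xn$ let $\lambda_\infty(\Lambda)\df\min\{\|v\|_\infty:v\in\Lambda\smallsetminus\{0\}\}$ be the first sup‑norm minimum; this is a positive continuous function on $\Xn$. If $\BB=\prod_{i=1}^n[-a_i,a_i]$ and $r\df(a_1\cdots a_n)^{1/n}$, then $g\df\diag(r/a_1,\dots,r/a_n)\in A$ satisfies $g^{-1}\big([-r,r]^n\big)=\BB$, so $\BB$ is admissible for $\Lambda$ precisely when the cube $[-r,r]^n$ is admissible for $g\Lambda$, i.e.\ precisely when $r\le\lambda_\infty(g\Lambda)$. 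Since $\Vol(\BB)/2^n=r^n$ and every positive‑volume symmetric box arises in this way, taking the supremum over all $\BB$ gives
\begin{equation*}
\kappa(\Lambda)=\sup_{a\in A}\lambda_\infty(a\Lambda)^n .
\end{equation*}
Thus $\kappa$ is $A$-invariant, and being a supremum of the continuous functions $\Lambda\mapsto\lambda_\infty(a\Lambda)^n$ it is lower semicontinuous on $\Xn$.

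\emph{Monotonicity along orbit closures.} If $\Lambda'\in\cl{A\Lambda}$, choose $a_k\in A$ with $a_k\Lambda\to\Lambda'$; lower semicontinuity at $\Lambda'$ together with $A$-invariance yields $\kappa(\Lambda')\le\liminf_k\kappa(a_k\Lambda)=\kappa(\Lambda)$. Hence $\kappa$ does not increase along $A$-orbit closures.

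\emph{Conclusion.} Put $M\df\supp(\mu)$ and $s\df\sup_{\Lambda'\in M}\kappa(\Lambda')$. Here I would use that a homogeneous $A$-invariant measure is ergodic for the $A$-action; granting this, covering the second countable space $M$ by countably many open sets of positive $\mu$-measure and noting that the $A$-saturation of each such set is open, $A$-invariant, of positive measure, hence conull, one gets $\cl{A\Lambda}=M$ for $\mu$-almost every $\Lambda$. For such a $\Lambda$ we have $\kappa(\Lambda)\le s$ because $\Lambda\in M$, while by the previous step $\kappa(\Lambda')\le\kappa(\Lambda)$ for every $\Lambda'\in M=\cl{A\Lambda}$, so $s\le\kappa(\Lambda)$; hence $\kappa(\Lambda)=s$ almost everywhere. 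Finally, the set $\{\Lambda\in M:\kappa(\Lambda)=s\}$ is conull, in particular nonempty, so $s=\max_{\Lambda'\in M}\kappa(\Lambda')$ and \eqref{u thm1 eq} holds.

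The main obstacle is the dynamical input invoked at the start of the last step: the ergodicity under $A$ of a homogeneous $A$-invariant measure, so that it has a single ergodic component whose support is all of $M$ (without this the conclusion is simply false, since $\kappa$ is constant on each $A$-ergodic component but these constants could a priori differ). For $\mu=\mu_{\Xn}$ this is Moore's ergodicity theorem; in general one first reduces to the case $A\subseteq H$ — replacing $H$ by the group generated by $H$ and $A$, which has the same orbit — and then, since for a connected $H\supseteq A$ the group $H$ is generated by $A$ and root subgroups on which $A$ acts nontrivially, a Mautner‑type argument shows every $A$-invariant function on $H\Lam$ is already $H$-invariant, hence $\mu$-a.e.\ constant. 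Everything else — the reformulation of $\kappa$ and the resulting semicontinuity and orbit‑closure monotonicity — is elementary.
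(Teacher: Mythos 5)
Your deterministic half --- the identity $\kappa(\Lambda)=\sup_{a\in A}\lambda_\infty(a\Lambda)^n$, the resulting lower semicontinuity and $A$-invariance, the monotonicity of $\kappa$ along $A$-orbit closures, and the standard passage from ergodicity to almost sure density of $A$-orbits in $\supp(\mu)$ --- is correct and is essentially the argument of Proposition \ref{prop: standard} and Theorem \ref{thm: ergodic+semicontinuous}. The gap is in the ergodicity input, and it occurs precisely in the case the theorem is careful to include: $\mu$ infinite. Your proposed justification is a ``Mautner-type argument showing every $A$-invariant function on $H\Lam$ is $H$-invariant.'' But the Mautner phenomenon is a statement about unitary representations: it applies to $A$-fixed vectors in $L^2(\mu)$. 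When $\mu$ is infinite, an $A$-invariant measurable set whose measure and co-measure are both infinite produces no invariant $L^2$ vector, and the principle you invoke is false for general infinite measure-preserving actions of groups generated by $A$ and root subgroups: take $\SL_2(\R)$ acting on $\R^2\smallsetminus\{0\}$ preserving Lebesgue measure; the set $\{(x,y): xy>1\}$ is invariant under the diagonal group but not (mod null sets) under the upper-triangular unipotent subgroup, even though the diagonal group contracts that root space. So $A$-invariance of a measurable set does not propagate to the root subgroups by soft representation-theoretic reasoning alone; one must use the specific structure of the orbit $H\Lambda$.

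That structure is exactly what Proposition \ref{prop: product structure} supplies and what your sketch omits: writing $Z(\PPP)=Z_s\cdot Z_a$, the orbit decomposes $A$-equivariantly as $Z_s\times H_1\Lambda$, where $Z_s\subset A$ acts by translations on a Euclidean factor carrying Haar measure and $H_1\Lambda$ has finite volume; this rests on Proposition \ref{prop: Zcl contains unips} and Borel--Harish-Chandra, not on generalities about groups containing $A$. Granting it, an $A$-invariant set is (mod null) of the form $Z_s\times E'$ with $E'$ invariant under $A\cap H_1$, and ergodicity reduces to the finite-volume case, where Howe--Moore legitimately applies --- this is Proposition \ref{prop: ergodic}. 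When $\mu$ is finite your sketch (Moore's ergodicity theorem) is fine, but the statement covers infinite homogeneous measures (e.g.\ $H(\PPP)\Z^n$ for a nontrivial partition, as in Example \ref{ex zn}), and there your ergodicity argument, as written, has a genuine hole; the rest of your proof needs no change once ergodicity is established by the above reduction.
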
 
Theorem \ref{u thm1} is a standard consequence of the ergodicity of
the $A$-action and is proved in \S\ref{sec: ergodicity}.  
We note that 
a well-known conjecture of Margulis \cite{Margulis conjectures}
asserts that in dimension $n\ge 3$ any $A$-invariant and $A$-ergodic
probability measure on $\Xn$ is homogeneous.

The following consequence of Theorem~\ref{u thm1} answers 
a question of Gruber, and improves on previous results of Gruber and
Ramharter \cite{GR1, R1, R2}. Note that Minkowski's convex body
Theorem implies that $\kappa(\Lam) \leq 1$ for any $\Lam$, this upper
bound being attained by $\Lam = \Z^n$. Therefore taking
$\mu=\mu_{\Xn}$ we obtain:
\begin{cor}\name{thm: full measure}
With respect to $\mu_{\Xn}$, almost every lattice has Mordell constant equal to 1. 
\end{cor}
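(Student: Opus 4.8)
The plan is simply to specialize Theorem~\ref{u thm1} to the measure $\mu = \mu_{\Xn}$, and then to identify the right-hand side of \eqref{u thm1 eq} explicitly. Two elementary observations do the job. First, since $\mu_{\Xn}$ is the $G$-invariant probability measure on the homogeneous space $\Xn$ and $G$ acts transitively on $\Xn$, the measure $\mu_{\Xn}$ has full support, i.e.\ $\supp(\mu_{\Xn}) = \Xn$; in particular the standard lattice $\Z^n$ lies in $\supp(\mu_{\Xn})$. Second, $\kappa(\Z^n) = 1$: the cube $[-1,1]^n$ is an admissible symmetric box for $\Z^n$, because its interior $(-1,1)^n$ contains no nonzero integer point, and it realizes the ratio $\Vol\pa{[-1,1]^n}/\pa{2^n \Vol(\Z^n)} = 1$ in \eqref{eq: defn const}, while Minkowski's convex body theorem gives the universal bound $\kappa(\Lambda') \le 1$ for every $\Lambda'$.

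Combining the two observations, the quantity $\max\{\kappa(\Lambda') : \Lambda' \in \supp(\mu_{\Xn})\} = \max\{\kappa(\Lambda') : \Lambda' \in \Xn\}$ appearing on the right-hand side of \eqref{u thm1 eq} equals $1$, this maximum being attained at $\Lambda' = \Z^n$. Theorem~\ref{u thm1} then yields $\kappa(\Lambda) = 1$ for $\mu_{\Xn}$-almost every $\Lambda$, which is exactly the assertion of Corollary~\ref{thm: full measure}.

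There is essentially no obstacle here: the corollary is an immediate specialization of Theorem~\ref{u thm1}, and the two auxiliary facts it relies on — full support of the $G$-invariant probability measure on $\Xn$, and the equality case $\kappa(\Z^n) = 1$ of Minkowski's theorem — are standard and are in fact recalled in the paragraph preceding the corollary's statement. If one wishes to be careful, the only subtlety to note is that Theorem~\ref{u thm1} already delivers an honest maximum (not merely a supremum) over the support, so no further argument is needed to pass from $\kappa(\Lambda) \ge \kappa(\Z^n) = 1$ (which holds almost surely) to the equality $\kappa(\Lambda) = 1$, the reverse inequality being the universal Minkowski bound.
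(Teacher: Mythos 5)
Your proposal is correct and follows the same route as the paper: the corollary is obtained by specializing Theorem~\ref{u thm1} to $\mu_{\Xn}$, using that $\supp(\mu_{\Xn})=\Xn$ contains $\Z^n$, together with the Minkowski bound $\kappa(\Lambda)\le 1$ and the equality $\kappa(\Z^n)=1$ (Proposition~\ref{prop: standard}). No further comment is needed.
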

A natural question is the existence and characterization of lattices with Mordell constant strictly smaller than 1.
 In view of Theorem~\ref{u thm1}, given a homogeneous $A$-invariant
 measure $\mu$,  it makes sense to define \textit{the generic value}
 $\kappa_\mu$  
 to be the almost sure value of $\kappa$ with  respect to $\mu$. One
 of the main results of this paper is the following: 
\begin{thm}\label{u thm 2}
Let $\mu_1,\mu_2$ be two $A$-invariant homogeneous 
measures such that $\mu_1$ is finite and 
$\operatorname{supp}(\mu_1)\varsubsetneq\operatorname{supp}(\mu_2)$. Then  
$\kappa_{\mu_1}< \kappa_{\mu_2}.$
\end{thm}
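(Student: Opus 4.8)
The plan is to combine Theorem~\ref{u thm1} with a rigidity/semicontinuity argument about closed $A$-invariant orbits. By Theorem~\ref{u thm1}, $\kappa_{\mu_i} = \max\{\kappa(\Lambda') : \Lambda' \in \supp(\mu_i)\}$, so the non-strict inequality $\kappa_{\mu_1} \le \kappa_{\mu_2}$ is immediate from the inclusion $\supp(\mu_1) \subset \supp(\mu_2)$. The entire content is the \emph{strictness}. First I would fix a lattice $\Lambda_1 \in \supp(\mu_1)$ achieving $\kappa(\Lambda_1) = \kappa_{\mu_1}$ (the maximum is attained since, as one should check, $\supp(\mu_1)$ is compact when $\mu_1$ is finite and $\kappa$ is upper semicontinuous --- admissibility of a box is a closed condition, so $\kappa$ is a sup of continuous functions, hence lower semicontinuous; the relevant semicontinuity for attaining the max on a compact set needs the reverse, which I would get from the fact that on $\supp(\mu_1)$ the supremum in \eqref{eq: defn const} is actually attained by a box of bounded size, giving continuity of $\kappa$ along the compact orbit closure, or more robustly from Theorem~\ref{u thm1} applied to the ergodic components). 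The goal is then to produce a single lattice $\Lambda_2 \in \supp(\mu_2) \setminus \supp(\mu_1)$ with $\kappa(\Lambda_2) > \kappa(\Lambda_1)$, because then $\kappa_{\mu_2} \ge \kappa(\Lambda_2) > \kappa(\Lambda_1) = \kappa_{\mu_1}$.

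The key step is a strict-monotonicity lemma for $\kappa$ along the supports: if $H_1\Lambda_1 \varsubsetneq \cl{H_2 \Lambda_2}$ are closed $A$-invariant orbits with $H_1\Lambda_1$ of finite volume, then some lattice in the larger orbit has strictly larger Mordell constant. I would prove this by taking an admissible symmetric box $\BB$ for $\Lambda_1$ realizing $\kappa(\Lambda_1)$ (or very nearly so), and perturbing: since $\supp(\mu_1)$ is a proper closed $A$-invariant subset of the bigger homogeneous space, there exist lattices $\Lambda$ in $\supp(\mu_2)$ arbitrarily close to $\Lambda_1$ but not in $\supp(\mu_1)$. For such $\Lambda$ close to $\Lambda_1$, the box $\BB$ — or a slightly shrunk copy $(1-\vre)\BB$ — remains admissible for $\Lambda$, so $\kappa(\Lambda) \ge (1-\vre)^n \kappa(\Lambda_1)$; this only gives the non-strict bound again. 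To upgrade to strictness, the right move is to exploit the extra room: because $\Lambda \notin \supp(\mu_1)$ and $\supp(\mu_1)$ is exactly the set of lattices whose $A$-orbit closure is "as small as possible", the $A$-orbit of such $\Lambda$ is genuinely larger, and along it one can push the box to grow. Concretely, apply elements $a_t \in A$ so that $a_t\Lambda$ converges (along a subsequence) to some $\Lambda_\infty \in \supp(\mu_2)$; the box $\BB$ is $A$-invariant-up-to-reshaping so $\kappa(a_t \Lambda) = \kappa(\Lambda)$, and by choosing the escape direction to "open up" the admissible box — using that $\Lambda$ has no short vectors forcing the box closed in the limit, unlike $\Lambda_1$ which, being in the small support, does — we obtain an admissible box for some $a_t\Lambda$ strictly larger than $\BB$.

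The cleanest packaging, which I would ultimately adopt, is via the \emph{associated algebra} invariant advertised in the abstract: $\supp(\mu_1)$ being a proper closed $A$-invariant finite-volume suborbit means its associated algebra is strictly larger, equivalently the lattices there have a nontrivial extra symmetry/structure that the generic lattice of $\supp(\mu_2)$ lacks; this structural deficit is precisely what prevents $\Lambda_1$ from achieving the supremum over the bigger space. So the argument becomes: (i) reduce via Theorem~\ref{u thm1} to comparing maxima over the two supports; (ii) show the max over $\supp(\mu_1)$ is attained at some $\Lambda_1$; (iii) using the strict inclusion and the algebraic characterization of closed/finite-volume orbits, find $\Lambda_2 \in \supp(\mu_2)\setminus\supp(\mu_1)$ and an admissible box for $\Lambda_2$ that dominates every admissible box for $\Lambda_1$, e.g.\ by a limiting/degeneration argument inside $\supp(\mu_2)$. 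The main obstacle is step (iii): ruling out the possibility that $\kappa$ is \emph{constant} on all of $\supp(\mu_2)$ and equal to $\kappa(\Lambda_1)$ — i.e., genuinely using that $\supp(\mu_1)$ is \emph{properly} contained, not just contained. I expect this to require a quantitative input: either a direct construction of an enlarged admissible box using a lattice point configuration available in $\supp(\mu_2)$ but excluded by the defining algebraic constraints of $\supp(\mu_1)$, or an ergodic-theoretic argument showing that if the generic values agreed then $\mu_2$ would have to be supported on $\supp(\mu_1)$, contradicting the strict inclusion. Everything else (semicontinuity of $\kappa$, compactness of finite-volume supports, invariance under $A$) is routine.
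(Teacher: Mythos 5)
Your reduction via Theorem~\ref{u thm1} to comparing the maxima of $\kappa$ over the two supports, and the observation that the maximum over $\supp(\mu_1)$ is attained, match the paper's starting point (though attainment comes from Theorem~\ref{thm: ergodic+semicontinuous} together with Corollary~\ref{cor: cubes} and Mahler compactness, not from compactness of $\supp(\mu_1)$: a finite-volume orbit of a block group is in general non-compact, e.g.\ $\Xn$ itself, so your compactness claim only holds when $H_1=A$). But the heart of the theorem, your step (iii), is exactly what remains unproved. The perturbation argument you describe yields only $\kappa(\Lambda)\ge(1-\vre)^n\kappa(\Lambda_1)$, as you note, and the proposed upgrade (``push the box to grow along the $A$-orbit because the orbit closure is genuinely larger'') cannot work as stated, because it never uses the finiteness of $\mu_1$. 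The paper's Example~\ref{ex zn2} ($A\Z^n\subset G\Z^n$, both generic values equal to $1$) shows that a strictly larger closed $A$-invariant homogeneous space does not force a strictly larger generic value; the same example defeats your alternative ergodic suggestion (generic values equal $\Rightarrow$ $\mu_2$ supported on $\supp(\mu_1)$). Any correct argument must exploit that finiteness of $\mu_1$ forces the associated algebra of the smaller orbit to be a field (Corollary~\ref{cor: bijective}), which makes the strict containment of associated algebras automatically \emph{non-essential}; this is precisely where the hypothesis enters, and your plan never isolates it.

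The mechanism the paper uses, absent from your plan, is the following. Take $\Lambda=\Lambda_{\max}$ realizing $\kappa_{\mu_1}$ with the cube $\mathcal{C}$ of volume $2^n\kappa_{\mu_1}$ admissible, and suppose for contradiction $\kappa_{\mu_1}=\kappa_{\mu_2}$. Step 1: every face of $\mathcal{C}$ carries ``locking points'' of $\Lambda$. Step 2: for every block $Q$ of $\PPP_2$ and $i_0\in Q$, the origin lies in the convex hull of the projections $\pi_{Q\smallsetminus\set{i_0}}$ of the locking points on the $i_0$-th face; otherwise a unipotent shear $u_t\in H_2$ (available because $Q$ is a block of $\PPP_2$) frees that face and permits a strictly larger admissible box, contradicting maximality of $\kappa$ on $H_2\Lambda$. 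Step 3: the Kernel Lemma~\ref{rl}, which rests on the explicit arithmetic construction of intermediate lattices (Proposition~\ref{pilc}) and automorphisms of $\C$ acting on the associated algebra, transfers the resulting affine relation from one block of $\PPP_1$ to another block lying in the same block of $\wt{\PPP}_1$ --- such a pair of blocks exists precisely because the containment of algebras is non-essential (Lemma~\ref{nei}) --- and forces the cube's half-width $\delta$ to vanish, a contradiction. This Galois-theoretic input is the quantitative ingredient you correctly anticipated would be needed but did not supply; without it the argument does not close.
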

In \S\ref{sec: the main theorem} we show by examples that the hypothesis that $\mu_1$
is finite in Theorem \ref{u thm 2} is essential. 
Nevertheless,  we will establish
Theorem~\ref{main theorem} which extends Theorem~\ref{u thm 2} to
the case of $A$-invariant homogeneous measures which are not
necessarily finite, under a suitable additional assumption. 

In order to prove Theorems~\ref{u thm 2} and \ref{main theorem} we
will study homogeneous $A$-invariant 
measures. 
As will be shown in Proposition \ref{prop: why block groups}, the
groups $H$ that give rise to homogeneous $A$-invariant measures are
\textit{block groups} obtained by 
choosing a partition $\PPP=\sqcup_1^r Q_\ell$ of $\set{1\dots n}$ and
defining
\begin{equation}\label{bgps}
H(\PPP)=\set{(g_{ij})\in G: g_{ij}\ne0\Rightarrow i,j\in Q_\ell\textrm{ for some }\ell}^\circ
\end{equation}
(where $L^\circ$ is the connected component of the identity in the
group $L$). 
In~\S\ref{subsection: types} we study orbits of block groups in
detail.
We attach to each orbit $H\Lam$ of a block group an algebraic
invariant we refer to as the \textit{associated algebra} 
which is a finite dimensional $\Q$-algebra.  Simple algebraic
properties of the associated algebra allow us to determine whether the
orbit is closed or of finite volume  (see
Theorem~\ref{n.c.c}). Whenever we have a containment $H_1\Lam\subset
H_2\Lam$ of orbits as above, we have a reverse inclusion of the
associated algebras and the condition which allows us to generalize
Theorem~\ref{u thm 2} is a simple algebraic property of the inclusion
of the associated algebras. 

\begin{dfn}\label{i.l.d.1}
A lattice $\Lambda\in\Xn$ is said to be {\em intermediate} (resp.\
\textit{intermediate of  finite volume type}) if it belongs to an
$A$-invariant homogeneous space
(resp.\ of finite volume)  $H\Lambda$   which is strictly contained in $\Xn$. 
\end{dfn}
The case $H=A$ will be of particular interest.
\begin{dfn}\label{i.l.d.2}
A lattice $\Lambda$ for which $A\Lambda$ is closed will be referred to
as an \textit{algebra lattice}. If furthermore the orbit is of finite
volume (equivalently, if it is compact)  then  the lattice is said to
be a {\em number 
field lattice}.
\end{dfn}
Corollary~\ref{cor: compact A orbits} will justify our choice of
terminology. 
It shows that  a lattice $\Lam$ is an
algebra  (resp.\ number field) lattice  
if and only if the associated algebra is $n$-dimensional (resp. is an $n$-dimensional field) over the rationals.

Combining  Theorems \ref{u thm1} and \ref{u thm 2} we obtain the following 
\begin{cor}\label{u cor1}
If $\Lambda\in\Xn$ be an intermediate lattice of finite volume type,
then $\kappa(\Lambda)<1$. 
\end{cor}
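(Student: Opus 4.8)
The plan is to derive Corollary~\ref{u cor1} directly from Theorems~\ref{u thm1} and~\ref{u thm 2} by a short comparison argument. Let $\Lambda$ be an intermediate lattice of finite volume type, so by Definition~\ref{i.l.d.1} there is a block group $H \subsetneq G$ with $H\Lambda$ closed, $A$-invariant, and of finite volume; let $\mu_1 \df \mu_{H\Lambda}$ be the associated homogeneous measure, which is finite by hypothesis. Take $\mu_2 \df \mu_{\Xn}$, the $G$-invariant probability measure, which is of course a homogeneous $A$-invariant measure with $\supp(\mu_2) = \Xn$. Since $H\Lambda \subsetneq \Xn$, we have $\supp(\mu_1) = H\Lambda \varsubsetneq \Xn = \supp(\mu_2)$, so the hypotheses of Theorem~\ref{u thm 2} are satisfied and we conclude $\kappa_{\mu_1} < \kappa_{\mu_2}$.

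Next I would identify the two generic values. By Corollary~\ref{thm: full measure}, $\kappa_{\mu_2} = \kappa_{\mu_{\Xn}} = 1$. Hence $\kappa_{\mu_1} < 1$. Finally, the point $\Lambda$ need not itself be a generic point for $\mu_1$, so one cannot immediately say $\kappa(\Lambda) = \kappa_{\mu_1}$; instead apply Theorem~\ref{u thm1} with $\mu = \mu_1$. That theorem asserts that for $\mu_1$-almost every $\Lambda'$ one has $\kappa(\Lambda') = \max\{\kappa(\Lambda'') : \Lambda'' \in \supp(\mu_1)\}$, and this maximal value is exactly the generic value $\kappa_{\mu_1}$. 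In particular $\kappa_{\mu_1} = \max_{\Lambda'' \in \supp(\mu_1)} \kappa(\Lambda'')$, and since $\Lambda \in H\Lambda = \supp(\mu_1)$, we get $\kappa(\Lambda) \le \kappa_{\mu_1} < 1$, which is the desired conclusion.

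There is essentially no obstacle here beyond bookkeeping: the corollary is a formal consequence of the already-established theorems. The one point that deserves a sentence of care is the passage from "$\Lambda$ lies in $\supp(\mu_1)$" to "$\kappa(\Lambda)$ is bounded by the generic value" — this uses that the common almost-sure value in Theorem~\ref{u thm1} is realized as a maximum over the whole support, so it dominates $\kappa$ at \emph{every} point of the support, not just almost every point. Everything else is just matching definitions: that $H\Lambda$ being of finite volume type makes $\mu_{H\Lambda}$ finite, and that strict containment of orbits gives strict containment of supports.
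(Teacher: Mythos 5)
Your argument is correct and is exactly the combination the paper intends: apply Theorem~\ref{u thm 2} with $\mu_1=\mu_{H\Lambda}$ (finite, by the finite volume type hypothesis) and $\mu_2=\mu_{\Xn}$ to get $\kappa_{\mu_1}<\kappa_{\mu_2}=1$, then use Theorem~\ref{u thm1} to identify $\kappa_{\mu_1}$ with the maximum of $\kappa$ over $\supp(\mu_1)=H\Lambda$, which dominates $\kappa(\Lambda)$. Your remark that the maximum bounds $\kappa$ at every point of the support, not just almost every point, is precisely the bookkeeping step the paper leaves implicit.
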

Corollary \ref{u cor1} is probably not new for number field lattices
 (see e.g. \cite{R1}) but we could not locate a suitable reference.
 \subsection{Isolation results}
  The results below concern isolation properties that follow  from 
  Theorem~\ref{u thm 2} and a rigidity result for the $A$-action in
  dimension $n\ge 3$ (Theorem~\ref{thm: dynamical isolation}). 
\begin{dfn}\name{def: isolated}
Let $\Lambda$ be a lattice and let $\vre_0>0$. We say that $\Lambda$ is {\em $\vre_0$-isolated} if 
for any $0< \vre <\vre_0$ there is a neighborhood $\mathcal{U}$ of $\Lambda$ in $\Xn$, such that for any $\Lambda' \in \mathcal{U} \sm A\Lambda$,  $\kappa(\Lambda') > \kappa(\Lambda)+\vre$. 

We say that $\Lambda$ is {\em locally isolated} if it is
$\vre_0$-isolated for some $\vre_0>0$, and that $\Lambda$ is {\em
  strongly isolated} if it is $\vre_0$-isolated for $\vre_0 =
1-\kappa(\Lambda)$.  
 \end{dfn}
   
\begin{thm}\name{thm: isolation nf} 
Let $n \geq 3$, and let $\Lambda$ be a number field lattice, 
associated with the degree $n$
number field $F$. Then $\Lambda$ is locally
isolated. Moreover $\Lambda$ is strongly isolated if and only if there
are no intermediate fields $\Q \varsubsetneq K \varsubsetneq F$.

\end{thm}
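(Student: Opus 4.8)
The plan is to combine the strict monotonicity of generic values (Theorem~\ref{u thm 2}), the rigidity of the $A$-action in dimension $n\ge3$ (Theorem~\ref{thm: dynamical isolation}), and the associated-algebra dictionary between $A$-invariant homogeneous spaces above the compact orbit $A\Lam$ and subfields of $F$. I will repeatedly use that $\kappa$ is lower semicontinuous on $\Xn$ — a slight shrinking of an admissible symmetric box stays admissible for nearby lattices, whose nonzero vectors cannot accumulate at $0$ — which together with the $A$-invariance of $\kappa$ forces $\kappa(\Lambda')\ge\kappa(\Lambda'')$ for every $\Lambda''\in\overline{A\Lambda'}$, and hence, by Theorem~\ref{u thm1}, $\kappa(\Lambda')\ge\kappa_{\mu_{Y'}}$ whenever $\overline{A\Lambda'}$ is a homogeneous space $Y'$.

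First I would assemble the structure theory. Since $\Lambda$ is a number field lattice, $A\Lam$ is compact, $\mu_{A\Lam}$ is finite, $\kappa$ takes the constant value $\kappa(\Lambda)=\kappa_{\mu_{A\Lam}}$ on $A\Lam$, and the associated algebra of $A\Lam$ is $F$ (Corollary~\ref{cor: compact A orbits}). By Proposition~\ref{prop: why block groups} every $A$-invariant homogeneous space is an orbit of a block group $H(\PPP)$, and since $A\subseteq H(\PPP)$ for every partition $\PPP$ of $\{1,\dots,n\}$, the family $\mathcal Y$ of $A$-invariant homogeneous spaces $Y$ with $A\Lam\varsubsetneq Y\subseteq\Xn$ is finite and nonempty. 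By the reverse inclusion of associated algebras under inclusion of orbits, each $Y\in\mathcal Y$ has associated algebra a $\Q$-subalgebra of $F$ properly contained in $F$, hence a subfield $K$ with $\Q\subseteq K\varsubsetneq F$; one has $Y=\Xn$ precisely when $K=\Q$, and since $K$ is a field, Theorem~\ref{n.c.c} shows $Y$ is of finite volume. Conversely (see \S\ref{subsection: types}) every intermediate field $\Q\varsubsetneq K\varsubsetneq F$ is realized by some $Y$ with $A\Lam\varsubsetneq Y\varsubsetneq\Xn$. Now set $c\df\min_{Y\in\mathcal Y}\kappa_{\mu_Y}$. For each $Y\in\mathcal Y$, Theorem~\ref{u thm 2} applied to $\mu_{A\Lam}$ and $\mu_Y$ (valid: $\mu_{A\Lam}$ is finite and $\supp\mu_{A\Lam}=A\Lam\varsubsetneq Y=\supp\mu_Y$) gives $\kappa(\Lambda)=\kappa_{\mu_{A\Lam}}<\kappa_{\mu_Y}$, so $c>\kappa(\Lambda)$; and for $Y\ne\Xn$, applying it to $\mu_Y$ and $\mu_{\Xn}$ (now $\mu_Y$ finite, $\supp\mu_Y\varsubsetneq\Xn$) gives $\kappa_{\mu_Y}<\kappa_{\mu_{\Xn}}=1$. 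In view of the dictionary, $c=1$ if and only if $\mathcal Y=\{\Xn\}$, if and only if $F$ has no intermediate subfield.

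Next I would show $\Lambda$ is $(c-\kappa(\Lambda))$-isolated. By Theorem~\ref{thm: dynamical isolation} there is a neighborhood $\mathcal U$ of $\Lambda$ such that for every $\Lambda'\in\mathcal U\sm A\Lam$ the orbit closure $Y'\df\overline{A\Lambda'}$ is an $A$-invariant homogeneous space containing $A\Lam$; since $\Lambda'\in Y'\sm A\Lam$ we get $Y'\in\mathcal Y$, and the observation of the first paragraph gives $\kappa(\Lambda')\ge\kappa_{\mu_{Y'}}\ge c$. Thus for every $0<\vre<c-\kappa(\Lambda)$, every $\Lambda'\in\mathcal U\sm A\Lam$ satisfies $\kappa(\Lambda')\ge c>\kappa(\Lambda)+\vre$. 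As $c>\kappa(\Lambda)$ this shows $\Lambda$ is locally isolated, and if $F$ has no intermediate subfield then $c=1$, so $\Lambda$ is $(1-\kappa(\Lambda))$-isolated, i.e.\ strongly isolated.

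Finally I would prove the converse. Suppose $\Q\varsubsetneq K\varsubsetneq F$ is an intermediate field, and let $Y\in\mathcal Y$ realize it, with $A\Lam\varsubsetneq Y\varsubsetneq\Xn$; then $Y$ is of finite volume and $\kappa_{\mu_Y}<1$ by the above. Since $A\Lam$ is a compact $A$-orbit strictly contained in the connected homogeneous space $Y$, it has positive codimension there (were it full-dimensional it would be open in $Y$, and being also closed it would equal $Y$), so $Y\sm A\Lam$ is dense in $Y$; pick $\Lambda_k\in Y\sm A\Lam$ with $\Lambda_k\to\Lambda$. As $\Lambda_k\in\supp\mu_Y$, Theorem~\ref{u thm1} gives $\kappa(\Lambda_k)\le\kappa_{\mu_Y}$. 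Choosing $\vre$ with $\kappa_{\mu_Y}-\kappa(\Lambda)<\vre<1-\kappa(\Lambda)$ — possible since $\kappa(\Lambda)<\kappa_{\mu_Y}<1$ — we see that every neighborhood of $\Lambda$ contains points $\Lambda_k\notin A\Lam$ with $\kappa(\Lambda_k)\le\kappa_{\mu_Y}<\kappa(\Lambda)+\vre$; since $\vre<1-\kappa(\Lambda)$, this means $\Lambda$ is not strongly isolated. Together with the previous paragraph, this yields the asserted equivalence. I expect the main obstacle to be the bookkeeping of the second paragraph: using Theorem~\ref{n.c.c} and the analysis of block-group orbits to pin down the correspondence between homogeneous spaces above $A\Lam$ and subfields of $F$, the finiteness of their volumes below $\Xn$, and the fact that the orbit closures supplied by Theorem~\ref{thm: dynamical isolation} all lie in the finite family $\mathcal Y$; the rigidity statement is the deep ingredient, used as a black box, while the lower semicontinuity of $\kappa$ is elementary.
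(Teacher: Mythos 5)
Your overall architecture (a minimum $c$ of generic values over the proper subfields of $F$, strict inequalities from Theorem~\ref{u thm 2}, rigidity as the deep input, and the converse via a finite-volume intermediate homogeneous space) is the same as the paper's, and your first and last paragraphs are essentially correct; in particular your converse argument, using that every lattice in $Y=\supp\mu_Y$ satisfies $\kappa\le\kappa_{\mu_Y}<1$ and that $Y\sm A\Lam$ accumulates on $\Lambda$, is a valid variant of the paper's (which instead uses the dense set of generic lattices in $H'\Lambda$). The gap is in your second paragraph: you claim that Theorem~\ref{thm: dynamical isolation} supplies a neighborhood $\mathcal{U}$ of $\Lambda$ such that for \emph{every} $\Lambda'\in\mathcal{U}\sm A\Lam$ the orbit closure $\overline{A\Lambda'}$ is an $A$-invariant homogeneous space containing $A\Lam$. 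The theorem asserts nothing of the kind: it is a statement about a sequence $\Lambda_k\to\Lambda$ lying outside $A\Lam$, and it describes the set of accumulation points of the form $\lim a_k\Lambda_k$ taken \emph{along the sequence}, not the closure of the $A$-orbit of any single nearby lattice. Moreover the statement you need is false: for $n\ge 3$ individual $A$-orbit closures need not be homogeneous, and, more concretely, every neighborhood of $\Lambda$ contains number field lattices $\Lambda'$ attached to fields other than $F$ (compact $A$-orbits are dense; cf.\ Proposition~\ref{prop: density} and Prasad--Raghunathan), for which $\overline{A\Lambda'}=A\Lambda'$ is a homogeneous space that does \emph{not} contain $A\Lam$, hence is not in your family $\mathcal{Y}$, and your bound $\kappa(\Lambda')\ge\kappa_{\mu_{Y'}}\ge c$ gives nothing. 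So no such $\mathcal{U}$ exists, and the local/strong isolation half of your proof collapses as written.

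The repair is the paper's sequential argument, for which you already have all the pieces: it suffices to show that every sequence $\Lambda_k\to\Lambda$ with $\Lambda_k\notin A\Lam$ satisfies $\liminf_k\kappa(\Lambda_k)\ge c$. Pass to a subsequence along which $\kappa(\Lambda_k)$ converges, apply Theorem~\ref{thm: dynamical isolation} (with $H=A$) to this sequence to obtain a proper subfield $F_1\varsubsetneq F$ and the finite-volume space $H'\Lambda$ equal to the set of accumulation points of the $a_k\Lambda_k$; choose, via Theorem~\ref{thm: ergodic+semicontinuous}, a lattice $\Lambda_{\max}\in H'\Lambda$ with $\kappa(\Lambda_{\max})=\kappa_{\mu_{F_1}}\ge c$ and realize it as $\Lambda_{\max}=\lim a_{k_j}\Lambda_{k_j}$; then lower semicontinuity and $A$-invariance of $\kappa$ (Proposition~\ref{prop: standard}) give $c\le\kappa(\Lambda_{\max})\le\lim_j\kappa(a_{k_j}\Lambda_{k_j})=\lim_k\kappa(\Lambda_k)$. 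With this substitution your definition of $c$, the identification $c=1$ exactly when $F$ has no intermediate subfield, and your converse paragraph complete the proof along the paper's lines.
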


Theorem \ref{thm: isolation nf} 
extends results of Ramharter \cite{R1}, who shows local isolation
under an additional assumption, but does not require $n \geq 3$. Our
methods crucially rely on the hypothesis $n \geq 3$. An immediate
consequence is: 

\begin{cor}\name{cor: prime nf}
If $n \geq 3$ is prime, then any 
number field 
lattice  in $\R^n$ is
strongly isolated.  
\end{cor}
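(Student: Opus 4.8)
The plan is to derive this as an immediate consequence of Theorem~\ref{thm: isolation nf}, so essentially all the work has already been done and what remains is a one-line field-theoretic observation. Fix a prime $n \geq 3$ and let $\Lambda \in \Xn$ be a number field lattice. By Definition~\ref{i.l.d.2} (together with the characterization recalled just afterwards, which identifies number field lattices as those whose associated algebra is an $n$-dimensional field over $\Q$), the lattice $\Lambda$ is associated with a number field $F$ with $[F:\Q] = n$. Theorem~\ref{thm: isolation nf} applies since $n \geq 3$, and it asserts that $\Lambda$ is strongly isolated precisely when there is no intermediate field $\Q \varsubsetneq K \varsubsetneq F$. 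Hence the corollary reduces to checking that $F$ admits no such intermediate field.

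For this, suppose $K$ is a field with $\Q \subseteq K \subseteq F$. By multiplicativity of degrees in a tower of extensions, $n = [F:\Q] = [F:K]\cdot[K:\Q]$, so $[K:\Q]$ is a positive divisor of $n$. Since $n$ is prime, $[K:\Q] \in \{1, n\}$, which forces $K = \Q$ or $K = F$. Thus there is no intermediate field strictly between $\Q$ and $F$, and Theorem~\ref{thm: isolation nf} yields that $\Lambda$ is strongly isolated, as claimed.

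There is no real obstacle in this argument: the only ingredient beyond the already-established Theorem~\ref{thm: isolation nf} is the tower law, and the dichotomy $[K:\Q]\in\{1,n\}$ is forced by primality. The substantive content — local isolation for every degree-$n$ number field lattice when $n \geq 3$, and the sharp dependence of strong isolation on the absence of proper subfields — is entirely contained in Theorem~\ref{thm: isolation nf}, which in turn rests on Theorem~\ref{u thm 2} and the dynamical rigidity input; the present corollary merely packages the case where the subfield lattice of $F$ is forced to be trivial.
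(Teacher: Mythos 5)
Your argument is correct and coincides with the paper's own (implicit) reasoning: the corollary is stated there as an immediate consequence of Theorem~\ref{thm: isolation nf}, with the absence of intermediate fields forced by primality of $n$ via the tower law, exactly as you wrote.
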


We remark that when $n$ is prime, an intermediate lattice of finite
volume type is automatically a number field lattice. 
Extending another result of \cite{R1} we show:
\begin{cor}\name{cor: extends}
For any $n \geq 3$, the set of strongly isolated lattices is dense in $\Xn$. 
\end{cor}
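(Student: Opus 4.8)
The plan is to exhibit, densely in $\Xn$, lattices that are already known from the results above to be strongly isolated. By Theorem~\ref{thm: isolation nf}, if $\Lam$ is a number field lattice attached to a degree $n$ field $F$ admitting no field $K$ with $\Q\varsubsetneq K\varsubsetneq F$, then $\Lam$ is strongly isolated; when $n$ is prime, by Corollary~\ref{cor: prime nf} \emph{every} number field lattice is strongly isolated. So it suffices to prove: the set of number field lattices attached to degree $n$ fields with no proper intermediate subfield is dense in $\Xn$. The arithmetic input needed is that for every $n\ge 2$ there is a totally real field $F$ of degree $n$ whose Galois closure $\widetilde F$ satisfies $\Gal(\widetilde F/\Q)\cong S_n$; such fields exist by standard constructions (take $F=\Q(\alpha)$ for $\alpha$ a root of a suitably chosen monic integer polynomial of degree $n$ with $n$ distinct real roots and Galois group $S_n$). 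Since the point stabiliser $S_{n-1}=\Gal(\widetilde F/F)$ is a maximal subgroup of $S_n$, the Galois correspondence shows $F$ has no proper intermediate subfield, and by Corollary~\ref{cor: compact A orbits} a number field lattice attached to such an $F$ exists and is therefore strongly isolated.

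For the density statement I would use the classical fact that periodic orbits of the full diagonal group $A$ are dense in $\Xn$. Concretely, such orbits are carried by the lattices $g\Z^n$ (rescaled to be unimodular) obtained from hyperbolic $\gamma\in\SL_n(\Z)$ — all eigenvalues real and distinct — by diagonalising $\gamma=g\,\diag(\lambda_1,\dots,\lambda_n)\,g^{-1}$; the $A$-orbit of $g\Z^n$ is compact because the group of totally positive units of $\Z[\gamma]$, which has rank $n-1$ by Dirichlet's unit theorem, embeds as a cocompact lattice in $A$ after conjugation by $g$. The content is that the set of frames $g$ obtained this way is dense in $\Xn$. Within this family one then arranges that the characteristic polynomial of $\gamma$ is irreducible over $\Q$ with Galois group $S_n$: both conditions cut out a dense (indeed generic) subset of the relevant space of candidate integer matrices, so they can be imposed without destroying density. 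For such $\gamma$, $F=\Q(\gamma)$ is a totally real degree $n$ field with no proper intermediate subfield, and by Corollary~\ref{cor: compact A orbits} the lattice $g\Z^n$ is a number field lattice attached to $F$, hence strongly isolated. This yields a dense set of strongly isolated lattices.

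The main obstacle is the last step: reconciling the density of periodic $A$-orbits with the arithmetic constraint forcing $F$ to have no proper subfield. The bare density of periodic torus orbits is standard (in dimension $2$ it is the classical density of closed geodesics on the modular surface), but to land inside the strongly isolated locus one must control simultaneously the eigenframe of $\gamma$, to get equidistribution-type density in $\Xn$, and the splitting field of its characteristic polynomial, to eliminate intermediate subfields; the key observation making this possible is that the latter is a generic condition, so a perturbation argument applied to the construction of periodic orbits suffices. When $n$ is prime this difficulty disappears, since Corollary~\ref{cor: prime nf} makes every number field lattice strongly isolated, and the corollary then follows directly from the density of number field lattices of arbitrary type.
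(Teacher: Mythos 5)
Your first step (existence of a totally real degree $n$ field $F$ whose Galois closure has group $S_n$, hence no intermediate subfields, plus Theorem \ref{thm: isolation nf}) matches the paper. The gap is in the density step. You propose to realize dense strong isolation by taking the eigenframe lattices $g\Z^n$ of hyperbolic $\gamma\in\SL_n(\Z)$ and then ``imposing'' that the characteristic polynomial of $\gamma$ be irreducible with Galois group $S_n$, on the grounds that this is a generic condition amenable to a perturbation argument. That is not a proof: integer matrices cannot be perturbed continuously, so ``generic among integer polynomials/matrices'' (Hilbert irreducibility, van der Waerden) says nothing about which $\gamma$ arise with eigenframe landing in a \emph{prescribed} open set $\mathcal{U}\subset\Xn$. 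The known density of compact $A$-orbits (Prasad--Raghunathan, which the paper does invoke elsewhere) comes with no control whatsoever on the Galois group of the associated field, and producing, inside every $\mathcal{U}$, a compact orbit whose field has no proper subfield is exactly the content you would need and do not establish. You acknowledge this as ``the main obstacle'' but then assert it away; as written the argument only works for $n$ prime, where Corollary \ref{cor: prime nf} removes the Galois constraint.

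The paper avoids this difficulty by a much softer mechanism, Proposition \ref{prop: density} applied with $H_1=A$, $H_2=G$: fix \emph{one} lattice $\Lambda_0$ built from $F$ via \equ{eq: vectors first type}, and translate it by elements $q\in G$ rational with respect to the $\Q$-structure induced by $\Lambda_0$. Such $q$ are dense in $G$; each $q$ commensurates the stabilizer, so $Aq\Lambda_0$ is still compact, and $V_{q\Lambda_0}=V_{\Lambda_0}$, so the associated algebra (hence the field $F$) is unchanged. Thus number field lattices with associated field $F$ are already dense in $\Xn$, and every one of them is strongly isolated by Theorem \ref{thm: isolation nf}. If you want to salvage your route, replace the perturbation claim by this rational-translate argument (or prove a genuinely new statement: density of compact $A$-orbits with prescribed, or at least subfield-free, associated field), since otherwise the crucial step is missing.
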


 The fundamental difference between the cases $n=2$ and $n \geq 3$ is highlighted in the following:

\begin{thm}\name{thm: n=2}
In dimension $n=2$ there are no strongly isolated lattices. 
\end{thm}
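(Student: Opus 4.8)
The plan is to show that in $\Xn$ with $n=2$, every lattice $\Lambda$ fails to be $\vre_0$-isolated for $\vre_0 = 1 - \kappa(\Lambda)$; concretely, I want to produce, arbitrarily close to $\Lambda$, lattices $\Lambda'$ which are not in $A\Lambda$ and for which $\kappa(\Lambda')$ is not much larger than $\kappa(\Lambda)$ — in fact I will arrange $\kappa(\Lambda')$ to be arbitrarily close to $\kappa(\Lambda)$, which already defeats $\vre_0$-isolation for every $\vre_0 > 0$ unless $\kappa(\Lambda) = 1$, and the case $\kappa(\Lambda)=1$ will be handled separately. The key structural input is that in dimension $2$ the diagonal group $A$ is one-dimensional, so $A$-orbit closures are severely constrained, and the function $\kappa$ restricted to a single $A$-orbit is constant by the $A$-invariance already noted in the introduction. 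The second input is a continuity/semicontinuity property of $\kappa$: for a symmetric box $\BB$ which is admissible for $\Lambda$, a small perturbation $\Lambda'$ of $\Lambda$ either keeps $\BB$ admissible or admits a slightly shrunken admissible box, so $\liminf_{\Lambda' \to \Lambda} \kappa(\Lambda') \ge \kappa(\Lambda)$; I will need a matching upper estimate along a well-chosen sequence.

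First I would treat the generic case $\kappa(\Lambda) < 1$. Here the idea is: since $\kappa(\Lambda) < 1$, the lattice $\Lambda$ is not in the $A$-orbit of $\Z^2$, and more importantly one can find lattices $\Lambda'$ arbitrarily near $\Lambda$ lying on $A$-orbits distinct from $A\Lambda$ whose $\kappa$-value is close to $\kappa(\Lambda)$. To get the upper bound on $\kappa(\Lambda')$ I would use that $\kappa$ is \emph{upper semicontinuous} along suitable approaches: if $\Lambda_k \to \Lambda$ and $\BB_k$ is an admissible box for $\Lambda_k$ with $\Vol(\BB_k)/4 \to \limsup \kappa(\Lambda_k)$, then after passing to a subsequence $\BB_k \to \BB$, and $\BB$ is admissible for $\Lambda$ (a nonzero point of $\Lambda$ in the interior of $\BB$ would, by $\Lambda_k \to \Lambda$, give a nonzero point of $\Lambda_k$ in the interior of $\BB_k$ for large $k$), so $\limsup_k \kappa(\Lambda_k) \le \kappa(\Lambda)$. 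Combined with the lower semicontinuity remark above, this shows $\kappa$ is \emph{continuous at $\Lambda$} whenever $\kappa(\Lambda) < 1$ — wait, more carefully, upper semicontinuity needs the boxes $\BB_k$ to stay bounded, which follows since $\Vol(\BB_k)$ is bounded and each $\BB_k$ contains no nonzero lattice point of a lattice converging to $\Lambda$, forcing the shortest side to be bounded below and hence, with bounded volume, the longest side bounded above. Granting this, $\kappa$ is continuous at every $\Lambda$ with $\kappa(\Lambda)<1$. Then $\vre_0$-isolation for $\vre_0 = 1 - \kappa(\Lambda) > 0$ would force $\kappa \ge \kappa(\Lambda) + \vre$ on a punctured neighborhood minus $A\Lambda$ for every small $\vre$; but continuity gives $\Lambda'$ near $\Lambda$ with $\kappa(\Lambda')$ within $\vre/2$ of $\kappa(\Lambda)$, and such $\Lambda'$ can be chosen off $A\Lambda$ because $A\Lambda$ is a $1$-dimensional submanifold of the $3$-dimensional $\Xn$ (so its complement is dense). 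This contradiction disposes of the case $\kappa(\Lambda)<1$.

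It remains to handle $\kappa(\Lambda) = 1$; here $\vre_0 = 1 - \kappa(\Lambda) = 0$, and I must be careful about the definition: $\Lambda$ is strongly isolated iff it is $0$-isolated, which vacuously requires nothing for $0 < \vre < 0$ — so literally every lattice with $\kappa(\Lambda)=1$ is "$0$-isolated" in a trivial sense. Thus the real content of the theorem is that \emph{no} lattice is strongly isolated, and for $\kappa(\Lambda)=1$ this must be read as: there is no $\vre_0 > 0$ with $\vre_0 \le 1 - \kappa(\Lambda) = 0$, which is automatic, so strong isolation is \emph{impossible} to even state nontrivially — meaning the theorem's force is entirely in the $\kappa(\Lambda) < 1$ case together with the observation that $\{\Lambda : \kappa(\Lambda) = 1\}$ being strongly isolated would require $\vre_0 = 0$, an empty condition, hence these lattices are not "strongly isolated" in the substantive sense. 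I expect the authors intend: a strongly isolated lattice must have $\kappa(\Lambda) < 1$ (else $\vre_0 = 0$ and the notion is vacuous/excluded), and then the continuity argument above shows no such lattice exists in dimension $2$. \textbf{The main obstacle} I anticipate is establishing the upper semicontinuity of $\kappa$ at points where $\kappa < 1$ with full rigor — specifically controlling the geometry of near-admissible boxes under perturbation and ruling out that $\kappa$ jumps up on the nowhere-dense set $A\Lambda$ in a way that the definition's exclusion of $A\Lambda$ exploits; this is where the low dimensionality ($\dim A = 1$, so $A\Lambda$ is "thin") is used decisively, in contrast to $n \ge 3$ where larger intermediate orbits create genuine isolation.
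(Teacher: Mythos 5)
Your argument hinges on the claim that $\kappa$ is continuous at every lattice with $\kappa(\Lambda)<1$, deduced from lower semicontinuity together with a proposed upper semicontinuity along sequences $\Lambda_k\to\Lambda$. This claim is false, and the paper itself shows why: by Corollary \ref{thm: full measure}, $\kappa=1$ on a set of full Haar measure in $\mathcal{L}_2$, hence on a dense set. So for \emph{every} $\Lambda$ with $\kappa(\Lambda)<1$ there are $\Lambda_k\to\Lambda$ with $\kappa(\Lambda_k)=1$, and $\kappa$ is discontinuous at every such point. The precise step in your argument that breaks is the boundedness of the near-optimal admissible boxes $\BB_k$: admissibility for $\Lambda_k$ together with $\Vol(\BB_k)$ bounded below does \emph{not} force the short side of $\BB_k$ to be bounded below. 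For the dense lattices with $\kappa(\Lambda_k)=1$ approaching $\Lambda$, the near-optimal boxes necessarily become long and thin (otherwise a limit box would contradict $\kappa(\Lambda)<1$), degenerating to slabs; the vectors of $\Lambda_k$ that would obstruct such slabs escape to infinity and do not converge to vectors of $\Lambda$, so no contradiction with $\Lambda_k\to\Lambda$ arises. Consequently the whole strategy of producing nearby lattices off $A\Lambda$ with $\kappa$ close to $\kappa(\Lambda)$ is unsupported; note also that if your continuity claim were true, the same argument (nowhere does it use $n=2$ except for the dimension count of $A\Lambda$, which is equally a proper submanifold for $n\ge 3$) would contradict Theorem \ref{thm: isolation nf}, which produces strongly isolated lattices with $\kappa<1$ in higher dimensions.

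What is actually needed, and what the paper does, is to exhibit lattices $\Lambda'$ arbitrarily close to $\Lambda$, not on $A\Lambda$, whose $\kappa$-value is merely bounded away from $1$ by some fixed $\kappa_0<1$ (not necessarily close to $\kappa(\Lambda)$). The paper achieves this through Gruber's two-dimensional characterization $\kappa(\Lambda)<1\Longleftrightarrow\lambda(\Lambda)>0$ (bounded $A$-orbit), plus the symbolic coding of the geodesic flow: starting from the bounded continued-fraction expansions of the endpoints attached to $\Lambda$, one concatenates them into a new bi-infinite sequence with digits bounded by $k+1$, producing a bounded $A$-orbit $A\Lambda_0\ne A\Lambda$ whose closure contains $A\Lambda$; boundedness gives $\kappa(\Lambda_0)<1$, and points of $A\Lambda_0$ approaching $\Lambda$ defeat $\vre_0$-isolation for $\vre_0=1-\kappa(\Lambda)$. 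This is genuinely a dimension-two mechanism (Gruber's equivalence fails for general $n$, as remarked after Proposition \ref{prop: Gruber}). Your discussion of the degenerate case $\kappa(\Lambda)=1$ is a reasonable reading of the definition and matches the paper's implicit restriction to $\kappa(\Lambda)<1$, but it does not salvage the main case.
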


Theorem \ref{thm: n=2} relies on work of Gruber \cite{Gruber}. 
In contrast with Theorem \ref{thm: isolation nf}, intermediate 
lattices which are not number field lattices are typically not isolated. 
In \S \ref{sec: completion} we will define a notion of `local relative isolation' and prove: 
\begin{thm}\name{thm: relative isolation}
Let  $\mu$ be an $A$-invariant homgeneous probability measure corresponding to a finite volume orbit $H\Lambda$ with $A\varsubsetneq H\varsubsetneq G$.
Then 
almost any lattice with respect to $\mu$ is not locally isolated but is locally isolated relative to $H$.     
\end{thm}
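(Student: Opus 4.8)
The plan is to combine the dynamical rigidity of the $A$-action near a closed orbit (Theorem~\ref{thm: dynamical isolation}) with the strict monotonicity of generic Mordell values (Theorem~\ref{u thm 2}), together with the elementary lower semicontinuity of $\kappa$. First note that the hypotheses $A \varsubsetneq H \varsubsetneq G$ force $n \geq 3$: by Proposition~\ref{prop: why block groups} we may write $H = H(\PPP)$, and for $H \varsupsetneq A$ the partition $\PPP$ must contain a block of size $\geq 2$, while for $H \varsubsetneq G$ it must contain $\geq 2$ blocks, so $n \geq 3$. Next, $\kappa$ is lower semicontinuous: shrinking an admissible box for $\Lam$ slightly gives a box admissible for all nearby lattices, whence $\liminf_{\Lam' \to \Lam} \kappa(\Lam') \geq \kappa(\Lam)$. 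I record the consequence I will use repeatedly: if $Y = \overline{A\Lam''}$ happens to be a homogeneous space in the sense of the paper, pick $a_j \in A$ with $a_j\Lam'' \to \Lam_Y$ for a $\kappa$-generic $\Lam_Y \in Y$ (such $\Lam_Y$ exist and are dense in $Y$, since by Theorem~\ref{u thm1} they form a $\mu_Y$-conull set); then $A$-invariance of $\kappa$ and lower semicontinuity give $\kappa(\Lam'') = \lim_j \kappa(a_j\Lam'') \geq \kappa(\Lam_Y) = \kappa_{\mu_Y}$, and since also $\kappa(\Lam'') \leq \max_Y \kappa = \kappa_{\mu_Y}$ we obtain $\kappa(\Lam'') = \kappa_{\mu_Y}$. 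In words: $\kappa$ takes the value $\kappa_{\mu_Y}$ at every lattice whose $A$-orbit is dense in $Y$.

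By Theorem~\ref{u thm1} there is a $\mu$-conull set $\Omega \subseteq H\Lam$ on which $\kappa \equiv \kappa_\mu$; I will show that every $\Lam' \in \Omega$ has the two asserted properties. For the failure of ordinary local isolation, observe that $\dim(A\Lam') = \dim A = n - 1 < \dim H = \dim(H\Lam)$ (the relevant stabilizers being discrete), so $A\Lam'$ is $\mu$-null and $\Omega \setminus A\Lam'$ is still $\mu$-conull, hence dense in $\operatorname{supp}(\mu) = H\Lam$. Thus any neighborhood $\mathcal{U}$ of $\Lam'$ in $\Xn$ contains some $\Lam'' \in (\Omega \setminus A\Lam') \cap H\Lam$, and $\kappa(\Lam'') = \kappa_\mu = \kappa(\Lam')$. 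Hence for any $\vre_0 > 0$, choosing $\vre = \vre_0/2$, every neighborhood of $\Lam'$ contains a point $\Lam'' \notin A\Lam'$ for which the inequality $\kappa(\Lam'') > \kappa(\Lam') + \vre$ fails, so $\Lam'$ is not $\vre_0$-isolated; as $\vre_0$ was arbitrary, $\Lam'$ is not locally isolated.

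For local isolation relative to $H$ — meaning, as will be made precise in \S\ref{sec: completion}, that the isolation inequality is demanded only for nearby $\Lam''$ lying outside the orbit $H\Lam$ — I will apply Theorem~\ref{thm: dynamical isolation} in the following form: since $n \geq 3$ and $H\Lam$ is a finite-volume closed $A$-invariant block-group orbit with $H\Lam \varsubsetneq \Xn$, there is a neighborhood $\mathcal{U}_0$ of $H\Lam$ in $\Xn$ such that for every $\Lam'' \in \mathcal{U}_0$ the orbit closure $\overline{A\Lam''}$ is a homogeneous space, and when $\Lam'' \notin H\Lam$ this homogeneous space strictly contains $H\Lam$; moreover only finitely many homogeneous spaces $Y_1, \dots, Y_m \varsupsetneq H\Lam$ occur this way. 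For each $j$ we have $\operatorname{supp}(\mu) = H\Lam \varsubsetneq Y_j = \operatorname{supp}(\mu_{Y_j})$ and $\mu$ is finite, so Theorem~\ref{u thm 2} yields $\kappa_{\mu_{Y_j}} > \kappa_\mu$. Put $\vre_0 := \min_{1\le j\le m}\bigl(\kappa_{\mu_{Y_j}} - \kappa_\mu\bigr) > 0$. Then any $\Lam'' \in \mathcal{U}_0 \setminus H\Lam$ has $\overline{A\Lam''} = Y_j$ for some $j$, so by the observation above $\kappa(\Lam'') = \kappa_{\mu_{Y_j}} \geq \kappa_\mu + \vre_0 = \kappa(\Lam') + \vre_0$; thus $\mathcal{U}_0$ witnesses that $\Lam'$ is $\vre_0$-isolated relative to $H$, and in particular locally isolated relative to $H$.

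The substance of the argument is concentrated in the appeal to Theorem~\ref{thm: dynamical isolation}; the rest is soft. Two features of it are indispensable: (i) \emph{comparability} — near $H\Lam$, an orbit closure not contained in $H\Lam$ is a homogeneous space that strictly \emph{contains} $H\Lam$ — for without it one would only know $\overline{A\Lam''} = H''\Lam''$ for some block group $H''$ with $H\Lam$ possibly not inside $\operatorname{supp}(\mu_{H''\Lam''})$, so that Theorem~\ref{u thm 2} could not be invoked; and (ii) a \emph{finiteness} statement for the spaces $Y_j$, needed to pass from a pointwise gap to a uniform $\vre_0$. This is also the only point at which the hypothesis $n \geq 3$, through higher-rank rigidity, enters, and there it is essential.
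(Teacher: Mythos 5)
Your part (1) argument (a conull set where $\kappa\equiv\kappa_\mu$ accumulating on $\Lambda'$ from outside the null set $A\Lambda'$) is essentially the paper's, and your overall plan for part (2) — Theorem \ref{thm: dynamical isolation} plus Theorem \ref{u thm 2} plus lower semicontinuity, with a minimum over the finitely many homogeneous spaces strictly containing $H\Lambda$ — is also the intended one. The gap is in the ``form'' in which you invoke Theorem \ref{thm: dynamical isolation}. You assert that there is a neighborhood $\mathcal{U}_0$ of $H\Lam$ such that \emph{every} $\Lam''\in\mathcal{U}_0\sm H\Lam$ has $\overline{A\Lam''}$ equal to a homogeneous space strictly containing $H\Lam$. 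The theorem says nothing of the sort, and this statement is false: by Prasad--Raghunathan \cite{PrasadRaghunathan} (quoted before Proposition \ref{prop: density}), compact $A$-orbits are dense in $\Xn$, so arbitrarily close to $\Lam$ there are number field lattices $\Lam''\notin H\Lam$ with $\overline{A\Lam''}=A\Lam''$ compact, which cannot contain the higher-dimensional set $H\Lam$. Theorem \ref{thm: dynamical isolation} is a statement about a \emph{sequence} $\Lam_k\to\Lam$ with $\Lam_k\notin H\Lam$: after passing to a subsequence, the set of accumulation points of the whole family $\{a_k\Lam_k:\ a_k\in A\}$ (not the individual orbit closures $\overline{A\Lam_k}$) equals a finite-volume homogeneous space $H'\Lam\varsupsetneq H\Lam$. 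Because your per-lattice observation ``$\overline{A\Lam''}=Y$ homogeneous $\Rightarrow \kappa(\Lam'')=\kappa_{\mu_Y}$'' is only ever applied through this false comparability claim, your proof of relative isolation does not go through as written.

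The repair is the sequence argument of the paper (the proof of Theorem \ref{thm: isolation nf} with $H$ in place of $A$). Work with $\Lam'$ in the conull set where, in addition to $\kappa(\Lam')=\kappa_\mu$, one has $\overline{A\Lam'}=H\Lam$ (this hypothesis of Theorem \ref{thm: dynamical isolation} holds $\mu$-a.e.\ by Proposition \ref{prop: ergodic}, and you need to add it to your $\Omega$). Given $\Lam_k\to\Lam'$ with $\Lam_k\notin H\Lam$, pass to a subsequence along which $\kappa(\Lam_k)$ converges and Theorem \ref{thm: dynamical isolation} applies, producing a finite-volume $H'\Lam\varsupsetneq H\Lam$ equal to the set of accumulation points of $\{a_k\Lam_k\}$. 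By Theorem \ref{thm: ergodic+semicontinuous} choose $\Lam_{\max}\in H'\Lam$ with $\kappa(\Lam_{\max})=\kappa_{\mu_{H'\Lam}}$ and realize it as $\lim a_k\Lam_k$; then Proposition \ref{prop: standard} (ii),(iii) gives $\kappa_{\mu_{H'\Lam}}\leq\lim\kappa(\Lam_k)$, while Theorem \ref{u thm 2} gives $\kappa_{\mu_{H'\Lam}}>\kappa_\mu$, and the minimum over the finitely many block groups strictly containing $H$ yields the uniform $\vre_0>0$ you wanted. With this substitution for your ``comparability plus finiteness'' step, the rest of your write-up stands.
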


\subsection{The reduced Mordell-Gruber spectrum}
We denote by $\mathbf{MG}_n$ the {\em Mordell-Gruber spectrum}, which
is the set of numbers $\kappa(\Lambda)$ where $\Lambda$ ranges
over all lattices in dimension $n$. We briefly summarize some of the known facts
about the Mordell-Gruber spectrum. Siegel (see \cite{GL}) showed that
$\kappa_n \df \inf \mathbf{MG}_n>0$. Many things are known about $\mathbf{MG}_2$, see \cite{Gruber}. The values $\kappa_2$ and
$\kappa_3$ are known, the latter by a difficult work of Ramharter
\cite{Ramharter 3}.  In \cite{Ramharter 3} Ramharter also showed that
$\kappa_3$ belongs to $\mathbf{MG}_3$ and is an isolated\footnote{The
  isolation of a number in a subset of $\R$ should not be confused
  with the isolation property of Definition \ref{def: isolated}.}
point. 
Various lower
bounds on $\kappa_n$ have been proved by various authors, and recently
\cite{stable} the authors obtained the lower bound $\kappa_n \geq
n^{-n/2}$.  

We wish to study accumulation points of $\mathbf{MG}_n$. There is a simple trick to generate such points which we now describe. 
As we explain in \S \ref{sec: n=2}, it can be deduced from results of Gruber that $\mathbf{MG}_2$ has many accumulation points. 
We say that a lattice $\Lambda \subset \R^n$ is  {\em decomposable} if
$n=n_1+n_2, \, n_i>0,$ and $\R^n = \R^{n_1} \oplus \R^{n_2}$ is the
direct sum decomposition corresponding to partitioning the coordinates
into subsets of sizes $n_1$ and $n_2$, and we can write $\Lambda =
\Lambda_1 \oplus \Lambda_2$, where $\Lambda_i = \Lambda \cap
\R^{n_i}.$  In this case we clearly have $\kappa(\Lambda) =
\kappa(\Lambda_1)\kappa(\Lambda_2)$. Taking direct sums with
$\Z^{n_2}$ we get embeddings $\mathbf{MG}_{n_1} \hookrightarrow
\mathbf{MG}_n$ for any $n_1 < n$. We are interested in the part of the
spectrum not arising in this way. That is, we define the {\em reduced
  Mordell-Gruber spectrum} to be 
$$
\widehat{\mathbf{MG}}_n \df \{\kappa(\Lambda): \Lambda \subset \R^n \mathrm{\ a \ lattice \ which \ is \ not \ decomposable } \}.
$$
As will be seen in Proposition \ref{prop: never decomposable}, number field lattices are never
decomposable. 
We are interested in the existence of accumulation points of $\widehat{\mathbf{MG}}_n$.   
\begin{thm}\name{thm: non-isolated, intermediate}
Let  $\mu$ be an $A$-invariant homogeneous probability measure
corresponding to a finite volume orbit $H\Lambda$ with $A\varsubsetneq
H$.  Then there is a  
sequence of number field lattices $\Lambda_k$ 
for which $\kappa(\Lambda_k) \nearrow \kappa_\mu.$ In particular, $\kappa_\mu$ is not an isolated point of $\widehat{\mathbf{MG}}_n$.
\end{thm}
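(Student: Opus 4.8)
The plan is to produce, inside the finite-volume orbit $H\Lam$, a sequence of number field lattices whose Mordell constants converge to $\kappa_\mu$ from below. The key structural input is Theorem~\ref{u thm 2}: if $\Lam_k$ is a number field lattice lying in $\supp(\mu) = H\Lam$, then the orbit closure $A\Lam_k$ is a compact $A$-orbit strictly contained in $H\Lam$ (strict since $A \varsubsetneq H$ forces the compact orbit to have smaller dimension than the positive-dimensional, $A$-invariant $H\Lam$), so the associated homogeneous probability measure $\mu_k \df \mu_{A\Lam_k}$ satisfies $\supp(\mu_k) = A\Lam_k \varsubsetneq \supp(\mu)$. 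Since $\mu_k$ is finite, Theorem~\ref{u thm 2} gives $\kappa_{\mu_k} < \kappa_\mu$, and by Theorem~\ref{u thm1} applied to $\mu_k$ (whose support is the single closed orbit $A\Lam_k$ on which $\kappa$ is constant), $\kappa(\Lam_k) = \kappa_{\mu_k} < \kappa_\mu$. Thus \emph{any} number field lattice in $H\Lam$ already has Mordell constant strictly below $\kappa_\mu$; the content of the theorem is to choose such lattices so that $\kappa(\Lam_k) \to \kappa_\mu$.

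To force convergence, I would use Theorem~\ref{u thm1} for $\mu$ itself: there is a $\mu$-generic lattice $\Lam_\infty \in H\Lam$ with $\kappa(\Lam_\infty) = \kappa_\mu$, and by definition of $\kappa$ there is an admissible symmetric box $\BB$ for $\Lam_\infty$ with $\Vol(\BB)/2^n$ arbitrarily close to $\kappa_\mu$. Since admissibility is an open condition on the lattice (an open box containing no nonzero lattice points remains admissible under small perturbations of the lattice, after possibly shrinking the box slightly), any lattice $\Lam'$ sufficiently close to $\Lam_\infty$ satisfies $\kappa(\Lam') > \kappa_\mu - \vre$. It therefore suffices to find number field lattices $\Lam_k \in H\Lam$ with $\Lam_k \to \Lam_\infty$, or more simply number field lattices $\Lam_k \in H\Lam$ with $\kappa(\Lam_k) > \kappa_\mu - 1/k$; combined with the strict upper bound $\kappa(\Lam_k) < \kappa_\mu$ from the previous paragraph, this yields $\kappa(\Lam_k) \nearrow \kappa_\mu$ (after passing to a subsequence to make the sequence monotone, or noting we may choose them so).

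The remaining point — and the main obstacle — is density of number field lattices in the finite-volume orbit $H\Lam$. Here I would invoke the description of $H$ as a block group $H(\PPP)$ together with the associated-algebra machinery (Theorem~\ref{n.c.c} and Corollary~\ref{cor: compact A orbits}): number field lattices in $H\Lam$ correspond to points whose associated algebra is an $n$-dimensional field, i.e.\ to compact $A$-orbits inside $H\Lam$. Since $H\Lam$ carries a finite $H$-invariant measure and $A$ is a maximal $\R$-split torus normalized appropriately inside the reductive part of $H$, compact $A$-orbits are dense in $H\Lam$: this is the standard fact that periodic torus orbits (equivalently, orbits through lattices all of whose successive minima directions are defined over a totally real degree-$n$ field) are dense in any finite-volume homogeneous space — it follows from, e.g., the fact that unipotent or full $H$-orbits equidistribute and that the set of compact $A$-orbits is $H$-invariant dense, or more elementarily from approximating $\Lam_\infty$ by a rational-type configuration and applying a theorem producing a totally real field with prescribed archimedean data. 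One must check that the approximating number field lattices can be taken inside $H\Lam$ and not merely in $\Xn$; this is exactly where the block-group structure and the invariance of the associated algebra under $A$ (so that the compact $A$-orbit lies in the same $H$-orbit-closure stratum) are used. Granting this density, the construction above goes through and the final sentence follows since $\kappa_\mu$ is then a non-isolated point of the set $\{\kappa(\Lam_k)\} \subset \widehat{\mathbf{MG}}_n$ (number field lattices being non-decomposable by Proposition~\ref{prop: never decomposable}).
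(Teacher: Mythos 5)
Your proposal is correct and follows essentially the same route as the paper: approximate a lattice realizing the generic value $\kappa_\mu$ by number field lattices inside $H\Lambda$, get $\kappa(\Lambda_k)<\kappa_\mu$ from Theorem~\ref{u thm 2} applied to the compact-orbit measures, get $\liminf_k\kappa(\Lambda_k)\ge\kappa_\mu$ from lower semicontinuity (Proposition~\ref{prop: standard}), and use Proposition~\ref{prop: never decomposable} for membership in $\widehat{\mathbf{MG}}_n$. The density of number field lattices in $H\Lambda$, which you rightly flag as the main remaining point, is exactly what the paper supplies via Propositions~\ref{pilc} and~\ref{prop: density} (existence of one compact $A$-orbit in $H\Lambda$ via Prasad--Raghunathan applied to the semisimple part, then density via rational translates), so citing that standard fact, as you do, is legitimate and matches the paper's use of it.
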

Taking $\mu$ to be the Haar measure we obtain
\begin{cor}\name{thm: non-isolated}
For any $n$ there is a sequence 
 $(\Lambda_k)$
of number field lattices 
for which $\kappa(\Lambda_k) \nearrow 1.$ In particular 1 is not an isolated point of $\widehat{\mathbf{MG}}_n$. 

\end{cor}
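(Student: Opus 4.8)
The plan is to deduce Corollary~\ref{thm: non-isolated} from Theorem~\ref{thm: non-isolated, intermediate} by choosing the measure appropriately. Take $\mu = \mu_{\Xn}$, the Haar (i.e.\ $G$-invariant) probability measure on $\Xn$. Since $G$ itself is a block group (the one associated with the trivial partition $\PPP = \{1,\dots,n\}$), the orbit $G\Lam = \Xn$ is an $A$-invariant homogeneous space of finite volume, and it contains $A$ properly, so the hypothesis $A \varsubsetneq H$ of Theorem~\ref{thm: non-isolated, intermediate} is satisfied with $H = G$. By Corollary~\ref{thm: full measure} we have $\kappa_\mu = 1$: indeed, $\mu_{\Xn}$-almost every lattice has Mordell constant equal to $1$, so the generic value $\kappa_{\mu_{\Xn}}$ is precisely $1$.

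Applying Theorem~\ref{thm: non-isolated, intermediate} to this $\mu$ then directly produces a sequence of number field lattices $(\Lambda_k)$ in $\R^n$ with $\kappa(\Lambda_k) \nearrow \kappa_\mu = 1$. This gives the first assertion. For the ``in particular'' clause, we must check that this sequence witnesses the non-isolation of $1$ \emph{within} the reduced spectrum $\widehat{\mathbf{MG}}_n$, not merely within $\mathbf{MG}_n$. This is exactly where Proposition~\ref{prop: never decomposable} enters: number field lattices are never decomposable, so each $\kappa(\Lambda_k)$ lies in $\widehat{\mathbf{MG}}_n$. Moreover $1 = \kappa(\Z^n)$ and $\Z^n$ is decomposable, but the value $1$ itself still lies in $\widehat{\mathbf{MG}}_n$ provided there is some indecomposable lattice attaining it --- however for the isolation statement what matters is only that $1$ is approached from below by elements $\kappa(\Lambda_k)$ of $\widehat{\mathbf{MG}}_n$ with $\kappa(\Lambda_k) \neq 1$, which the monotone convergence $\kappa(\Lambda_k) \nearrow 1$ guarantees (the $\Lambda_k$ cannot equal $\Z^n$, as $\kappa(\Lambda_k) < 1$ for each $k$ by Corollary~\ref{u cor1}, each $\Lambda_k$ being a number field lattice and hence an intermediate lattice of finite volume type when $n \geq 2$; for $n = 1$ the statement is vacuous or trivial). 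Hence $1$ is an accumulation point of $\widehat{\mathbf{MG}}_n$, i.e.\ not an isolated point of it.

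There is essentially no obstacle here beyond correctly identifying the ingredients: the corollary is a direct specialization. The only point requiring a moment of care is the verification that $G$ qualifies as one of the block groups $H$ to which Theorem~\ref{thm: non-isolated, intermediate} applies and that the generic value of the Haar measure is $1$; both are immediate from the material already developed (the block group formalism of~\eqref{bgps} and Corollary~\ref{thm: full measure}). One should also note that the theorem as invoked yields lattices in $\R^n$ for the given $n$, so no dimension shifting or direct-sum construction is involved --- which is precisely the content that makes the conclusion about the \emph{reduced} spectrum meaningful.
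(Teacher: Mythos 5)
Your proposal is correct and coincides with the paper's own deduction: the corollary is obtained precisely by taking $\mu=\mu_{\Xn}$ in Theorem~\ref{thm: non-isolated, intermediate}, with $\kappa_{\mu_{\Xn}}=1$ coming from Corollary~\ref{thm: full measure} (the membership of the values in $\widehat{\mathbf{MG}}_n$ being handled, as you note, via Proposition~\ref{prop: never decomposable} inside the proof of that theorem). Your extra remarks about $\kappa(\Lambda_k)<1$ and the value $1$ itself are harmless elaborations of the same argument.
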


Given $A \subset \R$, we denote $A^{(0)} \df A$, and by $A^{(k+1)}$ the elements of $A^{(k)}$ which are limits of strictly increasing sequences in $A^{(k)}$. 

\begin{thm}\name{thm: hopefully2}
For any natural number $t$, there is $n$ so that  $1 \in \widehat{\mathbf{MG}}_n^{(t)}$. 
\end{thm}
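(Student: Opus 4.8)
The plan is to prove this by induction on $t$, iterating the construction behind Theorem~\ref{thm: non-isolated, intermediate} while keeping track of nested chains of finite-volume homogeneous orbits. The base case $t=0$ is trivial: $1 \in \widehat{\mathbf{MG}}_n$ for every $n$ since, say, any non-decomposable number field lattice with Mordell constant close to $1$ — or more simply the fact that $\widehat{\mathbf{MG}}_n^{(0)} = \widehat{\mathbf{MG}}_n$ and Corollary~\ref{thm: non-isolated} produces points of it — witnesses membership; in fact Corollary~\ref{thm: non-isolated} already gives $1 \in \widehat{\mathbf{MG}}_n^{(1)}$ for all $n$, which is the case $t=1$. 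So the real content is the inductive step: assuming we have $n'$ with $1 \in \widehat{\mathbf{MG}}_{n'}^{(t)}$, produce $n$ with $1 \in \widehat{\mathbf{MG}}_n^{(t+1)}$.

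The key idea is to replace the ambient space $\Xn$ in Theorem~\ref{thm: non-isolated, intermediate} by a proper finite-volume homogeneous $A$-invariant subspace $H\Lambda$ whose generic Mordell value $\kappa_\mu$ is itself already a limit (to order $t$) of an increasing sequence of reduced Mordell-Gruber values. Concretely, I would take a block group $H = H(\PPP)$ for a partition $\PPP$ of $\{1,\dots,n\}$ having one block of size $n'$ and the remaining coordinates arranged so that $H$ is (up to the irrelevant decomposable factor) a copy of $\SL_{n'}(\R)$, and choose $\Lambda$ so that $H\Lambda$ is of finite volume and the $H$-action is essentially the $\SL_{n'}(\R)$-action on $\Xn[n']$. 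By Theorem~\ref{u thm 2} (or Theorem~\ref{main theorem}) and the multiplicativity $\kappa(\Lambda_1 \oplus \Lambda_2) = \kappa(\Lambda_1)\kappa(\Lambda_2)$, the generic value along such an orbit equals $1 \cdot \kappa_{\mu_{\Xn[n']}} = 1$; more usefully, by the induction hypothesis applied inside this $\SL_{n'}$-slice, there is a sequence of number field lattices inside the slice whose Mordell constants increase to a limit which is itself a $t$-fold accumulation point of $\widehat{\mathbf{MG}}_{n'}$, hence (transported into $\Xn$) of $\widehat{\mathbf{MG}}_n$. Finally, applying Theorem~\ref{thm: non-isolated, intermediate} to the homogeneous probability measure $\mu$ supported on $H\Lambda$ itself gives a sequence of \emph{number field lattices} $\Lambda_k \subset \R^n$ with $\kappa(\Lambda_k) \nearrow \kappa_\mu$; by Proposition~\ref{prop: never decomposable} these lie in $\widehat{\mathbf{MG}}_n$, and each $\kappa(\Lambda_k)$ can in turn be approached — using the induction hypothesis relativized to the orbit through $\Lambda_k$, or more carefully by choosing the $\Lambda_k$ to lie in slices carrying the order-$t$ accumulation structure — producing the extra level of accumulation needed for $\widehat{\mathbf{MG}}_n^{(t+1)}$.

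The main obstacle, and the place where care is needed, is bookkeeping the accumulation orders correctly: Theorem~\ref{thm: non-isolated, intermediate} by itself only upgrades "isolated" to "order-$1$ accumulation point," so to gain a full extra order at each step one must arrange that the approximating number field lattices $\Lambda_k$ are \emph{themselves} order-$t$ accumulation points of $\widehat{\mathbf{MG}}_n$, not merely points of it. This requires that for each $k$ the orbit-closure through $\Lambda_k$ (or a well-chosen finite-volume homogeneous space containing $\Lambda_k$) supports the inductively constructed order-$t$ accumulation structure — which is why the induction should really be stated in a strengthened form, e.g. "for every $t$ there is $n$ and a number field lattice $\Lambda \subset \R^n$ with $\kappa(\Lambda)$ an element of $\widehat{\mathbf{MG}}_n^{(t)}$," or even a statement about an increasing sequence of number field lattices whose limit is an order-$t$ point. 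With that strengthening the recursion closes: $n$ grows (roughly like a product $n_1 n_2 \cdots n_t$ of degrees of the successive number fields, since each step embeds the previous construction into one block of a block group), and at step $t$ one reads off $1 \in \widehat{\mathbf{MG}}_n^{(t)}$.
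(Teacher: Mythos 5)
There is a genuine gap in the inductive step, and it is structural rather than a matter of bookkeeping. Your proposed homogeneous space --- a block group $H(\PPP)$ with one block of size $n'$ and the remaining coordinates forming an ``irrelevant decomposable factor'', acting essentially as $\SL_{n'}(\R)$ on $\mathcal{L}_{n'}$ --- cannot be of finite volume: by Theorem~\ref{n.c.c} a finite-volume orbit forces $\A_\Lambda(\PPP)$ to be a field, hence by Corollary~\ref{equi must} the partition must be equiblock, which rules out one block of size $n'$ plus a complement of a different shape (for $1<n'<n$). So the hypotheses of Theorem~\ref{u thm 2} and of Theorem~\ref{thm: non-isolated, intermediate} (which need a finite $A$-invariant homogeneous measure) are not available for the orbit you describe; the closed orbit through a lattice of the form $\Lambda_1\oplus\Z^{n-n'}$ is of infinite volume, and for infinite measures the strict inequality can genuinely fail (compare Example~\ref{ex zn2}). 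Worse, even ignoring finiteness, the lattices produced in such a slice are decomposable, so the values $\kappa(\Lambda_1\oplus\Z^{n-n'})=\kappa(\Lambda_1)$ you transport from dimension $n'$ land in $\mathbf{MG}_n$ via exactly the direct-sum embedding that the reduced spectrum $\widehat{\mathbf{MG}}_n$ is defined to exclude; they give no information about $\widehat{\mathbf{MG}}_n^{(t)}$. Your closing remark that the induction must be strengthened is the right instinct, but the proposal supplies no mechanism that keeps all approximants indecomposable and of full accumulation order inside a single dimension.

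The paper's argument avoids induction on the dimension altogether: fix a tower of totally real fields $\Q\varsubsetneq F_1\varsubsetneq\cdots\varsubsetneq F_t$ and set $n=\deg(F_t/\Q)$, so that a single number field lattice $\Lambda$ has associated algebra $F_t$ and each $F_i=\A_\Lambda(\PPP_i)$ yields, via Corollary~\ref{cor: bijective}, a nested chain $A=H_t\varsubsetneq H_{t-1}\varsubsetneq\cdots\varsubsetneq H_1\varsubsetneq G$ of block groups all having \emph{finite-volume} orbits through $\Lambda$ (and through every rational translate $q\Lambda$, $q\in\SL(V_\Lambda)$). The $t$-fold accumulation at $1$ is then obtained level by level: the generic values of the orbits $H_{t-i}q'\Lambda$ are strictly smaller than that of $H_{t-i+1}q\Lambda$ by Theorem~\ref{u thm 2}, they are dense in it by the density of rational translates (as in Proposition~\ref{prop: density}), lower semicontinuity (Proposition~\ref{prop: standard}) forces increasing convergence, and Proposition~\ref{prop: never decomposable} keeps all these values in $\widehat{\mathbf{MG}}_n$ because the relevant measures are finite. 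If you want to salvage your outline, replace the ``$\SL_{n'}$-slice with a decomposable complement'' by an intermediate field $F_i$ of the top field $F_t$, i.e.\ work with equiblock groups attached to subfields of a single associated field of degree $n$; that is precisely the field-tower mechanism your sketch is missing.
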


\subsection{Organization of the paper}
In sections  \S\ref{sec: prelims} and \S\ref{sec: ergodicity} we recall
some standard results and prove some useful results about closed orbits
for actions of algebraic groups on $\Xn$. From these we deduce Theorem~\ref{u thm1}.
In \S\ref{subsection: types} we introduce  the \textit{associated
  algebra} of a lattice and characterize intermediate lattices in
terms of its algebraic properties. As we explain in \S \ref{sec:
  consequences}, the
associated algebra of a lattice $\Lam$ can be used to classify all
$A$-invariant homogeneous subsets containing $\Lam$. Moreover 
in \S \ref{subsec: constructions}
we show how to explicitly construct all intermediate lattices. The
proof of Theorem~\ref{u thm 2} is given 
in~\S\ref{sec: the main theorem} and of the isolation results in~\S\ref{sec:
  completion}.  In \S \ref{sec: n=2} we recall results of Gruber and
Ramharter for dimension $n=2$, give some more information about
$\mathbf{MG}_2$,  and prove Theorem \ref{thm: n=2}.  

\subsection{Caveat to the reader}
Some  of the results of this paper rely on results proved in
\cite{LW}, but not stated explicitly there. We were faced with the
dilemma of trying to keep the paper self-contained without repeating
arguments given in \cite{LW}. 
We chose not to include a proof of Theorem \ref{thm: dynamical
  isolation}, which is required for many of our results, was not
stated explicitly in \cite{LW} but follows from the proofs of the main
results (Theorems 1.1 and 1.3) of that paper. To our defense we should
mention that Theorem \ref{thm: dynamical isolation} has been used, and
a proof-sketch given, in \cite{ELMV}, and that a completely
self-contained proof would require several pages of arguments lifted
almost verbatim from \cite{LW}.

\subsection{Acknowledgements}
This paper was conceived when the authors were visiting the Erwin
Schr\"odinger Institute in Vienna in October 2011, as part of the
program {\em Combinatorics, Number theory, and Dynamical Systems}. The
support of ESI is gratefully acknowledged.   
 We are grateful to G. Ramharter and P. A. Gruber for directing our
 attention to the questions discussed in this paper, and for useful
 discussions. We are grateful to Ido Efrat, Alex Gorodnik and Dmitry
 Kleinbock for useful discussions and pointers to the literature.  
We gratefully acknowledge support of European Research Council grants
DALGAPS 279893 and  Advanced research Grant 228304,  and  Israel Science Foundation grant 190/08.

\section{Generalities}\name{sec: prelims}
The following two propositions are standard and explained in \cite{GL}. 
\begin{prop}\name{prop: standard}
The function $\kappa: \Xn \to \R$ has the following properties:
\begin{itemize}
\item[(i)]
For all $\Lambda$, $\kappa(\Lambda) \leq 1$. 

\item[(ii)]
If $\Lambda_k \to \Lambda$ then $\kappa(\Lambda) \leq \liminf \kappa (\Lambda_k);$
i.e.\ 
$\kappa$ is lower semi-continuous.

\item[(iii)]
For all $\Lambda$ and all $a \in A$, $\kappa(a\Lambda)=\kappa(\Lambda).$ 
\item[(iv)] $\kappa(\Z^n)=1$. 
\end{itemize}

\end{prop}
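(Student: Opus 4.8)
The plan is to verify the four properties of $\kappa$ directly from the definition \equ{eq: defn const}, since each is elementary once the right observation is made. I would organize the proof as four short paragraphs, one per item, and in fact deduce (i) and (iv) together from Minkowski's convex body theorem, since a symmetric box is a symmetric convex body.

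For (i): given an admissible symmetric box $\BB$, its interior contains no nonzero point of $\Lam$, so Minkowski's theorem forces $\Vol(\BB) \le 2^n \Vol(\Lam)$; taking the supremum over admissible $\BB$ gives $\kappa(\Lam) \le 1$. For (iv), I would exhibit a specific admissible box for $\Z^n$ realizing the bound: the box $[-1,1]^n$ has no nonzero integer point in its open interior $(-1,1)^n$, and $\Vol([-1,1]^n)/2^n = 1 = \Vol(\Z^n)$, so $\kappa(\Z^n) \ge 1$; combined with (i) this gives equality. For (iii): if $a = \diag(t_1,\dots,t_n)$ with $t_i>0$, then $a$ maps the box $\prod [-a_i,a_i]$ to the box $\prod[-t_ia_i, t_ia_i]$, and this is a bijection between symmetric boxes which preserves admissibility relative to $\Lam$ and $a\Lam$ respectively (a point $v$ lies in the open box iff $av$ lies in the image open box, and $v \in \Lam$ iff $av \in a\Lam$); moreover $\Vol(a\BB) = \det(a)\Vol(\BB) = \Vol(\BB)$ and $\Vol(a\Lam) = \Vol(\Lam)$ since $a \in \SL_n(\R)$ has determinant $1$. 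Hence the two suprema defining $\kappa(\Lam)$ and $\kappa(a\Lam)$ are taken over matching quantities, so they are equal.

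For (ii), lower semicontinuity, the key point is that admissibility is an \emph{open-ish} condition that can only improve under small perturbations if we shrink slightly. Concretely, fix $\Lam$ and $\vre>0$, and pick an admissible symmetric box $\BB = \prod[-a_i,a_i]$ with $\Vol(\BB)/2^n > \kappa(\Lam) - \vre$. For $\delta \in (0,1)$ the slightly larger box $\BB_\delta \df \prod[-(1-\delta)^{-1}a_i, (1-\delta)^{-1}a_i]$ is still admissible for $\Lam$ if $\delta$ is small enough, because $\Lam$ has no nonzero points in a neighborhood of the origin and the closed box $\cl{\BB}$ meets $\Lam$ only on its boundary (here one uses discreteness of $\Lam$: the finitely many lattice points of $\Lam$ on $\partial \BB$ have some coordinate equal to $\pm a_i$, and a small enough dilation keeps them outside the \emph{open} enlarged box). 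Then for $\Lam_k \to \Lam$ sufficiently close, the box $\BB' \df \prod[-(1-\delta/2)^{-1}a_i,\dots]$, say, is admissible for $\Lam_k$: convergence $\Lam_k \to \Lam$ means that in a fixed compact region the points of $\Lam_k$ are close to points of $\Lam$, so any $\Lam_k$-point in the open box $\interior(\BB')$ would have to be near a $\Lam$-point in the slightly larger $\interior(\BB_\delta)$, contradicting admissibility of the latter. Hence $\kappa(\Lam_k) \ge \Vol(\BB')/2^n \ge (1-\delta/2)^{-n}\Vol(\BB)/2^n > \kappa(\Lam) - \vre$ for all large $k$ (after also letting $\delta \to 0$), giving $\liminf_k \kappa(\Lam_k) \ge \kappa(\Lam) - \vre$, and then $\vre \to 0$.

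The main obstacle is entirely in part (ii): one must be careful that the supremum in \equ{eq: defn const} need not be attained, so one argues with near-optimal boxes, and one must handle the interplay between the \emph{open} interior in the admissibility condition and the \emph{closed} boxes whose volume one controls — this is exactly what the two-step dilation ($\BB \subset \BB'\subset \BB_\delta$) is designed to manage, using discreteness of the lattices and the fact that convergence in $\Xn$ is uniform convergence of lattice points on compact sets. Parts (i), (iii), (iv) are immediate. I would remark that (ii) also follows from general lower semicontinuity principles for "sup of continuous functions over an open condition," but the hands-on argument above is self-contained.
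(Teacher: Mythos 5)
Parts (i), (iii) and (iv) of your argument are correct (and note the paper itself offers no proof here, only a citation to \cite{GL}, so a self-contained verification is reasonable; your use of Minkowski for (i) and of the determinant-one computation for (iii) is exactly the standard route). The genuine problem is in (ii): you dilate the admissible box \emph{outward}, and an enlarged admissible box is in general no longer admissible. Concretely, take $\Lam=\Z^n$ and $\BB=[-1,1]^n$: for any $\delta\in(0,1)$ the box $\BB_\delta=\prod[-(1-\delta)^{-1},(1-\delta)^{-1}]$ contains the standard basis vectors in its open interior, so it is not admissible. Your justification --- ``the finitely many lattice points of $\Lam$ on $\partial\BB$ have some coordinate equal to $\pm a_i$, and a small enough dilation keeps them outside the open enlarged box'' --- fails precisely at such points: a lattice point $v$ with $|v_i|=a_i$ and $|v_j|<a_j$ for $j\neq i$ lies strictly inside \emph{every} outward dilation of $\BB$. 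The error propagates to the conclusion: the inequality $\kappa(\Lam_k)\geq \Vol(\BB')/2^n$ with $\Vol(\BB')>\Vol(\BB)$ cannot hold in general, since applied to the constant sequence $\Lam_k=\Z^n$ with $\BB$ the unit cube it would give $\kappa(\Z^n)>1$, contradicting your own part (i).

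The repair is the standard one and agrees with your opening sentence (``shrink slightly''), but you must shrink, not enlarge. Keep $\BB$ admissible for $\Lam$ with $\Vol(\BB)/2^n>\kappa(\Lam)-\vre$, and consider the contracted box $(1-\delta)\BB$. If $(1-\delta)\BB$ failed to be admissible for infinitely many $\Lam_k$, choose nonzero $v_k\in\Lam_k\cap\interior\pa{(1-\delta)\BB}$; writing $\Lam_k=g_k\Z^n$ with $g_k\to g$ and $g\Z^n=\Lam$, the nonzero vectors of $\Lam_k$ are uniformly bounded away from $0$ for large $k$, and the $v_k$ lie in a fixed compact set, so a subsequence converges to a nonzero $v\in\Lam\cap(1-\delta)\BB\subset\interior(\BB)$, contradicting admissibility of $\BB$ for $\Lam$. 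Hence $\kappa(\Lam_k)\geq (1-\delta)^n\Vol(\BB)/2^n$ for all large $k$; letting $\delta\to 0$ and then $\vre\to 0$ gives $\liminf_k\kappa(\Lam_k)\geq\kappa(\Lam)$. With this substitution your proof of (ii) is correct, and the other three parts stand as written.
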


\begin{prop}[Mahler's compactness criterion]\name{prop: Mahler}
A subset $X\subset \Xn$ is bounded (i.e.\ has compact closure) if and only if there is a neighborhood $\mathcal{U}$ of $0$ in $\R^n$ such that for any $\Lambda \in X$, $\Lambda \cap \mathcal{U} = \{0\}$.  
\end{prop}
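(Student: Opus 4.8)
The statement is classical (see \cite{GL}); here is the plan, with the one genuine ingredient isolated at the end. Throughout I identify $\Xn$ with $\SL_n(\R)/\SL_n(\Z)$ via $g\SL_n(\Z)\mapsto g\Z^n$, and I use that the quotient map $\pi\colon \SL_n(\R)\to\Xn$ is a covering map (the right $\SL_n(\Z)$-action on $\SL_n(\R)$ is free and properly discontinuous); in particular $\Lambda_k\to\Lambda$ in $\Xn$ precisely when one can choose lifts $g_k\to g$ in $\SL_n(\R)$ with $g_k\Z^n=\Lambda_k$ and $g\Z^n=\Lambda$. For $\Lambda\in\Xn$ write $\lambda_1(\Lambda)\df\min\{\|v\|:v\in\Lambda\sm\{0\}\}$. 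Choosing $\rho>0$ with $B(0,\rho)\subseteq\mathcal U$, the hypothesis that $\Lambda\cap\mathcal U=\{0\}$ for all $\Lambda\in X$ is equivalent to the uniform bound $\inf_{\Lambda\in X}\lambda_1(\Lambda)\geq\rho>0$; so I first reduce the proposition to the assertion that $X$ is bounded if and only if $\lambda_1$ is bounded below on $X$.

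For the direction ``bounded $\Rightarrow$ $\lambda_1$ bounded below'' I would argue by contradiction. If $\inf_{\Lambda\in\overline X}\lambda_1(\Lambda)=0$, pick $\Lambda_k\in\overline X$ and $v_k\in\Lambda_k\sm\{0\}$ with $v_k\to 0$; by compactness of $\overline X$ pass to a subsequence with $\Lambda_k\to\Lambda$, lift to $g_k\to g$ as above, and write $v_k=g_k m_k$ with $m_k\in\Z^n\sm\{0\}$. Since $g_k^{-1}\to g^{-1}$ the operator norms $\|g_k^{-1}\|$ are bounded, so $m_k=g_k^{-1}v_k\to 0$, contradicting $\|m_k\|\geq 1$. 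Hence $\rho\df\inf_{\overline X}\lambda_1>0$ and $\mathcal U=B(0,\rho)$ works. This direction uses only point-set topology together with the covering-space description of the topology on $\Xn$.

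For the converse I would assume $\lambda_1(\Lambda)\geq\rho$ for every $\Lambda\in X$ and invoke the one quantitative input, reduction theory / Minkowski's second theorem on successive minima: every unimodular lattice $\Lambda$ has a basis $v_1,\dots,v_n$ with $\|v_1\|=\lambda_1(\Lambda)$, with $\|v_i\|\geq\lambda_1(\Lambda)$ for each $i$, and with $\prod_{i=1}^n\|v_i\|\leq c_n$ for a constant depending only on $n$ (take a Minkowski-reduced basis and compare it to the successive minima, whose product is $\leq c_n'\,\Vol(\Lambda)=c_n'$). With $\|v_i\|\geq\rho$ this gives $\rho\leq\|v_i\|\leq c_n/\rho^{\,n-1}$ for all $i$, so the matrix $g_\Lambda$ with columns $v_1,\dots,v_n$ — after interchanging or sign-changing two columns so that $\det g_\Lambda=1$, which changes neither $\Lambda=g_\Lambda\Z^n$ nor the entrywise bound — lies in $\mathcal K\df\{g\in\SL_n(\R):|g_{ij}|\leq c_n/\rho^{\,n-1}\ \text{for all}\ i,j\}$. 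Since $\det$ is continuous, $\mathcal K$ is closed in $\Mat_n(\R)$, and it is bounded, hence compact; moreover $X\subseteq\pi(\mathcal K)$. As $\pi(\mathcal K)$ is compact, hence closed in $\Xn$, we conclude $\overline X\subseteq\pi(\mathcal K)$, so $X$ is bounded.

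The only non-formal step is the existence of the basis with the product bound used in the last paragraph; an equivalent packaging runs through a Siegel fundamental set $\Sigma\subset\SL_n(\R)$ for $\SL_n(\Z)$ and the observation that $\{g\in\Sigma:\lambda_1(g\Z^n)\geq\rho\}$ is relatively compact. Everything else — the two limiting arguments and the bookkeeping with the covering map $\pi$ — is routine, and I would refer the reader to \cite[Chap.\ 3]{GL} for the details.
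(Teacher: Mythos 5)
Your proof is correct. There is nothing in the paper to compare it with: the authors explicitly treat Proposition \ref{prop: Mahler} as standard and refer to \cite{GL} rather than proving it, and your argument is precisely the classical one found in the cited sources. The reduction of the hypothesis to a uniform lower bound $\rho$ on $\lambda_1$ is fine; the easy direction, lifting a convergent sequence $\Lambda_k\to\Lambda$ through the covering map $\SL_n(\R)\to\Xn$ (valid since $\SL_n(\Z)$ is discrete, so the right action is free and properly discontinuous) and using $\|m_k\|\geq 1$ for nonzero integer vectors, is complete; and the substantive direction is correctly carried by the one real input, namely a Minkowski-reduced basis with $\|v_i\|\geq\lambda_1(\Lambda)$ and $\prod_i\|v_i\|\leq c_n$ (reduction theory plus Minkowski's second theorem), which together with $\|v_i\|\geq\rho$ confines a basis matrix of $\Lambda$ to a fixed compact subset $\mathcal K\subset\SL_n(\R)$, whose image in $\Xn$ is compact and contains $X$. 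One trivial slip in the bookkeeping: to correct the sign of the determinant you should either interchange two columns or negate a single column; negating two columns leaves the determinant unchanged. This does not affect the argument, and your alternative packaging via a Siegel fundamental set is equally acceptable.
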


\begin{cor}\name{cor: cubes}
If $\Lambda \in \Xn$ and $\BB_0$ is a symmetric cube whose volume is smaller than $2^n\kappa(\Lambda)$,  then there is $a \in A$ such that $\BB_0$ is admissible for $a\Lambda$. In particular 
for any $\kappa_0>0$ there is a compact $K \subset \Xn$ such that for any $\Lambda \in \Xn$ with $\kappa(\Lambda)\geq \kappa_0$, there is $a \in A$ such that $a\Lambda \in K$. 
\end{cor}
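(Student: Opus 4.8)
The plan is to reduce both assertions to the definition~\equ{eq: defn const} of $\kappa$ together with one elementary observation about determinant-one diagonal scalings, and then, for the second assertion, to invoke Mahler's criterion.

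First I would unwind the definition. Writing $\BB_0 = [-c,c]^n$, the hypothesis $\Vol(\BB_0) < 2^n \kappa(\Lambda)$ says exactly that $c^n < \kappa(\Lambda)$, so by~\equ{eq: defn const} there is a symmetric box $\BB = [-a_1,a_1] \times \cdots \times [-a_n, a_n]$ which is admissible for $\Lambda$ and satisfies $\prod_i a_i > c^n$. The heart of the matter is then the claim that one can choose $a = \diag(t_1, \ldots, t_n) \in A$ --- so $t_i > 0$ and $\prod_i t_i = 1$ --- with $c/t_i < a_i$ for every $i$; this is possible precisely because $\prod_i (c/a_i) = c^n / \prod_i a_i < 1$ (e.g.\ take $t_i = (c/a_i)\lambda_i$ with $\lambda_i > 1$ and $\prod_i \lambda_i = \prod_i a_i/c^n$). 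For such $a$ one has the inclusion of open boxes $\interior(a^{-1}\BB_0) \subset \interior(\BB)$, so $\interior(a^{-1}\BB_0)$ contains no nonzero point of $\Lambda$; applying the linear isomorphism $a$ then shows that $\interior(\BB_0)$ contains no nonzero point of $a\Lambda$, i.e.\ $\BB_0$ is admissible for $a\Lambda$.

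For the ``in particular'' clause I would fix $\kappa_0 > 0$, choose once and for all a symmetric cube $\BB_0 = [-c,c]^n$ with $0 < c^n < \kappa_0$, and set $K \df \{\Lambda' \in \Xn : \Lambda' \cap (-c,c)^n = \{0\}\}$. By the first part, every $\Lambda \in \Xn$ with $\kappa(\Lambda) \geq \kappa_0$ satisfies $\Vol(\BB_0) < 2^n \kappa_0 \leq 2^n \kappa(\Lambda)$, hence $a\Lambda \in K$ for a suitable $a \in A$. It then remains to check that $K$ is compact: it is closed, because a nonzero lattice vector lying in the open cube $(-c,c)^n$ persists under small perturbations of the lattice, so the complement of $K$ is open; and it is bounded by Proposition~\ref{prop: Mahler} applied with the neighborhood $\mathcal{U} = (-c,c)^n$.

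I do not anticipate a genuine obstacle here. The one point requiring care is the existence of the scaling $a$ in the first step --- the elementary fact that a symmetric cube can, after a determinant-one diagonal scaling, be squeezed into the interior of an arbitrary symmetric box of strictly larger volume --- and once that is in hand everything else is bookkeeping together with Mahler's criterion.
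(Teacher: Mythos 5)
Your proof is correct and follows essentially the same route as the paper: pick an admissible symmetric box for $\Lambda$ of volume exceeding $\Vol(\BB_0)$, use a determinant-one diagonal element to compare it with the cube (the paper reshapes $\BB$ into a cube and compares cubes, you squeeze $\BB_0$ into $\BB$ --- the same elementary scaling argument), and deduce the second assertion from Proposition~\ref{prop: Mahler}. Nothing further is needed.
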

\begin{proof}
 For the first assertion, let $\BB$ be an admissible symmetric box such that $\Vol(\BB) > \Vol(\BB_0)$, and let $a \in A$ such that $a\BB$ is a cube symmetric about the origin. By considering volumes we see that $\BB_0 \subset a\BB$. This proves the first assertion. The second assertion follows via Proposition \ref{prop: Mahler}. 
\end{proof}

\subsection{Algebraic groups and $\Q$-structures} 
In the remainder of this section we will recall several classical results
about algebraic groups and lattices in Lie groups. We refer the reader
to \cite{Raghunathan} for more details and pointers to the literature. 
In this paper, we have preferred a concrete point of view so we will
work throughout with subgroups of $G = 
\SL_n(\R)$ and with the space $\Xn \cong \SL_{n}(\R) /\SL_n(\Z)$, rather than the more general
setup where $G$ is a real algebraic group and $G/\Gamma$ is the
quotient of $G$ by a lattice $\Gamma$. All the results we state below
are valid in this more general context. 

Given a lattice $\Lam\in\Xn$ we denote $V_\Lambda\df
\on{span}_{\Q}\Lam$. Note that $V_\Lambda$ is a $\Q$-vector subspace
of $\R^n$ such that $V_\Lambda \otimes_{\Q} \R = \R^n$, but
$V_\Lambda$ need not coincide with the standard $\Q$-structure
$\Q^n$. 
We say that a matrix $g\in G$ is $\Lam$-rational
if $gV_{\Lam}\subset V_{\Lam}$; the reader may verify that $g \in
G(\Q)$ if and only if $g$ is $\Z^n$-rational. As in
\cite[Preliminaries, \S2]{Raghunathan} one uses a $\Q$-structure on
$\R^n$ to define 
$\Q$-algebraic subgroups of $\SL_n(\R)$. If we use the $\Q$-structure
of $V_\Lambda$ we will say that such a subgroup $H$ is \textit{defined 
  over $\Q$ with respect to the $\Q$-structure induced by
  $\Lam$}. We will use two characterizations of such subgroups. They
are the groups 
$H$ containing a Zariski dense set of $\Lam$-rational
points; they are also the groups $H$ such that for any $g \in G$ for which $g\Z^n = \Lambda$,
the conjugate $g^{-1}Hg$ is a $\Q$-subgroup of $G$ (with respect to
the standard $\Q$-structure $\Q^n$). See \cite{Raghunathan} for
definitions of morphisms defined over $\Q$ and $\Q$-characters. 

We recall the following classical fact. 
\begin{prop}[Borel Harish-Chandra]\name{thm: BHC} 
Let $H\subset G$ be an algebraic group defined over $\Q$ with respect
to the $\Q$-structure induced by $\Lam\in\Xn$. Then the orbit
$H\Lam\subset \Xn$ is of finite volume if and only if $H$ has no
non-trivial $\Q$-characters. In particular, if $H$ is semisimple or is
generated by unipotent elements then $H\Lam$ is of finite volume. 
\end{prop}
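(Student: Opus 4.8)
The plan is to reduce to the standard $\Q$-structure by conjugation and then quote the classical Borel--Harish-Chandra finiteness theorem in the form given in \cite{Raghunathan}. First I would pick $g \in G$ with $g\Z^n = \Lam$, which exists since $G$ acts transitively on $\Xn$. By the characterization of subgroups defined over $\Q$ with respect to the $\Q$-structure induced by $\Lam$ that was recalled above, $H' \df g^{-1}Hg$ is an algebraic subgroup of $G$ defined over $\Q$ with respect to the \emph{standard} $\Q$-structure $\Q^n$, and conjugation by $g$ is an isomorphism of algebraic groups defined over $\Q$ from $H$ (carrying the $\Q$-structure induced by $\Lam$) onto $H'$. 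In particular it induces a bijection between the groups of $\Q$-characters of $H$ and of $H'$, so one of these groups is trivial if and only if the other is.

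Next I would transfer the orbit. Writing $\Lam$ as the coset $g\SL_n(\Z)$ in $\Xn \cong \SL_n(\R)/\SL_n(\Z)$, one has $H\Lam = Hg\SL_n(\Z) = g\,(g^{-1}Hg)\,\SL_n(\Z) = g\cdot(H'\Z^n)$, so left translation $L_g$ by $g$ is a homeomorphism of $\Xn$ taking $H'\Z^n$ onto $H\Lam$. A direct computation shows that $L_g$ intertwines the $H'$-action on $H'\Z^n$ with the $H$-action on $H\Lam$ through the conjugation isomorphism, hence pushes an $H'$-invariant locally finite measure on $H'\Z^n$ forward to an $H$-invariant one on $H\Lam$; by the uniqueness up to scaling of such measures (Proposition~\ref{prop: new closed}) this shows that $H\Lam$ is closed, resp.\ of finite volume, exactly when $H'\Z^n$ is. Since the stabilizer of $\Z^n$ in $H'$ is $H' \cap \SL_n(\Z) = H'(\Z)$, the orbit $H'\Z^n$ is $H'$-equivariantly identified with $H'/H'(\Z)$, and the classical theorem says precisely that this quotient has finite volume (and is then closed) if and only if $H'$ has no non-trivial $\Q$-character. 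Combining this with the previous paragraph gives the asserted equivalence for $H\Lam$.

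For the final assertion I would argue that in both special cases there is no non-trivial $\Q$-character to begin with. Any $\Q$-character $\chi\colon H \to \GL_1$ kills the commutator subgroup, so if $H$ is semisimple --- hence equal to its own derived group --- then $\chi$ is trivial. If instead $H$ is generated by its unipotent elements, then $\chi$ sends each such element to a unipotent element of $\GL_1$, of which $1$ is the only one, so $\chi$ is trivial on a generating set and therefore trivial. In either case the equivalence just established shows $H\Lam$ has finite volume.

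I do not expect a serious obstacle here: the one genuinely substantial ingredient is the classical finiteness theorem, which we quote rather than reprove; the rest is the verification that the two $\Q$-structures and the two invariant measures match up under conjugation by $g$, and the only point deserving real care is the $H'$-equivariance of $L_g$, which is what allows the classical statement to transfer verbatim.
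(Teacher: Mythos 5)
Your argument is correct and matches what the paper itself does: Proposition~\ref{thm: BHC} is stated as the classical Borel--Harish-Chandra theorem with no proof beyond the citation, and the reduction via $H \mapsto g^{-1}Hg$ is already built into the paper's characterization of ``defined over $\Q$ with respect to the $\Q$-structure induced by $\Lam$''. Your transfer of the orbit, the invariant measure and the $\Q$-characters under left translation by $g$, together with the observation that semisimple or unipotently generated groups admit no non-trivial $\Q$-characters, are exactly the routine verifications implicit in that citation.
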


\subsection{Closed orbits of real algebraic subgroups}
The following observation is useful.
\begin{prop}\name{prop: Zcl contains unips}
Let $\Lambda \in \Xn$ and let $H$ be an algebraic subgroup of
$G$. Denote by $H_{\Lambda}$ the stabilizer of $\Lambda$ in $H$ and by
$H_0$ the Zariski closure
of $H_{\Lambda}$. Then $H_{\Lambda}$ is a
lattice in $H_0$. Moreover, if the orbit $H\Lambda$ is closed in $\Xn$
then the connected component $H^\circ_0$ of the identity in $H_0$
contains the unipotent elements of $H$. 
\end{prop}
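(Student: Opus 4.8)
The plan is to prove the two assertions in sequence, the first being a standard consequence of the Borel Harish-Chandra theory and the second being the new content. First I would treat the claim that $H_\Lambda$ is a lattice in $H_0$. Pick $g \in G$ with $g\Z^n = \Lambda$; then $g^{-1}Hg$ is an algebraic group, and $g^{-1}H_\Lambda g = (g^{-1}Hg) \cap \SL_n(\Z)$ is an arithmetic subgroup of $g^{-1}Hg$. Its Zariski closure $g^{-1}H_0 g$ is then a $\Q$-subgroup of $g^{-1}Hg$ containing a Zariski-dense set of integer points, and a theorem of Borel (the fact that an arithmetic group is Zariski-dense in the $\Q$-group it sits in, once one passes to the subgroup generated, combined with Borel Harish-Chandra, Proposition \ref{thm: BHC}) shows that $g^{-1}H_\Lambda g$ is a lattice in $g^{-1}H_0 g$; equivalently $H_\Lambda$ is a lattice in $H_0$. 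I would also record here that $H_0$, being the Zariski closure of a group with no nontrivial $\Q$-characters that fix a lattice, has no nontrivial $\Q$-characters, so its radical is unipotent by Borel Harish-Chandra.

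Now for the second assertion, assume $H\Lambda$ is closed. The key point is the classical description of closed orbits: $H\Lambda$ is closed in $\Xn$ if and only if $H_\Lambda$ is a lattice in $H$ — more precisely, if and only if the natural map $H/H_\Lambda \to H\Lambda$ is a homeomorphism onto a closed subset, which forces $H_\Lambda$ to be a lattice in $H$ (this uses that $\Xn$ is the quotient of $G$ by the lattice $\SL_n(\Z)$, and the standard criterion that an orbit $H x\Gamma$ is closed with $H_x$ a lattice precisely when $\Gamma \cap H$ is a lattice in $H$ after conjugation). Combining with the first part, $H_\Lambda$ is a lattice both in $H_0$ and in $H$, so $H_0$ has finite index in $H$ up to issues with identity components; in any case $H^\circ \subset H_0$, hence $H^\circ_0 = H^\circ$. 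Since all unipotent elements of $H$ lie in $H^\circ$ (the unipotent variety is connected and contains the identity, so it sits inside the connected component), they lie in $H^\circ_0$.

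The main obstacle I anticipate is being careful about the passage between $H$, its identity component, and the Zariski closure $H_0$ of $H_\Lambda$: one must justify that closedness of the orbit $H\Lambda$ genuinely forces $H_\Lambda$ to be a lattice in $H$ (not merely a discrete subgroup), which is where a Borel Harish-Chandra–type input or a properness argument via Mahler's criterion (Proposition \ref{prop: Mahler}) is needed, and then to deduce $H^\circ = H^\circ_0$ cleanly. A clean way to package this: once $H_\Lambda$ is a lattice in $H$, the group $H^\circ_0$ contains the identity component of the Zariski closure of any lattice in $H^\circ$; since $H_\Lambda \cap H^\circ$ is a lattice in $H^\circ$ and $H^\circ$ is generated by its unipotent one-parameter subgroups together with a reductive part, and a lattice is Zariski dense in $H^\circ$ when $H^\circ$ has no $\Q$-anisotropic central torus factor — here the relevant subtlety is torus factors, but unipotent elements are unaffected since they always lie in the connected component and in the Zariski closure of any Zariski-dense subgroup. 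So the unipotents of $H$ are contained in $H^\circ_0$, completing the proof. The delicate bookkeeping around tori is the part I would write out most carefully.
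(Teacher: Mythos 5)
Your proof of the first assertion is essentially fine (it is the same Borel Harish-Chandra argument as the paper's, which justifies the absence of nontrivial $\Q$-characters of $H_0$ by a bounded-denominators argument applied to $\chi(H_\Lambda)$; your justification of that point is vaguer but repairable). The second assertion, however, rests on a false claim: you assert that closedness of $H\Lambda$ forces $H_\Lambda$ to be a lattice in $H$, and deduce $H^\circ_0=H^\circ$. The standard criterion only says that $H_\Lambda$ is a lattice in $H$ if and only if $H\Lambda$ is closed \emph{and} carries a finite $H$-invariant measure; closedness alone gives only that the orbit map $H/H_\Lambda\to\Xn$ is proper. The whole point of this paper's setting is that closed orbits may have infinite volume: take $\Lambda=\Z^n$ and $H=A$, or $H=H(\PPP)$ for any nontrivial partition $\PPP$. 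Then $H\Lambda$ is closed, the stabilizer $H_\Lambda$ is finite (resp.\ contained in $S(\PPP)$ up to finite index), it is not a lattice in $H$, and $H_0^\circ$ is a proper subgroup of $H^\circ$. So your intermediate conclusion $H^\circ_0=H^\circ$ is simply false in the generality of the proposition, and the proposition's content is precisely the weaker statement that survives: only the unipotents of $H$ must land in $H^\circ_0$. The closing remark that unipotents ``always lie in the Zariski closure of any Zariski-dense subgroup'' does not address this, since $H_\Lambda$ need not be Zariski dense in $H$ (it is not, in the examples above); containment of the unipotents in $H_0$ is exactly what has to be proved and it genuinely uses dynamics, not just algebra.

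The paper's argument for the second assertion is the missing ingredient: for a unipotent $u\in H$, Margulis' nondivergence theorem shows $\{u^k\Lambda\}$ is nondivergent in $\Xn$; properness of the orbit map (this is where closedness of $H\Lambda$ enters) transfers nondivergence to $\{u^kH_\Lambda\}$ in $H/H_\Lambda$, hence to $\{u^kH_0\}$ in the algebraic variety $H/H_0$, where a unipotent trajectory is either fixed or divergent. This forces $u\in H_0$, and then the one-parameter unipotent subgroup through $u$ gives $u\in H_0^\circ$. Your properness observation is correct as far as it goes, but without the nondivergence input and the dichotomy on $H/H_0$ it does not yield the conclusion, and the lattice-in-$H$ shortcut you propose in its place is not available.
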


\begin{proof}
We use the $\Q$-structure induced by $\Lambda$. Any $\Q$-character
$\chi$ of an algebraic group containing $H_{\Lambda}$ 
is bounded below by a bounded denominators argument. Therefore
$\chi(H_{\Lambda})$ is finite, which implies that $\chi|_{H^{\circ}_0}$ is
trivial. By Proposition \ref{thm: BHC}, $(H_0)_{\Lambda}$ is a lattice
in $H_0$, but since $H_{\Lambda} \subset H_0 \subset H$ we have
$(H_0)_{\Lambda} = H_{\Lambda}$. 

Now suppose $u$ is a unipotent element of $H$ and suppose $H\Lambda$
is closed. The orbit $\{u^n \Lambda: n =1,2, \ldots\}$ is
non-divergent in $\Xn$ by a classical result of Margulis \cite{Margulis
  nondivergence}. The orbit
map $hH_{\Lambda} \mapsto h\Lambda$ is proper since we have assumed
that $H\Lambda$ is closed, and this implies that the orbit $\{u^n
H_{\Lambda} : n=1,2, \ldots\}$ is non-divergent in the quotient
$H/H_{\Lambda}$. We have an $H$-equivariant factor map $H/H_{\Lambda} \to
H/H_0$, so the orbit $\{u^nH_0: n=1,2, \ldots\}$ is non-divergent in
$H/H_0$, which is an algebraic variety on which a unipotent trajectory
is either a fixed point or is divergent. This implies that $u \in
H_0$, and since every unipotent $u$ can be embedded in a one-parameter
unipotent subgroup, we have $u \in H^\circ_0$. 
\end{proof}

As we allow homogeneous subspaces of infinite measure, we need the
following fact:
\begin{prop}\label{prop: new closed}
Let $H \subset G$ be a real algebraic group and $\Lambda \in \Xn$
such that $H\Lambda$ is closed. Then there is an $H$-invariant
locally finite measure on $H\Lambda$. This measure is unique up to
scaling.  
\end{prop}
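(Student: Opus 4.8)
The plan is to reduce the problem to the existence and uniqueness of an invariant measure on a homogeneous space of the form $H/H_\Lambda$, where $H_\Lambda$ is the stabilizer of $\Lambda$ in $H$, and then invoke the classical theory of invariant measures on homogeneous spaces of locally compact groups together with the structure theory supplied by Proposition~\ref{prop: Zcl contains unips}. Since $H\Lambda$ is closed in $\Xn$, the orbit map $h H_\Lambda \mapsto h\Lambda$ is a homeomorphism of $H/H_\Lambda$ onto $H\Lambda$, so it suffices to produce a locally finite $H$-invariant Radon measure on $H/H_\Lambda$ and to show it is unique up to scaling.

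First I would recall the standard criterion (see e.g.\ \cite{Raghunathan}): for a locally compact group $H$ and a closed subgroup $S$, the quotient $H/S$ carries a nonzero $H$-invariant Radon measure, necessarily unique up to scaling, precisely when the modular functions agree on $S$, i.e.\ $\Delta_H|_S = \Delta_S$. So the task is to verify this compatibility condition for $S = H_\Lambda$. Here is where Proposition~\ref{prop: Zcl contains unips} enters: it tells us that $H_\Lambda$ is a lattice in its Zariski closure $H_0$, and that any $\Q$-character of an algebraic group containing $H_\Lambda$ is trivial on $H_0^\circ$. In particular $H_\Lambda$ sits inside $H_0$ as a lattice, so $H_\Lambda$ is unimodular (discrete groups always are, trivially, $\Delta_S \equiv 1$), and we must check that $\Delta_H$ restricts trivially to $H_\Lambda$.

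The key step, which I expect to be the main obstacle, is showing $\Delta_H|_{H_\Lambda} \equiv 1$. The modular function $\Delta_H$ of a real algebraic group is itself an algebraic character of $H$ with values in $\R_{>0}$, namely $h \mapsto |\det \mathrm{Ad}(h)|$ on $\mathrm{Lie}(H)$. Restricted to $H_\Lambda$, this is (the absolute value of) a $\Q$-character — using the $\Q$-structure induced by $\Lambda$ — of an algebraic group containing $H_\Lambda$, namely $H$ itself, or more precisely of $H_0$ after restriction. A bounded-denominators argument, exactly as in the proof of Proposition~\ref{prop: Zcl contains unips}, shows $\Delta_H(H_\Lambda)$ is a finitely generated subgroup of $\R_{>0}$ that is bounded below, hence it is trivial when we further use that $H_\Lambda$, being a lattice in $H_0$, is Zariski dense in $H_0^\circ$ up to finite index and that $\Delta_H$ is trivial on unipotents; alternatively, since $\Delta_H$ is a polynomial character, its restriction to the lattice $H_\Lambda$ in $H_0$ extends to $H_0$, and by the $\Q$-character vanishing statement of Proposition~\ref{prop: Zcl contains unips} it is trivial on $H_0^\circ$, hence on all of $H_\Lambda$ up to the finite component group, and finally trivial on $H_\Lambda$ because its image is torsion-free.

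Once $\Delta_H|_{H_\Lambda} \equiv 1$ is established, the existence and uniqueness up to scaling of an $H$-invariant Radon measure $\nu$ on $H/H_\Lambda$ is immediate from the cited general theory. Transporting $\nu$ through the orbit-map homeomorphism $H/H_\Lambda \xrightarrow{\sim} H\Lambda$ gives the desired locally finite $H$-invariant measure $\mu_{H\Lambda}$ on $H\Lambda$, and uniqueness up to scaling is inherited: any $H$-invariant locally finite measure on $H\Lambda$ pulls back to an $H$-invariant Radon measure on $H/H_\Lambda$, which must be a scalar multiple of $\nu$. This completes the proof. The only genuinely nontrivial point is the modular-function compatibility, and every tool needed for it has already appeared in the excerpt.
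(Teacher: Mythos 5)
Your reduction --- the orbit map being a homeomorphism onto the closed orbit, the classical criterion $\Delta_H|_{H_\Lambda}=\Delta_{H_\Lambda}$, and discreteness of $H_\Lambda$ forcing $\Delta_{H_\Lambda}\equiv 1$ --- is exactly how the paper's proof begins, and you correctly identify the real task: showing $\Delta_H$ is trivial on $H_\Lambda$. But your argument for that step has a genuine gap. You treat $\Delta_H$ (equivalently the algebraic character $h\mapsto\det\pa{\Ad(h)|_{\goth{u}}}$, where $\goth{u}$ is the Lie algebra of the unipotent radical $U$ of $H$) as ``the absolute value of a $\Q$-character of an algebraic group containing $H_\Lambda$''. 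Nothing in the hypotheses makes this character rational: $H$ is merely a real algebraic group with a closed orbit, so neither $H$ nor $U$ need be defined over $\Q$ with respect to the $\Q$-structure induced by $\Lambda$; the only group known to be a $\Q$-group is $H_0=\on{Zcl}(H_\Lambda)$. Hence the bounded-denominators mechanism of Proposition \ref{prop: Zcl contains unips} has nothing to bite on: for $h\in H_\Lambda$ the value $\Delta_H(h)$ is a priori only a positive real algebraic number, which need be neither rational nor uniformly bounded away from $0$, so neither of your two routes (``finitely generated subgroup of $\R_{>0}$ bounded below'' or ``apply the $\Q$-character vanishing statement to the extension to $H_0$'') is justified as stated. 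The missing ingredient is precisely a rationality statement for $U$.

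The paper supplies it as follows. Let $H'$ be the subgroup generated by the unipotent elements of $H$; Proposition \ref{prop: Zcl contains unips} gives $H'\subset H_0\subset H$, whence the unipotent radicals satisfy $U\subset U_0\subset U'$; since $H'$ is characteristic in $H$, its unipotent radical $U'$ is normal in $H$, forcing $U'=U$ and therefore $U=U_0$, i.e.\ $U$ is the unipotent radical of the $\Q$-group $H_0$. Then \cite[Cor. 8.28]{Raghunathan} shows that $\Gamma\df U\cap H_\Lambda$ is a lattice in $U$, and for $h\in H_\Lambda$ conjugation by $h$ preserves both $U$ and $H_\Lambda$, hence preserves $\Gamma$ and the covolume of $U/\Gamma$; since $\Delta_H(h)$ is exactly the factor by which this conjugation scales Haar measure on $U$, it equals $1$. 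If you prefer your character-theoretic phrasing, the identification $U=U_0$ is also what would make $\det\Ad(\cdot)|_{\goth{u}}$ restricted to $H_0$ a $\Q$-character and let your bounded-denominators argument run --- but that identification is the actual crux of the proof, and it is absent from your proposal.
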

We remark that in contrast to finite volume homogeneous spaces, for
infinite homogeneous spaces $H$ need not be unimodular. 
We also remark that in the statement of the Proposition one may
replace $\Xn$ with any homogeneous space $G/\Gamma$ where $G$ is a real algebraic group
and $\Gamma$ is a lattice. 

\begin{proof}
Let $H_{\Lambda}$ denote the stabilizer of $\Lambda$ in $H$. There is an injective {\em orbit map} 
$$
H/H_{\Lambda} \to \Xn, \ \ \ hH_{\Lambda} \mapsto h\Lambda, 
$$
and the assumption that $H\Lambda$ is closed implies that this map is
a homeomorphism onto its image. 
So it is enough to prove that there is an $H$-invariant
locally finite measure on $H/H_{\Lambda}$. For a Lie group $L$, let
$\Delta_L$ denote its modular function. 
In light of general facts about quotients of Lie groups (see
e.g. \cite[Chapter 1]{Raghunathan}), it is enough to show that
\eq{eq: need to show now}{\Delta_H|_{H_{\Lambda}} = \Delta_{H_{\Lambda}}.}
Since $H_{\Lambda}$ is
discrete, it is unimodular, so $\Delta_{H_{\Lambda}}$ is trivial. So we
need to show that the restriction of  $\Delta_H$ to $H_{\Lambda}$ is
trivial. We have the explicit formula 
$$\Delta_H(h) = |\det \Ad(h)|_{\goth{u}}|,$$ 
where $\goth{u}$ is the Lie algebra of the unipotent radical $U$ of
$H$. In other words $\Delta_H(h)$ is the multiplicative factor by which conjugation by $h$
multiplies the Haar measure on $U$. 

We will now show that $\Gamma \df U \cap H_{\Lambda}$ is a lattice in $U$. 
Indeed, 
let $H_0$ be as
in Proposition \ref{prop: Zcl contains unips}, let $H'$ be the
subgroup of $H$ generated by the unipotent elements of $H$, and let
$U_0, \, U'$ denote respectively the unipotent radicals of $H_0, \,
H'$.
By Proposition \ref{prop: Zcl contains unips}, 
$$H' \subset H_0 \subset H,$$ 
which implies 
$$ U \subset U_0 \subset U'.$$
On the other hand, $H'$ is a characteristic subgroup of $H$, and hence
so is its unipotent radical. Therefore $U'$ is normal in $H$,
which implies that $U=U'$. In particular $U=U_0$. It now follows from
\cite[Cor. 8.28]{Raghunathan} that $\Gamma \cap U$ is a lattice in $U$, as
required. 
\ignore{

 Clearly $U \subset U_0$, and
we claim that $U=U_0$. Indeed, let $H'$ be the subgroup of $H$
generated by the unipotent elements of $H$. Then $H' \subset H_0$ by
Proposition \ref{prop: Zcl contains unips}. The unipotent radical $U'$
of
$H'$ also contains $U$

 $R$ be the
radical of $H_0$, i.e. the largest connected normal solvable subgroup
of $H_0$. 
Then by Auslander's Theorem \cite[Chap. 8]{Raghunathan}, $R
\cap H_{\Lambda}$ is a lattice in $R$. 
Clearly $U \subset R$, and since $R$ is characteristic, the subgroup
of $R$ generated by unipotent elements of $R$ is normal in $H_0$,
i.e. it coincides with $U$. 
Therefore by a theorem of
Mostow \cite[Chap. 3]{Raghunathan}, $\Gamma$ is a lattice
in $U$. 
}
Since
conjugation by elements of $H_{\Lambda}$ preserves both $U$ and
$H_{\Lambda}$, it fixes $\Gamma$ and so fixes the covolume of
$U/\Gamma$. This implies \equ{eq: need to show now}.
\ignore{

$U \subset H_0$
and hence $U$ is normal in $H_0$ and is contained in $R$. Let $R_0$ be
the connected component of the identity in the group
$\overline{RH_{\Lambda}}$. Clearly $R \subset R_0$ but an argument of 
Auslander \cite{Auslander} using the Zassenhaus Lemma shows that $R_0$ is solvable
too. Moreover $R_0$ is invariant under conjugation by $\Lambda$ and
hence normal in $H_0$. This shows that $R=R_0$, i.e. the orbit $R
H_{\Lambda}$ is closed. Now in light of

 Let
$H_0$ denote the subgroup of $H$ generated by the unipotent elements
of $H$. We first show that the unipotent radical $U_0$ of $H_0$ coincides
with $U$. It is clear that $U$ is normal in $H_0$, hence $U \subset
U_0$. On the other hand, $H_0$ is normal in $H$, and conjugation by
elements of $H$ induces an automorphism of $H_0$. Since $U_0$ is
characteristic in $H_0$, this implies that $U_0$ is normal in $H$,
i.e. $U=U_0$, as claimed.

 Since $H\Lambda$ is closed, the closure of $H_0\Lambda$ is
contained in $H \Lambda$. From this it follows using Ratner's theorem
that $H_0\Lambda$ is also closed, and this implies via the Dani-Margulis
nondivergence estimates that $\Gamma \df H_0 \cap H_{\Lambda}$ is a lattice in
$H_0$. This implies via Auslander's theorem that $\Gamma \cap U$ is a
lattice in $U$. Since conjugations by elements of $H_{\Lambda}$
preserve both $U$ and $H_{\Lambda}$, we find that $\Gamma \cap U$ is
preserved under conjugation by elements of $H_{\Lambda}$. This proves
that the Haar measure on $U$ is preserved by conjugation by elements
of $H_{\Lambda}$. This implies \equ{eq: need to show now}. }
\end{proof}


\section{Ergodicity and consequences} 
\name{sec: ergodicity}
In this section we prove Theorem~\ref{u thm1} by establishing the ergodicity of the $A$-action with respect to homogeneous $A$-invariant measures (see Proposition~\ref{prop: ergodic}). In order to establish this, we study in some detail the structure of homogeneous $A$-invariant spaces in $\Xn$.

Let $X$ be a locally compact topological space, let $\mu$ be a locally finite Borel measure on $X$, and let $A$
be a group acting continuously on $X$ preserving 
$\mu$. The action is called {\em ergodic} if any invariant set is
either of zero measure, or its complement is of zero measure. Let $\on{supp}( \mu)$ denote the topological support of $\mu$. Then it is well-known (see  e.g. \cite{Zimmer}) that when the action is ergodic, any $A$-invariant measurable function $X \to \R$ is almost everywhere constant and for almost every $x \in X$, the orbit $Ax$ is dense in $\on{supp}(\mu)$.  
\begin{thm}\name{thm: ergodic+semicontinuous}
Let $X = \overline{A\Lambda_0}$ be an orbit-closure for the $A$-action on $\Xn$. Then 
$$\kappa(\Lambda_0) = \sup \{\kappa(\Lambda): \Lambda \in X\}.$$
In particular, if $\mu$ is an $A$-invariant and $A$-ergodic measure, then for $\mu$-almost every $\Lambda_0$ we have
\eq{eq: kappa}{
\kappa_\mu\df \sup\{\kappa(\Lambda): \Lambda \in \on{supp}(\mu)\} = \kappa(\Lambda_0)
}
and so the supremum in \equ{eq: kappa} is attained. Moreover
$\on{supp}(\mu)$ contains a lattice $\Lambda_{\max}$ with
$\kappa(\Lambda_{\max}) = \kappa_\mu$ such that the cube $\mathcal{C}$
of volume $2^n \kappa_\mu$ is admissible for $\Lambda_{\max}$ (so the
supremum in \equ{eq: defn const} is attained for $\Lambda =
\Lambda_{\max}, \, \BB=\mathcal{C}$).  
\end{thm}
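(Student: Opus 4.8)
The plan is to deduce everything from the properties of $\kappa$ recorded in Proposition~\ref{prop: standard} (lower semicontinuity and $A$-invariance), from Corollary~\ref{cor: cubes}, and from Mahler's criterion (Proposition~\ref{prop: Mahler}). For the first assertion I would fix $\Lambda\in X=\overline{A\Lambda_0}$, write $\Lambda=\lim_k a_k\Lambda_0$ with $a_k\in A$, and use $\kappa(a_k\Lambda_0)=\kappa(\Lambda_0)$ together with lower semicontinuity to get $\kappa(\Lambda)\le\liminf_k\kappa(a_k\Lambda_0)=\kappa(\Lambda_0)$; since $\Lambda_0\in X$ this gives $\kappa(\Lambda_0)=\sup\{\kappa(\Lambda):\Lambda\in X\}$. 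For the ergodic statement I would note that $\kappa$ is $A$-invariant and, being lower semicontinuous, Borel, hence $\mu$-a.e.\ equal to a constant; and that for $\mu$-a.e.\ $\Lambda_0$ one has both $\Lambda_0\in\on{supp}(\mu)$ and $A\Lambda_0$ dense in $\on{supp}(\mu)$, so that $\overline{A\Lambda_0}=\on{supp}(\mu)$ (the support being closed and $A$-invariant). Applying the first assertion with $X=\on{supp}(\mu)$ then identifies the a.e.\ constant value of $\kappa$ with $\sup\{\kappa(\Lambda):\Lambda\in\on{supp}(\mu)\}=\kappa_\mu$ and shows the supremum is attained at $\Lambda_0$, which is \eqref{eq: kappa}.

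For the ``moreover'' part I would fix $\Lambda_0$ as above (so $\kappa(\Lambda_0)=\kappa_\mu$) and, for each large integer $j$, let $\mathcal{C}_j$ be the symmetric cube of volume $2^n(\kappa_\mu-1/j)$. Since $\Vol(\mathcal{C}_j)<2^n\kappa(\Lambda_0)$, Corollary~\ref{cor: cubes} gives $a_j\in A$ with $\mathcal{C}_j$ admissible for $\Lambda_j\df a_j\Lambda_0$; note $\Lambda_j\in\on{supp}(\mu)$ and $\kappa(\Lambda_j)=\kappa_\mu$. For $j$ large $\mathcal{C}_j$ contains a fixed neighborhood of $0$, so the $\Lambda_j$ avoid a common neighborhood of $0$, and Proposition~\ref{prop: Mahler} shows $\{\Lambda_j\}$ has compact closure in $\Xn$; passing to a subsequence $\Lambda_{j_k}\to\Lambda_{\max}\in\on{supp}(\mu)$, the first assertion gives $\kappa(\Lambda_{\max})\le\kappa_\mu$. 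It then remains only to show the cube $\mathcal{C}$ of volume $2^n\kappa_\mu$ is admissible for $\Lambda_{\max}$: this forces $\kappa(\Lambda_{\max})\ge\Vol(\mathcal{C})/2^n=\kappa_\mu$, hence equality and the assertion that the supremum in \eqref{eq: defn const} is attained at $(\Lambda_{\max},\mathcal{C})$. To prove this admissibility I would argue by contradiction: if $0\ne v\in\Lambda_{\max}$ lies in $\interior(\mathcal{C})$, choose $v_k\in\Lambda_{j_k}$ with $v_k\to v$; since the open cubes $\interior(\mathcal{C}_j)$ increase to $\interior(\mathcal{C})$, the point $v$ lies well inside $\mathcal{C}_j$ for $j$ large, hence $v_k\in\interior(\mathcal{C}_{j_k})$ and $v_k\ne0$ for $k$ large, contradicting admissibility of $\mathcal{C}_{j_k}$ for $\Lambda_{j_k}$.

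I expect the only genuinely delicate step to be this last limiting argument: one must simultaneously prevent the translates $a_j\Lambda_0$ from diverging in $\Xn$ --- handled by Mahler's criterion using the uniform lower bound $\Vol(\mathcal{C}_j)\to 2^n\kappa_\mu>0$ --- and verify that admissibility of the slightly undersized cubes $\mathcal{C}_j$ survives passage to the limiting cube $\mathcal{C}$. The remaining assertions are formal consequences of lower semicontinuity, $A$-invariance and ergodicity.
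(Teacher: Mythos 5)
Your proposal is correct and follows essentially the same route as the paper: the first two assertions via lower semicontinuity, $A$-invariance and density of almost every $A$-orbit in the support, and the last via Corollary~\ref{cor: cubes} applied to cubes of volume increasing to $2^n\kappa_\mu$, Mahler compactness, and passing admissibility to the limit (a step the paper compresses to ``by construction''). No gaps.
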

\begin{proof}
By Proposition \ref{prop: standard}(ii),(iii), for any $\Lambda \in
X$, $\kappa(\Lambda) \leq \kappa(\Lambda_0)$. This proves the first
assertion. The second one follows taking $X = \on{supp}(\mu)$ and
recalling that almost every $A$-orbit in $X$ is dense.   

For the last assertion, let $\Lambda \in \on{supp}(\mu)$ with
$\kappa(\Lambda) = \kappa_\mu$ and let $C_k$ be a sequence of
symmetric cubes with $\Vol(C_k) \nearrow 2^n \kappa(\Lambda)$. For
each $k$, by Corollary \ref{cor: cubes} there is $a_k \in A$ so that
$C_k$ is admissible for $a_k(\Lambda)$ and the sequence
$\{a_k\Lambda\}$ is contained in a bounded subset of $\Xn$. Let
$\Lambda_{\max}$ be a limit of a converging subsequence of $\{a_k
\Lambda\}$. Then $\Lambda_{\max} \in \on{supp}(\mu)$ since
$\on{supp}(\mu)$ is closed and $A$-invariant. Moreover by
construction, the cube of volume $2^n \kappa_\mu$ is admissible for
$\Lambda_{\max}$. This implies that $\Lambda_{\max}$ has the required
properties.  
\end{proof}

\subsection{Block groups}
Given a partition of the indices $\{1,\dots, n\}$
\eq{eq: partition0}{
\PPP \df \left( \{1, \ldots, n\} = \bigsqcup Q_{\ell} \right) ,
}
we define the  {\em block group} corresponding to $\PPP$ to be the connected subgroup $H=H(\PPP)$ of $G$ whose Lie algebra is 
\eq{eq: equiblock group}{
\goth{h} = \goth{a} \oplus \bigoplus_{\ell} \bigoplus_{s,t \in Q_{\ell}} \goth{g}_{st}
}
where $\goth{a}$ are the Lie algebra of $A$ and $\goth{g}_{st}$ is the
one-dimensional Lie algebra spanned by the matrix with 1 in the entry
$(s,t)$ and 0 elsewhere (note that we always have $A \subset H$). We
refer to the elements $Q_\ell$ of $\PPP$ as the \textit{blocks} of the
partition and denote by $|\PPP|$ the number of blocks. When the blocks
are of equal size we say that $\PPP$ is an \textit{equiblock
  partition} and $H(\PPP)$ is an \textit{equiblock group}.  
Given a partition $\PPP$ we shall denote by $\sim_\PPP$ the
equivalence relation it defines on $\set{1, \dots, n}$.  
Our interest in block groups is explained by the following: 
\begin{prop}\name{prop: why block groups}
Let $H\Lambda\subset\Xn$ be a homogeneous space (i.e.\ a closed orbit
of a closed connected subgroup  $H\subset G$). Then if $H\Lambda$ is
$A$-invariant, then $A\subset H$ and  
$H=H(\PPP)$ for some partition $\PPP$.
\end{prop}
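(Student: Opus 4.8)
The plan is to analyze the normalizer of $A$ and then show that any connected group $H$ for which $H\Lambda$ is $A$-invariant and closed must contain $A$ and be a block group. The $A$-invariance of the orbit $H\Lambda$ means that $AH\Lambda = H\Lambda$, so $aHa^{-1}(a\Lambda) = H\Lambda$ for all $a \in A$; taking closures and using that $H\Lambda$ is a single closed orbit of the connected group $\overline{AH}$ (or rather comparing dimensions), I would first argue that $A$ normalizes $H$, i.e. $aHa^{-1} = H$ for all $a \in A$. Indeed, $aHa^{-1}$ is a connected subgroup whose orbit through $a\Lambda \in H\Lambda$ is again $H\Lambda$; since the orbit map of a closed orbit is a homeomorphism onto its image, $aHa^{-1}$ and $H$ have the same dimension and $aHa^{-1}(a\Lambda) = H(a\Lambda)$, which forces $aHa^{-1} = H$ by comparing the connected stabilizers inside the full group acting on the closed orbit. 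This shows $A$ normalizes $H$, hence $\goth{h} = \Lie(H)$ is invariant under $\Ad(A)$.

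Next I would use the root-space decomposition of $\goth{g} = \sln_n(\R)$ with respect to $A$: one has $\goth{g} = \goth{a} \oplus \bigoplus_{s \neq t} \goth{g}_{st}$, where $\goth{g}_{st}$ is the line spanned by the elementary matrix $E_{st}$, and these are precisely the weight spaces for the $\Ad(A)$-action, with pairwise distinct characters $a \mapsto a_s/a_t$. Since $\goth{h}$ is $\Ad(A)$-invariant and $A$ is connected, $\goth{h}$ must be a sum of weight spaces, so $\goth{h} = (\goth{h} \cap \goth{a}) \oplus \bigoplus_{(s,t) \in S} \goth{g}_{st}$ for some subset $S \subset \{(s,t): s \neq t\}$. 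Then I would show $\goth{a} \subset \goth{h}$: the orbit $H\Lambda$ is $A$-invariant and closed, so by Proposition~\ref{prop: Zcl contains unips} applied with $H$ replaced by $\overline{AH}$ (whose orbit is the same closed set), together with the hypothesis that $H$ is the connected group whose orbit this is, one deduces $A \subset H$; alternatively, if $\goth{a} \cap \goth{h}$ were a proper subspace of $\goth{a}$, then $AH\Lambda$ would be a strictly larger orbit than $H\Lambda$, contradicting $A$-invariance. This gives $A \subset H$ and $\goth{a} \subset \goth{h}$.

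It remains to show that the set $S$ is of the block form, i.e. there is a partition $\PPP$ with $S = \{(s,t): s \sim_{\PPP} t,\ s\neq t\}$. Since $\goth{h}$ is a Lie subalgebra, $S$ must be closed under the bracket operation on root spaces: $[\goth{g}_{st}, \goth{g}_{tu}] = \goth{g}_{su}$ (up to scalar) when $s \neq u$, so $(s,t), (t,u) \in S$ with $s \neq u$ implies $(s,u) \in S$. Also, since $H$ is a group (not just its Lie algebra is a subalgebra), and more to the point since $H\Lambda$ is a closed orbit, Proposition~\ref{prop: Zcl contains unips} tells us the connected component of the Zariski closure of the stabilizer contains all unipotent elements of $H$; combined with the bracket relation $[\goth{g}_{st}, \goth{g}_{ts}] \subset \goth{a}$ being automatically in $\goth{h}$, and with transitivity, I would argue that the relation $s \approx t$ defined by ``$s = t$ or $(s,t) \in S$'' is an equivalence relation — reflexivity is built in, transitivity follows from the bracket closure, and symmetry is the one point needing the group structure: if $(s,t) \in S$ then the one-parameter unipotent group $\exp(\R E_{st})$ lies in $H$, and I claim $(t,s) \in S$ as well. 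The cleanest way is to observe that $\goth{h}$, being the Lie algebra of a real algebraic (or at least a closed connected) subgroup $H$ normalized by the diagonal, and with the orbit $H\Lambda$ closed, must be \emph{reductive in} $\goth{g}$ in the relevant sense, or directly: $H$ contains a semisimple part forcing root strings to be symmetric. I would verify symmetry by noting that $\goth{h} \cap \goth{g}_{st} \neq 0$ and $\goth{a} \subset \goth{h}$ imply, via the $\SL_2$-triple generated together with the corresponding coroot in $\goth{a}$, that $\goth{g}_{ts} \subset \goth{h}$ too. Once $\approx$ is an equivalence relation, its classes form the partition $\PPP$, and comparing Lie algebras gives $H = H(\PPP)$ since both are connected with the same Lie algebra.

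The main obstacle I anticipate is the \emph{symmetry} step: showing that $\goth{g}_{st} \subset \goth{h}$ forces $\goth{g}_{ts} \subset \goth{h}$. Without it, $S$ could be the strictly-upper-triangular set of a partial order, giving a non-reductive $H$ (e.g. a parabolic's unipotent radical times $A$), whose orbit need not be closed — so this is exactly where the hypothesis that $H\Lambda$ is \emph{closed} must enter, presumably through Proposition~\ref{prop: Zcl contains unips} and the non-divergence of unipotent orbits: a one-parameter unipotent subgroup $\exp(\R E_{st}) \subset H$ would, if $\goth{g}_{ts} \not\subset \goth{h}$, typically yield a divergent orbit in the absence of the compensating opposite unipotent, contradicting that $H\Lambda$ is closed and hence that unipotent trajectories in it are non-divergent. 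I would make this precise by pushing the argument of Proposition~\ref{prop: Zcl contains unips} one step further.
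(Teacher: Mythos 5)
Your reduction to the symmetry step is exactly where the paper's proof also concentrates: the facts that $A\subset H$ (this is immediate from Lemma~\ref{containment of groups}, since $A\Lam\subset H\Lam$ and $A$ is connected), that $\goth{h}$ is then a sum of $\goth{a}$ and root spaces $\goth{g}_{st}$, and that transitivity of the relation follows from the bracket, are all as in the paper. But the symmetry step itself is a genuine gap in your proposal, and neither of the two mechanisms you suggest can close it. The purely algebraic one fails outright: from $\goth{a}\subset\goth{h}$ and $\goth{g}_{st}\subset\goth{h}$ you only get the Borel subalgebra of the relevant $\SL_2$, not an $\SL_2$-triple; the full upper-triangular subalgebra $\goth{a}\oplus\bigoplus_{s<t}\goth{g}_{st}$ is a closed connected subgroup's Lie algebra containing $\goth{a}$ and closed under brackets, so no ``reductivity'' is forced by the algebra alone --- as you yourself observe. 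The dynamical fix you sketch, however, aims at the wrong object: Proposition~\ref{prop: Zcl contains unips} and Margulis nondivergence constrain unipotent elements that are \emph{already in} $H$ (placing them in the Zariski closure of the stabilizer), whereas what must be produced is the \emph{opposite} unipotent $u_{ts}$, which is not known to lie in $H$; moreover one-parameter unipotent trajectories of points of $\Xn$ are never divergent, so there is no contradiction of the kind ``$u_{st}\subset H$ without $u_{ts}$ typically yields a divergent orbit'' to be extracted, and ``pushing Proposition~\ref{prop: Zcl contains unips} one step further'' gives no handle on elements outside $H$.

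The missing ingredient is an orbit-closure theorem. The paper's argument is: let $H_0\cong\SL_2(\R)$ be the copy generated by $u_{st}$ and $u_{ts}$, and let $B\subset H_0$ be its Borel subgroup, generated by $u_{st}$ together with a one-parameter subgroup $A_0\subset A$; then $B\subset H$ (here is where $A\subset H$ and $\goth{g}_{st}\subset\goth{h}$ are used), so $\overline{B\Lam}\subset H\Lam$ because $H\Lam$ is closed. By Ratner's work (see \cite{Ratner - padic}) one has $\overline{B\Lam}=\overline{H_0\Lam}$, hence $u_{ts}(x)\Lam\in H\Lam$ for all $x$, and Lemma~\ref{containment of groups} then gives $u_{ts}\subset H$, i.e.\ $\goth{g}_{ts}\subset\goth{h}$. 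So the closedness hypothesis enters not through nondivergence of unipotents but through the fact that a Borel (or expanding horospherical) orbit closure inside the closed set $H\Lam$ already contains the whole $\SL_2$-orbit closure; without citing this (or an equivalent equidistribution/density statement for horocycles), the symmetry step, and with it the proposition, remains unproved.
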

We will use the following simple Lemma whose proof is left to the reader.
\begin{lem}\label{containment of groups}
If $H_1\Lam\subset H_2\Lam$ is a containment of two orbits in $\Xn$ of
closed groups $H_1,H_2$ and $H_1$ is connected, then $H_1\subset
H_2$. 
\end{lem}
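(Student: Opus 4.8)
The plan is to translate the orbit containment into a purely group-theoretic inclusion inside $G$ and then exploit the connectedness of $H_1$. Let $\Gamma$ denote the stabilizer of $\Lambda$ in $G$; it is a discrete subgroup (a conjugate of $\SL_n(\Z)$), and in particular it is countable. Since every element of $\Gamma$ fixes $\Lambda$, for $h_1\in H_1$ we have $h_1\Lambda\in H_2\Lambda$ if and only if $h_1=h_2\gamma$ for some $h_2\in H_2$ and $\gamma\in\Gamma$. Hence the hypothesis $H_1\Lam\subseteq H_2\Lam$ is equivalent to the inclusion of subsets of $G$
\[
H_1\subseteq H_2\Gamma\df\set{h\gamma:h\in H_2,\ \gamma\in\Gamma}.
\]
Note that neither the topology of $\Xn$ nor closedness of the orbit $H_2\Lam$ is needed for this reformulation.

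Next I would decompose $H_2\Gamma$ into cosets. Choosing representatives $\set{\gamma_j}_j$ for the (countably many) right cosets of $H_2\cap\Gamma$ in $\Gamma$, one obtains a \emph{disjoint} union $H_2\Gamma=\bigsqcup_j H_2\gamma_j$, and since $H_2$ is closed in $G$ each $H_2\gamma_j$ is closed in $G$. Intersecting with $H_1$ therefore exhibits $H_1$ as a partition
\[
H_1=\bigsqcup_j\pa{H_1\cap H_2\gamma_j}
\]
into countably many pairwise disjoint subsets, each relatively closed in $H_1$; the block corresponding to the coset of the identity contains $e$, and in fact equals $H_1\cap H_2$.

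The last step uses that $H_1$, being a closed connected subgroup of $\SL_n(\R)$, is a connected Lie group, hence path-connected. Given $h\in H_1$, pick a path $p\colon[0,1]\to H_1$ from $e$ to $h$. Its image $p([0,1])$ is a metric continuum which, by the previous paragraph, is partitioned into countably many pairwise disjoint relatively closed subsets. By Sierpi\'nski's theorem --- a continuum cannot be written as a union of countably many (but at least two) pairwise disjoint nonempty closed sets --- exactly one of these blocks is nonempty, and it must be the one containing $p(0)=e$, i.e.\ $p([0,1])\subseteq H_1\cap H_2\subseteq H_2$. In particular $h\in H_2$, and since $h$ was arbitrary, $H_1\subseteq H_2$. (If one prefers to avoid paths: choose a relatively compact connected open neighbourhood $W$ of $e$ in $H_1$; then $\overline{W}$ is a continuum, the same argument gives $\overline{W}\subseteq H_2$, and $H_1=\bigcup_{k\ge1}W^k\subseteq H_2$ because $H_1$ is connected and thus generated by $W$.)

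The only genuinely non-bookkeeping ingredient is the topological input of the third step, namely Sierpi\'nski's theorem (equivalently, a Baire-category argument in the locally connected, locally compact space $H_1$). The two places that actually use the hypotheses are the discreteness --- hence countability --- of $\Gamma$ and the closedness of $H_2$, which together are exactly what makes the decomposition of $H_1$ be into \emph{countably many closed} pieces; I do not expect any serious difficulty beyond invoking these two facts correctly.
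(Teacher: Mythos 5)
Your argument is correct. Note that the paper itself gives no proof to compare against: the lemma is stated with ``proof left to the reader,'' so any valid argument is acceptable here, and yours is one. The reduction you make is exactly the expected one --- the orbit containment is equivalent to $H_1\subseteq H_2\Gamma$ with $\Gamma=\on{Stab}_G(\Lambda)$ countable and discrete, and $H_2\Gamma$ is a countable disjoint union of closed cosets $H_2\gamma_j$ --- and your verification of the disjointness (indexing by right cosets of $H_2\cap\Gamma$) and of the identity block being $H_1\cap H_2$ is accurate. Where you diverge from the ``simple'' argument the authors presumably had in mind is the final step: instead of invoking Sierpi\'nski's theorem on continua via a path image (or via $\overline{W}$), one usually applies Baire category directly to $H_1$: since $H_1$ is a locally compact, second countable (hence Baire) group covered by the countably many relatively closed sets $H_1\cap H_2\gamma_j$, some $H_1\cap H_2\gamma_j$ contains a nonempty open subset $U$ of $H_1$; then $UU^{-1}\subseteq H_2$ is a neighborhood of $e$ in $H_1$, and connectedness of $H_1$ gives $H_1=\bigcup_k (UU^{-1})^k\subseteq H_2$. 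This lighter route avoids both path-connectedness (so it does not need the closed subgroup theorem) and Sierpi\'nski's theorem, and uses only that $H_1$ is a closed connected subgroup of a second countable locally compact group; your version buys nothing extra but is perfectly sound, and your parenthetical variant with $\overline{W}$ and $H_1=\bigcup_k W^k$ is essentially the same generation-by-a-neighborhood step.
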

\qed
\begin{proof}[Proof of Proposition \ref{prop: why block groups}]
The fact that $A\subset H$ follows from Lemma~\ref{containment of groups}.
Note that if $H\subset G$ is a closed connected subgroup containing
the diagonal group then in the above notation, 
 the Lie algebra of $H$ satisfies 
$\goth{h} = \goth{a} \oplus \bigoplus \goth{g}_{st}$, where the sum is
taken over some subset of the set of pairs $(s,t)$. Since $\goth{h}$
is a Lie algebra, $\goth{g}_{s,t}, \goth{g}_{t,u} \subset \goth{h}$
implies $\goth{g}_{s,u} \subset \goth{h}$. We need to show that 
$$
\goth{g}_{s,t} \subset \goth{h} \ \implies \ \goth{g}_{t,s} \subset \goth{h}.
$$
Let $u_{st} \subset H$ be the one parameter unipotent group with
Lie algebra   
$\goth{g}_{st}$. There exists a one-parameter subgroup $A_0 \subset A$,
such that the group $B$ generated by
$A_0, \, \{u_{ts}(x)\}$ is the Borel subgroup of the copy of
$\SL_2(\R)\subset G$ which is generated by the two groups
$u_{ts}(x),u_{st}(x)$. We denote this copy of $\SL_2(\R)$ by $H_0$.  
As $H\Lam$ is assumed to be closed we have 
$H\Lam \supset \overline{B\Lam}$. By the work
of Ratner (see \cite{Ratner - padic} for a short proof) we have
$\overline{H_0\Lam}= \overline{B\Lam}$ and so we conclude that
$\set{u_{st}(x)\Lam:x\in\R}\subset H\Lam$. 
By Lemma~\ref{containment of groups}, $\set{u_{st}(x)}\subset H$ as desired.
\end{proof}
We will see in Corollary~\ref{equi must} that in the case $H\Lam$ is
of finite volume, there are further restrictions on the partition
$\PPP$ in the above proposition.

\subsection{Structure of $A$-invariant homogeneous measures}
Let $H=H(\PPP)$ be a block group. Then it can be written as an almost direct
product $H(\PPP)=Z(\PPP)\cdot S(\PPP)$, where $Z(\PPP)\subset A$ is
the  center of $H(\PPP);$ that is, $Z(\PPP)$ contains those diagonal
matrices in $H(\PPP)$ that have constant eigenvalues along the blocks
of $\PPP$, and  
 $S(\PPP)$ is the commutator group of $H(\PPP)$. More concretely,
 $S(\PPP)$ is the semisimple group of matrices having the block
 structure given by $\PPP$ with the further requirement that the
 determinant of each block is 1. 
 
 The following proposition shows that an $A$-invariant homogeneous
 measure has a simple product structure. 
\begin{prop}\name{prop: product structure}
Let $H=H(\PPP)$ be a block group, $H\Lambda$ an $A$-invariant
homogeneous space, and $\mu=\mu_{H\Lambda}$ the corresponding
$A$-invariant measure.  
Then there is a decomposition 
of $Z=Z(\PPP)$ as a direct product $Z=Z_s\cdot Z_a$ such that 
\begin{enumerate}
\item\label{deco.2} If $H_1=Z_a\cdot S$, where $S=S(\PPP)$, then 
  $H_1\Lam$ is of finite volume. 
\item\label{deco.1}  The map 
$$Z_s\to Z_s\Lam, \ \ z\mapsto z\Lam$$
  is proper, and a homeomorphism onto its image. In particular, the orbit $Z_s\Lam$ is
  divergent. 
\item\label{deco.3} The map 
$$Z_s\times H_1\Lam \to
  H\Lam, \ \ (z,h\Lam)\mapsto zh\Lam$$ 
is a homeomorphism onto its image, under which $\mu$ is identified
with 
 $\nu\times \mu_{H_1\Lam}$,
where  $\nu$ is Haar measure on $Z_s$. 
\end{enumerate}
\end{prop}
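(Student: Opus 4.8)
The plan is to understand the stabilizer $H_\Lambda$ through its Zariski closure and then use the decomposition $H(\PPP)=Z(\PPP)\cdot S(\PPP)$ to peel off a divergent abelian factor.

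\emph{Step 1: isolating the finite volume part.} Let $H_0$ be the Zariski closure of $H_\Lambda$ in $G$. By Proposition~\ref{prop: Zcl contains unips}, $H^\circ_0$ contains every unipotent element of $H$; these are precisely the unipotent elements of $S=S(\PPP)$ (a block-diagonal matrix is unipotent iff every block is), and they generate $S$ since each nontrivial block of $\PPP$ contributes a factor $\SL_{|Q_\ell|}(\R)$, which is generated by unipotents. As $H^\circ_0$ is a group, $S\subset H^\circ_0\subset H$. Since $Z=Z(\PPP)$ is central, $S$ is the commutator subgroup (hence normal), and $Z\cap S=\{e\}$ (a positive diagonal matrix that is a root of unity on each block is trivial), the direct product $H=Z\times S$ lets us define $Z_a$ as the image of $H^\circ_0$ under the projection $H\to Z$; one checks, using $S\subset H^\circ_0$, that $Z_a$ is a connected algebraic subgroup of $Z$ and $H^\circ_0=Z_a\times S=:H_1$, a connected normal subgroup of $H$ with $H/H_1$ a vector group. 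Each element of $H_\Lambda$ is $\Lambda$-rational, so $H_0$, being the Zariski closure of a set of $\Lambda$-rational points, is defined over $\Q$ with respect to the $\Q$-structure induced by $\Lambda$, and hence so is $H_1=H^\circ_0$. The image of $H_\Lambda$ in $H/H_1$ is contained in the image of $H_0$, which is finite; a finite subgroup of a vector group is trivial, so $H_\Lambda\subset H_1$ and therefore $H_0=H_1$ (in particular $H_0$ is connected). Finally, by the bounded denominators argument in the proof of Proposition~\ref{prop: Zcl contains unips}, every $\Q$-character of $H_1$ is trivial on the Zariski dense subgroup $H_\Lambda$, hence trivial, so Proposition~\ref{thm: BHC} gives that $H_1\Lambda$ is of finite volume. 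This is \eqref{deco.2}, and $H_1=H^\circ_0$ is intrinsic to the orbit.

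\emph{Step 2: splitting the orbit and the measure.} Pick any linear complement $Z_s$ to $Z_a$ in the vector group $Z$, so $H=Z_s\times H_1$ as Lie groups with $Z_s$ central. As $H_\Lambda\subset H_1$ and $Z_s\cap H_1=\{e\}$, the stabilizer $H_{1,\Lambda}\df H_\Lambda\cap H_1$ coincides with $H_\Lambda$ and lies in the $H_1$-factor, so
$$H/H_\Lambda=(Z_s\times H_1)/(\{e\}\times H_{1,\Lambda})\cong Z_s\times(H_1/H_{1,\Lambda}).$$
Since $H\Lambda$ and $H_1\Lambda$ are closed, the orbit maps $H/H_\Lambda\to H\Lambda$ and $H_1/H_{1,\Lambda}\to H_1\Lambda$ are homeomorphisms (as recalled in the proof of Proposition~\ref{prop: new closed}), so this identification is exactly the map $(z,h\Lambda)\mapsto zh\Lambda$, which is therefore a homeomorphism onto $H\Lambda$; this is the topological content of \eqref{deco.3}. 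Restricting to one fiber shows $z\mapsto z\Lambda$ is a homeomorphism of $Z_s$ onto a closed subset of $H\Lambda$, hence proper, and since $Z_s$ is a noncompact vector group (unless it is trivial) the orbit $Z_s\Lambda$ is divergent --- this is \eqref{deco.1}. For the measures, under $H=Z_s\times H_1$ the left $H$-action on $Z_s\times(H_1/H_{1,\Lambda})$ is the product of left translation of $Z_s$ on itself with the left $H_1$-action on $H_1/H_{1,\Lambda}$, so the $H$-invariant locally finite measure --- unique up to scaling by Proposition~\ref{prop: new closed} --- is $\nu\times\mu_{H_1\Lambda}$ with $\nu$ Haar on $Z_s$; transporting along the homeomorphism above identifies $\mu=\mu_{H\Lambda}$ with $\nu\times\mu_{H_1\Lambda}$, completing \eqref{deco.3}.

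The main obstacle is Step 1: one must correctly identify $H^\circ_0$ as $Z_a\cdot S$ and, crucially, prove $H_\Lambda\subset H_1$, so that the homogeneous space genuinely splits as a product rather than as a nontrivial bundle over $Z_s$; the $\Q$-rationality bookkeeping needed to invoke Borel--Harish-Chandra also sits here. Once this structure is in place, Step 2 is formal.
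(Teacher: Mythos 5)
Your argument is correct in substance and follows essentially the same route as the paper's proof: take $H_1$ to be the identity component of the Zariski closure $H_0$ of $H_\Lambda$, use Proposition~\ref{prop: Zcl contains unips} to get $S\subset H_1$ and hence $H_1=Z_a\cdot S$ with $H_1\Lam$ of finite volume, pick a complement $Z_s$, identify $H/H_\Lambda$ with $Z_s\times H_1/H_\Lambda$, get properness/homeomorphism from closedness of the two orbits, and get the product measure from uniqueness of the invariant measure. Two local corrections are needed in Step 1. First, your parenthetical ``therefore $H_0=H_1$ (in particular $H_0$ is connected)'' is false in general: the Hausdorff identity component need not be Zariski closed, so $H_\Lambda\subset H_1$ does not force $H_0\subset H_1$ (already for $n=2$, $H=A$ and a number field lattice, $H_\Lambda$ is generated by a positive unit matrix and lies in $H_0^\circ$, while $H_0(\R)$ also contains the component with negative entries). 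Fortunately this claim is not needed: since $H_\Lambda\subseteq H_1\subseteq H_0$ and $H_0$ is the Zariski closure of $H_\Lambda$, the group $H_\Lambda$ is automatically Zariski dense in $H_1$, so your character/Borel--Harish-Chandra step (Proposition~\ref{thm: BHC}) goes through unchanged. Second, $H_0$ need not be contained in $H$ (again by the sign issue), so ``the image of $H_0$ in $H/H_1$'' is not literally defined; the intended and correct statement is that $H_0(\R)$ has finitely many Hausdorff components, so $H_\Lambda$ meets finitely many cosets of $H_1$ and its image in the torsion-free vector group $H/H_1\cong Z/Z_a$ is a finite, hence trivial, subgroup. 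With that phrasing your deduction $H_\Lambda\subset H_1$ is actually a small improvement over the paper, which only says ``replacing $H_\Lambda$ by a finite-index subgroup if necessary''; the on-the-nose containment is what makes the clean product identification in Step 2 legitimate.
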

\begin{remark}
Note that, since $Z$ is central in $H$, the conclusions~\eqref{deco.2} and
\eqref{deco.1} hold for any $\Lam'\in H\Lam$. 
\end{remark}
\begin{proof}
Let $H_1\subset H$ denote the connected component of the identity in
the Zariski closure of the stabilizer group 
$H_\Lambda$. Replacing $H_{\Lambda}$ with a finite-index subgroup if necessary, we
may assume that $H_{\Lambda} \subset H_1$. It follows from Proposition \ref{prop: Zcl contains unips} 
that 
the orbit $H_1\Lam \cong H_1/H_{\Lambda}$ is of finite 
volume, and also that 
$S\subset H_1$. Since 
$$S\subset H_1\subset Z\cdot S=H$$ 
we find that 
 $H_1=Z_a\cdot S$, where $Z_a\df H_1\cap Z$ which
 establishes~\eqref{deco.2}. 

Let $Z_s$ be any direct complement of $Z_a$ in $Z$; that is, a
subgroup of $Z$ such that  $Z=Z_s\cdot Z_a$ (a direct product).  
Consider the natural embeddings
\begin{equation}\label{n.emb}
H_1/H_\Lam\hookrightarrow H/H_\Lam\hookrightarrow H\Lam\subset \Xn
\end{equation}
and note that as $H=Z_s\cdot H_1$ and $H_\Lam\subset H_1$,  the space
$H/H_{\Lam}$ is naturally identified with $Z_s\times H_1/H_\Lam$. 
As the orbits $H\Lam, H_1\Lam$
are closed, the embeddings in~\eqref{n.emb} are proper.  Moreover our
choice of $Z_s$ implies that the stabilizer of $\Lambda$ in $Z_s$ is
trivial. 
This implies 
\eqref{deco.1} and the first statement
of~\eqref{deco.3}. The statement regarding the measures now follows
from the uniqueness of an $H$-invariant measure
\cite[Chap. 1]{Raghunathan} on $H/H_{\Lambda}$. 
\end{proof}

\begin{prop}\name{prop: ergodic}
Let $\mu$ be an $A$-invariant homogeneous measure on $\Xn$
corresponding to the closed orbit $H\Lambda$. Then the $A$-action is
ergodic with respect to $\mu$. 
\end{prop}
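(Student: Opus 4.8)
The plan is to reduce the ergodicity of the $A$-action to a known ergodicity statement for the finite-volume orbit $H_1\Lam$ produced by Proposition~\ref{prop: product structure}, using the product decomposition $H\Lam \cong Z_s \times H_1\Lam$ provided there. First I would split $A$ compatibly with this decomposition: write $A = Z_s \cdot A_1$, where $A_1 = A \cap H_1$ is the part of the diagonal group sitting inside $H_1$ (so $A_1 \supset Z_a$, and $A_1$ together with the diagonal of $S(\PPP)$ exhausts $A \cap H_1$). Under the identification of Proposition~\ref{prop: product structure}\eqref{deco.3}, the $Z_s$-factor of $A$ acts by translation on the first coordinate of $Z_s \times H_1\Lam$ (preserving Haar measure $\nu$), and $A_1$ acts only on the second coordinate $H_1\Lam$ (preserving $\mu_{H_1\Lam}$). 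Thus $\mu = \nu \times \mu_{H_1\Lam}$ is invariant, and the $A$-action is a product of the translation action of $Z_s$ on $(Z_s, \nu)$ with the $A_1$-action on $(H_1\Lam, \mu_{H_1\Lam})$.

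The main step is then to show that each factor action is ergodic, and to conclude ergodicity of the product. For the $Z_s$-factor: translation of a vector group on itself (with Haar measure) is ergodic — indeed, any invariant function is invariant under a dense, hence all of, $Z_s$, so it is constant; alternatively one notes $Z_s \cong \R^d$ and translation by $\R^d$ on $\R^d$ is ergodic. For the $H_1\Lam$-factor: $H_1\Lam$ is a finite-volume homogeneous space, $A_1$ contains the full diagonal subgroup of the semisimple factor $S = S(\PPP)$ (whose blocks are $\GL$-type blocks of determinant one), and the diagonal subgroup of $\SL_k(\R)$ acting on any finite-volume quotient $\SL_k(\R)/\Gamma$ is ergodic by the Howe--Moore theorem / Mautner phenomenon; since $H_1\Lam$ is built (up to the central torus $Z_a$, which acts trivially on the $S$-orbit directions but mixes on its own compact-or-divergent part — here, being inside $H_1$ with $H_1\Lam$ finite volume, $Z_a\Lam$ has finite-volume hence compact closure) from such $\SL_k$ factors, the $A_1$-action on $(H_1\Lam,\mu_{H_1\Lam})$ is ergodic. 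More carefully, I would decompose $S(\PPP)$ as an almost direct product of the $\SL_{k_\ell}(\R)$'s attached to the blocks $Q_\ell$, observe that the diagonal subgroup of each factor acts ergodically on the corresponding quotient by Moore's theorem, and handle the remaining compact directions coming from $Z_a$ by a standard Mautner-type argument or by noting a continuous invariant function descends.

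Finally, I would combine the two ergodic factors: a measure-preserving action of $A = Z_s \times A_1$ which is a direct product of an ergodic $Z_s$-action and an ergodic $A_1$-action need not be ergodic in general, so the cleanest route is to argue directly. Given an $A$-invariant $L^2$ function $f$ on $Z_s \times H_1\Lam$, for $\mu_{H_1\Lam}$-a.e.\ fixed $y$ the slice $f(\cdot, y)$ is $Z_s$-invariant hence constant in the first variable; so $f$ descends to an $A_1$-invariant function on $H_1\Lam$, which by ergodicity of the $A_1$-action is constant. (The measurability/Fubini juggling needed to make ``for a.e.\ $y$'' precise is routine.) The hard part, and the only place requiring genuine input, is the ergodicity of the diagonal action on the finite-volume factor $H_1\Lam$ — i.e.\ correctly identifying which diagonal subgroup lands in $H_1$ and invoking Moore's ergodicity theorem in the presence of the central torus $Z_a$; everything else is formal manipulation of the product structure already established in Proposition~\ref{prop: product structure}.
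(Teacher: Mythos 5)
Your argument is correct and is essentially the paper's own proof: the paper likewise identifies $H\Lambda$ with $Z_s\times H_1\Lambda$ via Proposition~\ref{prop: product structure}, notes that since $Z_s\subset A$ the claim reduces to ergodicity of the $A\cap H_1$-action on the finite-volume factor $(H_1\Lambda,\mu_{H_1\Lambda})$, and invokes the Howe--Moore theorem. Your additional care with the slice/Fubini reduction and with the central part $Z_a$ (which requires no separate treatment, as $Z_a\subset A\cap H_1$ so invariance under it is automatic) merely fills in details the paper leaves implicit.
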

\begin{proof}
We use the notation of Proposition~\ref{prop: product structure}.
Identifying the orbit $H\Lam$  with the product $Z_s\times H_1\Lam$ we
see that, since $Z_s\subset A$, the statement reduces to the
ergodicity of the action of 
$A\cap H_1$ with respect to the finite
$H_1$-invariant measure $\mu_{H_1\Lam}$. The latter statement follows
from the 
Howe-Moore Theorem (see e.g.\ \cite{Zimmer}). 
\end{proof}
\begin{proof}[Proof of Theorem \ref{u thm1}]
The statement follows from Theorem \ref{thm: ergodic+semicontinuous}
and Proposition \ref{prop: ergodic}. 
\end{proof}

  \begin{remark}\label{anisotropic torus}
  We use the notation of Proposition~\ref{prop: product structure}.
  \begin{enumerate}
  \item It is clear from Proposition~\ref{prop: product structure},
    that the closed orbit $H\Lambda$ is of finite volume if and only
    if $Z=Z_a$. 
  \item\label{r.ait.2} The group $Z_a$ in Proposition~\ref{prop:
      product structure} is the center of $H_1$ and so is  a  
 $\Q$-group itself (with respect to the $\Q$-structure induced by
 $\Lam$). Moreover, it has no non-trivial $\Q$-characters as these
 will induce corresponding ones on $H_1$ because $H_1$ is an almost direct
 product $H_1=Z_a\cdot S$. By the Borel Harish-Chandra Theorem it
 follows that $Z_a\Lam$ is of finite volume (which in this case means
 compact) or in other words, if we denote
 $Z_\Lambda=\operatorname{Stab}_Z(\Lambda)$ then $Z_\Lambda$ is a
 lattice in $Z_a$. As $Z_a\subset A\cong \R^{n-1}$ we conclude that in
 particular,  the discrete subgroup 
 $Z_\Lambda$ is a finitely generated free abelian group with $\on{rank} (Z_\Lambda)=\dim Z_a$.
 \item Combining (1),(2) we conclude that the orbit $H\Lam$ is of
   finite volume if and only if $\on{rank}(Z_\Lambda)=\dim Z$. 
 \end{enumerate}
\end{remark}

\section{Intermediate lattices}
\name{subsection: types}
We now introduce intermediate 
lattices, and the homogeneous subspaces they belong to, in
detail. This builds on and expands earlier work of several authors,
see \cite{LW, Tomanov,McMullen, ELMV}. Our approach is close to that
of \cite{McMullen}, in that we emphasize the structure of algebras of
matrices associated with a lattice.  
We introduce for any lattice an \textit{associated algebra}. 
In \S\ref{sec: recognizing} we characterize intermediate lattices and
the homogeneous spaces they belong to by simple algebraic properties
of the associated algebra.  
In \S\ref{subsec: constructions} we explain some constructions of
lattices, and show using the aforementioned characterization, that the
constructions give rise to all intermediate   
lattices. In turn, this gives rise to an explicit construction of all 
homogeneous $A$-invariant measures. These results will be an 
important ingredient in the proof of Theorem~\ref{u thm 2}.

\subsection{$\Q$-algebras}\label{sec: algebras}
Let $F_j, \ j=1\dots r$ be number fields 
and consider the direct sum $B=\oplus_{j=1}^r
F_j$. Equipped with coordinate-wise addition and multiplication, $B$ is a finite dimensional
$\Q$-algebra. By the Artin-Wedderburn Theorem, any commutative finite
dimensional  semisimple 
$\Q$-algebra is of the above form. 

By a homomorphism between two such algebras we shall mean a map that
respects the algebraic operations and sends the identity of one
algebra to the identity element of the other. If $B$ is an algebra as
above, then an algebra $B'\subset B$ will be referred to as a
subalgebra if the inclusion $B'\hookrightarrow B$ is a homomorphism;
in particular, $B$ and $B'$ share the same unit. A subalgebra $B'\subset B$
will be referred to as a subfield if it is a field. 
We emphasize that  if $B=\oplus_1^r F_j$ as above, with $r>1$, then
the $F_j$'s are not subalgebras nor subfields.  

The theory of algebras of the above form is almost completely
analogous to the theory of number fields with only minor adaptations
resulting from the fact that we deal with direct sums of number
fields. For example it is clear that if $B=\oplus_1^rF_j$ is an
$n$-dimensional $\Q$-algebra, then it has exactly $n$ distinct  
homomorphisms into $\C$ and those are obtained by first projecting to
the components $F_j$ and then composing with the various embeddings of
the fields $F_j$ into $\C$.

\subsection{The associated algebra}
Let $D$ denote the algebra of $n \times n$ diagonal real matrices.
For $i=1, \ldots, n$, let $p_i: D \to \R$ be the algebra homomorphism
$\diag(d_1, \ldots, d_n) \mapsto d_i.$  
Given a partition 
$\PPP$ as in~\eqref{eq: partition0} we denote by $D(\PPP)$ the subalgebra of $D$ defined by
$$
D(\PPP) \df \left \{x \in D: p_{i}(x)=p_{j}(x)\textrm{ whenever } i\sim_\PPP j \right \}.
$$ 
\begin{dfn}\label{basic def}
Let $\Lambda \subset \R^n$ be a lattice.
\begin{enumerate}
\item We denote $V_{\Lambda} \df \spa_{\Q} (\Lambda)$.
\item 
For any partition $\PPP$ we define the \textit{associated algebra of
  $\Lam$ with respect to 
$\PPP$} to be
$$ \A_{\Lambda}(\PPP) \df \{a \in D(\PPP): a V_{\Lambda} \subset V_{\Lambda} \}.$$ 
We denote by $\PPP_0$ the partition into singletons,  denote
$\A_{\Lambda}(\PPP_0)$ simply by $\A_{\Lambda}$, and refer to it as
the \textit{associated algebra} to $\Lam$. 
\item Given a subalgebra $B\subset \A_\Lambda$ we define the
  \textit{associated partition} $\PPP_B$ to be the partition of
  $\set{1,\dots, n}$ induced by the equivalence relation
$$i\sim j \ \Longleftrightarrow \ p_i|_B \sim p_j|_B.$$ 
A partition of the form $\PPP_B$ will be referred to as an 
\textit{algebra partition for $\Lam$} and in case 
$B\subset \A_{\Lam}$ is a subfield, as a \textit{field partition for $\Lam$}. 
\end{enumerate}
\end{dfn}

 The following result demonstrates the usefulness of the
 associated algebra for the study of the $A$-action on $\Xn$: 
\begin{thm}\label{n.c.c}
Let $\Lam\in\Xn$, $\PPP$ a partition, and $H=H(\PPP)$. 
 The orbit $H\Lam$ is closed if and only if
 $\dim_{\Q}\A_{\Lam}(\PPP)=|\PPP|$. Furthermore, the orbit is of
 finite volume if and only if in addition, the associated algebra
 $\A_\Lambda(\PPP)$ is a field. 
\end{thm}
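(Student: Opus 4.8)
The plan is to relate the orbit $H\Lam$ to a $\Q$-algebraic subgroup via the $\Q$-structure induced by $\Lam$, and then translate the finite-volume criterion of the Borel--Harish-Chandra Theorem (Proposition~\ref{thm: BHC}) and the closedness criterion into the stated dimension and field conditions on $\A_\Lam(\PPP)$. First I would observe that $H=H(\PPP)$ has a natural $\Q$-form as soon as $\A_\Lam(\PPP)$ is large enough: the group $H(\PPP)$ is the centralizer in $G$ of the torus $D(\PPP)$, so its $\Lam$-rational points are controlled by $\A_\Lam(\PPP)=D(\PPP)\cap\{a:aV_\Lam\subset V_\Lam\}$. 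The key point is that $\dim_\Q \A_\Lam(\PPP)\le|\PPP|$ always (since $\A_\Lam(\PPP)\subset D(\PPP)$ and $\dim D(\PPP)=|\PPP|$), with equality exactly when $D(\PPP)$ itself is defined over $\Q$ with respect to the $\Q$-structure of $V_\Lam$, i.e.\ when the splitting torus $D(\PPP)$ is $\Lam$-rational. In that case $H(\PPP)=Z_G(D(\PPP))$ is also defined over $\Q$ with respect to $V_\Lam$, and conversely, if $H(\PPP)$ is $\Lam$-rational then so is its center, giving back $\dim_\Q\A_\Lam(\PPP)=|\PPP|$.

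Next I would handle the closedness equivalence. For the ``if'' direction: when $\dim_\Q\A_\Lam(\PPP)=|\PPP|$, the group $H=H(\PPP)$ is defined over $\Q$ with respect to the $\Q$-structure induced by $\Lam$, and a $\Q$-subgroup always has a closed orbit through $\Lam$ (this is the standard fact that $H(\Q)$-rational points give closed orbits; it also follows from Proposition~\ref{thm: BHC} together with the reductive structure of $H$, whose orbit is closed in $\Xn$). For the ``only if'' direction: suppose $H\Lam$ is closed. By Proposition~\ref{prop: Zcl contains unips}, the connected component $H_0^\circ$ of the Zariski closure of $H_\Lam$ contains all unipotent elements of $H$; since $H(\PPP)$ is generated over its center $Z(\PPP)$ by the root groups $u_{st}$ with $s\sim_\PPP t$, this forces $S(\PPP)\subset H_0^\circ$, and $H_\Lam$ is a lattice in $H_0$ by the same proposition. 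Then $H_0$ is defined over $\Q$ (being the Zariski closure of a group of $\Lam$-rational matrices), and its center --- which contains a complement of $S(\PPP)$ in a finite-index subgroup, hence is a full-rank subtorus of $Z(\PPP)$ by Remark~\ref{anisotropic torus}(2) applied to the $A$-invariant situation --- is a $\Q$-subtorus of $D(\PPP)$. Taking $\Lam$-rational points and intersecting with $D(\PPP)$ recovers $\dim_\Q\A_\Lam(\PPP)=|\PPP|$.

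For the finite-volume refinement, assume $H\Lam$ is closed, so $H$ is defined over $\Q$ w.r.t.\ $V_\Lam$ and $\A_\Lam(\PPP)$ is $|\PPP|$-dimensional. By Proposition~\ref{thm: BHC}, $H\Lam$ is of finite volume if and only if $H$ has no nontrivial $\Q$-characters. Now $H(\PPP)=Z(\PPP)\cdot S(\PPP)$ with $S(\PPP)$ semisimple, so $\Q$-characters of $H$ factor through the central torus $Z(\PPP)$; the $\Q$-split part of $Z(\PPP)$ (with respect to the $\Q$-structure of $V_\Lam$) is exactly the obstruction. The torus $D(\PPP)$, as a $\Q$-torus with character lattice of rank $|\PPP|$, decomposes into its $\Q$-split and $\Q$-anisotropic parts, and one computes that $D(\PPP)$ is $\Q$-anisotropic precisely when the $\Q$-algebra $\A_\Lam(\PPP)\cong D(\PPP)(\Q)$ (a commutative semisimple $\Q$-algebra $\oplus F_j$ by Artin--Wedderburn) contains no nontrivial idempotents other than $0,1$, i.e.\ when $r=1$ and it is a field. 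A standard computation identifies $X^*(D(\PPP))^{\Gal}\otimes\Q$ with $\Q^{(\#\text{orbits})}$ and shows the torus is anisotropic iff this is one-dimensional iff the algebra is a field. Combining: $H\Lam$ is of finite volume $\iff$ $Z(\PPP)$ is $\Q$-anisotropic $\iff$ $\A_\Lam(\PPP)$ is a field.

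\textbf{Main obstacle.} The step I expect to require the most care is the ``only if'' direction of closedness: producing from the mere topological closedness of $H\Lam$ the arithmeticity statement that $H$ is defined over $\Q$ with respect to $V_\Lam$ and that $\A_\Lam(\PPP)$ attains the maximal dimension. This rests on Proposition~\ref{prop: Zcl contains unips} (hence ultimately on Margulis nondivergence and the structure of closed orbits) to get that $H_\Lam$ is Zariski-dense in a $\Q$-group containing $S(\PPP)$, and then on a careful analysis --- parallel to Remark~\ref{anisotropic torus} --- that the central torus part is also $\Lam$-rational of the right rank; one must rule out the possibility that $H_\Lam$ sees only a proper subtorus of $Z(\PPP)$, which is where the hypothesis that the orbit is an $H$-orbit (not a smaller one) and the product structure of Proposition~\ref{prop: product structure} enter. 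The field-vs-algebra bookkeeping via Galois orbits on the character lattice is routine once the $\Q$-structure is in place.
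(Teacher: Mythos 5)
The hard direction is the one where your argument breaks, and it breaks exactly at the point you flagged as the main obstacle. From Proposition~\ref{prop: Zcl contains unips} you do get that $H_0$, the Zariski closure of the stabilizer $H_\Lambda$, is a $\Q$-group containing $S(\PPP)$ in which $H_\Lambda$ is a lattice; but your next claim, that the center of $H_0$ ``is a full-rank subtorus of $Z(\PPP)$'', is false in general. When the closed orbit has infinite volume --- which the hypotheses allow, and which is precisely the case the dimension criterion must still cover --- the decomposition $Z=Z_s\cdot Z_a$ of Proposition~\ref{prop: product structure} has $Z_s$ nontrivial, and the Zariski closure of the stabilizer only sees $Z_a$, a proper subtorus. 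Concretely, for $\Lambda=\Z^n$ and $H=A$ the orbit is closed and divergent, the stabilizer is finite, $H_0^\circ$ is trivial, and its center tells you nothing, yet $\dim_\Q\A_{\Z^n}=n$ still has to be proved. Remark~\ref{anisotropic torus}(2) gives $\on{rank}(Z_\Lambda)=\dim Z_a$, not $\dim Z$, so it cannot rule out the proper-subtorus scenario: that scenario genuinely occurs, and what must be shown is that $Z$ itself is defined over $\Q$ even though $Z_\Lambda$ is only a lattice in $Z_a$. The paper obtains this from a mechanism your sketch neither cites nor replaces: it passes to $G_1$, the centralizer of $H_1=Z_a\cdot S$, a reductive $\Q$-group with closed orbit by \cite[Proposition 10.15]{Raghunathan}, notes that $Z$ is a maximal $\R$-diagonalizable subgroup of $G_1$ and that $Z\Lambda=(H\cap G_1)\Lambda$ is closed by \cite[Lemma 2.2]{Shah}, and then invokes the Tomanov--Weiss theorem \cite[Theorem 1.1]{with george} on closed orbits of maximal tori to conclude that $Z$ is a $\Q$-group, whence $\dim_\Q\A_\Lambda(\PPP)=|\PPP|$ via Proposition~\ref{dimensions are equal}. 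Without this (or an equivalent substitute) the implication ``closed $\Rightarrow$ full-dimensional algebra'' is unproved.

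Your other two directions essentially coincide with the paper's: rationality of $D(\PPP)$ is equivalent to the dimension condition, gives rationality of $Z$ and hence of $H=Z_G(Z)^\circ$, and closedness then follows from reductivity and \cite[Proposition 10.15]{Raghunathan}. For the finite-volume refinement you take a mildly different route than the paper --- $\Q$-characters and anisotropy of the central torus via Borel--Harish-Chandra, rather than the paper's computation through Dirichlet's unit theorem (Proposition~\ref{prop: more information} and Corollary~\ref{r.21.6}) that $\dim Z_s=r-1$ --- and that route is workable, but it should be phrased for $Z=D(\PPP)\cap G$ rather than for $D(\PPP)$ itself, which always contains the $\Q$-split scalars and so is never anisotropic; the $\Q$-split rank of $Z$ is $r-1$, and anisotropy of $Z$ is equivalent to $r=1$, i.e.\ to $\A_\Lambda(\PPP)$ being a field.
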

Theorem \ref{n.c.c} is a compressed version 
of Theorem~\ref{prop: addition}, which is the main result of this
section. Theorem \ref{prop: addition} will be stated and proved below 
after some more preparations. 
 As Theorem \ref{n.c.c} shows, we will need to understand the dimension of
 associated algebras over $\Q$. 
Note that the elements of $\A_\Lambda(\PPP)$ are simply the rational
matrices in $D(\PPP)$ with respect to the rational structure induced
by $\Lam$.  
The associated algebra of a lattice $\Lam$ is a commutative algebra
over $\Q$. It is finite-dimensional because it can be can conjugated
into $\on{Mat}_{n\times 
  n}(\Q)$.

\begin{prop}\label{dimensions are equal}
For any lattice $\Lam$ and any partition $\PPP$ we have that
$\dim_{\Q}\A_\Lambda(\PPP)=\dim_{\R}
\pa{\A_\Lambda(\PPP)\otimes_{\Q}\R}$. In particular,  
$\dim_{\Q}\A_\Lambda(\PPP)\le |\PPP|$ with equality if and only if
$D(\PPP)$ is a subspace of $\on{Mat}_{n\times n}(\R)$ which is defined
over $\Q$ with respect to  
the $\Q$-structure induced by $\Lam$.
\end{prop}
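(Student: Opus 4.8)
The statement to prove is Proposition~\ref{dimensions are equal}: for any lattice $\Lambda$ and any partition $\PPP$, we have $\dim_{\Q}\A_\Lambda(\PPP) = \dim_{\R}\bigl(\A_\Lambda(\PPP)\otimes_\Q\R\bigr)$, and consequently $\dim_\Q \A_\Lambda(\PPP) \le |\PPP|$ with equality iff $D(\PPP)$ is defined over $\Q$ with respect to the $\Q$-structure induced by $\Lambda$.

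The plan is as follows. First, the equality $\dim_{\Q}\A_\Lambda(\PPP) = \dim_{\R}\bigl(\A_\Lambda(\PPP)\otimes_\Q\R\bigr)$ is a completely general fact: for any $\Q$-vector space $W$ one has $\dim_\Q W = \dim_\R(W\otimes_\Q \R)$, since a $\Q$-basis of $W$ becomes an $\R$-basis of $W\otimes_\Q\R$. So the real content is the identification of $\A_\Lambda(\PPP)\otimes_\Q\R$ with a natural real subspace of $\Mat_{n\times n}(\R)$, together with the dimension count. Concretely, I would fix $g\in G$ with $g\Z^n = \Lambda$, so that $V_\Lambda = g\Q^n$, and observe that $\A_\Lambda(\PPP)$ consists exactly of those $a\in D(\PPP)$ such that $g^{-1}ag$ has rational entries — i.e.\ $\A_\Lambda(\PPP) = D(\PPP)\cap gV$ where $V = \Mat_{n\times n}(\Q)$ inside $\Mat_{n\times n}(\R)$. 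Thus $\A_\Lambda(\PPP)$ is the $\Q$-points of the $\R$-subspace $D(\PPP)$ of $\Mat_{n\times n}(\R)$ with respect to the $\Q$-structure $g\Mat_{n\times n}(\Q)$ intersected with $D(\PPP)$. Equivalently, conjugating by $g^{-1}$, $g^{-1}\A_\Lambda(\PPP)g$ is the set of rational matrices in the real subspace $g^{-1}D(\PPP)g$.

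Next, the key linear-algebra lemma: if $V_0$ is an $\R$-subspace of $\R^N$ and $V_0(\Q) \df V_0\cap\Q^N$ denotes its rational points (with respect to the standard $\Q$-structure), then $\dim_\R V_0 \ge \dim_\Q V_0(\Q)$, with equality if and only if $V_0$ is defined over $\Q$, i.e.\ $V_0 = \spa_\R V_0(\Q)$; in the equality case, $V_0(\Q)\otimes_\Q\R = V_0$. This is standard: $V_0(\Q)$ is $\Q$-linearly independent inside $V_0$, hence spans a subspace of dimension $\le \dim_\R V_0$, and the inclusion $V_0(\Q)\otimes_\Q\R \hookrightarrow V_0$ is an isomorphism precisely when the dimensions match. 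Applying this with $N = n^2$, $V_0 = g^{-1}D(\PPP)g$ (with the standard $\Q$-structure on $\Mat_{n\times n}(\R)$, which corresponds to the $\Q$-structure induced by $\Lambda$ after conjugating back by $g$) and noting $\dim_\R D(\PPP) = \dim_\R g^{-1}D(\PPP)g = |\PPP|$, we get $\dim_\Q \A_\Lambda(\PPP) = \dim_\Q V_0(\Q) \le \dim_\R V_0 = |\PPP|$, with equality iff $g^{-1}D(\PPP)g$ is defined over $\Q$ with respect to the standard structure, i.e.\ iff $D(\PPP)$ is defined over $\Q$ with respect to the $\Q$-structure induced by $\Lambda$. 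The $\otimes$-equality in the general case drops out of the equality case of the lemma when equality holds, and in general from the first paragraph's observation.

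The only mild subtlety — and the point I would be most careful about — is bookkeeping between the two $\Q$-structures: the one on $\R^n$ induced by $\Lambda$ versus the standard one, and the induced $\Q$-structures on $\Mat_{n\times n}(\R) = \End(\R^n)$. One must check that "$a V_\Lambda\subset V_\Lambda$ and $a$ diagonal" is genuinely equivalent to "$a\in D(\PPP)$ is rational for the $\Q$-structure on $\End(\R^n)$ induced by $\Lambda$", and that conjugation by $g$ carries one structure to the other cleanly; this is routine but is where an error would hide. Once that dictionary is fixed, the argument is the elementary lemma above applied to the $|\PPP|$-dimensional space $D(\PPP)$, so there is no serious obstacle.
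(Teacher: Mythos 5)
Your argument is correct, but it follows a genuinely different route from the paper's. The paper argues algebraically: $\A_\Lambda(\PPP)$ is a commutative semisimple $\Q$-algebra, hence by Artin--Wedderburn a direct sum of number fields; the classical equality $\dim_\Q F=\dim_\R\pa{F\otimes_\Q\R}$ for number fields gives the first assertion, the bound $\dim_\Q\A_\Lambda(\PPP)\le|\PPP|$ comes from the natural inclusion $\A_\Lambda(\PPP)\otimes_\Q\R\subset D(\PPP)$, and the equality criterion from the observation that $\A_\Lambda(\PPP)$ is exactly the set of $\Lambda$-rational matrices in $D(\PPP)$. You bypass the algebra structure entirely: conjugating by $g$ with $g\Z^n=\Lambda$, you identify $g^{-1}\A_\Lambda(\PPP)g$ with the set of rational points (standard $\Q$-structure) of the $|\PPP|$-dimensional real subspace $g^{-1}D(\PPP)g\subset\Mat_{n\times n}(\R)$, and then use the elementary fact that rational vectors which are $\Q$-independent remain $\R$-independent, with equality of dimensions precisely when the real subspace is spanned by its rational points. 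This is more elementary and, if anything, more self-contained: the injectivity of the natural map $\A_\Lambda(\PPP)\otimes_\Q\R\to D(\PPP)$, which the paper's phrase ``natural inclusion'' takes for granted, really rests on the $\Lambda$-rationality of these matrices rather than on the number-field decomposition alone (a subfield such as $\Q(\sqrt2)\subset\R$ shows that a semisimple $\Q$-subalgebra of a real algebra can collapse under real span), and your lemma supplies exactly that point; what the paper's route buys in exchange is the decomposition into number fields, which it needs anyway for Theorem~\ref{prop: algebra}. Two cosmetic cautions: the $\Q$-structure on $\Mat_{n\times n}(\R)$ induced by $\Lambda$ is $g\Mat_{n\times n}(\Q)g^{-1}$, not your ``$gV$'' (your subsequent sentence, phrased via conjugation by $g^{-1}$, is the correct formulation and is what you actually use); and the first equality of the proposition is only ``trivial'' for the abstract tensor product --- what the later arguments need is the same equality for the image of $\A_\Lambda(\PPP)\otimes_\Q\R$ inside $D(\PPP)$, which is precisely what your rationality lemma delivers, so that remark should be read together with the lemma rather than as a standalone disposal of the first claim.
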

\begin{proof}
It is well known that for any number field $F$ one has the equality
$\dim_{\Q}F=\dim_{\R}\pa{F\otimes_{\Q}\R}$. 
It is evident that $\A_{\Lambda}(\PPP)$ is a semisimple algebra
and so by the Artin-Wedderburn theorem, is isomorphic to a direct sum of number fields. 
Thus the first part of
the Proposition follows from the fact that the associated algebra is
isomorphic to a direct sum of number fields. The dimension bound
follows from the natural 
inclusion $\A_\Lambda(\PPP)\otimes_{\Q}\R\subset D(\PPP)$. Finally, as noted above, with
respect to the $\Q$-structure induced by $\Lam$, $\A_\Lam(\PPP)$
consists of exactly the rational points of $D(\PPP)$ and therefore, by
the first part, $D(\PPP)$ has a basis consisting of rational matrices
if and only if $\dim_{\Q}\A_\Lambda(\PPP)=\dim_{\R} D(\PPP)$. 
\end{proof}

Since matrices in $D(\PPP)$ commute with matrices
in $H(\PPP)$, we have:
\begin{prop}\label{constant alg}
The assignment $\Lam\mapsto \A_\Lambda(\PPP)$ is 
constant along $H(\PPP)$-orbits. Therefore, $\A_\Lambda(\PPP)$ is an
invariant attached to the orbit $H(\PPP)\Lam$. 
\end{prop}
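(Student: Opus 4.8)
The plan is to prove the first assertion --- that $\Lam\mapsto\A_\Lam(\PPP)$ takes the same value at $\Lam$ and at $h\Lam$ for every $h\in H(\PPP)$ --- after which the second assertion is a formality, since a function that is constant on each $H(\PPP)$-orbit is by definition an invariant of that orbit.

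First I would isolate the one structural input, namely that $D(\PPP)$ lies in the centralizer of $H(\PPP)$; this is the remark immediately preceding the statement. Concretely, for $d=\diag(d_1,\dots,d_n)\in D(\PPP)$ and $g=(g_{ij})$ one has $(dg)_{ij}=d_ig_{ij}$ and $(gd)_{ij}=g_{ij}d_j$, and these coincide when $g\in H(\PPP)$ because then $g_{ij}\neq 0$ forces $i\sim_\PPP j$, hence $d_i=d_j$ by definition of $D(\PPP)$. Passing from matrices to the group: every element of the spanning set $\goth{a}\cup\{\goth{g}_{st}:s\sim_\PPP t\}$ of $\goth{h}$ commutes with $D(\PPP)$, so the connected group $H(\PPP)$ centralizes $D(\PPP)$.

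Next I would record that $V_{h\Lam}=h\,V_\Lam$, since an invertible $\R$-linear map commutes with the operation of taking the $\Q$-span. Then for $a\in D(\PPP)$ and $h\in H(\PPP)$ the centralizing property gives $a\,V_{h\Lam}=a\,h\,V_\Lam=h\,a\,V_\Lam$, so applying $h^{-1}$ shows $a\,V_{h\Lam}\subset V_{h\Lam}$ if and only if $a\,V_\Lam\subset V_\Lam$. Collecting the $a\in D(\PPP)$ for which this containment holds yields $\A_{h\Lam}(\PPP)=\A_\Lam(\PPP)$, and hence the invariance of $\A_\Lam(\PPP)$ along $H(\PPP)$-orbits.

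I do not expect a genuine obstacle: the proof rests entirely on the fact that $D(\PPP)$ and $H(\PPP)$ commute, together with the elementary compatibility of $\Q$-spans with linear maps. The only point worth stating carefully is the passage from the Lie-algebra level --- where the commutation is visible on the spanning matrices $\goth{g}_{st}$ with $s\sim_\PPP t$ together with $\goth{a}$ --- to the group level, which is routine since $H(\PPP)$ is connected.
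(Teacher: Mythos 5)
Your proof is correct and is essentially the paper's argument: the paper's entire proof is the observation that matrices in $D(\PPP)$ commute with matrices in $H(\PPP)$, which you verify directly and then combine with $V_{h\Lam}=hV_\Lam$ exactly as intended. No issues.
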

\qed

\begin{thm}\name{prop: algebra}
Let $\Lam\in\Xn$ be a lattice and $B\subset \A_\Lambda$ a
subalgebra. Then there is an isomorphism of $\Q$-algebras $\varphi:
\bigoplus_{j=1}^r F_j \to B,$ where the  $F_j$'s are totally real
number fields of degrees  $d_j\df\deg(F_j/\Q)$ such that  
\eq{eq: partition}{
\PPP_B = \bigsqcup_{\displaystyle_{k=1, \ldots, d_j}^{j=1,\ldots, r}} 
I_{j,k}
} 
where 
\begin{enumerate}
\item\label{alg.1}
For each $j$, the number $s_j \df | I_{j,k}|$ is independent of $k$.  
\item\label{alg.2}
For each $j,k$, there is a field embedding $\sigma: F_j \to \R$ such
that for all $i \in I_{j,k},  \, \sigma = p_i \circ \varphi|_{F_j}.$
Moreover any field embedding of $F_j$ appears for some choice of $k$.  
\end{enumerate}
\end{thm}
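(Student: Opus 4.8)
The plan is to unwind the definition of $\A_\Lambda$ together with the structure theory of semisimple commutative $\Q$-algebras. First I would invoke the Artin--Wedderburn decomposition to fix an isomorphism $\varphi \colon \bigoplus_{j=1}^r F_j \xrightarrow{\ \sim\ } B$, where each $F_j$ is a number field. To see that the $F_j$ are totally real, I would use the embedding $B \hookrightarrow D = D(\PPP_0)$, whose $\R$-points consist of real diagonal matrices: the $n$ coordinate projections $p_i$ restrict to $\Q$-algebra homomorphisms $B \to \R$, each of which factors through exactly one summand $F_j$ and there induces a field embedding $F_j \to \R$. Since every $\Q$-algebra homomorphism $B \to \C$ arises from a coordinate projection after tensoring with $\R$ (as recalled in \S\ref{sec: algebras}, an $n$-dimensional such algebra has exactly $n$ homomorphisms into $\C$, all real here), every embedding of every $F_j$ into $\C$ is realized by some $p_i$, hence lands in $\R$; thus each $F_j$ is totally real. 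This simultaneously proves the ``moreover'' clause of \eqref{alg.2}: each of the $d_j = \deg(F_j/\Q)$ embeddings of $F_j$ occurs as $p_i \circ \varphi|_{F_j}$ for at least one $i$.

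Next I would analyze the equivalence relation defining $\PPP_B$. By definition $i \sim_{\PPP_B} j$ iff $p_i|_B$ and $p_j|_B$ are conjugate (the paper writes $p_i|_B \sim p_j|_B$, meaning they differ by a Galois element, equivalently they have the same kernel and the induced maps on the relevant field are conjugate). Partitioning $\{1,\dots,n\}$ first according to which summand $F_j$ the homomorphism $p_i|_B$ factors through gives the coarse decomposition into pieces of size $n_j \df \sum_i [\text{$p_i$ factors through }F_j]$; within the $j$-th piece, $i \sim_{\PPP_B} i'$ iff the two embeddings $F_j \to \R$ induced by $p_i, p_{i'}$ are equal (over $\Q$ there is no further Galois ambiguity among real embeddings of a fixed $F_j$ once we are inside a single summand — or rather, the relation is precisely equality of the induced embedding). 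This produces the index sets $I_{j,k}$ with $k$ running over the embeddings of $F_j$, giving exactly \eqref{eq: partition} and property \eqref{alg.2}.

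The remaining point, \eqref{alg.1}, is that $s_j \df |I_{j,k}|$ is independent of $k$ — i.e. every embedding of $F_j$ is realized by the \emph{same} number of coordinates. This is the step I expect to be the real content, and the one where I would have to be careful. The key observation is that $\A_\Lambda$ is stable under the Galois action in the following sense: the full set $\{p_1|_B, \dots, p_n|_B\}$ of coordinate homomorphisms of $B$, counted with multiplicity, is a $\Gal$-stable multiset — this is because $\A_\Lambda$, and hence $B$, is defined over $\Q$ with respect to the rational structure induced by $\Lambda$ (Proposition~\ref{dimensions are equal}), so the collection of $\R$-points-realized-as-diagonal-embeddings is permuted by $\Gal(\cl\Q/\Q)$; concretely, $\Gal$ acts transitively on the embeddings of $F_j$ into $\C$ (all of which are real), and this action lifts to a permutation of the index set $\{1,\dots,n\}$ preserving the partition into the $I_{j,k}$'s and acting transitively on $\{I_{j,k} : k\}$ for each fixed $j$. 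Transitivity of a group action on the blocks $\{I_{j,k}\}_k$ forces all blocks in one $j$-orbit to have equal size, which is exactly $s_j$ independent of $k$. I would make the ``$\Gal$ permutes the coordinates'' statement precise by picking $g \in G$ with $g\Z^n = \Lambda$, conjugating so that $g^{-1} \A_\Lambda g \subset \Mat_{n\times n}(\Q)$, and noting that the diagonalizing data of $g^{-1} B g$ (its primitive idempotents and their eigencharacters) is Galois-stable because the algebra is rational; the Galois group then permutes the simultaneous eigenlines, which correspond to the coordinate directions, compatibly with how it permutes the embeddings of each $F_j$. Once this permutation action is set up, property \eqref{alg.1} is immediate from orbit-size considerations, and I would present it in that order: fix $\varphi$, prove total reality and surjectivity onto embeddings, read off $\PPP_B$ and \eqref{alg.2}, then deduce \eqref{alg.1} from Galois-stability of the rational algebra.
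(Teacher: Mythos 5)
Your mechanism for the crucial part \eqref{alg.1} is sound and is genuinely different from the paper's. You conjugate by $g$ with $g\Z^n=\Lambda$, so that $B'\df g^{-1}Bg\subset\Mat_{n\times n}(\Q)$, and use that any automorphism $\tau$ of $\C$ permutes the eigencharacters of $B'$ on $\C^n$ while preserving eigenspace dimensions; since $B$ is diagonal the action is semisimple, $\dim V_\chi=\#\{i: p_i|_B=\chi\}$, and $\dim V_\chi=\dim V_{\tau\circ\chi}$, which together with transitivity of the automorphisms on the embeddings of $F_j$ gives equal multiplicities. (One caution: the Galois group permutes the eigenspaces $V_\chi\subset\overline{\Q}^n$, not the coordinate axes themselves --- the eigenvectors $g^{-1}e_i$ need not be algebraic --- so the precise statement is the one about dimensions, not a literal permutation of $\{1,\dots,n\}$.) The paper instead forms, for $x\in F_j$, the element $\psi(x)=\varphi(x)+\sum_{j'\ne j}1_{j'}$, observes that $\det\psi(x)\in\Q$ because $\psi(x)$ preserves the $\Q$-space $V_\Lambda$, and invokes Lemma \ref{lem: norm} on the resulting product of embeddings. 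Your route replaces that single scalar invariant by the full Galois-stable eigencharacter multiset and avoids Lemma \ref{lem: norm} altogether; both ultimately rest on an automorphism-of-$\C$ argument.

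There is, however, a genuine gap in your total-reality paragraph: the assertion that every $\Q$-algebra homomorphism $B\to\C$ arises from a coordinate projection is not provided by \S\ref{sec: algebras} (which only counts the homomorphisms of an abstract sum of number fields into $\C$); it is exactly equivalent to the conjunction of total reality and the ``moreover'' clause of \eqref{alg.2}, i.e.\ it is the thing to be proven. If some summand were, say, $\Q(2^{1/3})$, then $B$ would have complex embeddings invisible to the $p_i$, and nothing you said earlier excludes this, so as written the step is circular. The fix lies inside your own argument: the characters realized by the $p_i$ are real-valued; injectivity of $B\hookrightarrow D$ forces $1_j\ne 0$ in $D$, so at least one $p_i$ factors through $F_j$; Galois-stability of the eigencharacter multiset (multiplicity zero allowed) plus transitivity on the embeddings of $F_j$ then yields that every embedding of $F_j$ occurs with the same positive multiplicity, hence is real. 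So total reality, \eqref{alg.1} and the ``moreover'' of \eqref{alg.2} should all be extracted from that one step, rather than assumed beforehand. Finally, in Definition \ref{basic def} the relation defining $\PPP_B$ is equality of $p_i|_B$ and $p_j|_B$, not Galois-conjugacy: with your ``conjugate'' reading all indices hitting a given summand would form a single block, contradicting \eqref{alg.2}; you do land on the correct reading in the end, but the aside should be dropped.
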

Before proving Theorem \ref{prop: algebra} we 
deduce a characterization of the partitions that
Theorem~\ref{n.c.c} may be applied to.  
\begin{cor}\label{algparchar}
Let $\Lam\in\Xn$ be given. A partition $\PPP$ is an algebra partition
for $\Lam$ if and only if $\dim_{\Q}\A_\Lambda(\PPP)=|\PPP|$ and in
that case,
$$
\PPP=\PPP_B, \ \ \mathrm{where} \ B=\A_\Lam(\PPP). 
$$

\end{cor}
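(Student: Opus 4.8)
The statement relates two notions: "$\PPP$ is an algebra partition for $\Lam$" (meaning $\PPP = \PPP_B$ for some subalgebra $B \subset \A_\Lam$) and the dimensional equality $\dim_\Q \A_\Lam(\PPP) = |\PPP|$. I would prove the two implications separately. For the forward direction, suppose $\PPP = \PPP_B$ with $B \subset \A_\Lam$ a subalgebra. I want to show $B = \A_\Lam(\PPP)$; combined with Proposition \ref{dimensions are equal} (the equality-of-dimensions case, which gives $\dim_\Q \A_\Lam(\PPP) \le |\PPP|$ always) this will pin the dimension down. The containment $B \subset \A_\Lam(\PPP)$ is the easy half: by definition of $\PPP_B$, every element $b \in B$ is constant on the $\sim_{\PPP_B}$-classes (since $p_i|_B \sim p_j|_B$ forces $p_i(b) = p_j(b)$... more precisely one uses that $p_i|_B$ and $p_j|_B$ agree as algebra homomorphisms), so $b \in D(\PPP)$; and $b$ already preserves $V_\Lam$ because $b \in \A_\Lam$. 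Hence $b \in \A_\Lam(\PPP)$. For the reverse containment $\A_\Lam(\PPP) \subset B$, I would invoke Theorem \ref{prop: algebra}: write $B \cong \bigoplus_j F_j$ and note that the partition $\PPP = \PPP_B$ is exactly the one recorded in \eqref{eq: partition}, where the blocks $I_{j,k}$ are indexed by the field embeddings $\sigma$ of the $F_j$. An element $a \in \A_\Lam(\PPP)$ is a rational matrix (with respect to the $\Q$-structure from $\Lam$) in $D(\PPP)$, i.e. it takes a single value on each block $I_{j,k}$; I must show this forces $a \in B$. The key point is that the number of blocks equals $\sum_j d_j = \dim_\Q B$, and the values of $a$ on the blocks, being a rational point of $D(\PPP) \otimes \R$ stable under the relevant Galois action, assemble into an element of $\bigoplus_j F_j$ under the embedding $\varphi$ — this is exactly the statement that the $\Q$-points of $D(\PPP)$ coincide with $\varphi(\bigoplus F_j)$, which follows because $D(\PPP) \otimes \R$ is the span of the embeddings and the $\Q$-structure on it is the one coming from $B$.

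For the converse direction, suppose $\dim_\Q \A_\Lam(\PPP) = |\PPP|$. By Proposition \ref{dimensions are equal}, this forces $D(\PPP)$ to be defined over $\Q$ with respect to the $\Q$-structure induced by $\Lam$, and $\A_\Lam(\PPP)$ to be precisely the set of rational points of $D(\PPP)$. Set $B \df \A_\Lam(\PPP)$; this is a subalgebra of $\A_\Lam$ (it is contained in $\A_\Lam(\PPP_0) = \A_\Lam$ since $D(\PPP) \subset D = D(\PPP_0)$, and it contains the identity and is closed under the algebra operations). It remains to check $\PPP_B = \PPP$. One inclusion of equivalence relations is immediate: if $i \sim_\PPP j$ then $p_i(b) = p_j(b)$ for all $b \in B \subset D(\PPP)$, so $p_i|_B = p_j|_B$, i.e. $i \sim_{\PPP_B} j$. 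For the reverse, I need that if $i \not\sim_\PPP j$ then there is some $b \in B = \A_\Lam(\PPP)$ with $p_i(b) \ne p_j(b)$; equivalently, $B$ separates distinct blocks of $\PPP$. This is where the dimension hypothesis does real work: since $\dim_\Q B = |\PPP|$ and $B \otimes \R = D(\PPP)$ has dimension $|\PPP|$ as well with the $|\PPP|$ coordinate functions $x \mapsto p_i(x)$ ($i$ ranging over a transversal of $\PPP$) being linearly independent, no two of these coordinate functionals can agree on all of $B$ unless the corresponding blocks coincide — otherwise $\A_\Lam(\PPP)$ would sit inside a proper subspace of $D(\PPP)$, contradicting $\dim_\Q \A_\Lam(\PPP) = \dim_\R D(\PPP)$. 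Then $\PPP_B = \PPP$ and $\PPP$ is an algebra partition, with $\PPP = \PPP_B$ and $B = \A_\Lam(\PPP)$ as claimed.

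The main obstacle, I expect, is the reverse containment $\A_\Lam(\PPP) \subset B$ in the forward direction — making rigorous that a rational diagonal matrix which is constant along the blocks $I_{j,k}$ of \eqref{eq: partition} automatically lies in $\varphi(\bigoplus_j F_j)$. This is essentially a Galois-descent / Artin-Wedderburn bookkeeping argument: one has to match up the blocks $I_{j,k}$ with embeddings $\sigma: F_j \to \R$ via clause \eqref{alg.2} of Theorem \ref{prop: algebra}, observe that a rational point of $D(\PPP)$ amounts to a tuple $(c_{j,k})$ with $c_{j,k} = \sigma_{j,k}(\alpha_j)$ for a single algebraic number $\alpha_j \in F_j$ (Galois-equivariance over each $F_j$ is automatic once one knows the rational structure on $D(\PPP)$ is the one induced by $B$, which is where the dimension count $\dim_\Q \A_\Lam(\PPP) = |\PPP|$, forced in the forward direction by Proposition \ref{dimensions are equal}, is used), and conclude $a = \varphi((\alpha_j)_j) \in B$. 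Once this identification is in hand both implications close up cleanly, and the "in that case" clause of the corollary ($\PPP = \PPP_B$, $B = \A_\Lam(\PPP)$) is recorded along the way.
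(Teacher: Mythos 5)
Your backward direction is sound: from $\dim_{\Q}\A_\Lambda(\PPP)=|\PPP|$ you get $\A_\Lambda(\PPP)\otimes_{\Q}\R=D(\PPP)$ via Proposition \ref{dimensions are equal}, and your separation argument (if two coordinate functionals $p_i,p_j$ with $i\not\sim_\PPP j$ agreed on $B=\A_\Lambda(\PPP)$, then $B\otimes_{\Q}\R$ would lie in a proper subspace of $D(\PPP)$, contradicting the dimension equality) correctly upgrades the obvious fact that $\PPP$ refines $\PPP_B$ to the equality $\PPP=\PPP_B$. This is a mildly different route from the paper, which instead applies Theorem \ref{prop: algebra} to $B$ to get $\dim_{\Q}B=|\PPP_B|$ and compares block counts; both arguments work and cost about the same.

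The problem is in your forward direction, precisely at the step you flag as the main obstacle: the containment $\A_\Lambda(\PPP_B)\subset B$. As sketched, the justification is circular. You identify the rational points of $D(\PPP_B)$ with $\varphi\pa{\oplus_j F_j}$ on the grounds that ``the $\Q$-structure on $D(\PPP_B)$ is the one coming from $B$,'' but the relevant $\Q$-structure is the one induced by $\Lam$, and the claim that its rational points in $D(\PPP_B)$ are exactly $B$ is the very containment being proved; moreover you attribute the equality $\dim_{\Q}\A_\Lambda(\PPP_B)=|\PPP_B|$ to Proposition \ref{dimensions are equal}, which only supplies the upper bound $\dim_{\Q}\A_\Lambda(\PPP)\le|\PPP|$. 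Fortunately the detour is unnecessary, and the repair uses only ingredients already in your write-up (and is exactly the paper's proof): the easy containment $B\subset\A_\Lambda(\PPP_B)$ together with the count $\dim_{\Q}B=\sum_j d_j=|\PPP_B|$ from Theorem \ref{prop: algebra} gives $|\PPP_B|\le\dim_{\Q}\A_\Lambda(\PPP_B)$, and Proposition \ref{dimensions are equal} gives the reverse inequality, so $\dim_{\Q}\A_\Lambda(\PPP_B)=|\PPP_B|$, which is all the forward implication of the corollary asserts. If you also want $B=\A_\Lambda(\PPP_B)$, it now follows for free by comparing $\Q$-dimensions of the nested algebras, with no Galois-descent bookkeeping at all.
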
 
\begin{proof}[Proof of Corollary \ref{algparchar}]
Suppose $\PPP$ is an algebra partition for $\Lam$; that is, there
exists a subalgebra $B\subset \A_\Lambda$ such that $\PPP=\PPP_B$. 
It follows from the definition that $\A_{\Lam}(\PPP_B)\supset B$ so by
Proposition~\ref{dimensions are equal}, in order to conclude that
$\dim_{\Q}\A_\Lambda(\PPP_B)=|\PPP_B|$, it is enough 
to show that $\dim_{\Q}B=|\PPP_B|$. The latter statement follows from the
description of $\PPP_B$ given in Theorem~\ref{prop: algebra}. 

In the other direction, assume that $\PPP$ satisfies
$\dim_{\Q}\A_\Lambda(\PPP)=|\PPP|$ and denote
$B=\A_\Lambda(\PPP)$. Then it follows from the definitions that  
$\PPP$ refines $\PPP_B$. Again, by Theorem~\ref{prop: algebra} we
deduce that $\dim_{\Q}B=|\PPP_B|$ and so the partitions $\PPP,\PPP_B$
are equal. 
\end{proof}

For the proof of Theorem~\ref{prop: algebra} we will require the
following well-known fact (for which we were unable to find a
reference).  

\begin{lem}\name{lem: norm}
Let $F$ be a number field of degree $d$ over $\Q$, let $\sigma_i: F \to \C, \, i=1, \ldots, d$ be its distinct embeddings in $\C$, and let $k_1, \ldots , k_d \in \Z$ such that for all $x \in F$, $\prod_1^d \sigma_i(x)^{k_i} \in \Q$. Then all the $k_i$ are equal. 
\end{lem}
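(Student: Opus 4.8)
The plan is to reduce the statement to the non-vanishing of a certain determinant built from logarithms of absolute values of embeddings, which is essentially the regulator. First I would dispose of a degenerate case: the embeddings come in complex-conjugate pairs, and if $\sigma_j = \overline{\sigma_i}$ then for $x \in F$ the products $\sigma_i(x)^{k_i}\sigma_j(x)^{k_j}$ with $k_i = k_j$ already behave better, but the cleanest route is to pass to absolute values. Fix $x$ a unit of $\mathcal{O}_F$. Then $\prod_i \sigma_i(x)^{k_i} \in \Q$ forces $\left|\prod_i \sigma_i(x)^{k_i}\right|$ to be a rational number which is moreover an algebraic integer (as a product of conjugates of units, up to integer powers it is a unit), hence $\pm 1$; taking logarithms of absolute values gives $\sum_i k_i \log|\sigma_i(x)| = 0$ for every unit $x$.

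Next I would invoke Dirichlet's unit theorem. Let $r_1, r_2$ be the numbers of real and complex places, so $r_1 + 2r_2 = d$ and the unit rank is $\rho \df r_1 + r_2 - 1$. The logarithmic embedding sends the units onto a full-rank lattice in the hyperplane $\{y \in \R^{r_1+r_2} : \sum y_v = 0\}$ (with the complex coordinates weighted by $2$). Grouping the terms $k_i \log|\sigma_i(x)|$ according to places — a complex place $v$ contributes $(k_i + k_j)\log|\sigma_i(x)|$ where $\sigma_i,\sigma_j$ are the two embeddings at $v$ — the relation $\sum_i k_i \log|\sigma_i(x)| = 0$ for all units says that the linear functional with coefficients $m_v$ (where $m_v = k_i$ at a real place and $m_v = k_i + k_j$ at a complex place) annihilates the unit lattice, hence annihilates the whole trace-zero hyperplane. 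Therefore all the $m_v$ are equal to a common value, say $c$: $k_i = c$ at each real place, and $k_i + k_j = c$ at each complex place.

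Now I would feed this back into the original hypothesis, this time using an element $x \in F^\times$ that is not a unit, to pin down the remaining freedom at the complex places (where we so far only know $k_i + k_j = c$, not $k_i = k_j$). Pick a prime $\mathfrak{p}$ of $F$ and $x \in F^\times$ with prescribed valuations; over $\C$ the argument of $\prod \sigma_i(x)^{k_i}$ must be $0$ or $\pi$ since the product is real, and varying $x$ (e.g. multiplying by a suitable element, or using that $F$ is dense in $\prod_v F_v$) shows the "twisted" contribution $(k_i - k_j)\arg\sigma_i(x)$ at each complex place must vanish for all $x$, forcing $k_i = k_j$; combined with $k_i + k_j = c$ this gives $k_i = c/2$, and then $c$ is even — but wait, I should instead observe that if $F$ has any complex place the integrality/rationality hypothesis is even easier to exploit, whereas if $F$ is totally real (the case actually needed in Theorem \ref{prop: algebra}, where all $F_j$ are totally real) there are no complex places at all and the unit-theorem step already gives all $k_i$ equal. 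The main obstacle is the complex-place bookkeeping in the general case: handling the argument (as opposed to the absolute value) of $\prod\sigma_i(x)^{k_i}$ cleanly, which is why I would, if only the totally real case is needed downstream, simply remark that there the argument terminates at the Dirichlet unit theorem step. I would present the general case but flag that the totally real subcase — the one invoked in the proof of Theorem \ref{prop: algebra} — needs only the absolute-value/unit-lattice argument.
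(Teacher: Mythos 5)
Your argument is complete only in the totally real case, and that is not the case the lemma asserts. For totally real $F$ the reduction is indeed clean: for a unit $x$ the product $\prod_i\sigma_i(x)^{k_i}$ is a rational algebraic integer whose inverse is also one, hence $\pm 1$, and since the logarithmic image of the unit group spans the trace-zero hyperplane (Dirichlet), the vector $(k_1,\dots,k_d)$ must be proportional to $(1,\dots,1)$. But for a general number field absolute values can never separate $k_i$ from $k_j$ at a conjugate pair: your unit-lattice step only yields $k_i=c$ at real embeddings and $k_i+k_j=2c$ at each complex pair (note also the normalization slip --- with the complex coordinates weighted by $2$ the constant is $2c$, not $c$, which is what led you into the ``$k_i=c/2$'' confusion), and the proposed follow-up ``varying $x$ shows $(k_i-k_j)\arg\sigma_i(x)$ must vanish at each complex place'' is precisely the missing content: the argument of the product is a sum of contributions from \emph{all} complex places modulo $2\pi$, and isolating one place requires a genuine argument which you flag but do not supply. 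Moreover the method collapses entirely when the unit rank is zero (e.g.\ $F$ imaginary quadratic, where the lemma is still true and nontrivial: $x^{k_1}\bar{x}^{k_2}\in\Q$ for all $x$ forces $k_1=k_2$). The paper avoids all of this by an algebraic argument: after dividing by a power of the norm one may assume $k_1=0<k_2$; writing $\varphi(\vec X)=\prod_i L_i(\vec X)^{k_i}$ with $L_i(\vec X)=\sum_j\sigma_i(\alpha_j)X_j$ for a $\Q$-basis $(\alpha_j)$, the map $\varphi$ is $\Q$-valued on $\Q^d$, hence fixed by every automorphism of $\C$ by Zariski density of $\Q^d$ in $\C^d$; applying an automorphism carrying $\sigma_2$ to $\sigma_1$ and comparing zero sets of the linearly independent functionals $L_i$ gives the contradiction, uniformly in the signature of $F$ and the signs of the $k_i$.

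Your fallback --- that only the totally real case is invoked in Theorem~\ref{prop: algebra} --- is circular. In that proof the lemma is applied to $F_j$ \emph{before} it is known to be totally real: total reality is exactly what is deduced from the conclusion that the multiplicities $k_\sigma$ are equal, hence nonzero, for every embedding $\sigma\colon F_j\to\C$, since the nonzero maps $p_i\circ\varphi|_{F_j}$ are real embeddings. (Your absolute-value computation could in fact be adapted to that particular application: there $k_\sigma=0$ at every non-real embedding while some real embedding has $k_\sigma>0$ and $F_j$ has at least one real embedding, so the relation at a hypothetical complex place would force $c=0$ and give a contradiction; but you did not make this observation, and in any case it would prove only what the application needs, not the lemma as stated.) So as it stands the proposal has a genuine gap for fields with complex embeddings, which is the case the lemma must cover.
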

\begin{proof}
We will prove that $k_1 = k_2$, the other cases being symmetric to this one. Assume by contradiction that $k_2 > k_1$. Since the norm map $N(x) = \prod \sigma_i(x)$ has its values in $\Q$, we may divide through by $N(x)$ to assume that $k_1=0 < k_2$. 
Choose a basis $\alpha_1, \ldots, \alpha_d$ of $F$ over $\Q$, and denote by $\varphi$
the rational map 
$$(X_1, \ldots, X_d) \mapsto \prod_{i=1}^d \sigma_i\left(\sum_j \alpha_j X_j \right)^{k_i}, $$
which we can simplify as 
$$
\varphi(\vec{X}) = \prod_{i=1}^d L_i(\vec{X})^{k_i}, \ \ \mathrm{where} \ L_i(\vec{X}) \df \sum_j  \sigma_i(\alpha_j) X_j.
$$
The $L_i$ are linearly independent linear functionals. Thus the zero set of $\varphi$ is the union of the kernels of those $L_i$ for which $k_i \neq 0$; in particular $\varphi$ is identically zero on $\ker ( L_2)$ but not on $\ker (L_1)$. 

Now let $\sigma: \C \to \C$ be a field automorphism such that $\sigma_{1} = \sigma \circ \sigma_{2}$.  Then for each $\vec{X} \in \Q^d$ we have $\varphi(X) \in \Q$, hence $\sigma \circ \varphi (\vec{X}) = \varphi(\vec{X})$, and since $\Q^d$ is Zariski dense in $\C^d$, this implies that $\sigma \circ \varphi$ and $\varphi$ are identical as rational maps. On the other hand $\sigma_i \mapsto \sigma \circ \sigma_i$ is a permutation $\sigma_i \mapsto \sigma_{\pi(i)}$ with $\pi(2)=1$. This means that $\varphi(X)$ can also be written as $\prod_1^d L_{\pi(i)}(X)^{k_i}$, and so 
 $\varphi$ is identically zero on $\ker( L_1)$ --- a contradiction. 
\end{proof}

\begin{proof}[Proof of Theorem \ref{prop: algebra}]
The fact that $B$ is isomorphic to a direct sum of number fields is a
consequence of the Artin-Wedderburn Theorem and follows from the fact
it is a finite dimensional semisimple $\Q$-algebra (see also
\cite[Prop. 3.1]{Tomanov}).  
So we have an abstract isomorphism  $\varphi: \bigoplus_{j=1}^r F_j \to B $, where the 
$F_j$'s are number fields, and for each $i$, consider the restriction
of $p_i$ to $B$, which we continue to denote by $p_i$. Since the
diagonal embedding of $\Q$ as scalar matrices is a subalgebra of $B$,
each $p_i$ is non-zero. For each $j$, let $1_j$ denote the image of $1
\in F_j$ in $B$. Then for $j \neq j'$ we have $1_j \cdot 1_{j'}
=0$. Since $\R$ has no zero-divisors, for each $i$ there is a unique
$j$ such that $p_i(1_j) \neq 0$.  This implies that $p_i \circ
\varphi|_{F_j}:F_j\to\R$ is a  
non-zero map that respects addition and multiplication. It follows that it is  a real field embedding. 

To prove assertions \eqref{alg.1},\eqref{alg.2} it remains
to show that for each $j$, and each field embedding $\sigma: F_j \to
\C$, the number of indices $i$ for which $\sigma = p_i \circ \varphi$
is a nonzero number independent of $\sigma.$ To this end, for each $x
\in F_j$ let  
\eq{def psi}{
\psi(x)\df \varphi(x)+\sum_{j'\ne j}1_{j'}\in B\subset \A_\Lambda.
}
This is a diagonal matrix whose $i$-th entry is 1 if $p_i \circ
\varphi|_{F_j}$ is zero, and is $p_i \circ \varphi(x)$ otherwise. In
particular, $\det \psi(x)$ is the product of the numbers $p_i \circ
\varphi(x)$, taken over the indices $i$ for which $p_i \circ
\varphi|_{F_j}$ is not zero, and is a rational number, since
$V_{\Lambda}$ --- on which  
$\psi(x)$ acts --- is a $\Q$-vector space. So the claim follows from
Lemma \ref{lem: norm}.  

\end{proof}

\subsection{Recognizing intermediate lattices}\name{sec: recognizing}
We are now in a position to prove the main result of this section. 
\begin{thm}\name{prop: addition}

Let $\Lambda  \in \Xn$, let $\PPP$ be a partition, and let $H= H(\PPP), \, Z=Z(\PPP)$ be the corresponding groups. The following are equivalent:
\begin{enumerate}
\item
$H\Lambda$ is closed in $\Xn$. 
\item
$H$ is defined over $\Q$ with respect to the $\Q$-structure induced by $\Lam$. 
\item
$Z$ is defined over $\Q$ with respect to the $\Q$-structure induced by $\Lam$.
\item
$\dim_{\Q} \A_{\Lambda}(\PPP)=|\PPP|$. 
\end{enumerate}
Moreover,  the orbit $H\Lam$ is of finite volume if and only if the
associated algebra $\A_\Lambda(\PPP)$ is a field of degree $|\PPP|$
over $\Q$. 
\end{thm}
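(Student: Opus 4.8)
The plan is to prove the four equivalences by a cycle of implications and then treat the finite-volume statement separately. The implications (2)$\Rightarrow$(3) and (3)$\Rightarrow$(4) are essentially formal: (2)$\Rightarrow$(3) holds because $Z = Z(\PPP)$ is the center of $H = H(\PPP)$, hence a characteristic subgroup, so if $H$ is defined over $\Q$ with respect to the $\Q$-structure induced by $\Lam$ then so is $Z$. For (3)$\Rightarrow$(4), I would observe that $Z(\PPP)$ and $D(\PPP)$ have the same Zariski closure in the space of matrices (indeed $D(\PPP) = \Lie(Z(\PPP)) \oplus \R\cdot \mathrm{Id}$, or more simply $D(\PPP)$ is spanned by $Z(\PPP)$ as a linear space); hence $Z$ is defined over $\Q$ with respect to the $\Q$-structure induced by $\Lam$ if and only if $D(\PPP)$ is a $\Q$-rational subspace with respect to that structure, and by Proposition~\ref{dimensions are equal} this is equivalent to $\dim_\Q \A_\Lambda(\PPP) = |\PPP|$, using $\dim_\R D(\PPP) = |\PPP|$.

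For (1)$\Rightarrow$(2): if $H\Lam$ is closed, then by Proposition~\ref{prop: Zcl contains unips} the identity component of the Zariski closure of the stabilizer $H_\Lam$ contains all unipotent elements of $H$, hence contains $S(\PPP)$ (which is generated by the $u_{st}$), and $H_\Lam$ is a lattice in that Zariski closure, which I'll call $H_0$. Since $\Lam$-rational points are Zariski dense in $H_0$ and $S \subset H_0 \subset H = Z \cdot S$, it remains to see that the full center $Z$ is $\Q$-rational. Here I would invoke Remark~\ref{anisotropic torus}: the argument of Proposition~\ref{prop: product structure} shows $Z_\Lambda = \mathrm{Stab}_Z(\Lam)$ is a lattice in $Z_a$, but more relevantly, closedness of $H\Lam$ forces $Z_s$ to be trivial precisely when... no — closedness alone does not force finite volume. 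The correct route: closedness of $H\Lam$ implies (via Proposition~\ref{prop: product structure}) the product decomposition $H\Lam \cong Z_s \times H_1\Lam$ with $H_1\Lam$ of finite volume and $H_1 = Z_a \cdot S$ defined over $\Q$; and $Z_s$, being a subtorus of $A$ whose orbit is a properly embedded divergent orbit, is a split $\Q$-torus with respect to the $\Q$-structure induced by $\Lam$ (a divergent diagonal orbit is conjugate to a $\Q$-split torus orbit — this is the standard fact underlying the theory of divergent orbits). Then $H = Z_s \cdot H_1$ is a product of $\Q$-groups, hence $\Q$-rational.

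For (4)$\Rightarrow$(1): if $\dim_\Q \A_\Lambda(\PPP) = |\PPP|$ then $D(\PPP)$ is $\Q$-rational, so $Z(\PPP)$ — being the subgroup of $\SL_n$ preserving each of the lines $p_i = p_j$ for $i \sim_\PPP j$, i.e. cut out by $\Q$-rational linear conditions — is defined over $\Q$ with respect to the $\Q$-structure induced by $\Lam$; hence so is $S(\PPP) = [H,H]$, which is semisimple, and so is $H = Z \cdot S$. A $\Q$-group of the form $Z \cdot S$ with $S$ semisimple has its closed-orbit property governed by Borel--Harish-Chandra together with the theory of split $\Q$-tori: $S\Lam$ is of finite volume by Proposition~\ref{thm: BHC}, $Z\Lam$ is closed because $Z$ is a $\Q$-torus (its orbit is closed — $A$-invariant and the stabilizer projects to a lattice in the anisotropic part while the split part gives a properly embedded factor), and $H\Lam = Z\cdot(S\Lam)$ is then closed. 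This uses the general principle that a product of a torus-orbit which is closed and a finite-volume semisimple orbit is closed.

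\textbf{The finite-volume statement.} By the Borel--Harish-Chandra Theorem (Proposition~\ref{thm: BHC}), $H\Lam$ is of finite volume if and only if $H$ has no nontrivial $\Q$-characters with respect to the $\Q$-structure induced by $\Lam$. Since $S(\PPP) = [H,H]$ carries no characters, the $\Q$-characters of $H$ are exactly those of $Z(\PPP)$, and $Z(\PPP)$ has no nontrivial $\Q$-character if and only if it is $\Q$-anisotropic, i.e. (using the decomposition $Z = Z_s \cdot Z_a$ of Proposition~\ref{prop: product structure} and Remark~\ref{anisotropic torus}) $Z_s$ is trivial. By Theorem~\ref{prop: algebra}, the algebra partition $\PPP_B$ attached to $B = \A_\Lambda(\PPP)$ records exactly the embeddings of the fields $F_j$ appearing in $B = \bigoplus F_j$; one checks that the split part $Z_s$ of the torus $Z$ is trivial precisely when $B$ is a single field, i.e. $r = 1$, because each extra direct summand $F_{j'}$ contributes a nontrivial $\Q$-character (a "partial determinant") of $Z$. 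Thus $H\Lam$ is of finite volume iff $\A_\Lambda(\PPP)$ is a field, and combined with (4) this field has degree $|\PPP|$ over $\Q$. \textbf{The main obstacle} I expect is making precise the identification of $Z_s$ (the split part of the center) with the failure of $\A_\Lambda(\PPP)$ to be a field, and the assertion that a properly embedded divergent diagonal orbit $Z_s\Lam$ forces $Z_s$ to be $\Q$-split — this is where the bounded-denominators / divergent-orbit structure theory enters, and it must be invoked carefully so that the $\Q$-structure throughout is the one induced by $\Lam$ rather than the standard one.
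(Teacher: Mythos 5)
The decisive implication is (1) $\Rightarrow$ (3), and your argument for it has a genuine gap. You deduce from Proposition~\ref{prop: product structure} that $Z_s\Lam$ is properly embedded and divergent, and then assert that ``a divergent diagonal orbit is conjugate to a $\Q$-split torus orbit'' as a standard fact. No such fact holds for non-maximal diagonal subgroups: divergence (or even closedness) of an orbit of a proper subtorus of $A$ does not force that subtorus to be defined over $\Q$ with respect to the $\Q$-structure induced by $\Lam$ --- already for one-parameter diagonal flows on $\Xn$, $n\ge 3$, there are plenty of non-degenerate divergent trajectories, so the principle you invoke is false in the generality you need. Moreover $Z_s$ is only an arbitrarily chosen direct complement of $Z_a$ in $Z$, so ``$Z_s$ is a $\Q$-split torus'' is not even a well-posed conclusion before one knows $Z$ itself is rational; and rationality of $Z_a$ together with rationality of some complement is what you are trying to prove, not something you may feed into the product $H=Z_s\cdot H_1$. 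The paper's proof gets around exactly this point by a different mechanism: it forms $G_1$, the centralizer of $H_1$ in $G$, which is a reductive group defined over $\Q$ (for the $\Lam$-structure) with closed orbit by \cite[Prop.~10.15]{Raghunathan}; observes that $Z=H\cap G_1=A\cap G_1$ is a \emph{maximal} $\R$-diagonalizable subgroup of $G_1$; obtains closedness of $Z\Lam$ from closedness of $H\Lam$ and $G_1\Lam$ via \cite[Lemma~2.2]{Shah}; and only then applies the Tomanov--Weiss theorem \cite[Theorem~1.1]{with george}, whose hypothesis of maximality is precisely what converts closedness into rationality. Without some substitute for this maximality step your cycle of implications does not close. (Your (4) $\Rightarrow$ (1) also leans on an unproved ``general principle'' that a closed torus orbit times a finite-volume semisimple orbit is closed; this is unnecessary --- once $Z$, hence $H$, is rational and reductive, closedness of $H\Lam$ follows directly from \cite[Prop.~10.15]{Raghunathan}, which is the paper's (2) $\Rightarrow$ (1).)

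Your treatment of the finite-volume statement is a genuinely different route from the paper's and is salvageable, but it is incomplete where you say ``one checks.'' The easy direction is fine: if $\A_\Lambda(\PPP)\cong\oplus_1^r F_j$ with $r\ge 2$, a partial-determinant character is defined over $\Q$ (it takes rational values on the Zariski-dense set of $\Lam$-rational points) and is nontrivial on $Z$, so Borel--Harish-Chandra rules out finite volume. The nontrivial direction --- $B$ a field implies $Z$ has no nontrivial $\Q$-characters --- needs the Galois-transitivity argument on the embeddings of $B$ (essentially Lemma~\ref{lem: norm}/Theorem~\ref{prop: algebra}: a $\Q$-rational character $\prod p_i^{k_i}$ must have all $k_i$ equal, hence is a power of the norm, which is trivial on $Z$); you did not supply this. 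The paper instead avoids characters altogether: Proposition~\ref{prop: more information} identifies $Z_\Lambda$ up to finite index with the unit group $\varphi(\prod\mathcal{O}_j^\times)$, and Dirichlet's unit theorem plus Remark~\ref{anisotropic torus}\eqref{r.ait.2} give $\dim Z_a=\sum(d_j-1)$, hence $\dim Z_s=r-1$ (Corollary~\ref{r.21.6}), so finite volume is equivalent to $r=1$. Either approach works once completed, but as written yours leaves the key step unproved.
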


\begin{proof}[Proof of  the first part of Theorem~\ref{prop: addition}]
Recall the notation of Proposition~\ref{prop: product structure}:
$Z=Z(\PPP), S=S(\PPP), \, H_\Lambda=\on{Stab}_H(\Lam), \, 
H_0=\on{Zcl}(H_\Lam), \, H_1=H_0^\circ$, where $\on{Zcl}$ stands for Zariski closure. 
\ignore{
\eqref{c.o.1}. The argument giving the proof of part~\eqref{c.o.1} of
the Theorem goes through a third equivalent condition that links
between the two conditions in the  
statement; we show that both conditions are equivalent to the fact
that the center $Z$ of $H$ is defined over $\Q$ with respect to the
$\Q$-structure induced by $\Lam$. 
We explicate about this terminology: A matrix is said to
be$\Lam$-rational if when presented with respect to a basis of $\Lam$
it has rational entries, or in other words, if it preserves the space
$V_\Lambda$. An algebraic group $F\subset G$ is said to be defined
over $\Q$ with respect to the $\Q$-structure induced by $\Lam$ if it
contains a Zariski dense set of $\Lam$-rational matrices.  

As a first step we show that $Z$ is defined over $\Q$ if and only if
$\dim_{\Q}\A_\Lambda(\PPP)=|\PPP|$. Recalling our definitions, we see
that 
$$\set{a\in Z: a\textrm{ is }
  \Lam\textrm{-rational}}=\set{a\in\A_\Lambda(\PPP):\det a=1}.$$

Throughout this proof, we denote $Z' \df g^{-1}Zg$ and $H' \df
g^{-1}Hg$. Note that since left multiplication of $\Xn$ by $g$ is a
homeomorphism, the statement that $H\Lambda$ is closed is equivalent
to the statement that $H'\Gamma$ is closed in $G/\Gamma$.  
}
Throughout the proof we will only refer to the $\Q$-structure induced by $\Lam$.

\noindent(1) $\implies$ (3): 
As we saw in Proposition~\ref{prop: product structure}, $H_0$ is a
$\Q$-algebraic group, contains $H_1$ as a finite-index subgroup,  and
moreover $S\subset H_1=Z_a\cdot S\subset H$ and so   
$H_0$ is reductive.
Let $G_1$ denote the centralizer of $H_1$ in $G$, which is again a
reductive $\Q$-algebraic group containing $Z$. By~\cite[Proposition
10.15]{Raghunathan} this implies that the orbit $G_1\Lam$ is closed
and hence can be viewed as the quotient of a reductive $\Q$-group by
its integral points. We  
now claim that $Z\subset G_1$ is a maximal $\R$-diagonalizable group
and that $Z\Lam$ is a closed orbit. 
Assuming this, applying \cite[Theorem 1.1]{with george},  we find that $Z$ is
a $\Q$-subgroup of $G_1$, and hence a $\Q$-subgroup of $G$.  Thus the
claim implies (3). 

Because $H,H_0$ differ only in $Z_s$ which is in the 
center $Z$, we have that  $Z=H \cap G_1=A\cap G_1$. As $G_1$ is
normalized by $A$ we deduce that $Z$ is a maximal $\R$-diagonalizable
subgroup of $G_1$.  
Since both orbits $H \Lam$, $G_1 \Lam$ are closed, by \cite[Lemma
2.2]{Shah} so is $(H\cap G_1)\Lam=Z\Lam$.

(3) $\implies$ (2): As  $H$ is the connected component of the identity
in the centralizer of $Z$ in $G$, if $Z$ is a $\Q$-algebraic group, so
is $H$.

(2) $\implies$ (1):  Since $H$ is reductive and defined over $\Q$, it
follows from \cite[Proposition 10.15]{Raghunathan} that $H\Lam $ is
closed in $\Xn$. 

(4) $\implies$ (3): It follows from Proposition~\ref{dimensions are
  equal} that if $\dim_{\Q}\A_\Lambda(\PPP)=|\PPP|$ then $D(\PPP)$ is
defined over  
$\Q$. As $Z=D(\PPP)\cap G$ we conclude that $Z$ is defined over $\Q$.

(3) $\implies$ (4): 
 If $Z$ is defined over $\Q$,  it contains a Zariski dense subset of
 $\Lam$-rational matrices.  These are by definition elements of $\A_\Lambda(\PPP)$ so
 we conclude that the dimension of the real vector space
 $\on{Zcl}(\A_\Lambda(\PPP))$ is at least the dimension of $Z$, which
 is  
$|\PPP|-1$. On the other hand, the line of scalar matrices is always
in this space (regardless of $\Lam$) and so the dimension is $|\PPP|$
as desired. 
\end{proof}
In order to complete the proof of Theorem~\ref{prop: addition} we will
need the following  Proposition which relates the structure of
$\A_{\Lambda}$ with the structure of the stabilizer of $\Lambda$ in
$Z(\PPP)$.   
\begin{prop}\name{prop: more information}
Let $\Lam\in\Xn$, $\PPP$ a partition,  and $Z=Z(\PPP)$.  
Let $F_1, \ldots, F_r$ and $\varphi$ be as in Theorem \ref{prop:
  algebra} applied to  $B=\A_\Lam(\PPP)$. Then the group 
$$Z_{\Lambda} \df \{a \in Z: a \Lambda = \Lambda\}$$
 is contained as a finite index subgroup in $\varphi \left(\prod_1^r
   \mathcal{O}^{\times}_j \right)$, where $\mathcal{O}^{\times}_j$ is
 the (multiplicative) group of units of the ring of integers of $F_j$.  
\end{prop}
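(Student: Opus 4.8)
The plan is to identify $Z_\Lambda$ with a group of units via the isomorphism $\varphi$ and the action on $V_\Lambda$. First I would recall that, by Theorem~\ref{prop: addition} (or rather Remark~\ref{anisotropic torus} and Proposition~\ref{prop: Zcl contains unips}), without loss of generality we may assume $Z = Z_a$, i.e. $Z\Lambda$ is a closed (in fact compact) orbit and $Z_\Lambda$ is a lattice in $Z$; otherwise we replace $\PPP$ by the algebra partition $\PPP_B$, which only affects the split part $Z_s$, and this part contributes nothing to $Z_\Lambda$ since $Z_s$ acts with trivial stabilizer on $\Lambda$ by Proposition~\ref{prop: product structure}\eqref{deco.1}. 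Thus $B = \A_\Lambda(\PPP)$ satisfies $\dim_\Q B = |\PPP|$, and by Theorem~\ref{prop: algebra} we have the isomorphism $\varphi: \bigoplus_1^r F_j \to B$.

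The key observation is the following identification. By definition, $\A_\Lambda(\PPP)$ consists exactly of the matrices in $D(\PPP)$ that preserve the $\Q$-vector space $V_\Lambda$. The invertible elements of $D(\PPP)$ that preserve $V_\Lambda$ \emph{together with} $V_\Lambda$ being invariant under their inverses form the unit group $B^\times$ of the algebra $B = \A_\Lambda(\PPP)$, since $\A_\Lambda(\PPP)$ is a finite-dimensional $\Q$-algebra and a diagonal matrix $a$ with $aV_\Lambda \subset V_\Lambda$ is invertible in $B$ iff $a^{-1}$ also lies in $D(\PPP)$ and preserves $V_\Lambda$. Now an element $a \in Z$ satisfies $a\Lambda = \Lambda$ iff both $a$ and $a^{-1}$ map the \emph{lattice} $\Lambda$ into itself; in particular $a \in \A_\Lambda(\PPP)$ and $a^{-1} \in \A_\Lambda(\PPP)$, so $a \in B^\times \cap Z$. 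Transporting through $\varphi$, the condition $a \in B^\times$ translates to $\varphi^{-1}(a) \in \bigoplus_1^r F_j^\times$, and the determinant-one condition defining $Z$ (inside $A$) translates, via assertion \eqref{alg.2} of Theorem~\ref{prop: algebra}, into the norm of $\varphi^{-1}(a)$ being $1$ in each coordinate — but this will be automatic for units of $\mathcal{O}_j$ up to sign, and in any case will only cut down to a finite-index subgroup. So the heart of the matter is to show $Z_\Lambda \subset \varphi(\prod_1^r \mathcal{O}_j^\times)$ up to finite index, i.e. that an element of $Z_\Lambda$ corresponds under $\varphi$ to a tuple of \emph{algebraic integers} (with algebraic integer inverses) in the $F_j$.

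The argument for integrality is the standard ``bounded denominators'' / orbit argument: since $a \in Z_\Lambda$ preserves the finitely generated $\Z$-module $\Lambda$, the matrix of $a$ in a $\Z$-basis of $\Lambda$ has integer entries, hence $a$ satisfies a monic integer polynomial (its characteristic polynomial), so $a$ is an algebraic integer in $B$; the same applies to $a^{-1} \in Z_\Lambda$. Therefore $\varphi^{-1}(a) \in \bigoplus_1^r \mathcal{O}_j$ and its inverse is likewise integral, which forces $\varphi^{-1}(a) \in \prod_1^r \mathcal{O}_j^\times$. This shows $Z_\Lambda \subset \varphi(\prod_1^r \mathcal{O}_j^\times)$. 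Conversely, $\varphi(\prod_1^r \mathcal{O}_j^\times)$ is a finitely generated abelian group by Dirichlet's unit theorem, it lies in $\A_\Lambda(\PPP)$, hence preserves $V_\Lambda$, and since it consists of units of $B$ it preserves the $\mathcal{O}_B$-module generated by $\Lambda$, where $\mathcal{O}_B$ is the order $\End_B(\Lambda) \cap B$; comparing $\Lambda$ with this finitely generated $\mathcal{O}_B$-module shows the unit group moves $\Lambda$ among finitely many commensurable lattices, so a finite-index subgroup of $\varphi(\prod_1^r\mathcal{O}_j^\times)$ stabilizes $\Lambda$ exactly, and this subgroup contains $Z_\Lambda \cap (\text{det one})$; a diagram chase on indices then gives that $Z_\Lambda$ sits as a finite-index subgroup of $\varphi(\prod_1^r \mathcal{O}_j^\times)$.

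I expect the main obstacle to be bookkeeping the determinant/norm condition cleanly: $Z = Z(\PPP) \subset A$ imposes $\det = 1$, and one must check — using the fact that the embeddings $p_i \circ \varphi|_{F_j}$ run over all real embeddings of $F_j$ each with the same multiplicity $s_j$, per Theorem~\ref{prop: algebra}\eqref{alg.1}--\eqref{alg.2} — that the determinant of $\varphi(u_1,\dots,u_r)$ equals $\prod_j N_{F_j/\Q}(u_j)^{s_j}$, which is $\pm 1$ for units, so the ``$\det = 1$'' constraint and the ``units of finite order issues'' only change things by a finite index. Handling these finite-index discrepancies — between $Z_\Lambda$, the full-norm-one units, the sign ambiguities, and the possibly-larger order than $\bigoplus\mathcal{O}_j$ — without getting the indices backwards is the delicate part; everything else is the routine integrality argument already used in Proposition~\ref{prop: Zcl contains unips}.
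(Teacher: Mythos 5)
Your argument is correct and is essentially the paper's proof: the inclusion $Z_\Lambda\subset\varphi\left(\prod_1^r\mathcal{O}_j^\times\right)$ comes from the same integrality argument (the matrix of $a\in Z_\Lambda$ in a $\Z$-basis of $\Lambda$ is integral, so $a$ and $a^{-1}$ are integral, hence each component of $\varphi^{-1}(a)$ is a unit), and the finite-index step rests on the same finiteness trick, packaged slightly differently: you take the lattice $\varphi(\oplus_1^r\mathcal{O}_j)\cdot\Lambda$, which contains $\Lambda$ with finite index and is permuted-invariant under the units (of determinant $\pm1$), and use that a lattice has only finitely many subgroups of a given index, whereas the paper runs the same argument unit-by-unit with $\til\Lambda=\sum_{k<n}M^k\Lambda$ (via the characteristic polynomial) and then invokes Dirichlet's theorem to get full rank; your version is marginally cleaner since the orbit-stabilizer count on the finite set of index-$m$ subgroups makes the Dirichlet rank count unnecessary. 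One blemish to delete: the opening ``without loss of generality $Z=Z_a$, i.e.\ $Z\Lambda$ is compact and $Z_\Lambda$ is a lattice in $Z$'' is neither justified nor available --- Proposition \ref{prop: product structure} presupposes a closed $A$-invariant orbit, which the proposition does not assume, and $Z=Z_a$ would force $\A_\Lambda(\PPP)$ to be a field, which need not hold even after passing to the algebra partition. Fortunately nothing you do afterwards uses this; the only fact you need is $Z_\Lambda\subset\A_\Lambda(\PPP)$ (any $a\in Z$ fixing $\Lambda$ preserves $V_\Lambda$), which holds for an arbitrary partition $\PPP$ with no reduction at all, and the sign/determinant bookkeeping you defer is indeed routine, since $\det\varphi(u_1,\dots,u_r)=\prod_j N_{F_j/\Q}(u_j)^{s_j}=\pm1$ and passing to the totally positive, determinant-one part only costs a finite index.
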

\begin{proof}
We first prove the inclusion $Z_{\Lambda} \subset \varphi\left( \prod_1^r \mathcal{O}^{\times}_j \right).$ 
Suppose $a \in Z_{\Lambda} \subset \A_{\Lambda}$. In light of Theorem
\ref{prop: algebra} there are $x_j \in F_j$ such that for each $i \in
I_{j,k}$, $p_i(a) = \sigma_k(x_j)$, where the $\sigma_k$ are the
distinct field embeddings of $F_j$. We need to show that each $x_j$ is
a unit in the ring of integers of $F_j$. Let $M$ be a matrix
representing the action of $a$, with respect to a basis of $\R^n$
which generates $\Lambda$. Since $a$ preserves $\Lambda$, $M$ has
integral entries, and has $x_j$ as an eigenvalue. Thus $x_j$ is a root
of the characteristic polynomial of $M$ which is a monic polynomial
over the integers, and so is an algebraic integer. 
As the same argument applies to $a^{-1},x_j^{-1},M^{-1}$ we conclude that $x_j$ is a unit.

We now show that $Z_{\Lambda}$ is of finite index in this inclusion. For each $j$, let $d = d_j \df \deg(F_j/\Q)$.
By Dirichlet's theorem, $\mathcal{O}^{\times}_j$ contains $d-1$
multiplicatively independent elements $\alpha_1, \ldots,
\alpha_{d-1}$, so it suffices to show that a finite power of each
$M_i$ fixes $\Lambda$, where
$M_i=\psi(\alpha_i)\in\A_{\Lambda}(\PPP)$, where $\psi$ is as
in~\equ{def psi} above.   

To this end, write $\alpha = \alpha_i, \, M = M_i$, and note that
by~\equ{def psi} and Theorem \ref{prop: algebra} the characteristic
polynomial $p_M(X)$ of $M$ is 
of the form $$p_M(X)=[ m_\alpha(X)]^{b_1}[X-1]^{b_2},$$ where
$m_\alpha(X)$ denotes the minimal polynomial of $\alpha$ and $b_1,b_2$
are non-negative 
integers. In particular,  $p_M(X)$ has coefficients in $\Z$ and degree
$n$. This implies that the additive group $\til \Lambda$ generated by  
$\bigcup_{k=0}^{n-1} M^k \Lambda$
is $M$-invariant. Representing $M$ with respect to a basis of
$V_{\Lambda}$ contained in $\Lambda$, we see that $M$ has rational
coefficients, and in particular $\til \Lambda$ is discrete, i.e.\ is a
lattice in $\R^n$. Since $\til \Lambda$ contains $\Lambda$, it must
contain it as a subgroup of finite index, and since $\det M = \pm 1$,
the index is preserved by the action of $M$. Since $\til \Lambda$
contains only finitely many subgroups of a given index, there is a
power of $M$ preserving $\Lambda$, as required.  
\end{proof} 
The following Corollary verifies the last statement of Theorem~\ref{prop: addition}.
\begin{cor}\name{r.21.6}
Let $H\Lam$ be a closed orbit of the block group $H=H(\PPP)$ and let
$Z=Z_s\cdot Z_a$ be the decomposition of the center $Z=Z(\PPP)$ given
in Proposition~\ref{prop: product structure}. Then, the number of
summands in the decomposition of the associated algebra as a direct
sum of number fields  
$\A_\Lambda(\PPP)\cong \oplus_1^r F_j$ satisfies $\dim Z_s=r-1$.

 In
particular, the following are equivalent:
\begin{itemize}
\item[(i)]
 $H\Lam$ is of finite volume.

\item[(ii)]
$\A_\Lambda(\PPP)$ is a field. 
\item[(iii)]
$Z\Lam$ is compact. 
\end{itemize}
\end{cor}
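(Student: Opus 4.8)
The plan is to combine Proposition~\ref{prop: product structure} with Theorem~\ref{prop: algebra} and Proposition~\ref{prop: more information}. Recall from Proposition~\ref{prop: product structure} the decomposition $Z = Z_s \cdot Z_a$, where $H_1 = Z_a \cdot S$ gives a finite volume orbit $H_1\Lam$, the orbit $Z_s\Lam$ is divergent, and $Z_\Lambda = \on{Stab}_Z(\Lam)$ is trivial on the $Z_s$ factor. By Remark~\ref{anisotropic torus}\eqref{r.ait.2}, $Z_\Lambda$ is a lattice in $Z_a$, so $\on{rank}(Z_\Lambda) = \dim Z_a = \dim Z - \dim Z_s$. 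The first task is thus to identify $\on{rank}(Z_\Lambda)$ in terms of the algebra structure $\A_\Lam(\PPP) \cong \bigoplus_1^r F_j$. By Proposition~\ref{prop: more information}, $Z_\Lambda$ sits as a finite index subgroup inside $\varphi\!\left(\prod_1^r \mathcal{O}_j^\times\right)$, so $\on{rank}(Z_\Lambda) = \on{rank}\!\left(\prod_1^r \mathcal{O}_j^\times\right) = \sum_{j=1}^r (d_j - 1)$ by Dirichlet's unit theorem, where $d_j = \deg(F_j/\Q)$ and each $F_j$ is totally real (Theorem~\ref{prop: algebra}). Since Theorem~\ref{prop: addition} (condition (4)) and the closedness hypothesis give $\dim_\Q \A_\Lam(\PPP) = |\PPP| = \sum_j d_j$, and $\dim Z = |\PPP| - 1 = \sum_j d_j - 1$, we get
\[
\dim Z_s \;=\; \dim Z - \on{rank}(Z_\Lambda) \;=\; \Big(\sum_j d_j - 1\Big) - \sum_j (d_j - 1) \;=\; r - 1,
\]
which is the asserted identity.

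For the equivalences, (i)$\iff$(iii) is immediate from Proposition~\ref{prop: product structure}: $H\Lam$ is of finite volume precisely when $Z_s$ is trivial (Remark~\ref{anisotropic torus}(1)), which by item~\eqref{deco.1} of that Proposition is the same as $Z_s\Lam$ being a single point, i.e.\ $Z\Lam = Z_a\Lam$ being compact (recall $Z_a\Lam$ is compact by Remark~\ref{anisotropic torus}\eqref{r.ait.2}). For (i)$\iff$(ii): by the identity just proved, $\dim Z_s = 0$ if and only if $r = 1$, i.e.\ if and only if $\A_\Lam(\PPP)$ is a single number field rather than a proper direct sum; since $H\Lam$ is closed we already know $\A_\Lam(\PPP)$ is $|\PPP|$-dimensional, so ``$\A_\Lam(\PPP)$ is a field'' is exactly the statement $r=1$. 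Chaining these gives (i)$\iff$(ii)$\iff$(iii), which in particular completes the ``finite volume iff field of degree $|\PPP|$'' clause of Theorem~\ref{prop: addition}.

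The main obstacle is the bookkeeping in the dimension count — specifically, making sure that the embedding $Z_\Lambda \subset \varphi\!\left(\prod_1^r \mathcal{O}_j^\times\right)$ from Proposition~\ref{prop: more information} is of finite index (so that ranks agree) and that the totally real hypothesis in Theorem~\ref{prop: algebra} is what forces each unit group $\mathcal{O}_j^\times$ to have rank exactly $d_j - 1$ (a totally real field of degree $d_j$ has $r_1 = d_j$ real places and $r_2 = 0$ complex places, so Dirichlet's theorem gives rank $r_1 + r_2 - 1 = d_j - 1$). Everything else is a direct assembly of results already established in the section, so no genuinely new argument is needed beyond carefully tracking $\dim Z$, $\dim Z_a$, $\dim Z_s$, and $\on{rank}(Z_\Lambda)$ through the two Propositions.
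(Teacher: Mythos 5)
Your argument for the dimension identity and for (i)$\iff$(ii) is the same as the paper's: both compute $\dim Z = |\PPP|-1 = \sum_j d_j - 1$ from Theorem~\ref{prop: addition}, and $\dim Z_a = \on{rank}(Z_\Lambda) = \sum_j(d_j-1)$ by combining Remark~\ref{anisotropic torus}\eqref{r.ait.2} with Proposition~\ref{prop: more information} and Dirichlet's unit theorem for the totally real fields $F_j$, then subtract. The one place you genuinely diverge is the implication (iii)$\implies$(i): the paper argues that compactness of $Z\Lam$ makes $Z_\Lambda$ cocompact in $Z$, hence $Z$ is defined over $\Q$, and then invokes the implication (3)$\implies$(1) of Theorem~\ref{prop: addition} together with Proposition~\ref{prop: product structure}; you instead stay inside Proposition~\ref{prop: product structure} and use item~\eqref{deco.1}. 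Your route is shorter and works, but as written it is compressed: "is the same as $Z_s\Lam$ being a single point" needs the one-line justification that if $Z\Lam$ is compact then $Z_s$, being the preimage of the compact set $\overline{Z\Lam}$ under the proper orbit map $z\mapsto z\Lam$, is compact, and a compact connected subgroup of $A\cong\R^{n-1}$ is trivial, so $Z=Z_a$ and Remark~\ref{anisotropic torus}(1) gives finite volume. With that sentence added, your proof is complete, and it buys a modest simplification over the paper by avoiding the detour through the $\Q$-rationality of $Z$.
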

\begin{proof}
Denote $\deg(F_j/\Q)=d_j$. By the part of Theorem~\ref{prop: addition}
already established, $\dim Z=\dim Z_s+\dim Z_a=|\PPP|-1=\sum_{j=1}^r
d_j -1.$ 
By  combining Proposition~\ref{prop: more information} and
part~\eqref{r.ait.2} of Remark~\ref{anisotropic torus} we conclude
that 
 $\dim Z_a=\sum_{j=1}^r (d_j-1)=\sum_{j=1}^r d_j - r$.
Combining these two equalities we conclude that $r= \on{dim}(Z_s)-1$ as desired. 

Finally, by Proposition~\ref{prop: product structure} we know that the
closed orbit $H\Lam$ is of finite volume if and only if $\dim Z_s=0$,
which, by the above reasoning, implies the equivalence of (i) and (ii)
and shows that they imply (iii). For the reverse implication (iii)
$\implies $ (i), note that if $Z\Lam$ is compact then $Z_{\Lam}$ is
cocompact in $Z$ which implies that $Z$ is defined over $\Q$ (with
respect to the $\Q$-structure induced by $\Lam$), so we may use the
implication (3) $\implies$ (1) of 
Theorem \ref{prop: addition}, and Proposition \ref{prop: product
  structure}.  
\end{proof}

\section{Consequences and examples} \name{sec: consequences}
Proposition \ref{prop: why block groups} and Theorem \ref{prop:
  addition} furnish a link between the algebraic
properties of intermediate lattices and the structure of their orbits
under block groups. This sheds light on all possible $A$-homogeneous
spaces. In this section we collect results in this direction, and
conclude with some examples.  

\begin{cor}\name{cor: bijective}
For any lattice $\Lambda \in \Xn$, the map $B \mapsto H(\PPP_B)$
is a bijective correspondence between the subalgebras of $\A_{\Lambda}$, and the block groups $H$ for which $H\Lambda$ is a closed orbit. Under this bijection subfields
of $\A_\Lambda$ correspond to finite volume orbits. 
The bijection is order-reversing for the orderings of the corresponding sets by inclusion. 
\end{cor}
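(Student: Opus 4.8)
The plan is to assemble the bijection from results already in hand. The key input is Theorem~\ref{prop: addition}: for a partition $\PPP$, the orbit $H(\PPP)\Lambda$ is closed iff $\dim_\Q \A_\Lambda(\PPP) = |\PPP|$, and it is of finite volume iff moreover $\A_\Lambda(\PPP)$ is a field. By Corollary~\ref{algparchar}, the partitions $\PPP$ with $\dim_\Q \A_\Lambda(\PPP)=|\PPP|$ are precisely the algebra partitions $\PPP_B$ for subalgebras $B\subset \A_\Lambda$, and in that case $B = \A_\Lambda(\PPP_B)$. Combined with Proposition~\ref{prop: why block groups} (every block group giving an $A$-invariant closed orbit is some $H(\PPP)$), this already shows that $\{H : H\Lambda \text{ closed, } H \text{ a block group}\}$ is exactly $\{H(\PPP_B) : B \subset \A_\Lambda \text{ a subalgebra}\}$.

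First I would verify the map $B \mapsto H(\PPP_B)$ is well-defined into this set, which is immediate from Corollary~\ref{algparchar} and Theorem~\ref{prop: addition}. Next, surjectivity: given a block group $H = H(\PPP)$ with $H\Lambda$ closed, Theorem~\ref{prop: addition} gives $\dim_\Q \A_\Lambda(\PPP) = |\PPP|$, so by Corollary~\ref{algparchar} $\PPP = \PPP_B$ with $B = \A_\Lambda(\PPP)$, hence $H = H(\PPP_B)$ is in the image. For injectivity, suppose $H(\PPP_{B_1}) = H(\PPP_{B_2})$; since a block group $H(\PPP)$ determines its partition $\PPP$ (the blocks are read off from which $\goth{g}_{st}$ lie in $\goth{h}$, or equivalently $\PPP$ is recovered as $\sim$-classes), we get $\PPP_{B_1} = \PPP_{B_2}$, and then by the ``in that case'' clause of Corollary~\ref{algparchar}, $B_1 = \A_\Lambda(\PPP_{B_1}) = \A_\Lambda(\PPP_{B_2}) = B_2$.

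For the finite-volume claim: by the last sentence of Theorem~\ref{prop: addition}, $H(\PPP_B)\Lambda$ is of finite volume iff $\A_\Lambda(\PPP_B)$ is a field of degree $|\PPP_B|$; since $\A_\Lambda(\PPP_B) = B$ has $\Q$-dimension $|\PPP_B|$ (Corollary~\ref{algparchar}), this says exactly that $B$ is a field. So subfields correspond to finite volume orbits.

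Finally, the order-reversing statement. If $B_1 \subset B_2$, then every coordinate functional distinguishing indices via $B_1$ also does so via $B_2$, so $\PPP_{B_2}$ refines $\PPP_{B_1}$; by the Lie algebra description~\eqref{eq: equiblock group}, finer partition means larger block group, i.e.\ $H(\PPP_{B_1}) \subset H(\PPP_{B_2})$. Conversely, if $H(\PPP_{B_1}) \subset H(\PPP_{B_2})$ then $\PPP_{B_2}$ refines $\PPP_{B_1}$, so $D(\PPP_{B_2}) \subset D(\PPP_{B_1})$, whence $B_1 = \A_\Lambda(\PPP_{B_1}) = D(\PPP_{B_1}) \cap (\text{rational diagonal matrices}) \supset D(\PPP_{B_2}) \cap (\cdots) = \A_\Lambda(\PPP_{B_2}) = B_2$. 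I expect the only mild subtlety — the ``main obstacle'', such as it is — to be pinning down precisely the equivalence ``$B_1 \subset B_2 \iff \PPP_{B_2}$ refines $\PPP_{B_1}$'' (one direction uses that $p_i|_{B_2}$ determines $p_i|_{B_1}$, the other that distinct restrictions of the $p_i$ to the larger algebra are detected already on the smaller one, via the structure in Theorem~\ref{prop: algebra}); everything else is bookkeeping on top of Theorem~\ref{prop: addition} and Corollary~\ref{algparchar}.
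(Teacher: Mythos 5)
The bijection and the finite-volume statements in your proposal are correct and follow essentially the paper's own route: the paper's proof is exactly the one-line deduction from Theorem~\ref{n.c.c} (i.e.\ Theorem~\ref{prop: addition}) together with Corollary~\ref{algparchar}, which is what you spell out. One small point of precision: for injectivity you quote the ``in that case'' clause of Corollary~\ref{algparchar} as giving $B_1=\A_\Lambda(\PPP_{B_1})$; literally that clause only says $\PPP_{B_1}=\PPP_{B}$ for $B=\A_\Lambda(\PPP_{B_1})$. The equality $B_1=\A_\Lambda(\PPP_{B_1})$ does hold, but it comes from the containment $B_1\subset\A_\Lambda(\PPP_{B_1})$ together with the dimension count $\dim_\Q B_1=|\PPP_{B_1}|\ge\dim_\Q\A_\Lambda(\PPP_{B_1})$, which is exactly the argument inside the proof of Corollary~\ref{algparchar} (and is recorded as Proposition~\ref{prop: correspondences}(2)); this is a presentational gap only.

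The order-reversal part, however, contains a genuine directional error. Your claim ``finer partition means larger block group'' is false: by \eqref{eq: equiblock group}, refining the partition removes root spaces $\goth{g}_{st}$ from $\goth{h}$, so a finer partition gives a \emph{smaller} group (the finest partition $\PPP_0$ gives $H=A$, the one-block partition gives $H=G$). Consequently, from $B_1\subset B_2$ and the (correct) observation that $\PPP_{B_2}$ refines $\PPP_{B_1}$, the right conclusion is $H(\PPP_{B_2})\subset H(\PPP_{B_1})$ --- which is the order-reversing inclusion asserted in the corollary --- whereas you conclude $H(\PPP_{B_1})\subset H(\PPP_{B_2})$, i.e.\ order \emph{preservation}, contradicting the very statement being proved (take $B_1=\Q$, which corresponds to $H=G$). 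In the converse direction you make two compensating slips: $H(\PPP_{B_1})\subset H(\PPP_{B_2})$ forces $\PPP_{B_1}$ to refine $\PPP_{B_2}$ (not the other way around), and it is from this that $D(\PPP_{B_2})\subset D(\PPP_{B_1})$ and hence $B_2=\A_\Lambda(\PPP_{B_2})\subset\A_\Lambda(\PPP_{B_1})=B_1$ follow; your final conclusion there is correct even though the intermediate ``refines'' claim is reversed. Once you fix the forward implication to read $B_1\subset B_2\Rightarrow H(\PPP_{B_2})\subset H(\PPP_{B_1})$, the argument is complete and agrees with the paper.
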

\begin{proof}
This follows from Theorem~\ref{n.c.c} and Corollary~\ref{algparchar}.
\end{proof}

In Corollary \ref{cor: bijective}, the trivial algebra $\Q$
corresponds to the block group $H=G$ (the group with one block).  
Recalling Definition~\ref{i.l.d.1}, we obtain: 
\begin{cor}\label{inter char}
A lattice $\Lambda\in\Xn$ is intermediate (resp.\ intermediate of
finite volume type) if and only if the associated algebra $\A_\Lambda$
is nontrivial (resp.\ contains a  subfield other than $\Q$). 
\end{cor}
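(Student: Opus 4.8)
The plan is to read the statement off from Proposition~\ref{prop: why block groups} and Corollary~\ref{cor: bijective}. By Definition~\ref{i.l.d.1}, $\Lambda$ is intermediate precisely when it lies in a closed orbit $H\Lambda\varsubsetneq\Xn$ of a closed connected real algebraic subgroup $H$ with $H\Lambda$ being $A$-invariant; by Proposition~\ref{prop: why block groups} such an $H$ is automatically a block group $H(\PPP)$. Since $H(\PPP)=G$ exactly when $|\PPP|=1$, and otherwise $\dim H(\PPP)<\dim G$ so that the orbit $H(\PPP)\Lambda$ has strictly smaller dimension than $\Xn$, the orbit $H(\PPP)\Lambda$ is a proper closed subset of $\Xn$ if and only if it is closed and $|\PPP|\ge2$. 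Thus being intermediate is equivalent to the existence of a partition $\PPP$ with $|\PPP|\ge2$ for which $H(\PPP)\Lambda$ is closed.

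Next I would invoke Corollary~\ref{cor: bijective}: the block groups $H$ with $H\Lambda$ closed are exactly the $H(\PPP_B)$, as $B$ ranges over the subalgebras of $\A_\Lambda$, with the trivial subalgebra $\Q$ corresponding to $H=G$. By Theorem~\ref{prop: algebra} applied to $B$ one has $\dim_\Q B=|\PPP_B|$, so $|\PPP_B|\ge2$ if and only if $B\ne\Q$. Combining this with the previous paragraph, $\Lambda$ is intermediate if and only if $\A_\Lambda$ has a subalgebra other than $\Q$; and since $\A_\Lambda$ is itself such a subalgebra whenever it is nontrivial, this is equivalent to $\A_\Lambda\ne\Q$.

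For the finite volume statement I would run the identical argument using the refinement in Corollary~\ref{cor: bijective} that under $B\mapsto H(\PPP_B)$ the subfields of $\A_\Lambda$ correspond to the finite volume orbits. A lattice is intermediate of finite volume type iff it lies in a finite volume $A$-invariant homogeneous space strictly inside $\Xn$, hence (by the same dimension count and Proposition~\ref{prop: why block groups}) iff $H(\PPP_B)\Lambda$ is a finite volume orbit for some subfield $B\subset\A_\Lambda$ with $|\PPP_B|\ge2$, i.e.\ with $B\ne\Q$; equivalently, $\A_\Lambda$ contains a subfield other than $\Q$.

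I do not expect a serious obstacle: all the substance has already been carried out in Proposition~\ref{prop: why block groups}, Theorem~\ref{prop: algebra} and Corollary~\ref{cor: bijective}. The only points that warrant a sentence of justification are the elementary facts that $H(\PPP)=G$ exactly when $|\PPP|=1$ and that a closed orbit $H(\PPP)\Lambda$ is a proper subset of $\Xn$ exactly when $H(\PPP)\ne G$ (by comparing dimensions), together with the equality $\dim_\Q B=|\PPP_B|$ supplied by Theorem~\ref{prop: algebra}.
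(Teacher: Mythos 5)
Your argument is correct and is essentially the paper's own deduction: the paper states Corollary~\ref{inter char} as an immediate consequence of Corollary~\ref{cor: bijective} (subalgebras $\leftrightarrow$ closed block-group orbits, subfields $\leftrightarrow$ finite volume orbits, with the trivial algebra $\Q$ corresponding to $H=G$) together with Definition~\ref{i.l.d.1} and Proposition~\ref{prop: why block groups}. Your extra details --- the equality $\dim_\Q B=|\PPP_B|$ from Theorem~\ref{prop: algebra} and the dimension count showing $H(\PPP)\Lambda\varsubsetneq\Xn$ exactly when $|\PPP|\ge 2$ --- just make explicit what the paper leaves implicit.
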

\qed

In the other extreme $A=H(\PPP_0)$ we have the following Corollary
which explains the terminology in Definition~\ref{i.l.d.2}.   
\begin{cor}
\name{cor: compact A orbits}
A lattice $\Lambda\in \Xn$ is an algebra lattice (resp.\ a number
field lattice) if and only if the associated algebra is
$n$-dimensional over $\Q$ (resp. is a field of degree $n$ over $\Q$).  
\end{cor}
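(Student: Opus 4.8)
The plan is to deduce Corollary~\ref{cor: compact A orbits} directly from Corollary~\ref{cor: bijective} by specializing to the finest partition $\PPP_0$ into singletons, for which $H(\PPP_0)=A$. First I would observe that the block group $A$ arises as $H(\PPP_B)$ precisely when $\PPP_B=\PPP_0$, i.e.\ when the restrictions $p_i|_B$ are pairwise distinct; by Theorem~\ref{prop: algebra} (or directly from the definition of $\PPP_B$) this happens exactly when $\dim_\Q B = n$, since $|\PPP_B|=\dim_\Q B$ and $|\PPP_0|=n$. So the subalgebras $B\subset\A_\Lambda$ with $A\Lambda'$-type orbit structure are exactly the $n$-dimensional ones, and an $n$-dimensional subalgebra of $\A_\Lambda$ forces $\A_\Lambda$ itself to be $n$-dimensional (as $B\subset\A_\Lambda\subset D$ and $\dim_\R D = n$, combined with $\dim_\Q\A_\Lambda = \dim_\R(\A_\Lambda\otimes_\Q\R)\le n$ from Proposition~\ref{dimensions are equal}), so in fact $B=\A_\Lambda$.

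Next I would spell out the two directions. If $\Lambda$ is an algebra lattice, i.e.\ $A\Lambda$ is closed, then by Corollary~\ref{cor: bijective} there is a subalgebra $B\subset\A_\Lambda$ with $H(\PPP_B)=A$; by the paragraph above $\A_\Lambda=B$ is $n$-dimensional. Conversely, if $\A_\Lambda$ is $n$-dimensional over $\Q$, take $B=\A_\Lambda$; then $\PPP_B=\PPP_0$ (again since $|\PPP_B|=n$ and $\PPP_B$ is a partition of an $n$-element set), so $H(\PPP_B)=A$ and Corollary~\ref{cor: bijective} (equivalently Theorem~\ref{n.c.c}, since $\dim_\Q\A_\Lambda(\PPP_0)=\dim_\Q\A_\Lambda=n=|\PPP_0|$) gives that $A\Lambda$ is closed. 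For the refined ``number field lattice'' statement I would invoke the second sentence of Corollary~\ref{cor: bijective}: under the bijection, subfields correspond to finite-volume orbits, so $A\Lambda$ is compact (equivalently of finite volume) iff $\A_\Lambda$ is a subfield — which, being $n$-dimensional, is a degree-$n$ field over $\Q$. Alternatively one quotes the ``moreover'' clause of Theorem~\ref{n.c.c} with $\PPP=\PPP_0$ directly.

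I do not expect a genuine obstacle here; the corollary is essentially a dictionary translation of already-established results. The only point requiring a line of care is the implication ``$\A_\Lambda$ contains an $n$-dimensional subalgebra $\Rightarrow \A_\Lambda$ is itself $n$-dimensional'', which rests on the dimension bound $\dim_\Q\A_\Lambda\le n$ from Proposition~\ref{dimensions are equal}; and the bookkeeping that a partition of $\{1,\dots,n\}$ with $n$ blocks must be $\PPP_0$. Both are immediate. So the proof reduces to: specialize Corollary~\ref{cor: bijective} to $\PPP_0$, note $\A_\Lambda$ is the top element of the subalgebra lattice so it corresponds to $A$ iff it is $n$-dimensional, and read off the finite-volume/field refinement from the same corollary.
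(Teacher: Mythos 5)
Your proposal is correct and matches the paper's (implicit) argument: the corollary is exactly the specialization of Theorem~\ref{n.c.c} and Corollary~\ref{cor: bijective} to the singleton partition $\PPP_0$, using $H(\PPP_0)=A$, $|\PPP_0|=n$, $\A_\Lambda(\PPP_0)=\A_\Lambda$, and the dimension bound of Proposition~\ref{dimensions are equal} to identify the corresponding subalgebra with $\A_\Lambda$ itself. The paper states it without proof precisely because it is this immediate dictionary translation, and your write-up supplies the same bookkeeping correctly.
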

The following Corollary recovers a result from~\cite{LW}. Note that by
Theorem~\ref{prop: algebra} any field partition must be an equiblock
partition; that is, a partition into blocks of equal size. 
\begin{cor}[See \S6 in \cite{LW}]\label{equi must}
 If $H = H(\PPP)$ has a finite volume orbit in $\Xn$ then $H$ is necessarily an equiblock group. 
 \end{cor}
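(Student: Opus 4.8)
The plan is to deduce Corollary~\ref{equi must} directly from the algebraic characterization of finite-volume orbits already in hand, namely the last statement of Theorem~\ref{prop: addition} (equivalently Theorem~\ref{n.c.c}) together with Theorem~\ref{prop: algebra}. Suppose $H=H(\PPP)$ has a finite volume orbit $H\Lambda$ in $\Xn$. By the last part of Theorem~\ref{prop: addition}, the associated algebra $B\df\A_\Lambda(\PPP)$ is a \emph{field} of degree $|\PPP|$ over $\Q$. In particular $B$ is an algebra partition for $\Lambda$, so by Corollary~\ref{algparchar} we have $\PPP=\PPP_B$.

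Now I would invoke Theorem~\ref{prop: algebra} applied to the subalgebra $B\subset\A_\Lambda$. Since $B$ is a field, the direct-sum decomposition $\bigoplus_{j=1}^r F_j$ has a single summand, $r=1$, and $B\cong F$ is a totally real number field of degree $d=|\PPP|$. The conclusion of Theorem~\ref{prop: algebra} then gives a partition
\eq{eq: equ-must-partition}{
\PPP = \PPP_B = \bigsqcup_{k=1}^{d} I_k
}
indexed by the distinct field embeddings $\sigma_k\colon F\to\R$, where $I_k=\{\,i : \sigma_k = p_i\circ\varphi\,\}$. Assertion~\eqref{alg.1} of that theorem asserts precisely that the block sizes $s\df|I_k|$ are independent of $k$. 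Hence all blocks of $\PPP$ have the same size, i.e.\ $\PPP$ is an equiblock partition and $H=H(\PPP)$ is an equiblock group, which is the claim.

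There is essentially no obstacle here: the corollary is a bookkeeping consequence of results already established. The one point worth being careful about is that Theorem~\ref{prop: algebra} is stated for an arbitrary subalgebra $B\subset\A_\Lambda$ together with its \emph{associated} partition $\PPP_B$, so one must first legitimately identify the given $\PPP$ with such a $\PPP_B$; this is exactly what the finite-volume hypothesis buys via Theorem~\ref{prop: addition} and Corollary~\ref{algparchar} (the hypothesis forces $\dim_\Q\A_\Lambda(\PPP)=|\PPP|$, so $\PPP$ is an algebra partition and equals $\PPP_{\A_\Lambda(\PPP)}$). Once that identification is made, the equality of block sizes is immediate from part~\eqref{alg.1}. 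I would also remark, as the paper does in the line preceding the corollary, that this argument shows more: even a \emph{closed} (not necessarily finite-volume) orbit of $H(\PPP)$ forces each of the several number-field summands to contribute blocks of a common size $s_j$, so the failure of the equiblock property in the merely-closed case is governed by the summands having different degrees.
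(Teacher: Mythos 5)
Your proof is correct and follows essentially the same route the paper intends: the paper dispenses with the corollary by noting, just before its statement, that Theorem~\ref{prop: algebra} forces any field partition to be equiblock, with the finite-volume hypothesis supplying the field property of $\A_\Lambda(\PPP)$ via Theorem~\ref{prop: addition} (and the identification $\PPP=\PPP_B$ via Corollary~\ref{algparchar}), exactly as you spell out. The only quibble is the slip ``$B$ is an algebra partition'' where you mean that $\PPP$ is an algebra partition because $\dim_\Q\A_\Lambda(\PPP)=|\PPP|$; the argument itself is sound.
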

\qed
 


We now summarize the relation between subalgebras and
partitions. 
 Given a lattice $\Lam$ we have defined two maps
\begin{align}\label{correspondences}
&\xymatrixcolsep{5pc}\xymatrix{
\set{\textrm{partitions $\PPP$}} \ar@/^2pc/[r]^{\PPP\mapsto
  \A_\Lambda(\PPP)}&\set{\textrm{subalgebras of
    $\A_\Lambda$}}\ar@/^2pc/[l]_{\PPP_B\mapsfrom B} 
}
\end{align}
The image of the RHS in the LHS of~\eqref{correspondences} is the
collection of algebra partitions which are those of dynamical
interest, in light of Corollary~\ref{cor: bijective}. 
\begin{prop} \name{prop: correspondences}
Let $\Lam$ be a lattice.
\begin{enumerate}
\item Both maps in~\eqref{correspondences} respect the partial
  orderings of refinement on the LHS and inclusion on the RHS.  
\item For any subalgebra $B\subset \A_\Lambda$ we have that
  $B=\A_\Lambda(\PPP_B)$; that is, going from the RHS to the LHS and
  back in~\eqref{correspondences} is the identity map. 
\item In the other direction, for any partition $\PPP$, if
  $B=\A_\Lambda(\PPP)$ then $\PPP_B$ is the finest algebra partition
  which is coarser than $\PPP$. 
\end{enumerate}
\end{prop}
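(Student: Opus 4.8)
The plan is to prove the three assertions of Proposition~\ref{prop: correspondences} in order, using the structural description of $\PPP_B$ furnished by Theorem~\ref{prop: algebra} together with the dimension-counting from Corollary~\ref{algparchar}.

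\textbf{Part (1).} For the map $\PPP \mapsto \A_\Lambda(\PPP)$, I would argue directly from the definitions: if $\PPP'$ refines $\PPP$ then $\sim_{\PPP'}$ implies $\sim_\PPP$, so the constraints defining $D(\PPP)$ are a subset of those defining $D(\PPP')$, whence $D(\PPP) \subseteq D(\PPP')$ and therefore $\A_\Lambda(\PPP) \subseteq \A_\Lambda(\PPP')$. For the map $B \mapsto \PPP_B$, suppose $B \subseteq B'$ are subalgebras of $\A_\Lambda$. If $i \sim j$ with respect to $\PPP_{B'}$, i.e. $p_i|_{B'} \sim p_j|_{B'}$ (equal after composing with a field automorphism of $\C$), then restricting to $B$ gives $p_i|_B \sim p_j|_B$, so $i \sim j$ with respect to $\PPP_B$; hence $\PPP_{B'}$ refines $\PPP_B$. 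Both maps thus reverse the two partial orders as claimed.

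\textbf{Part (2).} Given a subalgebra $B \subseteq \A_\Lambda$, the inclusion $B \subseteq \A_\Lambda(\PPP_B)$ is immediate from the definition of $\PPP_B$: every element of $B$ is constant along $\sim_{\PPP_B}$-classes by the very definition of that equivalence relation, and it preserves $V_\Lambda$, so it lies in $D(\PPP_B)$ and hence in $\A_\Lambda(\PPP_B)$. For the reverse inclusion it suffices to compare dimensions: Theorem~\ref{prop: algebra} gives $\dim_\Q B = |\PPP_B|$ (the partition~\equ{eq: partition} has one block $I_{j,k}$ for each pair $(j,k)$ with $k$ ranging over the $d_j$ embeddings of $F_j$, and $\sum_j d_j = \dim_\Q \bigoplus_j F_j = \dim_\Q B$), while by Proposition~\ref{dimensions are equal} we always have $\dim_\Q \A_\Lambda(\PPP_B) \le |\PPP_B|$. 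Chaining these, $\dim_\Q \A_\Lambda(\PPP_B) \le |\PPP_B| = \dim_\Q B \le \dim_\Q \A_\Lambda(\PPP_B)$, forcing equality everywhere and hence $B = \A_\Lambda(\PPP_B)$.

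\textbf{Part (3).} Fix a partition $\PPP$ and set $B = \A_\Lambda(\PPP)$. By Corollary~\ref{algparchar}, $\PPP_B$ is an algebra partition for $\Lam$; I must show it is the coarsest among algebra partitions refined by $\PPP$, i.e. the finest algebra partition coarser than $\PPP$. First, $\PPP$ refines $\PPP_B$: every element of $B \subseteq D(\PPP)$ is already constant along $\sim_\PPP$-classes, so $i \sim_\PPP j$ implies $p_i|_B = p_j|_B$, in particular $p_i|_B \sim p_j|_B$, giving $i \sim_{\PPP_B} j$. Now let $\PPP'$ be any algebra partition with $\PPP$ refining $\PPP'$; say $\PPP' = \PPP_{B'}$ for a subalgebra $B' \subseteq \A_\Lambda$. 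Since $\PPP$ refines $\PPP' = \PPP_{B'}$, part (1) gives $\A_\Lambda(\PPP) \subseteq \A_\Lambda(\PPP_{B'})$, which by part (2) equals $B'$; thus $B' \supseteq B$, and applying $B \mapsto \PPP_B$ (order-reversing, part (1)) yields $\PPP_{B'}$ refines $\PPP_B$, i.e. $\PPP'$ refines $\PPP_B$. Hence $\PPP_B$ is coarser than every algebra partition refined by $\PPP$ while itself being such a partition, which is exactly the claimed characterization.

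The only genuinely non-formal input is the dimension count $\dim_\Q B = |\PPP_B|$ in part (2), and that is precisely what the explicit form of the partition in Theorem~\ref{prop: algebra} delivers; everything else is a bookkeeping exercise with the two order-reversing maps. I therefore expect no serious obstacle — the main point is simply to be careful about the direction of each refinement/inclusion relation so that the order-reversal is applied consistently.
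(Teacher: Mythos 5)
The paper itself leaves this proposition to the reader, so the benchmark is the toolkit it supplies, and your parts (1) and (2) use exactly that: part (2) is the same dimension count ($B\subseteq\A_\Lambda(\PPP_B)$, then $\dim_\Q B=|\PPP_B|$ from Theorem~\ref{prop: algebra} and $\dim_\Q\A_\Lambda(\PPP_B)\le|\PPP_B|$ from Proposition~\ref{dimensions are equal}) by which the authors prove Corollary~\ref{algparchar}. Two slips there are harmless but should be fixed: when $\PPP'$ refines $\PPP$, it is the constraints defining $D(\PPP')$ that form a subset of those defining $D(\PPP)$, not the other way around (your displayed conclusion $D(\PPP)\subseteq D(\PPP')$ is nevertheless the correct one); and your gloss of $p_i|_B\sim p_j|_B$ as ``equal after composing with a field automorphism of $\C$'' cannot be the intended meaning, since for a field $B$ all real embeddings are so related and $\PPP_B$ would collapse to the one-block partition, contradicting Theorem~\ref{prop: algebra}; the relation is equality of the restrictions, which is what you in fact use everywhere else.

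Part (3), however, contains a genuine direction error. Since $\PPP$ refines $\PPP_{B'}$, your own part (1) gives $\A_\Lambda(\PPP_{B'})\subseteq\A_\Lambda(\PPP)=B$, not $\A_\Lambda(\PPP)\subseteq\A_\Lambda(\PPP_{B'})$ as you wrote; hence $B'=\A_\Lambda(\PPP_{B'})\subseteq B$ by part (2), and the order-reversal of $B\mapsto\PPP_B$ then yields that $\PPP_B$ refines $\PPP_{B'}$. Your reversed chain instead ends with the assertion that $\PPP_B$ is coarser than every algebra partition refined by $\PPP$; that is not the statement of the proposition (it would characterize the coarsest such partition, not the finest) and it is false in general: for $\Lambda=\Z^n$ with $n\ge 2$ and $\PPP=\PPP_0$ one has $B=\A_\Lambda\cong\Q^n$ and $\PPP_B=\PPP_0$, while every partition is an algebra partition for $\Z^n$ (Example~\ref{ex zn}), so $\PPP_0$ is certainly not coarser than, say, the one-block partition. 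Once the three arrows are reversed, your argument is correct and yields precisely the claim: $\PPP_B$ is an algebra partition coarser than $\PPP$ (your first step), and it refines every algebra partition coarser than $\PPP$, i.e.\ it is the finest such.
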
 
We will not be using Proposition  \ref{prop: correspondences} and its proof is
left to the reader.
\qed

\ignore{
The collections in both sides of~\eqref{correspondences} are partially
ordered sets in a natural way. The algebra side by inclusion and  the
partitions side  by the refinement relation; we choose to orient the
ordering in such a way that the finer the partition the bigger it is
considered. The two collections are also equipped with the following 
 operations: The algebra side by 
intersection $\cap$ and the partitions side by the  operation
$\wedge$. Here by $\PPP_1\wedge\PPP_2$ we mean the finest partition of
which both $\PPP_i$ are refinements of. In other words, the largest
element which is smaller than both $\PPP_i$. 
In the following Remark we discuss some elementary properties of the
correspondences in~\eqref{correspondences}.  
\begin{remark}\label{r.a.p for B} Let $\Lam$ be a lattice.
\begin{enumerate}
\item The maps $B\mapsto \PPP_B$, $\PPP\mapsto \A_\Lambda(\PPP)$ both
  respect the partial orderings. 
\item For any two partitions  $\PPP_1,\PPP_2$, because $D(\PPP_1\wedge
  \PPP_2)=D(\PPP_1)\cap D(\PPP_2)$ we deduce 
$\A_\Lambda(\PPP_1\wedge
\PPP_2)=\A_\Lambda(\PPP_1)\cap\A_\Lambda(\PPP_2)$. For any two
subalgebras $B_1,B_2\subset \A_\Lambda$ we have that  
$\PPP_{B_1}\wedge \PPP_{B_2}$ refines $\PPP_{B_1\cap B_2}$ (because
the latter is smaller than both $\PPP_{B_i}$ and the former is the
largest such element).***** Not sure about the following****It is not  
difficult to come up with examples where there is strict inequality between the two.  
\item It is easy to see that both maps in~\eqref{correspondences} map
  the minimal elements to each other; that is, the subalgebra $\Q$
  corresponds to the trivial partition and vice versa. In particular,
  the trivial partition is always a field partition.  
\item Getting ahead of ourselves we note that we will show in-----
  that for any subalgebra $B\subset \A_\Lambda$ we have
  $B=\A_{\Lam}(\PPP_B)$; that is, in~\eqref{correspondences} going
  from the right hand side to the left and back is the identity map. 
 Nonetheless, the following observations in this direction are
 straightforward and left to be verified by the reader: (a) $B\subset
 \A_{\Lam}(\PPP_B)$. (b) Any partition $\PPP$ for which  
$B\subset\A_{\Lam}(\PPP)$ must refine $\PPP_B$ and so in particular
$\A_\Lambda(\PPP_B)\subset \A_\Lambda(\PPP)$. 
\item  On the other direction, we will show in ----- that given a partition $\PPP$, if we denote 
$B=\A_\Lambda(\PPP)$ then the partition $\PPP_B$ is the finest algebra partition which is smaller than $\PPP$ (with respect to the partial ordering). At this point the existence of such an object is not clear.
\end{enumerate}
\end{remark}
\begin{thm}\name{prop: algebra}
Let $\Lam\in\Xn$ be a lattice and $B\subset \A_\Lambda$ a
subalgebra. Then there is an isomorphism of $\Q$-algebras $\varphi:
\bigoplus_{j=1}^r F_j \to B,$ where the  $F_j$'s are totally real
number fields. Moreover, if we denote  $d_j\df\deg(F_j/\Q)$, then
$\PPP_B$ takes the following form 
\eq{eq: partition}{
\PPP_B = \bigsqcup_{\displaystyle_{k=1, \ldots, d_j}^{j=1,\ldots, r}} 
I_{j,k}
} 
such that
\begin{enumerate}
\item\label{alg.1}
For each $j$, the number $s_j \df | I_{j,k}|$ is independent of $k$.  
\item\label{alg.2}
For each $j,k$, there is a field embedding $\sigma: F_j \to \R$ such that for all $i \in I_{j,k},  \, \sigma = p_i \circ \varphi|_{F_j}.$ Moreover any field embedding of $F_j$ appears for some choice of $k$. 
\item\label{alg.3}
Moreover, 
$B=\A_\Lambda(\PPP_B)$
and if $B$ was of the form $B=\A_\Lambda(\PPP)$ to begin with, then $\PPP$ refines $\PPP_B$.
\end{enumerate}
\end{thm}
Note that if $r>1$, the fields $F_j$ that appear above are not considered here as subfields or subalgebras as they do not share the same unit of the algebra.
For the proof we will require the following well-known fact (for which we were unable to find a reference). 

\begin{lem}\name{lem: norm}
Let $F$ be a number field of degree $d$ over $\Q$, let $\sigma_i: F
\to \C, \, i=1, \ldots, d$ be its distinct embeddings in $\C$, and let
$k_1, \ldots , k_d \in \Z$ such that for all $x \in F$, $\prod_1^d
\sigma_i(x)^{k_i} \in \Q$. Then all the $k_i$ are equal.  
\end{lem}
\begin{proof}
We will prove that $k_1 = k_2$, the other cases being symmetric to
this one. Assume by contradiction that $k_2 > k_1$. Since the norm map
$N(x) = \prod \sigma_i(x)$ has its values in $\Q$, we may divide
through by $N(x)$ to assume that $k_1=0 < k_2$.  
Choose a basis $\alpha_1, \ldots, \alpha_d$ of $F$ over $\Q$, and denote by $\varphi$
the rational map 
$$(X_1, \ldots, X_d) \mapsto \prod_{i=1}^d \sigma_i\left(\sum_j \alpha_j X_j \right)^{k_i}, $$
which we can simplify as 
$$
\varphi(\vec{X}) = \prod_{i=1}^d L_i(\vec{X})^{k_i}, \ \
\mathrm{where} \ L_i(\vec{X}) \df \sum_j  \sigma_i(\alpha_j) X_j. 
$$
The $L_i$ are linearly independent linear functionals. Thus the zero
set of $\varphi$ is the union of the kernels of those $L_i$ for which
$k_i \neq 0$; in particular $\varphi$ is identically zero on $\ker (
L_2)$ but not on $\ker (L_1)$.  

Now let $\sigma: \C \to \C$ be a field automorphism such that
$\sigma_{1} = \sigma \circ \sigma_{2}$.  Then for each $\vec{X} \in
\Q^d$ we have $\varphi(X) \in \Q$, hence $\sigma \circ \varphi
(\vec{X}) = \varphi(\vec{X})$, and since $\Q^d$ is Zariski dense in
$\C^d$, this implies that $\sigma \circ \varphi$ and $\varphi$ are
identical as rational maps. On the other hand $\sigma_i \mapsto \sigma
\circ \sigma_i$ is a permutation $\sigma_i \mapsto \sigma_{\pi(i)}$
with $\pi(2)=1$. This means that $\varphi(X)$ can also be written as
$\prod_1^d L_{\pi(i)}(X)^{k_i}$, and so  
 $\varphi$ is identically zero on $\ker( L_1)$ --- a contradiction. 
\end{proof}

\begin{proof}[Proof of Theorem \ref{prop: algebra}]
The fact that $B$ is isomorphic to a direct sum of number fields is a
consequence of the Artin-Wedderburn Theorem and follows from the fact
it is a finite dimensional semisimple $\Q$-algebra (see also
\cite[Prop. 3.1]{Tomanov}).  
So we have an abstract isomorphism  $\varphi: \bigoplus_{j=1}^r F_j
\to B $, where the  
$F_j$'s are number fields, and for each $i$, consider the restriction
of $p_i$ to $B$, which we continue to denote by $p_i$. Since the
diagonal embedding of $\Q$ as scalar matrices is a subalgebra of $B$,
each $p_i$ is non-zero. For each $j$, let $1_j$ denote the image of $1
\in F_j$ in $B$. Then for $j \neq j'$ we have $1_j \cdot 1_{j'}
=0$. Since $\R$ has no zero-divisors, for each $i$ there is a unique
$j$ such that $p_i(1_j) \neq 0$.  This implies that $p_i \circ
\varphi|_{F_j}:F_j\to\R$ is a  
non-zero map that respects addition and multiplication. It follows
that it is  a real field embedding.  

To argue parts~\eqref{alg.1},\eqref{alg.2} of the Theorem it remains
to show that for each $j$, and each field embedding $\sigma: F_j \to
\C$, the number of indices $i$ for which $\sigma = p_i \circ \varphi$
is a nonzero number independent of $\sigma.$ To this end, for each $x
\in F_j$ let  
\eq{def psi}{
\psi(x)\df \varphi(x)+\sum_{j'\ne j}1_{j'}\in B\subset \A_\Lambda.
}
This is a diagonal matrix whose $i$'th entry is 1 if $p_i \circ
\varphi|_{F_j}$ is zero, and is $p_i \circ \varphi(x)$ otherwise. In
particular, $\det \psi(x)$ is the product of the numbers $p_i \circ
\varphi(x)$, taken over the indices $i$ for which $p_i \circ
\varphi|_{F_j}$ is not zero, and is a rational number, since
$V_{\Lambda}$ -- on which  
$\psi(x)$ acts -- is a $\Q$-vector space. So the claim follows from Lemma \ref{lem: norm}. 

\textbf{Barak, please go over the following reasoning and make sure that I did not cheat}

We now  argue~\eqref{alg.3}. It is clear from part~\eqref{alg.2} of
the Theorem that $\dim_{\Q}B=\sum_1^r d_j = |\PPP_B|$, and from our
definitions that 
 $B\subset \A_\Lambda(\PPP_B)$. We will argue the equality between
 these two algebras by showing that their dimensions are equal. 
Denote $B' =\A_{\Lam}(\PPP_B)$ and note that by carrying the above
analysis for $B'$ we obtain that $\dim_{\Q}B'=|\PPP_{B'}|$. On the one
hand we have that 
$\dim_{\Q} B\le\dim_{\Q} B'$ as $B\subset B'$. On the other hand,
it follows from 
Remark~\ref{r.a.p for B} that $\PPP_B$ must refine $\PPP_{B'}$ and in
particular, $|\PPP_{B'}|\le|\PPP_B|$ which establishes the desired
equality of dimensions. 
Finally, if $B=\A_\Lambda(\PPP)$ for some partition $\PPP$ to begin
with, then as we established 
already $\A_\Lambda(\PPP)=\A_\Lambda(\PPP_B)$, by Remark~\ref{r.a.p
  for B} we deduce that $\PPP$ indeed refines $\PPP_B$. 
\end{proof}
Let $\Lambda\in\Xn$ be a lattice and let $B\subset \A_\Lambda$ be a subalgebra. 
By Theorem~\ref{prop: algebra}, the algebra partition $\PPP_B$ may be defined as follows
\eq{eq: bijective partition}{
\PPP_B \df \left(\{i: p_i|_B = \sigma \}: \sigma \mathrm{\ a \ algebra \ homomorphism \ of \ } B \right).
}
In the case $B$ is a field and $\PPP_B$ is a field partition observe
that  in the notation of Theorem~\ref{prop: algebra}, $r=1$, $B\cong
F_1$ is a field of degree $d_1$ over $\Q$,  
and $\PPP_B$ consists of $d_1$ blocks of size $n/d_1$ each; i.e.\ $\PPP_B$ is an equiblock partition. 
The following proposition characterizes 
algebra and field partitions.
\begin{prop}\label{prop field partition}
Let $\Lam$ be a lattice and $\PPP$ a partition. Then
$\dim_{\Q}\A_{\Lam}(\PPP)\le |\PPP|$ with equality if and only if
$\PPP$ is an algebra partition for $\Lam$. 
\end{prop}
\begin{proof}
The bound $\dim_{\Q}\A_\Lambda(\PPP) \le|\PPP|$ holds for any
partition. In case $\PPP=\PPP_B$ is an algebra partition for the
subalgebra $B\subset \A_\Lambda$, we already established in the proof
of 
Theorem~\ref{prop: algebra}\eqref{alg.3} that  
$\dim_{\Q}\A_\Lambda(\PPP)=|\PPP|$ (see~\eqref{dimineq}). For the
other direction let $B\df \A_\Lam(\PPP)$ and assume
$|\PPP|=\dim_{\Q}B$. By~\eqref{eq: partition} in Theorem~\ref{prop:
  algebra} 
we see that also $|\PPP_B|=\dim_{\Q} B$ and so $|\PPP|=|\PPP_B|$. By
part~\eqref{alg.3} of Theorem~\ref{prop: algebra}  
$\PPP$ refines $\PPP_B$ so we deduce that $\PPP=\PPP_B$ is indeed an algebra partition as desired.

The last statement regarding field partitions is now immediate from
the definitions and the equality
$\A_\Lambda(\PPP)=\A_\Lambda(\PPP_{\A_{\Lam}(\PPP)})$ which is part of
Theorem~\ref{prop: algebra}. 
\end{proof}
}

\subsection{Density properties}
It is a well-known result of Prasad and Raghunathan
\cite{PrasadRaghunathan}, based on earlier work of Mostow, that the
set of compact $A$-orbits is dense in any fixed finite volume orbit $H\Lam$ of a block group $H$. Our results imply the
following related result:
\begin{prop}\name{prop: density}
Let $H_1\varsubsetneq H_2$ be two equiblock subgroups of $G$ and let
$\Lambda_0$ be an intermediate lattice such that both orbits
$H_i\Lambda_0$ are homogeneous and of finite volume.  
Let $\PPP_i$ denote the partition satisfying $H_i=H(\PPP_i)$ and let
$K=\A_{\Lambda_0}(\PPP_1)$ be the subfield of the associated algebra
to $\Lambda_0$  
corresponding to $\PPP_1$ by Corollary~\ref{cor: bijective}.
Then the set 
$$
\left\{\Lambda \in H_2\Lambda_0 : H_1\Lambda \mathrm{\ is \ a \
    homogeneous \ subspace \ and \ } K \cong \A_{\Lambda}(\PPP_K)  
\right\}
$$
is dense in $H_2\Lambda_0$.

\end{prop}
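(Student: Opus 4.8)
The plan is to deduce the statement from the Prasad–Raghunathan density theorem (density of compact $A$-orbits in a finite volume homogeneous space) combined with the algebraic characterization of intermediate lattices developed in \S\ref{subsection: types}, in particular the bijective correspondence of Corollary~\ref{cor: bijective} and the explicit description of field partitions in Theorem~\ref{prop: algebra}. The key point to exploit is that the property ``$K \cong \A_\Lambda(\PPP_K)$'' is an open-type condition that is automatically satisfied on a dense set once we know it holds at $\Lambda_0$ and is stable under the relevant translations.

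First I would set up the picture: since $H_1 \varsubsetneq H_2$ are equiblock groups and both $H_i\Lambda_0$ are finite volume orbits, Corollary~\ref{cor: bijective} (applied with the lattice $\Lambda_0$) identifies the block group $H_i = H(\PPP_i)$ with a subfield $K_i = \A_{\Lambda_0}(\PPP_i)$ of $\A_{\Lambda_0}$, where $K = K_1 \subset K_2$ and $\PPP_1$ refines $\PPP_2$. Now I would work \emph{inside} the finite volume orbit $Y \df H_2\Lambda_0$. For $\Lambda \in Y$, the associated algebra $\A_\Lambda(\PPP_2)$ equals $\A_{\Lambda_0}(\PPP_2) = K_2$ by Proposition~\ref{constant alg} (it is constant along $H_2 = H(\PPP_2)$-orbits). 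The subtlety is that the \emph{sub}field $K \subset K_2$ need not remain a subfield of the smaller algebra $\A_\Lambda(\PPP_K)$ as $\Lambda$ varies within $Y$, because $\PPP_K = \PPP_1$ and $\A_\Lambda(\PPP_1)$ is only constant along $H_1$-orbits, not along $H_2$-orbits. So the set in question is precisely $\{\Lambda \in Y : \A_\Lambda(\PPP_1) \text{ contains a copy of } K, \text{ equivalently } \dim_\Q \A_\Lambda(\PPP_1) = |\PPP_1|\}$, using Corollary~\ref{algparchar} and Theorem~\ref{prop: addition}: the latter dimension condition is exactly the condition that $H_1\Lambda$ is closed, and since $H_1$ is an equiblock group defined over $\Q$ (with respect to the structure induced by $\Lambda$) with no $\Q$-characters when $\A_\Lambda(\PPP_1)$ is a field of the right degree, this forces $H_1\Lambda$ to be of finite volume, i.e.\ a homogeneous subspace.

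The heart of the argument is then: the set of $\Lambda \in Y$ for which $A\Lambda$ is a \emph{compact} orbit is dense in $Y$ by Prasad–Raghunathan. For such $\Lambda$, the associated algebra $\A_\Lambda = \A_\Lambda(\PPP_0)$ is an $n$-dimensional field $F_\Lambda$ over $\Q$ by Corollary~\ref{cor: compact A orbits}. I would argue that for a dense subset of these compact-orbit lattices, $F_\Lambda$ contains a subfield isomorphic to $K$ inducing exactly the partition $\PPP_1$ — in fact this holds for \emph{every} $\Lambda$ with $A\Lambda$ compact lying in $Y$: since $H_1\Lambda_0$ is a finite volume orbit and $A\Lambda$ compact with $\Lambda \in Y = H_2\Lambda_0$, one checks (using Proposition~\ref{prop: product structure} applied to $H_1$, or directly via the associated-algebra formalism) that $\A_\Lambda(\PPP_1) \supset \A_\Lambda(\PPP_2)|_{\text{restricted}}$... — more carefully, one uses that the centralizer structure forces $\A_\Lambda(\PPP_1)$ to be $|\PPP_1|$-dimensional. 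The cleanest route: by Corollary~\ref{cor: bijective} applied to the lattice $\Lambda$ (with $A\Lambda$ compact so $\A_\Lambda$ is a degree-$n$ field $F_\Lambda$), the block groups with closed orbit through $\Lambda$ correspond to subfields of $F_\Lambda$; and $H_2\Lambda$ being closed gives the subfield $\A_\Lambda(\PPP_2) = K_2 \subset F_\Lambda$, while $H_1\Lambda$ closed would correspond to a subfield of $K_2$ isomorphic to $K$ inducing $\PPP_1$. So I must show such a subfield of $F_\Lambda$ exists for a dense set of compact-$A$-orbit lattices in $Y$. This I would do by applying Prasad–Raghunathan \emph{within the smaller orbit} $H_1\Lambda_0$: compact $A$-orbits are dense in $H_1\Lambda_0$, and every lattice $\Lambda$ on a compact $A$-orbit inside $H_1\Lambda_0$ satisfies, by Proposition~\ref{constant alg}, $\A_\Lambda(\PPP_1) = \A_{\Lambda_0}(\PPP_1) = K$, so $K \cong \A_\Lambda(\PPP_K)$ holds and $H_1\Lambda$ is a finite-volume homogeneous subspace; hence the desired set \emph{contains} all such $\Lambda$. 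Finally I would enlarge: the desired set is $A$-invariant and $H_1$-invariant (both leave $\A_\Lambda(\PPP_1)$ fixed and the finite-volume property intact), so it contains $H_1 \cdot (\text{dense set in } H_1\Lambda_0) \cap Y$; translating these $H_1$-orbits around by elements of $H_2$ and invoking density of compact $A$-orbits in $Y$ together with the fact that each such compact orbit meets a small neighbourhood, one concludes density in all of $Y = H_2\Lambda_0$.

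The main obstacle I anticipate is the last density step: showing that the union of the finite-volume $H_1$-orbits with the correct associated subfield is dense in $H_2\Lambda_0$, rather than just dense in $H_1\Lambda_0$. The clean way to handle this is to note that by Prasad–Raghunathan the compact $A$-orbits are dense in $H_2\Lambda_0$; given such a compact orbit $A\Lambda'$ with $\Lambda' \in H_2\Lambda_0$, the field $F_{\Lambda'} = \A_{\Lambda'}$ has degree $n$ and contains $K_2 = \A_{\Lambda'}(\PPP_2)$; I then need a subfield of $F_{\Lambda'}$ isomorphic to $K$ with partition $\PPP_1$, which is where one must either invoke that $K \subset K_2$ as abstract fields forces a matching subfield (using that $\PPP_1$ refines $\PPP_2$ and Theorem~\ref{prop: algebra}), or instead sidestep by observing that it suffices to exhibit a dense set of $\Lambda'$ with $A\Lambda'$ compact for which one \emph{can} arrange this, e.g.\ by choosing $\Lambda'$ to lie on a compact $A$-orbit contained in some finite-volume $H_1$-orbit that is itself close to an arbitrary point of $H_2\Lambda_0$ — such $H_1$-orbits being dense in $H_2\Lambda_0$ again by Prasad–Raghunathan applied one level up. I would carry out this two-tiered application of Prasad–Raghunathan (first to get dense finite-volume $H_1$-orbits inside $H_2\Lambda_0$, then to populate each with compact $A$-orbits) and the routine verification that the associated-algebra conditions propagate; the algebraic bookkeeping via Corollaries~\ref{cor: bijective} and~\ref{algparchar} is then straightforward.
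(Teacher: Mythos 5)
There is a genuine gap at the decisive step, and it is exactly the step where the paper's proof uses a different (and necessary) idea. Your argument correctly observes that the set in question contains all of $H_1\Lambda_0$ (by Proposition \ref{constant alg} the algebra $\A_\Lambda(\PPP_1)$ is constant along the $H_1$-orbit, so no appeal to Prasad--Raghunathan is even needed there), but that only gives density in $H_1\Lambda_0$, not in $H_2\Lambda_0$. To spread out you offer two routes, and neither works. Route (a): for a lattice $\Lambda'$ on a compact $A$-orbit in $H_2\Lambda_0$, you claim the abstract containment of fields forces $F_{\Lambda'}=\A_{\Lambda'}$ to contain a copy of $K$ inducing $\PPP_1$. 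First, your inclusions are reversed: since the correspondence of Corollary \ref{cor: bijective} is order-reversing and $\PPP_1$ refines $\PPP_2$, one has $K_2=\A_{\Lambda_0}(\PPP_2)\subset K=\A_{\Lambda_0}(\PPP_1)$, not $K\subset K_2$. Second, even with the correct direction the claim is false: a generic compact-$A$-orbit lattice in $H_2\Lambda_0$ corresponds to a degree-$n$ field containing $K_2$ but typically containing no subfield isomorphic to $K$ at all (so $H_1\Lambda'$ is not even closed, since $\dim_\Q\A_{\Lambda'}(\PPP_1)<|\PPP_1|$). Route (b), ``Prasad--Raghunathan applied one level up'' to get finite-volume $H_1$-orbits (with the right field) dense in $H_2\Lambda_0$, is circular: that density is essentially the statement being proved, and Prasad--Raghunathan, as quoted in the paper, only gives density of compact orbits of the Cartan subgroup $A$ inside a fixed finite-volume orbit; it says nothing about orbits of intermediate block groups $H_1$ with $A\varsubsetneq H_1$, and nothing about prescribing the isomorphism type of the associated algebra.

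The missing idea is arithmetic rather than dynamical. The paper takes the dense subgroup $H_2(\Q)$ of elements of $H_2$ rational with respect to the $\Q$-structure induced by $\Lambda_0$, and considers the translates $q\Lambda_0$, $q\in H_2(\Q)$. Two facts then do all the work: (i) $V_{q\Lambda_0}=V_{\Lambda_0}$, so $\A_{q\Lambda_0}(\PPP_K)=\A_{\Lambda_0}(\PPP_K)=K$ on the nose; and (ii) $q$ commensurates the stabilizer $\Gamma'$ of $\Lambda_0$ in $H_2$, so the orbit $Zq\Lambda_0$ of the center $Z$ of $H_1$ shares a finite cover with the compact orbit $Z\Lambda_0$ and is therefore compact, whence $H_1q\Lambda_0$ is a finite-volume homogeneous subspace by Corollary \ref{r.21.6}. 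Density of $\{q\Lambda_0\}$ in $H_2\Lambda_0$ is then immediate from density of $H_2(\Q)$ in $H_2$. If you want to salvage your outline, this commensurability-of-rational-translates argument (or some equivalent transfer of the $\Q$-structure along a dense set of translates) has to replace the second application of Prasad--Raghunathan; there is no purely dynamical substitute in the paper's toolkit.
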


\begin{proof}
 Let $Z$ be the center of $H_1$. By Corollary \ref{r.21.6},
 $Z\Lambda_0$ is compact. Let $\Gamma'$ be the subgroup of $H_2$
 fixing $\Lambda_0$, which is an arithmetic lattice in $H_2$. Let
 ${H_2}(\Q)$ denote the elements $q \in H_2$ which are rational with
 respect to the corresponding $\Q$-structure, a dense subgroup of
 $H_2$ (see \cite[Prop. 3.4]{LW} for more details). For each $q \in
 H_2(\Q)$, $q\Gamma' q^{-1}$ is commensurable with $\Gamma'$, so the
 orbits $Z\Lambda_0$ and $Zq\Lambda_0$ share a common finite
 cover. This implies that each $Zq\Lambda_0$ is also compact. Another
 application of Corollary \ref{r.21.6} shows that $H_1q\Lambda_0$ is also a homogeneous
 subset of finite volume.  Moreover, for each $q \in H_2(\Q)$, $V_{\Lambda_0} =
 V_{q\Lambda_0}$. This implies that $\A_{\Lambda_0}(\PPP_K) =
 \A_{q\Lambda_0}(\PPP_K)$. So the set of lattices $\{q\Lambda_0: q \in  
H_2(\Q)
 \}$ has the desired properties. 
\end{proof}

\subsection{Constructing 
intermediate lattices}\name{subsec: constructions}

Let $B=\oplus_1^r F_j$ be an $n$-dimensional $\Q$-algebra where the
$F_j$'s are totally real number fields of degrees $d_j$ over $\Q$
respectively; so $\dim_{\Q} B=\sum_1^r d_j= n$.  
Let $\sigma_i: B \to \R, i=1\dots n$ be some enumeration of the $n$
distinct homomorphisms of $B$ into the reals. More concretely, if we
denote by $\tau_{jk}:F_j\to\R, \ k=1,\dots, d_j$ the various field
embeddings of  
$F_j$ into the reals, and view each $\tau_{jk}$ as a homomorphism from
$B$ to $\R$, then $\sigma_1,\dots,\sigma_n$ is some enumeration of the
$\tau_{jk}, \, j=1,\dots, r, k=1,\dots, d_j$. Let $v: B\to \R^n$ be the map   
\begin{equation}\label{g.emb}
\alpha\mapsto v(\alpha) \df (\sigma_1(\alpha), \ldots, \sigma_n(\alpha)) \in \R^n.
\end{equation}
 Let $L$ be an  additive subgroup of $B$ of rank $n$. As
 $B\otimes_{\Q}\R=\R^n$,  the group $\{v(\alpha): \alpha \in L\}$ is a
 lattice in $\R^n$. Let  
\eq{eq: vectors first type}{
  \Lambda_L \df c_L \, \{v(\alpha): \alpha \in L\},}
  where $c_L$ is chosen so that $\Lam_L$ has covolume 1 and so belongs
  to $\Xn$. 
 Lattices arising in this way have been studied by many authors
 (mainly in the case where $B$ is a field), see
 e.g. \cite[Chap. 1]{GL} or \cite[p. 54]{PR}. We refer to below  to
 lattices of the form $\Lam_L$ as \textit{lattices arising via}
 \eqref{eq: vectors first type}.  

The following proposition gives an explicit construction of all algebra lattices.
 \begin{prop}\label{prop: compact A orbits}
 \begin{enumerate}
\item Let $\Lam_L$ be a lattice arising via~\eqref{eq: vectors first
    type} with $L$ a rank-$n$ subgroup of the $n$-dimensional
  $\Q$-algebra  $B$ as above. Then the associated algebra
  $\A_{\Lam_L}$ is isomorphic to $B$. In particular,  the orbit
  $A\Lam_L$ is closed and so consists of algebra lattices.  
\item If $\Lam$ is an algebra lattice, then there is  a lattice
  $\Lam_L$ arising via~\eqref{eq: vectors first type} with 
  $\Lam\in A\Lam_L$.
\end{enumerate}
 \end{prop}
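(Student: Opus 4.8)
The plan is to establish the two parts in turn, using the characterization of algebra lattices from Corollary \ref{cor: compact A orbits} — namely that $\Lam$ is an algebra lattice iff $\dim_\Q \A_\Lam = n$. For part (1), the key observation is that the natural (left-regular) action of $B$ on itself, transported through the embedding $v$ of \eqref{g.emb}, becomes the action of a diagonal algebra on $\R^n$. Concretely, for $\beta \in B$ let $m_\beta : B \to B$ denote multiplication by $\beta$; since the $\sigma_i$ are algebra homomorphisms, $v(m_\beta \alpha) = v(\beta\alpha)$ has $i$-th coordinate $\sigma_i(\beta)\sigma_i(\alpha)$, so under the identification $\R^n \cong B\otimes_\Q\R$ via $v$, the operator $m_\beta$ is the diagonal matrix $\diag(\sigma_1(\beta),\dots,\sigma_n(\beta))$. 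First I would check that this gives an embedding $B \hookrightarrow D$ (the diagonal algebra) whose image lies in $\A_{\Lam_L}$: indeed $m_\beta$ preserves $L$ up to finite index for a suitable integer multiple of $\beta$, hence preserves $V_{\Lam_L} = \spa_\Q\{v(\alpha):\alpha\in L\} = v(B)$ — and note $V_{\Lam_L}$ does not depend on the choice of $L$. This shows $\dim_\Q \A_{\Lam_L} \geq \dim_\Q B = n$; combined with the general bound $\dim_\Q \A_\Lam \leq n$ (a case of Proposition \ref{dimensions are equal}), we get equality, the image of $B$ is all of $\A_{\Lam_L}$, and $A\Lam_L$ is closed by Corollary \ref{cor: compact A orbits}. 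Identifying $\A_{\Lam_L}$ with $B$ as $\Q$-algebras is then immediate from the construction.

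For part (2), suppose $\Lam$ is an algebra lattice, so $B \df \A_\Lam$ has $\Q$-dimension $n$. By Theorem \ref{prop: algebra} applied with $\PPP = \PPP_0$ the partition into singletons (so $B = \A_\Lam(\PPP_0)$ and $\PPP_B = \PPP_0$, forcing all $s_j = 1$), there is an isomorphism $\varphi : \bigoplus_{j=1}^r F_j \to B$ with the $F_j$ totally real, and for each index $i$ the map $p_i \circ \varphi$ is one of the real embeddings of some $F_j$; part \eqref{alg.2} of that theorem says these exhaust all $n$ real embeddings as $i$ ranges over $\{1,\dots,n\}$. Thus, setting $\sigma_i \df p_i \circ \varphi$, the enumeration $\sigma_1,\dots,\sigma_n$ is exactly an enumeration of the $n$ homomorphisms of $B = \oplus F_j$ into $\R$, and the associated map $v : B \to \R^n$ of \eqref{g.emb} sends $\beta \mapsto (p_1(\varphi(\beta)),\dots,p_n(\varphi(\beta)))$, i.e. $v = \varphi$ followed by the diagonal identification. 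Now take $L \df \varphi^{-1}(V_\Lam \cap \text{(something of rank } n))$; more precisely, since $V_\Lam$ is an $n$-dimensional $\Q$-vector space on which $B$ acts $\Q$-linearly (every $a \in B = \A_\Lam$ preserves $V_\Lam$ by definition), and since $B$ itself is $n$-dimensional, $V_\Lam$ is a rank-one free $B$-module; picking any nonzero $w \in V_\Lam$ and any rank-$n$ additive subgroup $L \subset B$, the set $\{v(\alpha) : \alpha \in L\}$ and the lattice $\{a\cdot w : a \in L\} \subset V_\Lam$ differ by a fixed $\R$-linear isomorphism $\R^n \to \R^n$ intertwining the two diagonal $B$-actions, which one checks must be conjugation by a diagonal matrix, i.e. an element of $A$ up to signs and scaling. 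Hence $\Lam \in A\Lam_L$ after normalizing covolumes.

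The main obstacle I expect is the bookkeeping in part (2): one must show that the abstract $B$-module isomorphism $V_\Lam \cong B$ can be realized, after applying $v$ on one side, by an element of $A$ (rather than merely some invertible real matrix). The point is that any $\R$-linear automorphism of $\R^n \cong B\otimes_\Q\R$ commuting with the diagonal action of $B\otimes_\Q\R = \R^n$ (acting by its own regular representation, which is the full diagonal algebra since $\dim = n$) must itself be diagonal, and the positivity/unimodularity needed to land in $A$ is arranged by absorbing signs into the choice of basis of $L$ and the overall scalar into $c_L$; this is where the footnote in the introduction about permutations and sign changes becomes relevant, and I would state the conclusion as membership in the orbit of the full group of $\kappa$-preserving diagonal maps, which suffices for all later applications. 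Everything else is routine linear algebra together with the already-established Theorem \ref{prop: algebra} and Corollary \ref{cor: compact A orbits}.
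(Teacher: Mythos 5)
Your part (1) is correct and is essentially the paper's own argument: the relation $v(\beta\alpha)=\diag\pa{\sigma_1(\beta),\dots,\sigma_n(\beta)}v(\alpha)$ shows $\spa_\Q(\Lam_L)$ is a dilate of $v(B)$ and gives an embedding of $B$ into $\A_{\Lam_L}$, which the bound $\dim_\Q\A_{\Lam_L}\le n$ of Proposition \ref{dimensions are equal} upgrades to an isomorphism, and Corollary \ref{cor: compact A orbits} finishes.

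Part (2), however, has concrete gaps as written. First, ``picking any nonzero $w\in V_\Lam$'' is not enough: when $B=\A_\Lam$ is not a field, a nonzero element of a free rank-one module need not be a generator, and concretely if some coordinate of $w$ vanishes then $a\mapsto a\cdot w$ is not injective and the intertwining isomorphism you invoke does not exist. You need $w$ with all coordinates nonzero, and in fact with all coordinates \emph{positive} so that the intertwiner $\diag(w_1,\dots,w_n)$ (after unimodular rescaling) lies in $A$; such a $w$ exists in $\Lam$ because a full-rank lattice meets the open positive orthant. Your alternative of ``absorbing signs into the choice of basis of $L$'' does not work -- changing a basis of $L$ changes neither $L$ nor $\Lam_L$ -- and it leads you to settle for a weaker conclusion (orbit under the signed diagonal group) than the proposition asserts, which is unnecessary. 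Second, ``any rank-$n$ additive subgroup $L\subset B$'' cannot be right: to get $\Lam\in A\Lam_L$ you must take exactly $L=\psi^{-1}(\Lam)=\set{a\in B: a\cdot w\in\Lam}$, where $\psi(a)\df a\cdot w$; for an arbitrary $L$ the lattice $\Lam_L$ has nothing to do with $\Lam$, and your tentative definition $L\df\varphi^{-1}(V_\Lam\cap\cdots)$ does not typecheck since $V_\Lam\subset\R^n$ while $\varphi^{-1}$ is defined on $B$. Third, the claim that $V_\Lam$ is a free rank-one $B$-module does not follow from equality of $\Q$-dimensions alone (an $n$-dimensional module over $\oplus_j F_j$ could, a priori, be a power of one summand); it is true here, either by the multiplicity-one fact $s_j=1$ you extracted from Theorem \ref{prop: algebra}, or, more simply and as in the paper, by bypassing module theory: with $w\in\Lam$ all of whose coordinates are positive, $\psi:B\to V_\Lam$ is an injective $\Q$-linear map between $n$-dimensional spaces, hence an isomorphism, and then $\Lam$ and $\Lam_L$ differ by the element of $A$ obtained by normalizing $\diag(w_1,\dots,w_n)$ to determinant one. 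With these repairs your argument coincides with the paper's proof.
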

 \begin{proof}
 (1)  The map $v:B\to\R^n$ in~\eqref{g.emb} is $\Q$-linear and also
 respects multiplication in the sense that $v(\alpha\cdot\beta) =
 \on{diag}\pa{\sigma_1(\alpha),\dots,\sigma_n(\alpha)}\cdot
 v(\beta)$. This means that $V_{\Lam_L}=v(B)$, and moreover that
 the map $\alpha\mapsto
 \on{diag}\pa{\sigma_1(\alpha),\dots,\sigma_n(\alpha)}$ is an
 embedding of $B$ into $\A_{\Lam_L}$. As $B$ is $n$-dimensional and
 $\A_{\Lam_L}$ is at most $n$-dimensional, the above
 map is an isomorphism between $B$ and $\A_{\Lam_L}$. By
 Corollary~\ref{cor: compact A orbits}, $\Lam_L$ is an algebra lattice
 as desired.  
 
 \noindent(2)
 Let $B=\A_\Lambda$. By Corollary~\ref{cor: compact A orbits}, $B$ is
 $n$-dimensional. Choose a vector $w\in \Lam$ all of whose coordinates
 are positive and consider the map 
 $\psi: B\to V_\Lambda$ given by $a\mapsto a\cdot w$. This map is
 clearly $\Q$-linear and injective. As $V_\Lambda$ is $n$-dimensional
 as well, it must be an isomorphism of  
 $\Q$-vector spaces. Let $L\df\psi^{-1}(\Lam)\subset B$. By
 Theorem~\ref{prop: algebra}, by projecting $B$ to the diagonal
 coordinates we obtain an ordering of all the various homomorphisms of
 $B$ into $\R$. This way we obtain a map $v:B\to\R^n$ as
 in~\eqref{g.emb}, and the lattice $\Lam_L$ which
 arises  
 via~\eqref{eq: vectors first type} satisfies $\Lam_L=a\cdot
 \Lam$. Here $a$ is the diagonal matrix obtained by rescaling the diagonal
 matrix $\on{diag}(w_i)$ to have determinant 1. Indeed $a\in
 A$ as we chose $w$ so that all of its coordinates are positive. 
 \end{proof}
 The following Proposition gives an explicit construction of
 all homogeneous $A$-invariant spaces in $\Xn$ (or equivalently, of
 all intermediate lattices).  
 It shows that each such homogeneous space $H(\PPP)\Lam$ contains an
 algebra lattice $\Lam_L$ arising via~\eqref{eq: vectors first
   type}. Moreover, by Corollary~\ref{cor: bijective}, the partition 
 $\PPP$ must be the algebra partition that corresponds to the
 subalgebra of $\A_{\Lam_L}$ that we associate to the homogeneous
 space.  
 \begin{prop}\name{pilc}
 Let $H\Lam$ be a closed orbit for the group $H=H(\PPP)$. Then there
 exists a lattice $\Lam_L$ arising via~\eqref{eq: vectors first type}
 such that $\Lam\in H\Lam_L$. If $H\Lam$ is of finite volume then
 $\Lam_L$ can be taken to be a number field lattice. 
 \end{prop}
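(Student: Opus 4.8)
The plan is to reduce the statement, via Proposition~\ref{prop: compact A orbits}(2), to the assertion that every closed orbit $H(\PPP)\Lam$ contains an algebra lattice, and that every finite volume orbit $H(\PPP)\Lam$ contains a number field lattice. Indeed, if $\Lam'\in H\Lam$ is an algebra lattice, then by Proposition~\ref{prop: compact A orbits}(2) there is a lattice $\Lam_L$ arising via~\eqref{eq: vectors first type} with $\Lam'\in A\Lam_L$; since $A\subset H=H(\PPP)$ this gives $\Lam_L\in A\Lam'\subset H\Lam'=H\Lam$, whence $\Lam\in H\Lam_L$. If moreover $\Lam'$ is a number field lattice, then $A\Lam_L=A\Lam'$ is compact, so $\Lam_L$ is a number field lattice.

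For the finite volume case the required lattice is produced directly: as $H=H(\PPP)$ is a block group and $H\Lam$ is of finite volume, the Prasad--Raghunathan theorem recalled above provides (densely many) compact $A$-orbits inside $H\Lam$, and by Definition~\ref{i.l.d.2} a lattice whose $A$-orbit is compact is a number field lattice. For a general closed orbit $H\Lam$ I would invoke the product structure of Proposition~\ref{prop: product structure}: write $H\Lam$ as the homeomorphic image of $Z_s\times H_1\Lam$, where $H_1=Z_a\cdot S$ and $H_1\Lam$ is of finite volume, and decompose $A=Z_s\cdot(A\cap H_1)$ (a direct decomposition), noting that $A\cap H_1=Z_a\cdot(A\cap S)$ is a maximal $\R$-split torus of the reductive group $H_1$. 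Now choose $\Lam''\in H_1\Lam$ whose orbit under $A\cap H_1$ is compact. Under the identification $H\Lam\cong Z_s\times H_1\Lam$, the $A$-orbit of $\Lam''$ corresponds to $Z_s\times(A\cap H_1)\Lam''$, which is closed in $Z_s\times H_1\Lam$ since $(A\cap H_1)\Lam''$ is compact; because the product map is a homeomorphism onto $H\Lam$ and $H\Lam$ is closed in $\Xn$, the orbit $A\Lam''$ is closed in $\Xn$, i.e.\ $\Lam''$ is an algebra lattice lying in $H\Lam$, and the reduction above completes the argument.

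The main obstacle lies in this last step and is twofold. First, one needs compact $(A\cap H_1)$-orbits inside $H_1\Lam$: since $H_1$ is reductive but need not be a block group, one should either appeal to Prasad--Raghunathan in its general reductive form, or reduce to the block groups $\SL_{|Q_\ell|}$ whose almost direct product is $S=S(\PPP)$, combining such orbits with the orbit of the central torus $Z_a$, which is automatically compact by part~\eqref{r.ait.2} of Remark~\ref{anisotropic torus} because $Z_a$ is $\Q$-anisotropic. Second, one must verify carefully that a compact orbit in the finite volume factor $H_1\Lam$ assembles with the proper divergent direction $Z_s$ into a subset of $\Xn$ which is closed in all of $\Xn$; this is exactly the homeomorphism statement~\eqref{deco.3} of Proposition~\ref{prop: product structure} together with the closedness of $H\Lam$, using the decomposition $A=Z_s\cdot(A\cap H_1)$, whose directness ($Z_s\cap H_1=\{e\}$) is a routine computation inside the diagonal group $A$.
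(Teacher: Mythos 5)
Your argument is correct and is essentially the paper's proof: the paper likewise uses the decomposition of Proposition~\ref{prop: product structure}, applies Prasad--Raghunathan to the semisimple factor $S$ (getting a compact $(A\cap S)$-orbit), combines it with the compact $Z_a$-orbit from Remark~\ref{anisotropic torus}\eqref{r.ait.2}, uses part~\eqref{deco.3} of Proposition~\ref{prop: product structure} to conclude the assembled $A$-orbit is closed, and finishes with Proposition~\ref{prop: compact A orbits}. The only difference is cosmetic: you phrase the key step at the level of $H_1$ and then note it reduces to $S$ together with $Z_a$, which is exactly the route the paper takes.
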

 \begin{proof}
 By Proposition~\ref{prop: product structure} (and its notation),
 we may present $H$ as an almost direct product $H=Z_s\cdot
 Z_a\cdot S$. Since $S$ is a semisimple group defined over $\Q$, with
 respect to the $\Q$-structure induced by $\Lambda$,
 Proposition~\ref{thm: BHC} implies that the orbit $S\Lam$ is of
 finite volume. Let $A_0 \df S \cap A$. 
By the theorem of Prasad and Raghunathan \cite{PrasadRaghunathan} the
finite volume orbit $S\Lambda$ contains a 
lattice $\Lambda'$ with a compact $A_0$-orbit. By
Remark~\ref{anisotropic torus}\eqref{r.ait.2} we have that
$Z_a\Lam'$ is compact 
as well and since $Z_a$ commutes with $A_0$, the orbit
$Z_a A_0 \Lam'$ is compact. Applying part (3) of
Proposition~\ref{prop: product structure} we deduce that the orbit
$Z_sZ_a\, A_0\Lam'$ is closed but as $A=Z_sZ_a\, A_0$ we
conclude that $\Lam'$ is an algebra lattice. Similarly, when $H\Lam$
is of finite volume then $A\Lam'$ is compact so $\Lam'$
is a number field lattice. By Proposition~\ref{prop: 
  compact A orbits} we conclude that we may assume without loss of
generality that $\Lam'=\Lam_L$ is a lattice arising via~\eqref{eq:
  vectors first type}. 
 \end{proof}

\subsection{Indecomposable lattices}
\begin{prop}\name{prop: never decomposable}
Let $\mu$ be a finite $A$-invariant homogeneous measure. Then
$\mu$-a.e. $\Lambda$ is indecomposable. In particular $\kappa_\mu \in
\widehat{\mathbf{MG}}_n$ (where $\kappa_\mu$ is as in \equ{eq: kappa}). 
\end{prop}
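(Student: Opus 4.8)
The plan is to reduce indecomposability to a statement about the associated algebra. A lattice is decomposable precisely when it lies in one of the sets
\[
D_{Q_1,Q_2}\df\{\Lambda\in\Xn:\Lambda=(\Lambda\cap\R^{Q_1})\oplus(\Lambda\cap\R^{Q_2})\},
\]
taken over the (finitely many) partitions $\{1,\dots,n\}=Q_1\sqcup Q_2$ into two nonempty coordinate blocks. So it suffices to show that each $D_{Q_1,Q_2}$ is $\mu$-null. Write $\mu=\mu_{H\Lambda}$ for a finite volume orbit $H\Lambda$ of a block group $H=H(\PPP)$ (Proposition~\ref{prop: why block groups}); since $H\Lambda$ is of finite volume, $\A_\Lambda(\PPP)$ is a field (Theorem~\ref{n.c.c}), and this field property is the input we aim to contradict.

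Fix $Q_1\sqcup Q_2$ and set $\PPP_1=\{Q_1,Q_2\}$. The set $D_{Q_1,Q_2}$ is $A$-invariant, since a positive diagonal matrix preserves each subspace $\R^{Q_i}$; and it is closed, since (after adjusting signs of basis vectors) every unimodular decomposable lattice has the form $h\Z^n$ for a block-diagonal $h\in H(\PPP_1)$, so $D_{Q_1,Q_2}=H(\PPP_1)\Z^n$ is a closed orbit by Theorem~\ref{prop: addition} (here $\dim_\Q\A_{\Z^n}(\PPP_1)=2=|\PPP_1|$). By Proposition~\ref{prop: ergodic} the $A$-action on $(H\Lambda,\mu)$ is ergodic, so $D_{Q_1,Q_2}$ is either $\mu$-null---as desired---or $\mu$-conull; assume the latter for contradiction. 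Then the closed set $D_{Q_1,Q_2}$ contains $\on{supp}(\mu)=H\Lambda$, in particular $\Lambda\in D_{Q_1,Q_2}=H(\PPP_1)\Lambda$, whence $H\Lambda\subseteq H(\PPP_1)\Lambda$ and Lemma~\ref{containment of groups} gives $H(\PPP)\subseteq H(\PPP_1)$, i.e.\ $\PPP$ refines $\PPP_1$.

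Now I derive the contradiction. Since $\PPP$ refines $\PPP_1$, the idempotent $e\df\diag(1\text{ on }Q_1,\,0\text{ on }Q_2)$ is constant along the blocks of $\PPP$, so $e\in D(\PPP)$; and since $\Lambda\in D_{Q_1,Q_2}$ we have $V_\Lambda=(V_\Lambda\cap\R^{Q_1})\oplus(V_\Lambda\cap\R^{Q_2})$, so $eV_\Lambda\subseteq V_\Lambda$. Hence $e\in\A_\Lambda(\PPP)$; but $e$ is a nontrivial idempotent ($e\ne0$, $e\ne1$), so $\A_\Lambda(\PPP)$ contains zero-divisors and is not a field, contradicting the first paragraph. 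Therefore $\mu(D_{Q_1,Q_2})=0$ for every such partition, so $\mu$-a.e.\ lattice is indecomposable. Finally, by Theorem~\ref{u thm1} we have $\kappa(\Lambda')=\kappa_\mu$ for $\mu$-a.e.\ $\Lambda'$; intersecting with the conull set of indecomposable lattices yields an indecomposable $\Lambda'$ with $\kappa(\Lambda')=\kappa_\mu$, so $\kappa_\mu\in\widehat{\mathbf{MG}}_n$.

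The only step requiring real thought, as opposed to bookkeeping, is bridging the elementary observation ``decomposable $\Rightarrow$ the full associated algebra $\A_\Lambda=\A_\Lambda(\PPP_0)$ has a nontrivial idempotent'' with the finite-volume hypothesis, which concerns the \emph{smaller} algebra $\A_\Lambda(\PPP)$: one must know the relevant idempotent actually lies in $D(\PPP)$, i.e.\ that $\PPP$ refines the decomposition partition. Obtaining this is exactly what forces the combined use of ergodicity (to upgrade positive measure to conull) and the closedness of $D_{Q_1,Q_2}$ (to upgrade conull to a containment of orbits, hence of groups via Lemma~\ref{containment of groups}).
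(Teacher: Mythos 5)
Your argument is correct, but it takes a genuinely different route from the paper's. You identify the decomposable locus for each two-block coordinate partition with the closed orbit $H(\PPP_1)\Z^n$ (closedness as in Example~\ref{ex zn}/Theorem~\ref{prop: addition}), use ergodicity (Proposition~\ref{prop: ergodic}) to upgrade positive measure to conull, use closedness to obtain $H\Lam\subseteq H(\PPP_1)\Lam$ and hence $H(\PPP)\subseteq H(\PPP_1)$ via Lemma~\ref{containment of groups}, and then observe that the coordinate-projection idempotent lies in $D(\PPP)$ and preserves $V_\Lam$, so it gives a nontrivial idempotent in $\A_\Lam(\PPP)$, contradicting the field criterion for finite volume in Theorem~\ref{n.c.c}. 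The paper's proof is much softer and shorter: a decomposable lattice contains a nonzero vector with a vanishing coordinate, so a positive-measure set of decomposable lattices would yield a positive-measure set of lattices with divergent trajectories under $a_t=\diag(e^{(n-1)t},e^{-t},\dots,e^{-t})$ (via Mahler's criterion, Proposition~\ref{prop: Mahler}), contradicting Poincar\'e recurrence for the finite $A$-invariant measure; it uses none of the associated-algebra machinery. Your approach is heavier but buys a structural explanation (decomposability forces a zero divisor in the associated algebra), and in the extreme case $H=A$, where $\PPP$ refines every partition, the idempotent step alone shows pointwise that no number field lattice is decomposable, rather than only almost every lattice. The only delicate points in your write-up -- that every decomposable unimodular lattice is $h\Z^n$ with $h$ in the identity component $H(\PPP_1)$ (sign adjustment of basis vectors in each block), and that a closed conull set contains $\on{supp}(\mu)$ -- are handled correctly, so there is no gap.
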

 
\begin{proof}
A decomposable lattice $\Lambda = \Lambda_1 \oplus \Lambda_2$ has
nonzero vectors with zero entries (namely those nonzero vectors in
each $\Lambda_i$, as embedded in $\Lambda$). If the set of
decomposable lattices had positive $\mu$-measure, then for some index
$i_0$, there would be
a set of positive measure of lattices $\Lambda$ containing a vector
whose $i_0$-th coordinate vanishes. Assume to simplify notation that
$i_0=1$, then by Proposition \ref{prop: Mahler}, such
lattices have a divergent trajectory under the one parameter subgroup $a_t =
\diag(e^{(n-1)t}, e^{-t}, \ldots, e^{-t})$ as $t \to \infty$. This
contradicts the Poincar\'e
recurrence theorem, which asserts that with respect to an invariant
probability measure for an $\R$-action, almost
every point $x$ returns to any neighborhood of $x$ along an unbounded infinite
subsequence. 
\end{proof} 

\subsection{Examples}\label{examples} 
\begin{example}\label{ex zn}
Let $\Lam=\Z^n$, that is, $\Lam$ arises via~\eqref{eq: vectors first type}
from the $n$-dimensional $\Q$-algebra $B=\Q^n$ and so $\A_\Lam\cong
\Q^n$. Moreover,  
for any partition $\PPP$, the subalgebra $\A_\Lambda(\PPP)$ consists
of all diagonal matrices with rational diagonal entries that are
constant in each block of $\PPP$ and so is a $|\PPP|$-dimensional
subalgebra of $\A_\Lambda$. By Corollary~\ref{algparchar} any
partition $\PPP$ is an algebra partition; hence by Corollary~\ref{cor:
  bijective}, the orbit $H(\PPP)\Lam$ is closed for any block group
$H(\PPP)$. Moreover, as $\Q^n$ 
does not have any subfields other than $\Q$, all orbits $H(\PPP)\Lam$
are of infinite volume, apart from the orbit $\Xn$ which is obtained
by choosing the trivial partition that contains only one block, corresponding to the subfield $\Q$.  
\end{example}
\begin{example}\label{ex qsqrt2}
Let $B=F_1\oplus F_2$ be a 4-dimensional $\Q$-algebra where
$F_1=F_2=\Q(\sqrt{2})$. Denote by $x\mapsto x'$ the nontrivial
automorphism of $\Q(\sqrt{2})$.  
Let $L= \mathcal{O}_{F_1}\oplus\mathcal{O}_{F_2}$ and define $\Lam=\Lam_L$ 
to be the lattice defined by~\eqref{eq: vectors first type} where $v(x,y)=(x,x',y,y')$. 
Then
$$\Lam_L=c_L \set{(x,x',y,y'):x,y\in\mathcal{O}_{\Q(\sqrt{2})}}$$ 
is
an algebra lattice with $ F_1\oplus F_2\cong\A_\Lambda$. The
isomorphism is 
given by the 
map $(x,y)\mapsto\on{diag}(x,x',y,y')$. It is not hard to write down a
table of all subalgebras of $\A_\Lambda$ and work out the corresponding
algebra partitions. This gives us a classification of all closed orbits of block
groups through $\Lam$. For example 
if we take $B_1=\set{\on{diag}(x,x,y,y):x,y\in \Q}\cong \Q\oplus\Q$ we
obtain the algebra partition $\PPP_1=\set{\set{1,2},\set{3,4}}$ for
which  
(by Corollary~\ref{cor: bijective}) the orbit $H(\PPP_1)\Lam$ is
closed but of infinite volume because $\Q\oplus\Q$ is not a field.  
On the other hand, if we take $B_2=\set{\on{diag}(x,x',x,x'):x\in
  \Q(\sqrt{2})}$, then we obtain the algebra partition
$\PPP_2=\set{\set{1,3},\set{2,4}}$ for which  
$H(\PPP_2)\Lam$ is a closed orbit of finite volume as $B_2\cong\Q(\sqrt{2})$ is a field.  
\end{example}

\section{Strict inequalities among the $\kappa_{\mu}$}\name{sec: the
  main theorem}
We begin with a definition. 
Let $\varphi:\oplus_{j=1}^{r_2} F^{(2)}_j\hookrightarrow
\oplus_{j=1}^{r_1} F_j^{(1)}$ be an embedding of $\Q$-algebras. 
We say that $\varphi$ is \textit{essential} if 
the image of $\varphi$ projects onto $F_j^{(1)}$ for any $1\le j\le
r_1$. Otherwise we refer to $\varphi$ as \textit{non-essential}.  

We can now state the main result of this section, which is one of the
main results of this paper. 
\begin{thm}\name{main theorem}
Let $\mu_i$, $i=1,2$ be two homogeneous $A$-invariant measures such
that $\on{supp}(\mu_1)\varsubsetneq \on{supp}(\mu_2)$. Let
$H_i=H(\PPP_i)$ and $\Lam\in\Xn$ be such that
$H_i\Lam=\on{supp}(\mu_i)$, so $H_1\varsubsetneq H_2$.
If the containment
$\A_\Lambda(\PPP_2)\subset \A_\Lambda(\PPP_1)$ 
is non-essential then 
$\kappa_{\mu_1} <\kappa_{\mu_2}$.
\end{thm}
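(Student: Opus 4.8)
The plan is to realize $\on{supp}(\mu_1) = H_1\Lambda$ as a proper closed sub-orbit inside $\on{supp}(\mu_2) = H_2\Lambda$, and to exploit the non-essentiality of the inclusion $\A_\Lambda(\PPP_2) \subset \A_\Lambda(\PPP_1)$ to produce a lattice $\Lambda_{\max}^{(1)}$ realizing $\kappa_{\mu_1}$ which sits in $\on{supp}(\mu_2)$ but admits an admissible box strictly larger than the optimal cube $\mathcal C_1$ of volume $2^n\kappa_{\mu_1}$. First I would invoke Theorem~\ref{thm: ergodic+semicontinuous}: $\kappa_{\mu_i} = \sup\{\kappa(\Lambda'):\Lambda'\in\on{supp}(\mu_i)\}$, and there is $\Lambda_{\max} \in H_1\Lambda \subset H_2\Lambda$ with $\kappa(\Lambda_{\max}) = \kappa_{\mu_1}$ for which the symmetric cube $\mathcal{C}_1$ of volume $2^n\kappa_{\mu_1}$ is admissible. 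Since $\kappa_{\mu_2}\ge\kappa(\Lambda_{\max})=\kappa_{\mu_1}$ automatically, everything reduces to ruling out equality: if $\kappa_{\mu_2}=\kappa_{\mu_1}$ then the cube $\mathcal{C}_1$ would be an \emph{optimal} admissible box for $\Lambda_{\max}$ as a point of $H_2\Lambda$, and I would derive a contradiction by perturbing $\Lambda_{\max}$ within $H_2\Lambda$ — in a direction transverse to $H_1$ — so as to open up strictly more room around the boundary of $\mathcal{C}_1$.

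The key step is to pin down \emph{where} the box $\mathcal C_1$ touches $\Lambda_{\max}$ and to use non-essentiality to find a perturbation direction. Recall from Theorem~\ref{prop: algebra} that $\PPP_1$ refines $\PPP_2$, and the blocks $Q^{(2)}_\ell$ of $\PPP_2$ decompose into blocks of $\PPP_1$; non-essentiality of $\A_\Lambda(\PPP_2)\subset\A_\Lambda(\PPP_1)$ means there is a summand $F^{(1)}_{j_0}$ of $\A_\Lambda(\PPP_1)$ onto which $\A_\Lambda(\PPP_2)$ does \emph{not} project surjectively — concretely, there is a pair of distinct $\PPP_1$-blocks lying in a single $\PPP_2$-block that are "fused" by $\A_\Lambda(\PPP_2)$, so that $H_2$ contains off-diagonal one-parameter unipotent subgroups $u_{st}$ linking coordinates $s,t$ in different $\PPP_1$-blocks but the \emph{same} $\PPP_2$-block, while the diagonal torus $Z(\PPP_1)$ can independently rescale those two $\PPP_1$-blocks only up to the relation imposed by being in a common field. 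I would then argue: the box $\mathcal C_1$, being an optimal cube for $\Lambda_{\max}$, has lattice points of $\Lambda_{\max}$ on its boundary, and I can use the extra $H_2$-directions (either the unipotents $u_{st}$ above, or the part of $Z(\PPP_2)$ not in $Z(\PPP_1)$) to move $\Lambda_{\max}$ so that every boundary lattice point moves strictly \emph{outward} from $\mathcal C_1$ in at least one coordinate, after which a small dilation of $\mathcal C_1$ (still a symmetric box, and by Corollary~\ref{cor: cubes} one may arrange it to again be a cube after applying an element of $A$) remains admissible for the perturbed lattice, giving $\kappa$ strictly above $\kappa_{\mu_1}$ at a point of $\on{supp}(\mu_2)$.

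To make the "moves boundary points strictly outward" step precise I expect to need a compactness/finiteness input: $\Lambda_{\max}$ has only finitely many nonzero points in any bounded region, so only finitely many lie on $\partial\mathcal C_1$, and I want a single tangent vector $\xi$ in $\Lie(H_2)\setminus\Lie(H_1)$ (or in the normalizer part $Z(\PPP_2)\setminus Z(\PPP_1)$) such that the derivative of $|$coordinate$|$ along $\exp(t\xi)$ at each such boundary point is non-negative in the coordinate(s) achieving the max and strictly positive for at least one coordinate per point. Here is where non-essentiality is essential: it guarantees the fused $\PPP_1$-blocks inside a $\PPP_2$-block are acted on by a nontrivial torus direction (or unipotent) that is \emph{not} forced to scale them by reciprocal factors the way the finite-volume relations in a number-field block would — so one genuinely has a free direction to push outward rather than a volume-preserving squeeze. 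If for some boundary point no coordinate derivative is strictly positive, then that point is fixed by all of $H_2$ in the relevant coordinates, which forces it to lie in a proper $H_1$-rational subspace and ultimately contradicts the strict inclusion $H_1\subsetneq H_2$ together with the algebra description.

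\textbf{Main obstacle.} The hard part will be the last paragraph: turning the qualitative "there is a transverse direction" coming from non-essentiality into a uniform quantitative statement that the optimal box can be enlarged. Two difficulties compound here. First, the optimal admissible configuration need not be unique and the set of boundary lattice points can vary as one perturbs; one must either work with a fixed maximizing $\Lambda_{\max}$ and its specific finite set of boundary points, or set up a semicontinuity argument (using Proposition~\ref{prop: standard}(ii)) robust to this. Second, one must check that the perturbation stays inside $\on{supp}(\mu_2) = H_2\Lambda$ — which it does, since we move along $H_2$ — and that the enlarged box can be re-cubed by an element of $A$ without destroying admissibility, which is exactly the content of Corollary~\ref{cor: cubes} but must be applied with care since we are simultaneously perturbing the lattice. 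I anticipate that the cleanest route is contrapositive: assume $\kappa_{\mu_1}=\kappa_{\mu_2}$, use Theorem~\ref{thm: ergodic+semicontinuous} to get $\Lambda_{\max}$ with the optimal \emph{cube} admissible, and show the existence of a transverse $H_2$-direction increasing some coordinate-slack forces $\kappa$ to exceed $\kappa_{\mu_1}$ along that direction — contradicting that $\kappa_{\mu_1}$ is the supremum over all of $\on{supp}(\mu_2)$. The finite-volume hypothesis on $\mu_1$ (or the non-essentiality condition replacing it in the infinite case) enters precisely to guarantee the relevant orbit closures are as expected and that $\Lambda_{\max}$ is genuinely recurrent rather than escaping to infinity.
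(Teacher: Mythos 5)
Your first-order setup matches the paper's: take $\Lambda_{\max}\in H_1\Lambda$ realizing $\kappa_{\mu_1}$ with the optimal cube $\mathcal C$ admissible (Theorem~\ref{thm: ergodic+semicontinuous}), assume $\kappa_{\mu_1}=\kappa_{\mu_2}$, note each face of $\mathcal C$ carries ``locking'' points of the lattice, and use one-parameter unipotents inside $H_2$ (shears within a single $\PPP_2$-block, exactly as in the paper's Step~2) to exploit maximality over $H_2\Lambda$. But the mechanism you propose for the contradiction --- finding a single direction $\xi\in\Lie(H_2)\setminus\Lie(H_1)$ that pushes every locking point strictly outward so that a larger box becomes admissible --- cannot be made to work, and in fact its impossibility is the \emph{conclusion} of the variational step, not a contradiction. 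At a maximizer over $H_2\Lambda$, for every block $Q$ of $\PPP_2$ and every $i_0\in Q$ one has $0\in\conv\{\pi_{Q\smallsetminus\{i_0\}}(v):v\in L_{i_0}\}$ (this is what the separation/unipotent argument yields); equivalently, no separating functional, hence no outward-pushing direction, exists. This situation is perfectly consistent with $\kappa_{\mu_1}=\kappa_{\mu_2}$: in the essential case $\Lambda=\Z^n$, $H_1=A\varsubsetneq H_2=G$, there are plenty of transverse unipotent and toral directions in $H_2\setminus H_1$, yet both generic values equal $1$ (Example~\ref{ex zn2}). Since the only place non-essentiality enters your argument is the vague claim that it provides ``a free direction to push outward,'' while transverse directions exist in the essential case too, your argument as written would prove a false statement; the closing dichotomy (``if no coordinate derivative is positive then the point is fixed by $H_2$ \dots contradicting $H_1\varsubsetneq H_2$'') is also unjustified and false.

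What is missing is the arithmetic ingredient that converts the convex-hull relation into a contradiction. The paper first reduces non-essentiality (Lemma~\ref{nei}) to a combinatorial statement: some block $Q$ of $\PPP_2$ contains two distinct blocks $Q_1,Q_2$ of $\PPP_1$ lying in the same block of $\wt\PPP_1$, i.e.\ corresponding to two embeddings of the \emph{same} field summand $F^{(1)}_j$ of $\A_\Lambda(\PPP_1)$. The decisive input is then the Kernel Lemma~\ref{rl} (built on Propositions~\ref{pilc} and~\ref{aligned rk} and the explicit construction \equ{eq: vectors first type}): for lattice vectors $v_1,\dots,v_t$, the kernels of the matrices with columns $\pi_{Q_1}(v_i)$ and $\pi_{Q_2}(v_i)$ differ by a field automorphism $\rho$ of $\C$. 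Feeding the convex-combination coefficients $\vec\lambda$ from the locking relation $\sum\lambda_i\pi_Q(v_i)=\pi_Q(\delta\E_{i_0})$ (with $i_0\in Q_1$, so $\vec\lambda\in\ker$ of the $Q_2$-matrix) through $\rho$ gives $\sum\rho(\lambda_i)=1$ while the $i_0$-coordinate forces $\delta=\sum\rho(\lambda_i)\delta=0$, a contradiction. Note that the rigidity runs opposite to your intuition: non-essentiality does not give extra freedom to deform; it ties the two $\PPP_1$-blocks together by Galois conjugation, and it is this rigidity, not a perturbation, that rules out $\kappa_{\mu_1}=\kappa_{\mu_2}$. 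Without some such Galois-theoretic use of the number-field structure of the orbit, the geometric perturbation scheme alone cannot distinguish the essential from the non-essential case.
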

\begin{proof}[Deduction of Theorem \ref{u thm 2}] 
 If $\mu_1$ is a finite measure then by Corollary~\ref{cor: bijective}
 the associated algebra to the orbit that supports $\mu_1$ is a field. It
 follows that the   
associated algebra  
to the orbit that supports $\mu_2$ must be a field as well (because it
is a subalgebra of a field) and therefore $\mu_2$ must be a finite
measure as well by another application of Corollary~\ref{cor:
  bijective}. Moreover, as the containment  between the orbits is
strict, it must be that the containment of the associated algebras 
is strict and so the containment is non-essential and
Theorem~\ref{main theorem} applies. We therefore conclude that if
$\mu_1$ is a finite measure, then $\kappa_{\mu_1}<\kappa_{\mu_2}$ and
Theorem~\ref{u thm 2} follows. 
\end{proof}

\begin{example}\label{ex zn2}
Continuing with Example~\ref{ex zn}, note that when we consider the
inclusion of closed orbits $A\Z^n\subset G\Z^n$, the containment of
the associated algebras $\Q\subset \oplus_1^n\Q$ is essential and the
conclusion of Theorem~\ref{main theorem} fails to hold as both of the
generic constants attached to these orbits are equal to 1. 
\end{example}
\begin{example}
Let $\Lam$ be the lattice  constructed in Example~\ref{ex qsqrt2}. In
the notation of that example we know that the orbits $A\Lam,
H(\PPP_1)\Lam$, $H(\PPP_2)\Lam$, $G\Lam$ are all closed  and their
associated algebras are isomorphic respectively to
$\Q(\sqrt{2})\oplus\Q(\sqrt{2})$, $\Q\oplus \Q$, $\Q(\sqrt{2})$, and
$\Q$. We denote the generic values attached to these closed orbits by
$\kappa_0,\kappa_1,\kappa_2,\kappa_3$ respectively, so that
$\kappa_3=\kappa_{\mu_{\Xn}}=1$. Because the inclusions
$\Q\hookrightarrow \Q(\sqrt{2})$,
$\Q\oplus\Q\hookrightarrow\Q(\sqrt{2})\oplus\Q(\sqrt{2})$ are
non-essential  we deduce by Theorem~\ref{main theorem} that
$\kappa_2<\kappa_3$, $\kappa_0<\kappa_1$. On the other hand, the
inclusions $\Q\hookrightarrow \Q\oplus\Q$,
$\Q(\sqrt{2})\hookrightarrow \Q(\sqrt{2})\oplus \Q(\sqrt{2})$ are
essential and so Theorem~\ref{main theorem} does not tell us that the
inequalities $\kappa_1\le \kappa_3$, $\kappa_0\le \kappa_2$ are
strict. Indeed, it is not hard to see that $\kappa_1=1$ because the
lattice $\Z^4$ belongs to the orbit $H(\PPP_1)\Lam$ (as $\Lam$ is the
direct sum of two 2-dimensional lattices and the 2 by 2 blocks of
$H(\PPP_1)$ act on each of the summands separately). We do not know
whether $\kappa_0 < \kappa_2$. 
\end{example}

The proof of Theorem \ref{main theorem} requires some preparations. 
Once again let $\varphi:\oplus_{j=1}^{r_2} F^{(2)}_j\hookrightarrow
\oplus_{j=1}^{r_1} F_j^{(1)}$ be an embedding of $\Q$-algebras. 
We say that $\varphi$ is \textit{aligned}
if $r_1=r_2$.

\begin{prop}\label{aligned rk}
Let $H\Lam$ be a closed orbit of $H=H(\PPP)$ and let $\Lam_L\in H\Lam$
be the algebra lattice constructed in Proposition~\ref{pilc}. Then the
inclusion 
$\A_{\Lam_L}(\PPP)\subset \A_{\Lam_L}$ is aligned.
\end{prop}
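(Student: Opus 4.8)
The plan is to prove that the inclusion is aligned, i.e. that the two sides have the same number of simple summands, by computing in two ways the dimension of the largest subgroup of $A$ whose orbit through $\Lam_L$ is compact. Write $\A_{\Lam_L}\cong\bigoplus_{j=1}^{r_1}F_j$ and $\A_{\Lam_L}(\PPP)\cong\bigoplus_{j=1}^{r_2}K_j$ as direct sums of totally real number fields as in Theorem~\ref{prop: algebra}; the assertion to be proved is exactly that $r_1=r_2$.

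One inequality, $r_2\le r_1$, is soft. The inclusion $\A_{\Lam_L}(\PPP)\subset\A_{\Lam_L}$ is a homomorphism of $\Q$-algebras carrying $1$ to $1$, so it sends the $r_2$ primitive idempotents of $\A_{\Lam_L}(\PPP)$ to a family of $r_2$ nonzero, pairwise orthogonal idempotents of $\A_{\Lam_L}$ summing to $1$. Since each $F_j$ is a field, every idempotent of $\bigoplus_j F_j$ is $0$ or $1$ in each coordinate, so this family corresponds to a partition of the set of $r_1$ primitive idempotents of $\A_{\Lam_L}$ into $r_2$ nonempty blocks; hence $r_2\le r_1$.

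For the reverse inequality I would use the explicit construction of $\Lam_L$ in the proof of Proposition~\ref{pilc}, which is the only place the precise choice of $\Lam_L$ is needed. First, applying Corollary~\ref{r.21.6} to the closed orbit $A\Lam_L=H(\PPP_0)\Lam_L$ (whose associated algebra is $\A_{\Lam_L}$, with $r_1$ summands) yields a decomposition $A=Z_s^{(0)}\cdot Z_a^{(0)}$ with $\dim Z_s^{(0)}=r_1-1$, so $\dim Z_a^{(0)}=n-r_1$; and by Remark~\ref{anisotropic torus}\eqref{r.ait.2} the stabilizer $\on{Stab}_A(\Lam_L)$ is a lattice in $Z_a^{(0)}$, hence a free abelian group of rank $n-r_1$. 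Next, recall that in the construction of $\Lam_L$ one writes $H(\PPP)=Z_sZ_aS$, sets $A_0\df S\cap A$, and produces $\Lam_L$ with $(Z_aA_0)\Lam_L$ compact. The group $C\df Z_aA_0$ is a connected (vector) subgroup of $A$, and $Z_a\cap A_0=\{e\}$ (an element of the intersection is constant along each block of $\PPP$ and has determinant $1$ on each block, so equals the identity), whence $\dim C=\dim Z_a+\dim A_0$. Using $\dim A_0=\sum_\ell(|Q_\ell|-1)=n-|\PPP|$ and $\dim Z_a=\dim Z(\PPP)-\dim Z_s=(|\PPP|-1)-(r_2-1)$ (the value $\dim Z_s=r_2-1$ again coming from Corollary~\ref{r.21.6}, now applied to $H(\PPP)\Lam_L$), one gets $\dim C=n-r_2$. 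Finally, compactness of $C\Lam_L$ forces $C\cap\on{Stab}_A(\Lam_L)=\on{Stab}_C(\Lam_L)$ to be a full-rank lattice in $C\cong\R^{\dim C}$, so $n-r_2=\dim C=\on{rank}\big(\on{Stab}_C(\Lam_L)\big)\le\on{rank}\big(\on{Stab}_A(\Lam_L)\big)=n-r_1$; thus $r_1\le r_2$, and together with the previous paragraph $r_1=r_2$.

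The main obstacle — or rather the one genuinely nontrivial input — is the compactness of the $Z_aA_0$-orbit of $\Lam_L$: for an arbitrary lattice in $H(\PPP)\Lam$ only $Z_a\Lam$ is known to be compact, while $A_0\Lam$ and $S\Lam$ need not be, so the statement is false for a generic lattice in the orbit. It is precisely the Prasad--Raghunathan compact-orbit lattice produced inside $S\Lam$ in Proposition~\ref{pilc} that upgrades this to compactness of $(Z_aA_0)\Lam_L$. Once that is in hand, everything else is bookkeeping with Corollary~\ref{r.21.6} and elementary facts about idempotents and about lattices in vector groups.
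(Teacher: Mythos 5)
Your argument is correct, and it runs on the same key input as the paper's proof: the only special feature of $\Lam_L$ used, there as here, is that the construction in Proposition~\ref{pilc} makes the orbit of $\Lam_L$ under $Z_a(A\cap S)$ compact, and this is converted into a count of simple summands via Corollary~\ref{r.21.6} and Remark~\ref{anisotropic torus}. The packaging, however, differs. The paper argues that the split part in the decomposition of $A$ (for the closed orbit $A\Lam_L$) may be chosen to be the same group $Z_s$ as for $H\Lam_L$, and then reads off $r_1-1=\dim Z_s=r_2-1$ from two applications of Corollary~\ref{r.21.6}; that identification implicitly contains two containments, of which only the compactness-based one is spelled out. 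You instead prove two one-sided inequalities: $r_1\le r_2$ by comparing $\on{rank}\bigl(\on{Stab}_A(\Lam_L)\bigr)=n-r_1$ (Remark~\ref{anisotropic torus} plus Corollary~\ref{r.21.6} applied to $A\Lam_L$) with the rank $n-r_2$ of the cocompact stabilizer in $C=Z_a(A\cap S)$, and $r_2\le r_1$ by an elementary idempotent argument valid for any unital inclusion of such algebras. This isolates the dynamical input exactly where it is needed and replaces the paper's terse ``split parts coincide'' step by transparent bookkeeping, at the cost of being longer; your dimension counts ($\dim A_0=n-|\PPP|$, $\dim Z_a=|\PPP|-r_2$, $Z_a\cap A_0=\{e\}$ using positivity of the diagonal entries) are all correct. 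One side remark is slightly off: alignment does not fail for a \emph{generic} lattice in the orbit, since for almost every $\Lam'\in H\Lam$ one has $\A_{\Lam'}=\A_{\Lam'}(\PPP)$ and alignment holds trivially; it fails only for special lattices, such as $\Z^n$ inside $\Xn=G\Z^n$. This does not affect your proof.
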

\begin{proof}
As both orbits $H\Lam_L$ and $A\Lam_L$ are closed, we may apply
Proposition~\ref{prop: product structure} to both of them and obtain
decompositions of $Z(\PPP)$ and $A$ respectively. Following the proof
of Proposition~\ref{pilc} we see that the split part
in these decompositions may be chosen to be the same; indeed, in the
notation of Proposition~\ref{pilc}, if $Z=Z_s\cdot Z_a$ is the
decomposition for $Z=Z(\PPP)$, then we saw that
$A=Z_s\cdot\pa{Z_a\cdot(A\cap S)}$ and that $\Lam_L$ was chosen so
that $Z_a(A\cap S)\Lam_L$ is compact. 
It now follows from Corollary~\ref{r.21.6} that if we present the
associated algebras to the orbits $H\Lam_L$, $A\Lam_L$ 
as $\A_{\Lam_L}=\oplus_1^{r_1}
F_j^{(1)}$, $\A_{\Lam_L}(\PPP)\cong \oplus_1^{r_2}F_j^{(2)}$, then
$r_1=r_2$.
\end{proof}
\subsection{The Kernel Lemma} 
We will need some more notation. 
\begin{dfn}\label{tilde par}
\begin{enumerate}
\item Given a block group $H=H(\PPP)$ and a closed orbit $H\Lam$ with
  associated algebra $\A_\Lambda(\PPP)\cong \oplus_{j=1}^r F_j$ such
  that 
  $\deg(F_j/\Q)=d_j$,  
we present $\PPP$ as in~\eqref{eq: partition},
$\PPP=\sqcup_{j=1}^r\sqcup_{k=1}^{d_j}  
I_{j,k}.$
Let 
$$
\wt{I}_j\df \cup_{k=1}^{d_j} I_{jk} \ \ \mathrm{and} \ \wt{\PPP}=\sqcup_{j=1}^r 
\wt{I}_j;
$$
that is, the $j$-th block of $\wt{\PPP}$ is obtained by grouping the
diagonal coordinates that correspond to embeddings of $F_j$.  
\item  Given a subset $Q\subset \set{1,\dots, n}$, we 
denote by $\pi_Q:\R^n\to \R^{|Q|}$ the projection to the coordinates of the subset $Q$.
\end{enumerate}
\end{dfn}

\begin{lem}[Kernel Lemma]\name{rl}
Let $H\Lam\subset \Xn$ be a closed orbit of the block group
$H=H(\PPP)$. Let $Q_1,Q_2$ be two blocks of $\PPP$ that are
contained in the same block of  
the partition $\wt{\PPP}$ from Definition~\ref{tilde par}.
Then, there is an automorphism $\rho$ of $\C$ such that for any collection of vectors $v_1,\dots, v_t\in \Lam$, we have 
the following connection between kernels of the  $n\times t$ matrices whose columns are the $\pi_{Q_j}(v_i)$'s
\begin{equation}\label{ranks are equal}
\on{ker}\mat{|&\;&|\\ \pi_{Q_1}(v_1)&\cdots&\pi_{Q_1}(v_t)\\ |&\;&|}=\rho\pa{\on{ker}\mat{|&\;&|\\ \pi_{Q_2}(v_1)&\cdots&\pi_{Q_2}(v_t)\\ |&\;&|}},
\end{equation}
where we let $\rho$ act on $\R^n$ coordinate-wise.
\end{lem}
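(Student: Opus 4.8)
The approach I would take is to work directly with the module structure of $V_\Lambda$ over the associated algebra, rather than pass through the explicit lattices of Proposition~\ref{pilc}. Using the notation of Definition~\ref{tilde par}, present $\A_\Lambda(\PPP)\cong\bigoplus_{l=1}^r F_l$ via the isomorphism $\varphi$ of Theorem~\ref{prop: algebra}, and let $j$ be the index for which $\wt I_j$ is the block of $\wt\PPP$ containing $Q_1$ and $Q_2$; thus $Q_1=I_{j,k_1}$ and $Q_2=I_{j,k_2}$ for some $k_1,k_2$, and since the blocks $I_{j,k}$ all have the same cardinality $s_j$ (Theorem~\ref{prop: algebra}), both matrices in~\eqref{ranks are equal} have size $s_j\times t$. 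Let $1_l\df\varphi(1_{F_l})\in\A_\Lambda(\PPP)$ be the diagonal idempotents (so $1_l$ has entries $1$ on the coordinates in $\wt I_l$ and $0$ elsewhere), and set $W_l\df 1_lV_\Lambda\subset\R^n$, a $\Q$-subspace whose nonzero coordinates lie in $\wt I_l$. Since $\varphi(x)1_l=\varphi(x)$ for $x\in F_l$, each $W_l$ is a module over the field $F_l$ acting through $\varphi$, and being finite dimensional over $\Q$ it is a free $F_l$-module.

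The structural point is that the identification $V_\Lambda\otimes_\Q\R=\R^n$ is compatible with the orthogonal idempotents $1_l$: by exactness of $-\otimes_\Q\R$, $(1_lV_\Lambda)\otimes_\Q\R$ is identified with $1_l\R^n$, the coordinate subspace on $\wt I_l$, so $\dim_\Q W_l=|\wt I_l|=d_ls_l$ and hence $\dim_{F_j}W_j=s_j$. Fix an $F_j$-basis $e_1,\dots,e_{s_j}$ of $W_j$. Decomposing the isomorphism $W_j\otimes_\Q\R\cong\R^{\wt I_j}$ on the source via $F_j\otimes_\Q\R\cong\prod_{k=1}^{d_j}\R$ (the factors indexed by the embeddings $\tau_{j,k}$ of $F_j$) and on the target via $\R^{\wt I_j}=\bigoplus_k\R^{I_{j,k}}$, one checks from the supports that it is block-diagonal, with $k$-th block the matrix
\[
U^{(k)}\df\bigl[\,\pi_{I_{j,k}}(e_1)\ \big|\ \cdots\ \big|\ \pi_{I_{j,k}}(e_{s_j})\,\bigr]\in\Mat_{s_j\times s_j}(\R);
\]
in particular each $U^{(k)}$ is invertible. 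Now for $v\in\Lambda$ write $1_jv=\sum_a\varphi(x_a(v))e_a$ with uniquely determined $x_a(v)\in F_j$. Since $\varphi(x)$ scales the coordinates in $I_{j,k}$ by $\tau_{j,k}(x)$ and $Q_m=I_{j,k_m}\subset\wt I_j$, we get $\pi_{Q_m}(v)=\pi_{I_{j,k_m}}(1_jv)=U^{(k_m)}\bigl(\tau_{j,k_m}(x_a(v))\bigr)_{a=1}^{s_j}$. Hence the matrix in~\eqref{ranks are equal} attached to $Q_m$ equals $U^{(k_m)}\Xi_m$, where $\Xi_m$ is the $s_j\times t$ matrix with $(a,i)$ entry $\tau_{j,k_m}(x_a(v_i))$; and since $U^{(k_m)}$ is invertible, its kernel coincides with $\ker\Xi_m$.

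It remains to relate $\ker\Xi_1$ and $\ker\Xi_2$. All entries of the $\Xi_m$ lie in one number field (a compositum of the fields $\tau_{j,k}(F_j)$), and, exactly as in the proof of Lemma~\ref{lem: norm}, the field isomorphism $\tau_{j,k_1}\circ\tau_{j,k_2}^{-1}\colon\tau_{j,k_2}(F_j)\to\tau_{j,k_1}(F_j)$ extends to an automorphism $\rho$ of $\C$, so that $\tau_{j,k_1}=\rho\circ\tau_{j,k_2}$ on $F_j$. Applying $\rho$ entrywise carries $\Xi_2$ to $\Xi_1$, and since $\rho$ is a ring automorphism, $\Xi_2x=0$ forces $\Xi_1\rho(x)=\rho(\Xi_2x)=0$ for all $x\in\C^t$, with $\rho^{-1}$ giving the converse; thus $\ker\Xi_1=\rho(\ker\Xi_2)$, which is~\eqref{ranks are equal}. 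Here the kernels are read inside $\C^t$: a wild $\rho$ need not preserve $\R$, and this is the sense in which the concluding clause of the lemma is meant; in particular the two matrices in~\eqref{ranks are equal} have equal rank, which is what the applications require.

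The step I expect to be the main obstacle is the invertibility of the block-projection matrices $U^{(k)}$ — that is, the second dimension count — since this is precisely what decouples~\eqref{ranks are equal} from the particular lattice in the orbit $H\Lambda$ and reduces the statement to the elementary Galois-theoretic comparison of the two embeddings $\tau_{j,k_1}$ and $\tau_{j,k_2}$ of the single field $F_j$. The remaining ingredients — freeness of $W_j$ over $F_j$ and extending a field isomorphism to an automorphism of $\C$ — are routine.
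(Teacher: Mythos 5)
Your proof is correct, but it takes a genuinely different route from the paper's. The paper first notes that, since $H=H(\PPP)$ acts on each block $Q_m$ by an invertible matrix, the validity of \equ{ranks are equal} is an invariant of the orbit $H\Lam$; it then invokes Proposition~\ref{pilc} (whose proof rests on Prasad--Raghunathan density of compact $A$-orbits \cite{PrasadRaghunathan} and Proposition~\ref{prop: product structure}) to replace $\Lam$ by an algebra lattice $\Lam_L$ arising via \equ{eq: vectors first type}, for which $V_{\Lam_L}$ is identified with the full $n$-dimensional algebra $\A_{\Lam_L}$, and uses the alignment statement (Proposition~\ref{aligned rk}) to conclude that for such a lattice the $Q_1$- and $Q_2$-coordinates of every vector of $V_{\Lam_L}$ are literally related coordinate-wise by a single automorphism $\rho$ of $\C$, from which \equ{ranks are equal} is immediate. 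You instead argue intrinsically with an arbitrary lattice in the orbit: you decompose $V_\Lam$ by the idempotents of $\A_\Lam(\PPP)$, use freeness of $W_j=1_jV_\Lam$ over $F_j$ together with the dimension count $\dim_\Q W_j=|\wt I_j|$ (via $V_\Lam\otimes_\Q\R\cong\R^n$) to produce the invertible block matrices $U^{(k)}$, and thereby reduce \equ{ranks are equal} to the Galois comparison $\tau_{j,k_1}=\rho\circ\tau_{j,k_2}$ of two embeddings of the single summand $F_j$ of $\A_\Lam(\PPP)$. This trades the structural and dynamical input (Propositions~\ref{pilc} and~\ref{aligned rk}) for a short module-theoretic argument needing only Theorem~\ref{prop: algebra} and Corollary~\ref{algparchar}; the invertibility of the $U^{(k)}$ plays exactly the role that the orbit-invariance observation plays in the paper, absorbing the difference between $\Lam$ and the model lattice, and your argument never needs the big algebra $\A_{\Lam_L}$ at all. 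Your reading of the kernels inside $\C^t$ (with $\rho$ possibly not preserving $\R$) is also the correct interpretation of the concluding clause, and is precisely how the lemma is applied in Step 3 of the proof of Theorem~\ref{main theorem technical}.
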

\begin{proof}
Because of the
block structure of $H$, the conclusion of the Lemma is independent of the
lattice we choose to consider within the orbit $H\Lam$.
By Proposition~\ref{pilc} it is enough to
assume that $\Lam=\Lam_L$ is constructed via~\eqref{eq: vectors first
  type} for some $n$-dimensional $\Q$-algebra $B$ and $L\subset B$.  
By Proposition~\ref{prop: compact A orbits} we may assume that
$B=\A_\Lambda$. Moreover, by the proof of Proposition~\ref{prop:
  compact A orbits}, the map that sends a diagonal matrix in
$\A_\Lambda$ to the vector in $\R^n$ whose coordinates are the
diagonal entries of the matrix is a linear isomorphism\footnote{The
  attentive reader will notice that $V_\Lam$ should be replaced with its dilation.}
between $\A_\Lambda$ and
$V_\Lam$. This means that the statement of the Lemma translates to a
statement about the associated algebra $\A_\Lam$.  

By Proposition~\ref{aligned rk}, the containment
$\A_\Lambda(\PPP)\subset \A_\Lambda$ is aligned.  
We present $\A_\Lambda \cong\oplus_{j=1}^r F_j$,
$\A_\Lambda(\PPP)\cong\oplus_{j=1}^r K_j$ and note that the alignment
of the inclusion of the algebras means that we may assume that for
each $1\le j\le r$ 
the field $F_j$ is an extension of $K_j$ and the inclusion
$\A_\Lambda(\PPP)\subset \A_\Lambda$ is induced from the natural
inclusion $\oplus_1^r K_j\subset \oplus_1^r F_j$.

By Theorem~\ref{prop: algebra}, as $\A_\Lambda$ is $n$-dimensional,
the diagonal coordinates (or the coordinates of $\R^n$) are in one to
one correspondence with the various field embeddings of the fields $F_j$. 
By the above discussion, the blocks $\wt{I}_j$ of $\wt{\PPP}$ are obtained by
grouping the coordinates that correspond to each field $F_j$
together. As $\PPP$ is the algebra partition attached to
$\A_\Lambda(\PPP)$  (see Definition~\ref{basic def} and
Corollary~\ref{algparchar}), the blocks of $\PPP$ are then obtained by
further splitting each $\wt{I}_j$ according to the restriction of the
corresponding embedding of $F_j$ to the subfield $K_j$. That is, two
coordinates that correspond to embeddings of $F_j$ that restrict to
the same embedding of $K_j$ belong to the same block. As the group of
automorphisms of $\C$ acts transitively on the equivalence classes of
embeddings of $F_j$ with respect to the above equivalence relation, we
deduce that if $Q_1,Q_2$ 
are two blocks of $\PPP$ that are contained in $\wt{I}_j$, then there
is an automorphism of $\C$ such that for each $v\in V_\Lam=A_\Lam$ we
have that 
$\pi_{Q_1}(v)=\rho\pa{\pi_{Q_2}(v)}$. From here~\eqref{ranks are equal} readily follows.

\end{proof}

\ignore{
We note a simple consequence:
\begin{prop}\name{prop: never decomposable}
Number field lattices are never decomposable. 
\end{prop}

\begin{proof}
A decomposable lattice $\Lambda = \Lambda_1 \oplus \Lambda_2$ has
nonzero vectors with zero entries (namely those nonzero vectors in
each $\Lambda_i$, as embedded in $\Lambda$). On the other hand for a
number field lattice the partition $\til \PPP$ is the trivial
partition into the single block $\{1, \ldots, n\}$. Thus if we had a
decomposable number field lattice, we would get a contradiction to the
case $t=1$ of Lemma \ref{rl}. 
\end{proof}
}

\subsection{Strict inequalities for $\kappa$ values} 

\begin{lem}\label{nei}
Let $H_1\Lam\subset H_2\Lam$ be  a containment of two closed orbits
where $H_i=H(\PPP_i)$, $i=1,2$. If the containment
$\A_\Lambda(\PPP_2)\subset\A_\Lambda(\PPP_1)$ is non-essential, then
there is a block of $\PPP_2$ that contains two distinct blocks of
$\PPP_1$ that are contained in the same block of $\wt{\PPP}_1$.  
\end{lem}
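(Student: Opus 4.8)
The plan is to translate the hypothesis on the $\Q$-algebras into a statement about real embeddings of number fields, and then to reduce it to an elementary counting fact about separable extensions.

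First I would record the basic reductions. Set $B_i\df\A_\Lambda(\PPP_i)$. Since $H_i\Lam$ is closed, Theorem~\ref{prop: addition} gives $\dim_\Q B_i=|\PPP_i|$, so by Corollary~\ref{algparchar} each $\PPP_i$ is an algebra partition and $\PPP_i=\PPP_{B_i}$. From $H_1\subset H_2$ (Lemma~\ref{containment of groups}) one sees that $\PPP_1$ refines $\PPP_2$, hence $D(\PPP_2)\subset D(\PPP_1)$ and therefore $B_2\subset B_1$ is a genuine inclusion of $\Q$-subalgebras of $D$. Now fix a decomposition $B_1\cong\bigoplus_{j=1}^{r_1}F_j^{(1)}$ as in Theorem~\ref{prop: algebra}, with the $F_j^{(1)}$ totally real, and let $\pi_j\colon B_1\to F_j^{(1)}$ be the coordinate projections. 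By the definition of ``essential'', non-essentiality of $B_2\subset B_1$ means there is an index $j_0$ with $K\df\pi_{j_0}(B_2)\varsubsetneq F\df F_{j_0}^{(1)}$; note $K$ is a subfield of $F$, being a $\Q$-subalgebra of a number field containing the identity.

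Next I would analyse the coordinates lying in the block $\wt{I}_{j_0}$ of $\wt{\PPP}_1$. By Theorem~\ref{prop: algebra}, for each $i\in\wt{I}_{j_0}$ the homomorphism $p_i|_{B_1}$ factors as $\tau_i\circ\pi_{j_0}$ for a real embedding $\tau_i$ of $F$, and as $i$ runs over $\wt{I}_{j_0}$ every embedding of $F$ occurs; moreover the blocks of $\PPP_1$ contained in $\wt{I}_{j_0}$ are exactly the level sets $\set{i\in\wt{I}_{j_0}:\tau_i=\tau}$, one for each embedding $\tau$ of $F$. For $i,i'\in\wt{I}_{j_0}$ one has $i\sim_{\PPP_2}i'$ if and only if $p_i|_{B_2}=p_{i'}|_{B_2}$; since $p_i|_{B_2}=(\tau_i|_K)\circ(\pi_{j_0}|_{B_2})$ and $\pi_{j_0}$ restricts to a surjection $B_2\twoheadrightarrow K$, this is equivalent to $\tau_i|_K=\tau_{i'}|_K$.

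Finally I would invoke the elementary fact that, since $F/K$ is separable, the restriction map from the real embeddings of $F$ to the embeddings of $K$ is surjective and $[F:K]$-to-one (each embedding of $K$ extends to $[F:K]$ embeddings of $F$, all real because $F$ is totally real). As $[F:K]\ge2$, pick an embedding $\sigma\colon K\to\R$ with two distinct extensions $\tau',\tau''$ to $F$. Their level sets are two \emph{distinct} blocks of $\PPP_1$, both contained in $\wt{I}_{j_0}$ --- hence in a single block of $\wt{\PPP}_1$ --- and by the previous paragraph both are contained in one and the same block of $\PPP_2$ (the one containing every $i\in\wt{I}_{j_0}$ with $\tau_i|_K=\sigma$). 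This is precisely the assertion of the Lemma. I do not anticipate a genuine obstacle; the only point needing care is the bookkeeping that makes the identifications $\PPP_i=\PPP_{B_i}$, $B_2\subset B_1$ and $p_i|_{B_2}=(\tau_i|_K)\circ(\pi_{j_0}|_{B_2})$ literally correct, all of which follows by unwinding the definitions of \S\ref{subsection: types}.
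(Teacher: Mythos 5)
Your proposal is correct and follows essentially the same route as the paper's proof: both translate non-essentiality into the existence of a component $F_{j_0}^{(1)}$ of $\A_\Lambda(\PPP_1)$ onto which $\A_\Lambda(\PPP_2)$ projects to a proper subfield, and then use Theorem~\ref{prop: algebra} to see that the $\PPP_1$-blocks inside the corresponding block of $\wt{\PPP}_1$ are indexed by embeddings of $F_{j_0}^{(1)}$, at least two of which restrict to the same embedding of the subfield and hence lie in one $\PPP_2$-block. The only (cosmetic) difference is that you count via extensions of embeddings of $K=\pi_{j_0}(B_2)$ to $F_{j_0}^{(1)}$, whereas the paper phrases the same count as the degree ratio $\deg(F_{i_0}^{(1)}/\Q)/\deg(F_{j_0}^{(2)}/\Q)>1$.
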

Lemma~\ref{nei} together with the following Theorem implies the
validity of Theorem~\ref{main theorem}.  
We postpone the proof of Lemma~\ref{nei} to the end of this section.
\begin{thm}\name{main theorem technical}
Let $H_1\Lam\subset H_2\Lam$ be  a containment of two closed orbits
where $H_i=H(\PPP_i)$, $i=1,2$. Suppose that there is a block of
$\PPP_2$ that contains two distinct blocks of $\PPP_1$ that are
contained in the same block of $\wt{\PPP}_1$. Then $\kappa_1 <\kappa_2.$
\end{thm}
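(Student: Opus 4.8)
The plan is to start from a lattice $\Lambda_{*}\in H_1\Lam$ realizing $\kappa_1$ and deform it \emph{inside $H_2$}, in a direction that mixes the two blocks $Q_1,Q_2$ but is unavailable inside $H_1$, so as to obtain a lattice of strictly larger Mordell constant. Since $\operatorname{supp}(\mu_i)=H_i\Lam$ and $H_1\Lam\subseteq H_2\Lam$, we already have $\kappa_1\le\kappa_2$, so only the strict inequality is at stake. By Theorem~\ref{thm: ergodic+semicontinuous} applied to $\mu_1$, fix $\Lambda_{*}\in H_1\Lam$ with $\kappa(\Lambda_{*})=\kappa_1$ for which the cube $\mathcal C=[-c,c]^n$, $c=\kappa_1^{1/n}$, is admissible; since $\Lambda_{*}\in H_2\Lam$ as well, it suffices to produce $g\in H_2$ with $\kappa(g\Lambda_{*})>\kappa_1$, for then $\kappa_2\ge\kappa(g\Lambda_{*})>\kappa_1$. (Necessarily such a $g$ lies in $H_2\setminus H_1$, since $g\in H_1$ with $\kappa(g\Lambda_{*})>\kappa_1$ would contradict $\kappa_1=\sup_{H_1\Lam}\kappa$.) We may assume $\kappa_1<1$: using the structure of lattices of Mordell constant $1$, a lattice of $H_1\Lam$ with Mordell constant $1$ would force $\A_\Lambda(\PPP_1)$ to be totally split over $\Q$, hence $\wt\PPP_1=\PPP_1$, contrary to the hypothesis.

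Next I would localize the problem to finitely many lattice vectors. The sets $W=\{v\in\Lambda_{*}\setminus\{0\}:\|v\|_\infty=c\}$ and $W'=\{v\in\Lambda_{*}\setminus\{0\}:c<\|v\|_\infty\le 2c\}$ are finite by discreteness, and $W\ne\emptyset$, since otherwise a slightly enlarged cube would be admissible for $\Lambda_{*}$, contradicting $\kappa(\Lambda_{*})=\kappa_1$. Putting $\delta_0=\min_{v\in W'}\|v\|_\infty-c>0$ (or $\delta_0=c$ if $W'=\emptyset$), a routine estimate shows that whenever $g$ is close enough to the identity in $H_2$, the cube of radius $r(g)=\min_{w\in W}\|gw\|_\infty$ is admissible for $g\Lambda_{*}$; hence $\kappa(g\Lambda_{*})\ge r(g)^n$, and the whole problem reduces to finding $g$ near the identity with $\|gw\|_\infty>c$ for every $w\in W$.

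Passing to the infinitesimal version, for $g=\exp(tX)$ with $X$ in the Lie algebra $\goth h_2$ of $H_2$ and $t>0$ small one has $\|gw\|_\infty=c+t\,M_X(w)+O(t^2)$, where $M_X(w)=\max\{\operatorname{sign}(w_i)(Xw)_i:\ |w_i|=c\}$; so it is enough to find $X$ in the subspace $\goth v\subseteq\goth h_2$ complementary to $\goth h_1$ — spanned by the root vectors $E_{st}$ with $s,t$ in a common block of $\PPP_2$ but in distinct blocks of $\PPP_1$, a nonzero subspace since the hypothesis gives distinct $Q_1,Q_2\in\PPP_1$ inside one block $Q\in\PPP_2$, so $E_{pq}\in\goth v$ for $p\in Q_1,\,q\in Q_2$ — such that $M_X(w)>0$ for all $w\in W$. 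This is where the Kernel Lemma (Lemma~\ref{rl}) and the hypothesis enter: the lemma (whose conclusion is independent of the chosen lattice of $H_1\Lam$) provides $\rho\in\operatorname{Aut}(\C)$ and a bijection $\pi:Q_1\to Q_2$ with $v_i=\rho(v_{\pi(i)})$ for all $v\in\Lambda_{*}$ and $i\in Q_1$, so that the tight-coordinate data of the vectors of $W$ in $Q_1$ and in $Q_2$ are Galois conjugate; this conjugacy is what lets one play the direction $E_{p,\pi(p)}$ against $E_{\pi(p),p}$, balancing the contribution of a tight coordinate of $w$ in $Q_1$ against that of its $\pi$-partner in $Q_2$.

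The step I expect to be the crux is solving the finite system $\{M_X(w)>0:\ w\in W\}$. Assuming it has no solution, a Helly/linear-programming duality yields, for each $X$ in a spanning family of $\goth v$, an obstructing $w_X\in W$ all of whose tight coordinates receive a non-positive contribution; feeding into this the $\rho$-conjugacy of the two blocks together with the central symmetry and finiteness of $W$ should produce a contradiction, using also $c<1$ (so that the $n$ coordinates of a single $w$ cannot all be tight, which supplies the needed slack). If the purely infinitesimal cube argument proves too rigid — for instance if some $w\in W$ were tight only in coordinates outside the blocks mixed by $\goth v$ — the same analysis is to be carried out while (i) allowing admissible boxes for $g\Lambda_{*}$ with different side-lengths along $Q_1$ and along $Q_2$, which is legitimate since the Mordell constant is a supremum over all symmetric boxes, and (ii) replacing $\Lambda_{*}$ by another lattice of $H_1\Lam$ realizing $\kappa_1$ whose tight vectors meet the block $Q$. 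Once a suitable $X$ (and box) is produced, $\kappa(g\Lambda_{*})>\kappa_1$, whence $\kappa_2>\kappa_1$, completing the proof.
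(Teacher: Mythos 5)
Your overall strategy (perturb a $\kappa$-maximizer inside $H_2$ in directions mixing two $\PPP_1$-blocks lying in one $\PPP_2$-block, and use the Kernel Lemma to exploit Galois conjugacy) is in the right spirit, but the proposal has a genuine gap precisely at the step you yourself flag as the crux, and it also misuses Lemma~\ref{rl}. First, the decisive step is not carried out: you reduce to solving the finite system $\{M_X(w)>0:w\in W\}$ and then say that LP/Helly duality plus conjugacy ``should produce a contradiction''; nothing is proved, and indeed the reduction itself is too rigid, since any $X$ supported in the block $Q$ gives $M_X(w)=0$ for a vector $w$ whose tight coordinates all lie outside $Q$, so the system is in general unsolvable and your fallbacks (i)--(ii) are only named, not executed. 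The paper's proof shows how to avoid demanding that \emph{all} tight vectors move out simultaneously: assuming $\kappa_1=\kappa_2$, it takes $\Lam_{\max}$ maximizing $\kappa$ on $H_1\Lam$ (hence on $H_2\Lam$), notes every face $\mathcal F_{i_0}$ of the critical cube carries locking points $L_{i_0}$, and clears \emph{one face at a time}: if $0\notin\conv\{\pi_{Q\smallsetminus\{i_0\}}(v):v\in L_{i_0}\}$ for some $\PPP_2$-block $Q\ni i_0$, a separating functional $f$ defines a unipotent $u_t(v)=v+tf(\pi_{Q\smallsetminus\{i_0\}}(v))\E_{i_0}$ in $H_2$ which keeps the cube admissible, empties the relative interior of $\mathcal F_{i_0}$, and allows a strictly larger admissible (non-cubic) box, contradicting maximality over $H_2\Lam$. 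Hence $\pi_Q(\delta\E_{i_0})$ is a convex combination $\sum\lambda_i\pi_Q(v_i)$ with $v_i\in L_{i_0}$; since $i_0\notin Q_2$, the coefficient vector lies in the kernel of the $\pi_{Q_2}$-matrix, so by Lemma~\ref{rl} its image under $\rho$ lies in the kernel of the $\pi_{Q_1}$-matrix, and because $\sum\rho(\lambda_i)=\rho(1)=1$ while every $v_i$ has $i_0$-coordinate $\delta$, one gets $\delta=0$, a contradiction. No ``move everything out'' LP, no reduction to $\kappa_1<1$, and no cube-only restriction is needed.

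Second, your invocation of the Kernel Lemma overstates it: the lemma asserts equality of \emph{kernels} of the $\pi_{Q_1}$- and $\pi_{Q_2}$-matrices (up to an automorphism $\rho$ of $\C$), and only that formulation is invariant along the $H_1$-orbit. The coordinatewise statement you use, $\pi_{Q_1}(v)=\rho(\pi_{Q_2}(v))$ for all $v\in\Lam_*$, holds only for special representatives such as $\Lam_L$ arising via \eqref{eq: vectors first type}; acting by a general $h\in H_1$ destroys it, because the blocks of $h$ over $Q_1$ and over $Q_2$ are unrelated real matrices, while your $\Lam_*$ is a maximizer over which you have no such control. Finally, two auxiliary claims are unjustified as stated: the reduction ``we may assume $\kappa_1<1$'' (your assertion that generic value $1$ forces $\A_\Lam(\PPP_1)$ to be totally split requires a Haj\'os-type argument you do not give), and the claim that $c<1$ prevents all $n$ coordinates of a tight vector from being tight (nothing rules out a lattice vector with all coordinates of modulus $c$). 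To repair the proposal you would essentially have to replace the simultaneous infinitesimal argument by the face-by-face separation argument and use the Kernel Lemma only through kernels, which is the paper's route.
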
 
\begin{proof}
Assume by way of contradiction that $\kappa_1=\kappa_2$. 
Without loss of generality we may assume that $\Lam=\Lam_{\on{max}}$
is a lattice for which the conclusions of Theorem~\ref{thm:
  ergodic+semicontinuous} are satisfied for the $A$-invariant set
$H_1\Lam$; 
that is, we assume that $\kappa(\Lam)=\max\set{\kappa(\Lam'):\Lam'\in
  H_1\Lam}$ and furthermore, that the symmetric cube $\mathcal{C}$ of
volume $2^n \kappa_1$  
is admissible for $\Lam$. From our assumption we deduce that also
$\kappa(\Lam)=\max\set{\kappa(\Lam'):\Lam'\in H_2\Lam}$. In practice,
the property of $\Lam$ 
that will be of importance to us is that one cannot act on $\Lam$ with
some $h\in H_2$ in such a way that a symmetric box of bigger volume
will be admissible for $h\Lam$.  

For $1\le i\le n$ we refer to the face of $\mathcal{C}$ that intersects
the positive $i$-th axis as the $i$-th face of $\mathcal{C}$ and
denote it by $\mathcal{F}_i$. By symmetry there will be no need to consider
the opposite faces. 
We divide the rest of the argument into steps.

\medskip

\noindent\textbf{Step} 1. 
We first observe that for each $1\le i\le n$
the relative interior of $\mathcal{F}_i$ intersects $\Lam$
nontrivially. If not, we could have chosen an admissible
symmetric box for $\Lam$ which strictly contains $\mathcal{C}$ and in
particular, is of greater volume. We refer to the set $L\df
\Lam\cap\partial \mathcal{C}$ as the set of \textit{locking points}
and denote  
by $L_i$, $1\le i\le n$ the set of points in $L$ that belong to the
relative interior of $\mathcal{F}_i.$  

\medskip

\noindent\textbf{Step} 2. 
Recall that for $Q\subset \set{1,\dots, n}, \, \pi_Q:\R^n\to \R^{|Q|}$
is the projection to the coordinates of $Q$. 
We observe that for each block $Q$ of the partition $\PPP_2$ and for each $i_0\in Q$,
\begin{equation}\label{in the conv}
0\in\on{conv}\set{\pi_{Q\smallsetminus\set{i_0}}(v):v\in L_{i_0}}.
\end{equation}
In other words, 
if we restrict attention to the coordinates of the block $Q$, then the
point of intersection of $\mathcal{F}_{i_0}$ with the 
 $i_0$-th axis  belongs to the convex hull of locking points for the
 $i_0$-th face. 

To prove~\eqref{in the conv}, suppose to the contrary that $0$ does
not belong to $\on{conv}\set{\pi_{Q\smallsetminus\set{i_0}}(v):v\in L_{i_0}}$. Then
there is a linear functional $f: \R^{|Q|-1} \to \R$ which is strictly
positive on 
$\set{\pi_{Q\smallsetminus\set{i_0}}(v):v\in
  L_{i_0}}$. Define a one-parameter unipotent subgroup $\{u_t\}$ of
$G$ by the formula  
\eq{eq: ut}{
u_t(v) = v+tf(\pi_{Q\smallsetminus \set{i_0}}(v))\E_{i_0},
}
where $\E_1, \ldots, \E_n$ is the standard basis of $\R^n$. 
Since $Q$ is a block of the partition defining $H_2$, we have
$\set{u_t}\subset H_2$. It follows from the definition of $u_t$ and
the positivity of  
$f\circ\pi_{Q\smallsetminus\set{i_0}}$ on $L_{i_0}$ that
for small values of $t>0$  the cube $\mathcal{C}$ is admissible for
$u_t\Lam$. Furthermore, $u_t\Lam$ does not contain points in the
relative interior of $\mathcal{F}_{i_0}$. Thus we may find
a symmetric box that strictly contains $\mathcal{C}$ and is
admissible for $u_t\Lam$, contradicting our assumption that  
$\kappa$ attains its maximal value on $H_2\Lam$ at  $\Lam$.

\medskip

\noindent\textbf{Step} 3. Now we apply the Kernel
Lemma~\ref{rl}. Denote  by $Q$ a block of $\PPP_2$ that contains two  
distinct blocks $Q_1,Q_2$ of $\PPP_1$ such that both $Q_i$ are
contained in the same block of $\wt{\PPP}_1$, the existence of which
is assumed in the statement. Let 
$i_0\in Q_1$. 

Let $\delta>0$ be such that $\delta\E_{i_0}$ is the point of intersection
of $\mathcal{F}_{i_0}$ with the $i_0$-th axis. Because all the points
of $\mathcal{F}_{i_0}$ share  
the  same $i_0$-th coordinate, namely $\delta$, we deduce from \eqref{in
  the conv} that $\pi_Q(\delta\E_{i_0})$ is in
the convex hull of $\pi_Q(L_{i_0})$. 
Choose $v_1, \ldots, v_t$ in $L_{i_0}$ 
and a coefficients vector $\vec{\lambda}=(\lambda_1,\dots,\lambda_t)$ with $\lambda_i\ge 0$ and $\sum_1^t\lambda_i=1$ so that $\sum \lambda_i\pi_Q(v_i)=\pi_Q(\delta\E_{i_0}).$
In particular, as $i_0\notin Q_2$, the vector $\vec{\lambda}$ belongs to the kernel of the $n\times t$ matrix whose columns are the $\pi_{Q_2}(v_i)$'s. Applying the Kernel Lemma~\ref{rl} we deduce that
there is an automorphism $\rho$ of $\C$ so that $\rho(\vec{\lambda})$ belongs to the kernel of the corresponding matrix with columns $\pi_{Q_1}(v_i)$. As $\rho$ is an automorphism we deduce
that $\sum_1^t\rho(\lambda_i)=\rho(1)=1$. Looking at the $i_0$ coordinate we deduce that $0=\sum_1^t\rho(\lambda_i)\delta=\delta$, a contradiction.

\ignore{
 and such that \equ{eq: in conv1'} cannot be satisfied with fewer than $t$ elements of $L_{i_0}$. 
Since $i_0\notin Q_2$, \eqref{in the conv} implies that $0 \in
\conv\set{\pi_{Q_2}(v_i): 1\le i\le t}$. In particular,  
the projections $\set{\pi_{Q_2}(v_i)}_{i=1}^t$ are linearly
dependent. Lemma~\ref{rl} implies 
that the vectors $\set{\pi_{Q_1}(v_i)}_{i=1}^t$ are linearly
dependent as well. Let $\alpha_1, \ldots, \alpha_t$ be scalars, not
all zero, such that $\sum \alpha_i \pi_{Q_1}(v_i)=0$. Since the
$i_0$-coordinate of  all of the $v_i$ is equal to $\rho$ and $i_0\in
Q_1$ we find 
\eq{eq: sum alphas'}{
\sum_1^t \alpha_i =0.
}
By~\eqref{eq: in conv1'} we may choose  $\lambda_i > 0$ with $\sum
\lambda_i=1$  such that  $\sum_1^t \lambda_i \pi_Q(v_i)=\pi_Q(\rho
\E_{i_0})$.  
Then for each $s \in \R$ we have
\eq{eq: contradicts'}{\sum b_i(s)\pi_Q(v_i)=\pi_Q(\rho\E_{i_0}), \ \ \
  \ \mathrm{where} \ \ b_i(s)\df \lambda_i+s\alpha_i.} 
By \equ{eq: sum alphas'} at
least one of the $\alpha_i$ is negative. Therefore there is some $i$ such that  
$b_i(s) \longrightarrow_{s\to\infty} -\infty$, so there is a largest
positive $s$ so that all the coefficients $b_i(s)$ are
non-negative. By continuity at least one of these coefficients
vanishes, but not all since $\sum b_i(s) \equiv 1.$ So \equ{eq:
  contradicts'} gives us a representation of $\pi_Q(\rho\E_{i_0})$ as
a convex combination of fewer than $t$ elements of $\pi_Q(L_{i_0})$
--- a contradiction to the minimality of $t$.  
}
\end{proof}
\begin{proof}[Proof of Lemma~\ref{nei}]
The associated algebras $\A_\Lambda(\PPP_1),$ $\A_\Lambda(\PPP_2)$ are
isomorphic to direct sums of number fields.  
Suppose $\A_\Lambda(\PPP_i)\cong \oplus_{j=1}^{r_i} F_j^{(i)}$. Then
the inclusion $\A_\Lambda(\PPP_2)\subset \A_\Lambda(\PPP_1)$ induces
an embedding  
$$\varphi:\oplus_{j=1}^{r_2} F_j^{(2)}\hookrightarrow
\oplus_{j=1}^{r_1} F_j^{(1)}.$$ For such an embedding, there exists a
partition $\set{1,\dots,r_1}=\bigsqcup_{j=1}^{r_2} Q_j$ 
such that for each  $ 1\le j\le  r_2$  there is an embedding  
$\varphi_j: F_j^{(2)}\hookrightarrow \oplus_{i\in Q_j}F_i^{(1)}$ so
that $\varphi$ takes the following form  
$$\varphi( (x_j)_{j=1}^{r_2}) =
\pa{\varphi_1(x_1)),\dots,\varphi_{r_2}(x_{r_2}) }\in \pa{\oplus_{i\in
    Q_1} F_i^{(1)}}\oplus\dots\oplus\pa{\oplus_{i\in Q_{r_2}}
  F_i^{(1)}}.$$ 
Our assumption that the embedding $\varphi$ is non-essential simply
means that there exists some $1\le j_0\le r_2$ and $i_0\in Q_{j_0}$
such that  
\begin{equation}\label{gt1}
\deg\left(F_{i_0}^{(1)}/\Q \right ) > \deg \left(F_{j_0}^{(2)}/\Q \right
).
\end{equation}
Recall that by
Theorem~\ref{prop: algebra} the blocks of the partition $\PPP_2$
correspond to the various embeddings of the $F_j^{(2)}$. Choose a
block $Q$ that 
corresponds to one of the embeddings of $F_{j_0}^{(2)}$. 

This block splits into several blocks of the finer
partition $\PPP_1$ and this splitting is done in two steps. First, it 
splits into blocks $S_i, i\in Q_{j_0},$ that correspond to the
embedding $\varphi_{j_0}:F^{(2)}_{j_0}\hookrightarrow\oplus_{i\in
  Q_{j_0}} F_i^{(1)}$ and then each block $S_i$ further splits into
$\deg \left(F^{(1)}_{j_0}/\Q \right) / \deg \left(F^{(2)}_i/\Q\right)$ blocks $S_{i\ell}$ which
are the blocks of $\PPP_1$. 
Following the definitions we see that for each fixed $i$ the blocks
$S_{i\ell}$ belong to the same block of $\wt{\PPP}_1$; namely the one
defined by $F_i^{(1)}$.  
Combining this with the inequality~\eqref{gt1} concludes the proof.
\end{proof}


\ignore{
\section{Properties of intermediate lattices}\name{sec: properties} 

\begin{prop}\name{prop: no zeroes}
If $\Lambda$ is a number field lattice, 
then for any nonzero $v \in \Lambda$, none of the coordinates of $v$ is zero. 
\end{prop}
\begin{proof}
Referring to Proposition \ref{prop: compact A orbits}, in \equ{eq: vectors first type}, if $\sigma_i(\alpha)=0$ for some $i$ then $\alpha=0$ and hence $v=0$. 
\end{proof}
\begin{remark}
An alternative proof of the above proposition which is more natural
from the dynamical point of view and does not rely on the results in
\S\ref{subsection: types} is as follows: A vector 
with a zero coordinate could be shrunk to 0 by applying suitable
diagonal matrices and therefore its existence in a number field
lattice contradicts the  
compactness of the $A$-orbit by Mahler's compactness criterion
(Proposition \ref{prop: Mahler}). In order to prove Theorem~\ref{u thm
  2} we will need a suitable generalization 
of Proposition \ref{prop: no zeroes} to intermediate lattices
(Proposition~\ref{prop: no zeroes, intermediate}) and we failed to
find a suitable dynamical argument. This is the reason we needed to  
develop the theory in \S\ref{subsection: types} and obtain
Proposition~\ref{prop: constructing, general}. 
\end{remark}
 
\begin{prop}\name{prop: no zeroes, intermediate}
Let $\Lambda$ be a 
intermediate lattice, let $F$ be subfield of degree $d\ge 2$ of the associated algebra $\A_{\Lambda}$ whose existence is guaranteed by Corollary~\ref{inter char}, 
and let $\PPP_F = \left(\bigsqcup_{j =1}^{d} Q_{j} \right)$ be the corresponding field partition to $F$.  
For $j=1, \ldots, d$, let $P_j$ be the projection 
\eq{eq: defn first proj}{
P_j:  \R^n \to \R^{n/d}, \ \ P_j(x_1, \ldots, x_n) = (x_i)_{i \in Q_j}.
}
Then for any $v_1, \ldots, v_t \in \Lambda$, the rank
\eq{eq: rank}{
\dim \, \spa \left(P_j(v_1), \ldots, P_j(v_t)\right)
}
does not depend on $j$. 

\end{prop}

\begin{proof}
\combarak{I would be happier to have a proof with fewer computations.} 
Using Proposition \ref{prop: constructing, general}
we may write $\Lambda = h\Lambda_0$, where $h \in H(\PPP_F)$ and $\Lambda_0$ is a number field lattice 
arising via \equ{eq: vectors first type} from a degree $n$ totally real number field $K$ which contains (a copy of) $F$. Write the coordinates of $h$ as $(b_{s,t})_{1 \leq s,t \leq n}$, let $\ell = n/d$ and for $j=1, \ldots, d$ let $B_j = (b_{st})_{s,t \in Q_j} \in M_{\ell}(\R)$. That is, $B_j$ is the $j$-th block of $h$. In particular $P_j (hv) = B_jP_j(v)$ for any $v \in \R^n$. This implies that $\det h = \prod_1^d \det B_j$ so each $B_j$ is invertible. Therefore it is enough to prove the statement for $\Lambda = \Lambda_0$.

We need to show that the rank \equ{eq: rank} is independent of $j$;
with no loss of generality it is enough to show that the rank computed
for $j=1$ is not less than the rank computed for $j=2.$   
Let $L$ be a splitting field for $K$, so that, by \equ{eq: vectors
  first type}, all coefficients of all vectors in $\Lambda_0$ lie in
$L$. Denote by $\bar{\sigma}_i$ the extension of each field embedding
$\sigma_i: K \to \R$ to a field automorphism of $L$, and identify each
$i$ in each $Q_j$ with the field automorphism $\bar{\sigma}_i$. Then,
by  
Theorem \ref{prop: algebra}, the $Q_j$ correspond to cosets in
$\Gal(L/\Q)$ of the subgroup fixing the field $F$ pointwise. Fixing
coset representatives, this implies that there is some $\sigma \in
\Gal(L/\Q)$ such that the map $\tau \mapsto \sigma \circ \tau$ is a
bijection $Q_1 \to Q_2$. Let $r$ denote the rank in \equ{eq: rank} for
$j=1$. Then there is a minor of size $r \times r$ in the matrix with
columns $P_1(v_1), \ldots, P_1(v_t)$ with non-vanishing
determinant. In view of \equ{eq: vectors first type}, there is a
corresponding $r \times r$ minor in the matrix with columns $P_2(v_1),
\ldots, P_2(v_t)$, whose entries are obtained from the previous one by
an application of $\sigma$. Since $\sigma$ is a field automorphism,
$\det (\sigma(a_{ij})) = \sigma(\det(a_{ij}))$ for any matrix
$(a_{ij})$. Therefore the corresponding minor has non-vanishing
determinant, i.e. the rank \equ{eq: rank} for $j=2$ is at least $r$,
as claimed.   
\end{proof}

As an application, we now give a direct proof of Corollary~\ref{u
  cor1} which do not rely on Theorem~\ref{u thm 2} that is proved at
the end of \S\ref{sec: locking}. 
The argument relies on the following famous theorem.
 \begin{thm}[Haj\'os \cite{Hajos}]
Suppose $\Lambda$ is a unimodular lattice, such that $\Lambda$ has no
nonzero vectors in the interior of the unit cube. Then $\Lambda$
contains one of the vectors of the standard basis of $\R^n$.  
\end{thm}

\begin{proof}[Proof of Corollary \ref{u cor1}]
Suppose by contradiction that $\Lambda$ is a 
intermediate lattice and has $\kappa(\Lambda)=1.$ Applying Corollary
\ref{cor: cubes} we see that there are $a_k \in A$ such that $a_k
\Lambda$ contains no nonzero vectors in the cube with volume $2^n -
1/k$. By Mahler's compactness criterion, the sequence $(a_k \Lambda)$
has a convergent subsequence, converging to $\Lambda_0$ with no
nonzero vectors in the interior of the unit cube. On the other hand,
if $\Lambda$ is an 
intermediate lattice then so is any lattice in its $A$-orbit-closure, in particular so is $\Lambda_0$. By Haj\'os' Theorem, $\Lambda_0$ contains a standard basis vector, i.e. a vector with exactly one nonzero entry. This is a contradiction to the case 
$t=1$ of Proposition \ref{prop: no zeroes, intermediate}.
\end{proof}

Another application concerns decomposable lattices. These lattices contain vectors with many zero coefficients, hence by choosing appropriate vectors, Proposition \ref{prop: no zeroes, intermediate} implies:
\begin{cor}
Intermediate 
lattices are not decomposable. 
\end{cor}
We leave the details as an exercise to the reader.
\qed
}


\section{Proofs of isolation results}\name{sec: completion}
We recall the main dynamical result of \cite{LW}, which is the basis
of our proof of isolation results.  
\begin{thm}\name{thm: dynamical isolation}
Let $n \geq 3$, let $H\Lambda$ be a finite-volume homogeneous subspace of $\Xn$,
corresponding via Corollary \ref{cor: 
  bijective} to a subfield $F$ of the associated algebra
$\A_\Lambda$. Assume that $\overline{A\Lambda} = H\Lambda$ and let
$(\Lambda_k)$ be a sequence of lattices in $\Xn \sm H\Lambda$
converging to $\Lambda$. Then, after passing to a subsequence of the
$(\Lambda_k)$, there is a proper subfield $K \varsubsetneq F$ such
that the set of accumulation points of the form $\lim a_k \Lambda_k$, $a_k\in A$,
is equal to the finite-volume homogeneous space $H'\Lambda$, where
$H'$ is an equiblock group 
associated to $K$ via Corollary \ref{cor: bijective}.  

In particular, if $F$ has no proper subfields other than $\Q$, then
the set of accumulation points as above is the entire space $\Xn$.

\end{thm}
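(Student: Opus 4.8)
\smallskip
\noindent\emph{Proof plan.} The plan is to follow the strategy of \cite{LW}. Fix a sequence $\Lam_k \to \Lam$ with $\Lam_k \in \Xn \sm H\Lam$, set $X_k \df \overline{A\Lam_k}$, and, passing to a subsequence, assume the $X_k$ converge in the Chabauty topology on closed subsets of $\Xn$ to a closed $A$-invariant set $X_\infty$. Since $\Lam_k \in X_k$ and $\Lam_k \to \Lam$ we get $\Lam \in X_\infty$, and since $\overline{A\Lam}=H\Lam$ by hypothesis, $H\Lam \subseteq X_\infty$. A diagonal argument identifies $X_\infty$ with the set in the statement: any accumulation point of a sequence $(a_k\Lam_k)$ with $a_k \in A$ lies in $X_\infty$ because $a_k\Lam_k \in X_k$ and $X_k \to X_\infty$, while conversely every point of $X_\infty$ is a Chabauty limit of points of $X_k$, each of which is approximated by points of the orbit $A\Lam_k$. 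So it suffices to show that, after a further subsequence, $X_\infty = H'\Lam$ for the equiblock group $H'$ associated via Corollary~\ref{cor: bijective} to a proper subfield $K\varsubsetneq F$; the last assertion is then immediate, since when $F$ has no proper subfield other than $\Q$ we must have $K=\Q$, $H'=G$, $X_\infty=\Xn$.

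The first substantive step is to produce a point of $X_\infty$ lying off $H\Lam$ in a single transverse root direction. I would write $\Lam_k = g_k\Lam$ with $g_k \to e$ in $G$ and decompose along $\goth{g} = \goth{h} \oplus \bigoplus_{s \not\sim_\PPP t} \goth{g}_{st}$, where $H=H(\PPP)$ and $\goth{h} = \Lie H$; the hypothesis $\Lam_k \notin H\Lam$ forces the off-block part of $g_k$ to be nonzero. Then I would renormalize: choose $a_k \in A$ so that $a_k\Lam$ stays in a fixed compact subset of the finite-volume space $H\Lam$ (possible by Poincar\'e recurrence for the $A$-action on $H\Lam$ and the density $\overline{A\Lam}=H\Lam$), while $\Ad(a_k)$ rescales the off-block part of $g_k$ to converge to a nonzero vector $w \in \goth{g}_{s_0 t_0}$, $s_0 \not\sim_\PPP t_0$, with the components in the other off-block root spaces driven to $0$. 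Along a further subsequence $a_k\Lam_k = (a_k g_k a_k^{-1})(a_k\Lam)$ converges to $p=\exp(w)\Lam'$ with $\Lam'\in H\Lam$ and $0\ne w$, so $p \in X_\infty \sm H\Lam$; choosing $\Lam'$ also with dense $A$-orbit and using that $A$ normalizes the root group $\{\exp(tw):t\in\R\}$, one sees that $X_\infty$ contains the closure of $\{\exp(tw)\Lam'' : t\ge 0,\ \Lam''\in H\Lam\}$.

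Given this ``thickening'' of $H\Lam$ inside $X_\infty$ in an $A$-expanding direction, the next step is to build an $A$-invariant probability measure $\nu$ supported on $X_\infty$ with $\nu \ne \mu_{H\Lam}$ and positive entropy for a one-parameter subgroup of $A$ --- for instance as a weak-$*$ limit of Ces\`aro averages, along $\{a_s\}$ with $\Ad(a_s)w = e^s w$, of $(a_s)_*\big((\exp w)_*\mu_{H\Lam}\big)$ --- the delicate point being non-escape of mass, which is exactly where $n \ge 3$ is used (there is enough room to push transversally without creating arbitrarily short vectors). Once such a $\nu$ is in hand, the measure-classification/high-entropy results of \cite{LW} (see also the discussion in \cite{ELMV}) apply because $n\ge 3$ and force $X_\infty$ to be homogeneous; as the relevant group is generated by its unipotent elements, the orbit is of finite volume by Proposition~\ref{thm: BHC}. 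Finally, an $A$-invariant, finite-volume homogeneous set is, by Proposition~\ref{prop: why block groups} together with Corollaries~\ref{equi must} and~\ref{r.21.6}, the orbit $H(\PPP')\Lam$ of an equiblock group with $\A_\Lam(\PPP')$ a field $K$; since $H\Lam\subseteq X_\infty$ and the correspondence of Corollary~\ref{cor: bijective} is order-reversing, $K$ is a subfield of $F$, and it is proper because $X_\infty\ne H\Lam$.

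I expect the main obstacle to be the combination of the renormalization in the second step with the non-escape-of-mass estimate in the third: one must choose the $a_k$ so that $a_k\Lam$ does not diverge in $H\Lam$ \emph{while} $\Ad(a_k)$ extracts a nonzero transverse limit and controls all remaining transverse directions, and then rule out loss of mass when averaging the spread-out measures. Both fail in dimension $n=2$, where $A$ is one-dimensional and no such balanced renormalization exists; they are carried out over several pages in \cite{LW}, which is why we quote the statement rather than reprove it.
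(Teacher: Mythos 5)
This theorem is not proved in the paper at all: it is quoted, with the remark that it follows from the proof of \cite[Theorem 1.3]{LW} (see also \cite[Corollary 7.1]{LW} and \cite[Theorem 4.8]{ELMV}), so your decision to ultimately defer to \cite{LW} matches what the paper itself does, and your opening reduction (Chabauty limit $X_\infty$ of the orbit closures $\overline{A\Lam_k}$, identified with the set of accumulation points $\lim a_k\Lam_k$, together with the transverse decomposition $g_k=\exp(v_k)h_k$ and the renormalization of $v_k$ to a nonzero limit in a single root space) is indeed the skeleton of the argument in \cite{LW}.

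However, your third step is both misattributed and gapped. There are no measure-classification or high-entropy results in \cite{LW}: the argument there is purely topological. After extracting $\exp(w)\Lam'\in X_\infty$ with $0\neq w\in\goth{g}_{s_0t_0}$, one conjugates by the stabilizer of the base point in $A$ (units of the order in $F$); the image of the unit group under the single root character is dense in $\R^{+}$ precisely because the unit rank is $n-1\geq 2$ --- this, and not a non-escape-of-mass estimate, is where $n\geq 3$ enters, and it is exactly what fails for real quadratic fields (the character image is cyclic, which is why unions of distinct compact orbits give counterexamples when $n=2$). From the density one gets the full one-parameter unipotent orbit over $H\Lam$ inside $X_\infty$, and then orbit-closure arguments and an induction over subfields of $F$ identify the accumulation set. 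Your proposed alternative --- build an $A$-invariant probability measure $\nu$ of positive entropy on $X_\infty$ and invoke measure rigidity --- has two problems even granting the (nontrivial, unverified) entropy positivity and tightness of the Ces\`aro limits: such a classification would at best show that $\operatorname{supp}(\nu)$ is a homogeneous subset of $X_\infty$, whereas the theorem asserts that the accumulation set \emph{equals} $H'\Lam$; the upper-bound direction ($X_\infty\subseteq H'\Lam$, with no extra, possibly zero-entropy, pieces) is never addressed in your sketch. Finally, the finite-volume claim cannot be obtained from Proposition~\ref{thm: BHC} via ``generated by unipotents'': a block group containing $A$ is never generated by unipotents, and finiteness of the volume of $H'\Lam$ is equivalent to the associated algebra $\A_\Lam(\PPP')$ being a field (Corollary~\ref{r.21.6}), which must be extracted from the structure of the limit set, as in \cite{LW}.
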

Although the Theorem is not stated in this form in \cite{LW}, this
statement follows from the proof of  \cite[Theorem 1.3]{LW}. For more
details see \cite[Corollary 7.1]{LW} and \cite[Theorem 4.8]{ELMV}. 
\subsection{Proofs}

\begin{proof}[Proof of Theorem \ref{thm: isolation nf}]
For each subfield $F_1\subset F$, by Corollary \ref{cor: bijective}
there is a corresponding homogeneous subset $H\Lambda$, equipped with
a homogeneous measure $\mu_{F_1}$, and by Theorem \ref{thm:
  ergodic+semicontinuous}, a corresponding 
$\kappa_{\mu_{F_1}}$. Let  
$$\kappa' \df  \min\{\kappa_{\mu_{F_1}}: F_1 \varsubsetneq F \mathrm{\
  a \ subfield} \}.$$ 
Note that if $F$ has no proper subfields, the only possible $F_1$ is
the field $\Q$, and in this case $\kappa'=1$. Also note that by
Theorem \ref{u thm 2}, $\kappa'> \kappa(\Lambda)$.  

We now claim that 
for any sequence $\Lambda_k \to \Lambda$, such that 
$\Lambda_k \notin A\Lambda$, we have  $\liminf_k \kappa(\Lambda_k)
\geq \kappa'$. Take a subsequence along which
$\kappa(\Lambda_k)$ converges.  
 Applying Theorem \ref{thm: dynamical isolation} in the special case
 $H=A$, after passing to a further subsequence we find that there is a
 subfield $F_1 \varsubsetneq F$ such that any lattice in $H'\Lambda$
 is an accumulation point of a sequence of the form $a_k
 \Lambda_k$. Here $H'$ is the equiblock group corresponding to $F_1$
 under Corollary \ref{cor: bijective}. 
In particular, we can choose $\Lambda_{\max}$, a lattice realizing the
maximal value of $\kappa$ on the homogeneous subset $H'\Lambda$ as in
Theorem \ref{thm: ergodic+semicontinuous}, as a limit point of
$a_k\Lambda_k$. In view of Proposition \ref{prop: standard},  
$$\kappa' \leq \kappa(\Lambda_{\max}) \leq \lim\kappa(a_k\Lambda_k) =
\lim \kappa(\Lambda_k).$$

Now to prove local isolation, note that Definition \ref{def:
  isolated} is satisfied with $\vre_0 \df
\kappa'-\kappa(\Lambda)$. This also implies strong isolation when $F$
has no proper subfields. It remains to show that $\Lambda$ is not
strongly isolated when $F$ does have a proper subfield $F'$. Indeed,
in this case 
$\kappa'<1$ by Theorem \ref{u thm 2}. Letting $H'$ denote the block group corresponding
to $F'$, we find from Theorem \ref{thm: ergodic+semicontinuous} that
there is a dense collection of lattices $\Lambda'
\in H'\Lambda$, for which $\kappa(\Lambda')=\kappa'$. This means that
$\Lambda$ is not strongly isolated. 
\end{proof} 

\begin{proof}[Proof of Corollary \ref{cor: extends}]
We first recall that for any $n$ there is a totally real number field $F$ of degree $n$ without proper subfields. Indeed, by \cite[Prop. 2]{field extensions}, for any $n$ there is a totally real Galois extension $K$ of $\Q$ with Galois group $\mathcal{S}_n$ (the full permutation group of $\{1, \ldots, n\}$). The subgroup $G_0 \cong \mathcal{S}_{n-1}$ fixing the element 1 is a maximal subgroup of index $n$, so by the Galois correspondence, the subfield $F$ of $K$ fixed by $G_0$ has the required properties. 

Now let $\Lambda$ be a number field lattice 
in $\R^n$ arising from such a field $F$ via the construction as in
\eqref{eq: vectors first type} of \S\ref{subsec: constructions}. By
Theorem \ref{thm: isolation nf}, $\Lambda$ is strongly isolated, and
by Proposition \ref{prop: density} (applied to $H_1=A,H_2=G$), the set
of number field lattices with associated field $F$ is dense in $\Xn$.  
\end{proof}

We now give a variant of Definition \ref{def: isolated}.
Let $\Lambda\in \Xn$ and let $H \subset G$ be a subgroup containing
$A$. Given $\vre_0>0$, we say that $\Lambda$ is {\em $\vre_0$-isolated
  relative to $H$} if for any $0<\vre<\vre_0$ there is a neighborhood
$\mathcal{U}$ of $\Lambda$ in $\Xn$, so that for any $\Lambda' \in
\mathcal{U} \sm H\Lambda$, $\kappa(\Lambda') >\kappa(\Lambda)+\vre$.  
We will say that $\Lambda$ is 
{\em  locally isolated relative to $H$} if it is  $\vre_0$-isolated relative to $H$ for some $\vre_0>0$. 

With this definition we prove the following result, of which Theorem
\ref{thm: relative isolation} is a special case: 

\begin{thm}
Let $\mu$ be a homogeneous $A$-invariant probability measure which
corresponds to the homogeneous space $H\Lambda_0$ with $A
\varsubsetneq H \varsubsetneq  G$. 
Then for any $\Lambda\in H\Lambda_0$ for which $\overline{A\Lambda} =
H\Lambda_0$ (in particular, for $\mu$ -almost any $\Lambda$) the
following assertions hold:  
\begin{enumerate}
\item
$\Lambda$ is not locally isolated. 
\item
$\Lambda$ is $\vre_0$-locally isolated relative to $H$, for 
$$
\vre_0 \df \min\{\kappa_\nu: \displaystyle^{\nu \textrm{ is a
    homogeneous $A$-invariant probability}}_{\textrm{measure with }
  \operatorname{supp}(\mu) \varsubsetneq \operatorname{supp}(\nu)} 
  \} - \kappa(\Lambda).
$$
\end{enumerate}
\end{thm}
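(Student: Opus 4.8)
The plan is to mimic the proof of Theorem~\ref{thm: isolation nf}, replacing the diagonal group $A$ there by the block group $H$ here; the rigidity input is again Theorem~\ref{thm: dynamical isolation}, whose hypotheses I will check are met. Before the two parts I would collect some bookkeeping. Write $\PPP$ for the partition with $H=H(\PPP)$. The hypothesis $A\varsubsetneq H\varsubsetneq G$ forces $n\ge 3$, since for $n\le 2$ there is no block group strictly between $A$ and $G$, so Theorem~\ref{thm: dynamical isolation} is applicable; and since $\mu$ is finite, Corollary~\ref{cor: bijective} shows that $F\df\A_{\Lambda}(\PPP)$ is a subfield of $\A_\Lambda$ and $H=H(\PPP_F)$. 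Next, by Proposition~\ref{prop: why block groups} and Corollary~\ref{cor: bijective} the homogeneous $A$-invariant \emph{probability} measures $\nu$ with $\on{supp}(\mu)\varsubsetneq\on{supp}(\nu)$ are in bijection, via their supports, with the proper subfields $K\varsubsetneq F$ (the support being the finite-volume orbit of the block group of $K$); this is a finite, non-empty family (it contains $\mu_{\Xn}$, corresponding to $K=\Q$), and $\kappa_\mu<\kappa_\nu$ for every such $\nu$ by Theorem~\ref{u thm 2}. Hence the minimum defining $\vre_0$ is attained and $\vre_0>0$. Finally, Theorem~\ref{thm: ergodic+semicontinuous} applied to $X=\overline{A\Lambda}=H\Lambda_0$ gives $\kappa(\Lambda)=\kappa_\mu=\sup\{\kappa(\Lambda'):\Lambda'\in\on{supp}(\mu)\}$, so $\kappa(\Lambda')\le\kappa(\Lambda)$ for every $\Lambda'\in H\Lambda_0$.

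For part (1), I would argue that $A\Lambda$ sits inside $H\Lambda_0$ as a set too small to separate $\Lambda$ from the rest of $H\Lambda_0$. Indeed $\on{Stab}_H(\Lambda)$ is discrete, so $\dim H\Lambda_0=\dim H$, while $\dim H>n-1$ because $\PPP$ has a block of size $\ge 2$; thus $A\Lambda$ is an immersed submanifold of $H\Lambda_0$ of dimension $\le n-1<\dim H\Lambda_0$, hence $\mu$-null, hence $H\Lambda_0\setminus A\Lambda$ is dense in $H\Lambda_0$. Picking $\Lambda_k\in H\Lambda_0\setminus A\Lambda$ with $\Lambda_k\to\Lambda$ and using $\kappa(\Lambda_k)\le\kappa(\Lambda)$ (from the last sentence above, as $\Lambda_k\in\on{supp}(\mu)$), every neighbourhood of $\Lambda$ contains some $\Lambda_k\notin A\Lambda$ with $\kappa(\Lambda_k)\le\kappa(\Lambda)$, so (taking $0<\vre<\vre_0$ arbitrary) $\Lambda$ is not $\vre_0$-isolated for any $\vre_0>0$; that is, $\Lambda$ is not locally isolated.

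For part (2), I would suppose towards a contradiction that $\Lambda$ is not $\vre_0$-isolated relative to $H$, producing $\vre\in(0,\vre_0)$ and lattices $\Lambda_k\to\Lambda$ with $\Lambda_k\notin H\Lambda_0$ and $\kappa(\Lambda_k)\le\kappa(\Lambda)+\vre$ for all $k$. Since $H\Lambda_0$ is a finite-volume homogeneous space corresponding to the subfield $F$, $\overline{A\Lambda}=H\Lambda_0$, and $\Lambda_k\in\Xn\setminus H\Lambda_0$, Theorem~\ref{thm: dynamical isolation} applies: after passing to a subsequence there is a proper subfield $K\varsubsetneq F$ so that the accumulation points of sequences $(a_k\Lambda_k)$, $a_k\in A$, fill up the finite-volume homogeneous space $H'\Lambda$ with $H'=H(\PPP_K)$. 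Since the correspondence of Corollary~\ref{cor: bijective} is order-reversing, $H'\supsetneq H$ and $H'\Lambda\supsetneq H\Lambda_0$, so the normalized homogeneous measure $\nu$ on $H'\Lambda$ lies in the family defining $\vre_0$, whence $\kappa_\nu\ge\kappa(\Lambda)+\vre_0$. Choosing, by Theorem~\ref{thm: ergodic+semicontinuous}, a lattice $\Lambda_{\max}\in H'\Lambda=\on{supp}(\nu)$ with $\kappa(\Lambda_{\max})=\kappa_\nu$, and writing it as $\Lambda_{\max}=\lim_j b_j\Lambda_{k_j}$ with $b_j\in A$, lower semicontinuity and $A$-invariance of $\kappa$ (Proposition~\ref{prop: standard}) give
\[
\kappa(\Lambda)+\vre_0\ \le\ \kappa_\nu\ =\ \kappa(\Lambda_{\max})\ \le\ \liminf_j\kappa(b_j\Lambda_{k_j})\ =\ \liminf_j\kappa(\Lambda_{k_j})\ \le\ \kappa(\Lambda)+\vre,
\]
contradicting $\vre<\vre_0$. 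Hence $\Lambda$ is $\vre_0$-isolated relative to $H$, as claimed.

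The only substantial ingredient is Theorem~\ref{thm: dynamical isolation}, which I treat as a black box (it is the rigidity theorem of \cite{LW}); granting it, everything else is soft and copies the proof of Theorem~\ref{thm: isolation nf}. The points needing care are: verifying that the minimum defining $\vre_0$ runs over a finite non-empty family and exceeds $\kappa_\mu$ (handled by Corollary~\ref{cor: bijective} and Theorem~\ref{u thm 2}); the dimension count in part (1) that makes $A\Lambda$ a $\mu$-null subset of $H\Lambda_0$; and, in part (2), correctly tracking the subsequences and using that $K\varsubsetneq F$ is \emph{proper}, so that the measure $\nu$ it produces genuinely participates in the minimum defining $\vre_0$.
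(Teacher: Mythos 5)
Your proposal is correct and takes essentially the paper's route: part (2) is exactly the intended argument (the proof of Theorem \ref{thm: isolation nf} repeated with Theorem \ref{thm: dynamical isolation} applied to $H$ in place of $A$), and your bookkeeping on $\vre_0$ (finiteness of the family of measures $\nu$, positivity via Theorem \ref{u thm 2}, and $\kappa(\Lambda)=\kappa_\mu$ from Theorem \ref{thm: ergodic+semicontinuous}) matches the remark following the theorem. In part (1) the paper simply takes generic elements of $H\Lambda_0 \sm A\Lambda$ tending to $\Lambda$, while you obtain density of $H\Lambda_0 \sm A\Lambda$ near $\Lambda$ by a dimension count showing $A\Lambda$ is $\mu$-null --- a cosmetic variation of the same idea.
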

Since there are only finitely many equiblock groups that
contain $H$, there are only finitely many measures $\nu$ that can
appear in the minimum defining 
$\vre_0$ above. By Theorem~\ref{u thm 2} we see that indeed $\vre_0>0$.

\begin{proof}
Since $A \varsubsetneq H$,  (1) is immediate from Theorem \ref{thm:
  ergodic+semicontinuous}, taking a sequence of generic elements in
$H\Lambda \sm A\Lambda$ tending to $\Lambda$. The proof of assertion
(2) is identical to the proof of Theorem \ref{thm: isolation nf},
except that in applying Theorem \ref{thm: dynamical isolation}, we use
$H$ in place of $A$.    
\end{proof}
\begin{proof}[Proof of Theorem \ref{thm: non-isolated, intermediate}]
By Proposition~\ref{pilc} there are number
field lattices in $H\Lambda$, and by Proposition \ref{prop: density} the
collection of number field lattices in $H\Lambda$ is dense. Let
$\Lambda_0\in H\Lambda$ be a lattice realizing the generic value
$\kappa_\mu$ and choose $\Lambda_k\to\Lambda_0$ a sequence of number
field lattices from within $H\Lambda$. On the one hand, by
Theorems~\ref{thm: ergodic+semicontinuous} and \ref{u thm 2} we know that
$\kappa(\Lambda_k)<\kappa_\mu$. On the other hand, by Proposition
\ref{prop: standard},  
$\liminf_k\kappa(\Lambda_k)\ge \kappa_\mu$. It follows that the
sequence $\kappa(\Lambda_k)$ converges to $\kappa_\mu$ and after
possibly taking a subsequence, we may assume  
that $\kappa(\Lambda_k) \nearrow \kappa_\mu$. Finally, by
Proposition~\ref{prop: never decomposable}, these values belong to
the reduced Mordel-Gruber spectrum. 
\end{proof}

\begin{proof}[Proof of Theorem \ref{thm: hopefully2}]
Given $t$, let $\Q \varsubsetneq F_1 \varsubsetneq \cdots \varsubsetneq F_t$ be a tower of totally real fields, and let $n=\deg( F_t/\Q)$. Let $\Lambda \subset \Xn$ be a number field lattice 
corresponding to $F_t$, constructed via \equ{eq: vectors first type}
for some rank $n$ subgroup $L \subset F_t$. Then each of the $F_i$ is
obtained as $\A_{\Lambda}(\PPP_i)$ for some partition $\PPP_i$, and by
Corollary \ref{cor: bijective}, 
the corresponding groups $H_i \df H(\PPP_i)$ satisfy $A=H_t
\varsubsetneq H_{t-1} \varsubsetneq \cdots \varsubsetneq H_1
\varsubsetneq G$.  
For each $i$, let $H_i\Lambda$ be the corresponding finite-volume homogeneous
subspace. 
Denote $\SL(V_\Lambda)\df G\cap \GL(V_\Lambda)$, i.e., the set of
elements of $G$ which are rational with respect to the $\Q$-structure
induced by $\Lam$.
For each $q \in \SL(V_\Lambda)$ and each $i$, the orbit
$H_iq\Lambda$ is also a homogeneous subspace, since $q$ commensurates
$\on{Stab}_G(\Lambda)$. Let  $\kappa_i(q)$ denote the generic value of $\kappa$,
as in Theorem \ref{thm: ergodic+semicontinuous}, on the homogeneous
subspace $H_iq\Lambda$. We will show by induction that each
$\kappa_{t-i}(q)$ belongs to $\widehat{\mathbf{MG}}_n^{(i)}.$ 

Suppose first that $i=1$. Then each $H_{t-1}q\Lambda$ is a homogeneous
subspace, which contains the compact $A$-orbits $Aq'\Lambda$, for all
$q' \in \SL(V_\Lambda) \cap H_{t-1}q.$ Since $\SL(V_\Lambda)$ is a group and
$\SL(V_\Lambda) \cap H_i$ is dense in each $H_i$, the set of such $q'$ is
dense in each $H_{t-1}q$. Therefore, repeating the argument proving
Theorem \ref{thm: non-isolated, intermediate}, we find that each
$\kappa_{t-1}(q)$ is a limit of an increasing sequence from
$\widehat{\mathbf{MG}}_n^{(i)}.$ For the case of general $i$ we argue
in the same way, taking all $q'  
\in \SL(V_\Lambda) \cap H_{t-i+1}q$, and using the values of $\kappa$
corresponding to $H_{t-i}q'\Lambda$ to approximate the value
$H_{t-i+1}q\Lambda$.  
\end{proof}

\section{The case $n=2$}
\name{sec: n=2}
For a lattice $\Lambda \subset \R^n$, we denote 
$$\lambda(\Lambda) \df \inf \left\{\left|\prod x_i\right | : (x_1, \ldots, x_n) \in \Lambda \right\}.$$
The following was proved in \cite{Gruber}:
\begin{prop}[Gruber]\name{prop: Gruber}
For a lattice $\Lambda$ of dimension 2, $\kappa(\Lambda)<1 \Longleftrightarrow \lambda(\Lambda)>0.$

\end{prop}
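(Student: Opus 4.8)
The plan is to prove the contrapositive in both directions, working with the quantity $\lambda(\Lambda) = \inf\{|xy| : (x,y)\in\Lambda\setminus\{0\}\}$ and the Mordell constant of a $2$-dimensional unimodular lattice. First I would record the elementary geometric dictionary: for $n=2$ a symmetric box is a rectangle $[-a,a]\times[-b,b]$, it is admissible for $\Lambda$ precisely when $\Lambda$ has no nonzero point $(x,y)$ with $|x|<a$ and $|y|<b$, and $\kappa(\Lambda) = \sup ab$ over such admissible rectangles (using $\Vol(\Lambda)=1$). Since the $A$-action rescales $(x,y)\mapsto(tx,t^{-1}y)$ and preserves both $\lambda$ and $\kappa$, and since admissibility of the rectangle $[-a,a]\times[-b,b]$ depends only on the product $ab$ up to this rescaling (replace the rectangle by the square of the same area), one sees $\kappa(\Lambda) = \sup\{ ab : \text{there is no }(x,y)\in\Lambda\setminus\{0\}\text{ with }|xy|<ab\text{ and }|x|<a\}$; in fact the cleanest reformulation, which I would establish first, is that $\kappa(\Lambda)$ equals the supremum of areas $c=ab$ of \emph{open} rectangles containing no nonzero lattice point, and that by applying a diagonal element one may always assume the obstructing points, if any, satisfy $|xy|$ close to $c$.

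For the direction $\lambda(\Lambda)>0 \implies \kappa(\Lambda)<1$: suppose $\lambda(\Lambda)=\delta>0$. I claim $\kappa(\Lambda)\le 1-c(\delta)$ for an explicit $c(\delta)>0$. The point is that if a square $\mathcal C$ of area $2^2\cdot\kappa_0$ (after applying a suitable $a\in A$, using Corollary~\ref{cor: cubes}) is admissible for some $a\Lambda$ with $\kappa_0$ close to $1$, then $a\Lambda$ is a unimodular lattice avoiding the open square of area close to $4$; but a unimodular lattice in $\R^2$ avoiding an open square of area $4-\epsilon$ must, for small $\epsilon$, contain points very close to the boundary, and Minkowski / the structure of critical lattices forces one of these near-boundary points to lie near a \emph{corner}, hence to have $|xy|$ arbitrarily small as $\kappa_0\to 1$, contradicting $\lambda(a\Lambda)=\lambda(\Lambda)=\delta$. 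Concretely I would use the classification of lattices extremal for the square (the ``critical lattices'' of the square are generated by $(1,0),(1/2,\sqrt 3/2)$-type configurations rescaled, or rather have a short vector on a face) to get the quantitative bound; alternatively one can argue directly that a unimodular lattice with no nonzero point in $(-s,s)^2$, $s^2$ close to $1$, has a nonzero vector $(x,y)$ with $|x|<s$ and $|y|$ only slightly bigger than $s$, so $|xy|$ is bounded by $s\cdot(\text{something}\to s)$ — wait, this does not immediately shrink; the correct mechanism is that avoiding a large square forces a vector \emph{near a corner}, i.e. with both $|x|\approx s$ and $|y|\approx s$ impossible by unimodularity unless the lattice is close to $\Z^2$ rescaled, and then there is a vector with one small coordinate. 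This is the step I expect to be the main obstacle: making precise the claim ``$\kappa(\Lambda)$ near $1$ forces $\lambda(\Lambda)$ near $0$'', which really amounts to understanding lattices nearly critical for the square. I would handle it by Mahler compactness: if $\kappa(\Lambda_k)\to 1$ but $\lambda(\Lambda_k)\ge\delta$, pass (after applying $a_k\in A$) to a convergent subsequence $a_k\Lambda_k\to\Lambda_\infty$ with no nonzero point in the open unit-area-$4$ square, so by Hajós' theorem (or directly in dimension $2$) $\Lambda_\infty$ contains a standard basis vector, i.e. a vector with a zero coordinate, giving $\lambda(\Lambda_\infty)=0$; but $\lambda$ is lower semicontinuous enough — more precisely, a vector of $\Lambda_\infty$ with a zero coordinate is a limit of vectors $v_k\in a_k\Lambda_k$ with that coordinate $\to 0$ while the vectors stay bounded, so $|x_ky_k|\to 0$, contradicting $\lambda(a_k\Lambda_k)=\lambda(\Lambda_k)\ge\delta$.

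For the converse $\lambda(\Lambda)=0 \implies \kappa(\Lambda)=1$: if $\lambda(\Lambda)=0$ there are nonzero $v_k=(x_k,y_k)\in\Lambda$ with $|x_ky_k|\to 0$. Applying a diagonal $a_k\in A$ I can rescale so that $|x_k|=|y_k|$, hence both coordinates of $a_kv_k$ tend to $0$; but then $a_k\Lambda$ has arbitrarily short vectors, so $a_k\Lambda$ leaves every compact set, and I want instead to arrange that $a_k\Lambda$ \emph{converges} to a lattice containing a coordinate vector. Better: rescale $a_k$ so that $x$-coordinate of $a_kv_k$ is exactly $1$; then its $y$-coordinate $\to 0$, and $a_k\Lambda$ is unimodular containing a vector $\to(1,0)$; passing to a Mahler-convergent subsequence (the lattices are bounded since they contain no \emph{too} short vector after normalization — one must check non-divergence, which follows because the covolume is $1$ and there is a vector of length $\approx 1$) we get $\Lambda_\infty\ni(1,0)$, so $\Lambda_\infty = \Z(1,0)\oplus\Z(t,1)$ for some $t$, i.e. $\Lambda_\infty$ is $A$-equivalent to $\Z^2$, whence $\kappa(\Lambda_\infty)=1$; by lower semicontinuity of $\kappa$ (Proposition~\ref{prop: standard}(ii)) and its $A$-invariance, $\kappa(\Lambda)=\kappa(a_k\Lambda)\ge\limsup$... — here I must be careful about direction of semicontinuity, so instead I argue directly that since $\kappa$ is $A$-invariant and $\Z^2$ is in the $A$-orbit-closure of $\Lambda$, Theorem~\ref{thm: ergodic+semicontinuous} (or just Proposition~\ref{prop: standard}(ii)) gives $\kappa(\Lambda)\ge\kappa(\Lambda_\infty)=1$, hence $=1$. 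The only subtlety is the non-divergence needed to extract $\Lambda_\infty$, which I would get from the fact that $a_k\Lambda$ contains a vector of norm bounded above and, by unimodularity, cannot have all vectors long; if divergence still occurs one instead uses that a divergent sequence of unimodular $2$-lattices has $\kappa\to 1$ anyway (shrinking shortest vector direction opens up a large admissible box), which directly yields $\kappa(\Lambda)=1$.
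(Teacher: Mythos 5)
The paper itself gives no proof of this proposition --- it is quoted from Gruber \cite{Gruber} --- so there is no internal argument to compare with; what you propose is an independent, self-contained dynamical proof, and its architecture is sound and consistent with the paper's toolkit. For the direction $\lambda(\Lambda)>0\Rightarrow\kappa(\Lambda)<1$, your final argument (Corollary \ref{cor: cubes} plus Mahler compactness to extract a limit lattice $\Lambda_\infty$ with no nonzero point in the open square of area $4$, then Haj\'os --- elementary for $n=2$ --- to produce a vector of $\Lambda_\infty$ with a vanishing coordinate, approximated by vectors $v_k\in a_k\Lambda$ with $|x_ky_k|\to 0$, contradicting $A$-invariance of $\lambda$) is correct; you are right that your first, quantitative attempt via ``critical lattices of the square'' is the shaky part, and you correctly discard it.

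Two points in the converse direction need repair, both minor. First, non-divergence of $a_k\Lambda$: containing the vector $(1,\epsilon_k)$ of norm about $1$ does not by itself rule out arbitrarily short vectors, since $(1,\epsilon_k)$ could be a large multiple of a tiny primitive vector. Fix this by taking $v_k$ primitive (replacing $v_k=mp$ by $p$ only decreases the coordinate product); then any lattice vector independent of $(1,\epsilon_k)$ has norm at least $1/\|(1,\epsilon_k)\|\ge 1/\sqrt 2$ by the determinant bound, parallel ones have norm at least $\|(1,\epsilon_k)\|\ge 1$, and Mahler applies. Second, $\Z(1,0)\oplus\Z(t,1)$ is not $A$-equivalent to $\Z^2$ for general $t$ (it is a unipotent shear of it); but the fact you actually need, $\kappa(\Lambda_\infty)=1$, is immediate since the open square $(-1,1)^2$ misses any unimodular lattice containing a vector on a coordinate axis. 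In fact your ``fallback'' claim can be upgraded into a one-line proof of this whole direction, making the compactness superfluous: if $(p,q)\in\Lambda$ is nonzero with $0<|pq|<1$, then the box $[-|p|,|p|]\times[-b,b]$ with $b=(1-|pq|)/|p|$ is admissible, because any lattice point $(x,y)$ off the line $\R(p,q)$ satisfies $|xq-yp|\ge 1$, so $|x|<|p|$ forces $|y|>b$, while nonzero multiples of $(p,q)$ have $|x|\ge |p|$. Hence $\kappa(\Lambda)\ge 1-|pq|$ for every nonzero lattice vector, i.e.\ $\kappa(\Lambda)\ge 1-\lambda(\Lambda)$, which gives $\lambda(\Lambda)=0\Rightarrow\kappa(\Lambda)=1$ directly and also substantiates your assertion that divergent sequences of unimodular $2$-lattices have $\kappa\to 1$.
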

\begin{remark}
Using the results of the previous sections, it is not hard to show that Gruber's result is not valid for general $n$. Indeed, Mahler's compactness criterion (Proposition \ref{prop: Mahler}) implies that the condition $\lambda(\Lambda)>0$ is equivalent to the boundedness of the $A$-orbit of $\Lambda$ in $\Xn$. Now let $n$ be composite and let $\mu$ be a homogeneous measure on $\Xn$ supported on intermediate 
lattices which are not 
number field lattices. In light of \cite[Step 6.3]{LW} and Proposition \ref{prop: ergodic}, for almost any $\Lambda \in \on{supp}(\mu)$, $A\Lambda$ is not bounded, so that $\lambda(\Lambda)=0$. However by Theorem \ref{u thm 2}, $\kappa(\Lambda)<1$. 
\end{remark}

\begin{proof}[Proof of Theorem \ref{thm: n=2}]
Let $\Lambda$ be a lattice in dimension 2, with
$\kappa(\Lambda)<1$. We wish to show that it is not strongly
isolated. In view of Proposition \ref{prop: Gruber}, we know that
$\lambda(\Lambda)>0$, and it suffices to show that there is a bounded
$A$-orbit $A\Lambda_0$ which contains $A \Lambda$ in its closure but
is not equal to $A\Lambda$. Since $n=2$, the $A$-action is the
geodesic flow on the unit tangent bundle to the modular surface, and
the existence of such orbits is well-known using symbolic
dynamics. More specifically, using the viewpoint of
\cite{AdlerFlatto}, for any lattice $\Lambda \in \mathcal{L}_2$, let
$\alpha, \omega$ be two real numbers which are endpoints of the
infinite geodesic through a lift of the tangent vector corresponding
to $\Lambda$ in the upper half-plane. Since $A\Lambda$ is bounded, the
continued fractions coefficients of the numbers $\alpha, \omega$ are
bounded, say by a number $k$. Denote these coefficients by $\alpha =
[a_{-1}, a_{-2}, \ldots ]$ and $\omega = [a_0, a_1, a_2, \ldots].$ Now
let $\alpha' \df [k+1, k+1, \ldots]$ and  
$$
\omega' \df [a_0, a_{-1}, a_0, a_1, a_{-2}, a_{-1}, a_0, a_1, a_2, a_{-3}, a_{-2}, \ldots].
$$
That is, the bi-infinite word obtained by concatenating the expansions
of $\alpha$ and $\omega$ is in the orbit-closure, under the shift, of
the the bi-infinite word obtained by concatenating the expansions of
$\alpha', \omega'$.  

In view of the symbolic coding of the geodesic flow
\cite{AdlerFlatto}, the closure of the projection in $\mathcal{L}_2$ of the geodesic with endpoints $\alpha', \omega'$
contains the projection of the geodesic with endpoints $\alpha, \omega$. 
Since the digits of $\alpha'$ are greater than $k$, the two
orbits are distinct. Since all digits of $\alpha', \omega'$ are
bounded by $k+1$, the corresponding $A$-orbit has $\kappa <1$.   
\end{proof}

Theorem \ref{thm: non-isolated, intermediate} concerns with the existence
of accumulation points for $\widehat{\mathbf{MG}}_n$ besides 1, for $n
\geq 3$. As we now explain, Proposition \ref{prop: Gruber} can be used
to settle this question in dimension 2.  

\begin{prop}\name{prop: accumulation n=2}
The set $\mathbf{MG}_2 = \widehat{\mathbf{MG}}_2$ has accumulation points smaller than 1. 
\end{prop}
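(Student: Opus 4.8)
The plan is to combine Proposition~\ref{prop: Gruber} with the compactness of the ``bounded'' part of $\mathcal{L}_2$ and with the classical structure of the invariant $\lambda$ on $\mathcal{L}_2$. First I would set up notation: for $\epsilon>0$ put $\Omega_\epsilon\df\{\Lambda\in\mathcal{L}_2:\lambda(\Lambda)\ge\epsilon\}$. Since $|x_1x_2|$ is invariant under the $A$-action, $\lambda$ is $A$-invariant and $\Omega_\epsilon$ is $A$-invariant; since every $\Lambda$ has a nonzero vector with $|v_1v_2|$ as close as we like to $\lambda(\Lambda)$, and such a vector survives small deformations of $\Lambda$, the function $\lambda$ is upper semicontinuous, so $\Omega_\epsilon$ is closed; and since $\lambda(\Lambda)\ge\epsilon$ rules out nonzero vectors shorter than $\sqrt{\epsilon}$, Proposition~\ref{prop: Mahler} shows $\Omega_\epsilon$ is compact. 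By Proposition~\ref{prop: Gruber}, $\kappa(\Lambda)<1$ for every $\Lambda\in\Omega_\epsilon$. Finally, a decomposable lattice in dimension $2$ is a direct sum of two one-dimensional lattices, hence has $\kappa=1$; so every value of $\kappa$ below $1$ is realized by an indecomposable lattice (and $1$ itself is realized by, e.g., any indecomposable lattice $\langle \E_1,\ \alpha\E_1+\E_2\rangle$ with $\alpha$ irrational, giving $\mathbf{MG}_2=\widehat{\mathbf{MG}}_2$). Consequently it suffices to produce one accumulation point of $\mathbf{MG}_2$ strictly below $1$, and for that it is enough to find some $\epsilon>0$ with $\kappa(\Omega_\epsilon)$ infinite.

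Next I would show that $\kappa$ is bounded away from $1$ on $\Omega_\epsilon$. Arguing by contradiction, suppose $\Lambda_k\in\Omega_\epsilon$ with $\kappa(\Lambda_k)\to1$. By Corollary~\ref{cor: cubes} there are $a_k\in A$ such that the symmetric cube $C_k$ of volume $2^2\bigl(\kappa(\Lambda_k)-\tfrac{1}{k}\bigr)$ is admissible for $a_k\Lambda_k\in\Omega_\epsilon$, and the side of $C_k$ tends to $2$. Passing by compactness of $\Omega_\epsilon$ to a subsequence with $a_k\Lambda_k\to\Lambda_\infty\in\Omega_\epsilon$, the lattice $\Lambda_\infty$ has no nonzero vector in the open square $(-1,1)^2$: any such vector would be a limit of nonzero vectors of $a_k\Lambda_k$ lying inside $C_k$ for large $k$, contradicting admissibility. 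But then the two-dimensional case of Haj\'os' theorem (a unimodular lattice with no nonzero point in $(-1,1)^2$ tiles $\R^2$ by translates of the unit cube, hence contains a standard basis vector; see \cite{GL}) forces $\E_i\in\Lambda_\infty$ for some $i$, giving $\lambda(\Lambda_\infty)=0$ and contradicting $\Lambda_\infty\in\Omega_\epsilon$. Hence there is $\eta=\eta_\epsilon>0$ with $\kappa(\Omega_\epsilon)\subseteq[0,1-\eta]$.

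The remaining and main point is to see that $\kappa(\Omega_\epsilon)$ is infinite for a suitable $\epsilon$, and here I would invoke the detailed two-dimensional analysis in \cite{Gruber}. Writing a unimodular $\Lambda=g\Z^2$ with $g=\left(\begin{smallmatrix}a&b\\ c&d\end{smallmatrix}\right)\in\SL_2(\R)$, the form $(am+bn)(cm+dn)$ is an indefinite real binary quadratic form of discriminant $(ad-bc)^2=1$, so $\lambda(\Lambda)=\min_{(m,n)\in\Z^2\smallsetminus\{0\}}|(am+bn)(cm+dn)|$ is exactly the reciprocal of a value of the Lagrange spectrum $\mathbb{L}$; in particular $\{\lambda(\Lambda):\lambda(\Lambda)>0\}$ is an infinite subset of $(0,1/\sqrt5\,]$ with accumulation points bounded away from $0$ (for instance $\tfrac13$, approached from above by the normalized minima $1/\sqrt5,1/\sqrt8,\dots$ of the Markoff forms). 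Gruber's study of the two-dimensional Mordell constant expresses $\kappa(\Lambda)$, for lattices whose $A$-orbit is bounded, in terms of the bounded continued-fraction data of the associated geodesic, and from this description one extracts that $\kappa$ takes infinitely many distinct values on lattices whose $\lambda$-value lies in a fixed compact subinterval of $(0,1/\sqrt5\,]$; choosing $\epsilon>0$ below that subinterval, $\kappa(\Omega_\epsilon)$ is then infinite. (One could also try to build many bounded orbits directly by the symbolic-dynamics construction used in the proof of Theorem~\ref{thm: n=2}, but the difficulty of separating the resulting $\kappa$-values is the same.) I expect this step to be the crux: Proposition~\ref{prop: Gruber} only records the dichotomy $\kappa<1\Leftrightarrow\lambda>0$, so one genuinely needs the finer information about $\kappa$ supplied by \cite{Gruber} to pass from non-emptiness to infinitude of the relevant set of $\kappa$-values.

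Finally, combining the two preceding steps, $\kappa(\Omega_\epsilon)$ is an infinite subset of the compact interval $[0,1-\eta]$ and therefore has an accumulation point $p$, necessarily with $p\le 1-\eta<1$. By the reduction in the first paragraph, $p$ is also an accumulation point of $\widehat{\mathbf{MG}}_2$, which proves the proposition.
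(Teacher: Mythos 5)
Your first two steps are fine: $\Omega_\epsilon=\{\lambda\ge\epsilon\}$ is indeed compact and $A$-invariant, and your compactness-plus-Haj\'os argument does give a uniform bound $\kappa\le 1-\eta_\epsilon$ on $\Omega_\epsilon$; this is a legitimate substitute for the paper's use of Gruber's inequalities (which bound $\kappa$ in terms of a bound on the continued-fraction digits). But the proposal has a genuine gap exactly where you yourself locate the crux: you never prove that $\kappa$ takes \emph{infinitely many} values on $\Omega_\epsilon$ (or on any set of lattices with $\kappa$ bounded away from $1$). The sentence ``from this description one extracts that $\kappa$ takes infinitely many distinct values'' is an assertion, not an argument, and it does not follow from Proposition~\ref{prop: Gruber} nor from the cited inequalities of \cite{Gruber}, which only yield upper bounds. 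The danger is real: infinitely many distinct bounded $A$-orbits could a priori produce only finitely many $\kappa$-values, so some mechanism is needed to separate the values.

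This is precisely where the paper does its work. It takes quadratic irrationals $x_n\in\mathbf{Bad}_2$ lying in infinitely many \emph{distinct} real quadratic fields $F_n$ (Cusick's description of the quadratics in $\mathbf{Bad}_2$ as $\sqrt{(3m-2)(3m+2)}/m$, plus Dirichlet's theorem on primes in progressions), forms the compact-orbit lattices $\Lambda(x_n)$ via \equ{eq: vectors first type}, and then proves the values $\kappa_n=\kappa(\Lambda(x_n))$ are pairwise distinct by an algebraic argument: using locking points (Step 1 of Theorem~\ref{main theorem technical}) one writes $\kappa_n=\sigma^{(n)}_1(\alpha_n)\sigma^{(n)}_2(\beta_n)$ with $\alpha_n,\beta_n$ spanning $F_n$ over $\Q$, deduces that $\kappa_n$ is an irrational element of $F_n$, and concludes distinctness because distinct quadratic fields meet only in $\Q$. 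Gruber's inequalities enter only to give the uniform bound $\kappa_n\le\kappa_0<1$ (the role your Haj\'os step plays). To complete your proof you would have to supply an argument of this kind; as written, the infinitude claim is unsupported. (A minor side issue: your parenthetical that $\langle\E_1,\alpha\E_1+\E_2\rangle$ has $\kappa=1$ for \emph{any} irrational $\alpha$ is false --- for badly approximable $\alpha$ one has $\lambda>0$, hence $\kappa<1$ by Proposition~\ref{prop: Gruber}; take $\alpha$ irrational and not badly approximable.)
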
 

\begin{proof}
Let $\mathbf{Bad}_k$ denote the set of real numbers $x$ whose
continued fraction coefficients $a_1(x), a_2(x), \ldots$ are bounded
above by $k$. Then it is well-known that $\cup_{k\ge 1}\mathbf{Bad}_k$
contains all real quadratic irrationals. Given a real quadratic
irrational $x$, let $L= \Z \oplus \Z x$ be an additive subgroup in the
corresponding quadratic field $\Q(x)$, and $\Lambda = \Lambda(x) \in
\mathcal{L}_2$ be the lattice in dimension 2, constructed via
\equ{eq: vectors first type}.  
Then, as is well-known (and is a very special case of Corollary \ref{r.21.6})
the orbit $A\Lambda(x)$ is compact. 
The inequalities of \cite{Gruber} imply that a uniform bound on the
continued fraction coefficients of $x$ imply a uniform bound on the
Mordell constant; in particular, for any $k$ there is $\kappa_0<1$ so
that if $x \in \mathrm{Bad}_k$ is a quadratic irrational, then
$\kappa(\Lambda(x)) \leq \kappa_0$.  

It is known that there is $k$ such that $\mathbf{Bad}_k$ contains a
sequence $(x_n)$ of quadratic irrationals, for which the fields $F_n
\df \Q(x_n)$ are distinct quadratic fields. Indeed, as explained to
the authors by Dmitry Kleinbock, one can take $k=2$. By \cite[Theorem
1.6]{Cusick}, the quadratics in $\mathbf{Bad}_2$ are the numbers
$\sqrt{(3m-2)(3m+2)}/m$, and it follows from Dirichlet's theorem on
primes in arithmetic progressions, that among these, numbers belonging
to infinitely many distinct fields arise. 

For each $n$, as in Step 1 of Theorem \ref{main theorem technical}, 
there are $v_1^{(n)}, v_2^{(n)}$ which
are locking points for $\Lambda(x_n)$. Let $\sigma^{(n)}_1,
\sigma^{(n)}_2$ be the two field embeddings of $F_n$. After applying
an element of $A$, we find by \equ{eq: vectors first type} that there
are $\alpha_n, \beta_n$ in $F_n$ such that $v^{(n)}_1 =
\left(\sigma^{(n)}_1(\alpha_n), \sigma^{(n)}_2(\alpha_n)\right)$ and
$v^{(n)}_2 =  \left(\sigma^{(n)}_1(\beta_n),
  \sigma^{(n)}_2(\beta_n)\right) $, so that $$\kappa_n \df \kappa
(\Lambda(x_n)) = \sigma^{(n)}_1(\alpha_n) \cdot
\sigma^{(n)}_2(\beta_n).$$ 
Since $\alpha_n, \beta_n$ span $F_n$, they are linearly independent
over $\Q$. On the other hand
$\sigma^{(n)}_1(\alpha_n)\sigma^{(n)}_2(\alpha_n) \in \Q$. This
implies that $\kappa_n$ is irrational. Since the $\kappa_n$ belong to
distinct quadratic fields, they are therefore distinct. So the
sequence $(\kappa_n)$ is an infinite sequence in $\mathbf{MG}_2$,
bounded above by $\kappa_0 < 1$. This implies that $\mathbf{MG}_2$ has
a limit point smaller than 1.  
\end{proof}

\begin{remark}
By a similar argument, in order to show that there are infinitely many
distinct accumulation points in $\mathbf{MG}_2$, it suffices to
construct infinitely many disjoint finite blocks of natural numbers
$B_n$, and for each $n$, an infinite sequence of quadratics in
distinct fields, whose continued fractions coefficients lie in
$B_n$. It would be interesting to know whether $\mathbf{MG}_2$ is
uncountable.   
\end{remark}

\end{document}